\title{\color{DLtitle}\rmfamily\spacedallcaps{homotopy theory of linear coalgebras}}
\author{\spacedlowsmallcaps{brice le grignou}\and\spacedlowsmallcaps{damien lejay}}
\date{} 
\newcommand{\addresses}{{
	\bigskip
	\footnotesize
	
	\textit{E-mail address}, Brice Le Grignou: \textsf{bricelegrignou@gmail.com}
	
	\medskip
	
	\textit{E-mail address}, Damien Lejay: \textsf{lejay@paracompact.space}
}}
\newcommand{\tocconfig}{%
	\addtocontents{toc}{\protect\vspace{\beforebibskip}}
	\addcontentsline{toc}{section}{Acknowledgements}
	\addcontentsline{toc}{section}{\refname}
}
\DeclareFontFamily{U}{wncy}{}
\DeclareFontShape{U}{wncy}{m}{n}{<->wncyr10}{}
\DeclareSymbolFont{mcy}{U}{wncy}{m}{n}
\DeclareMathSymbol{\shaletter}{\mathord}{mcy}{"58} 
\newcommand{\sha}{\operatorname{\scaleobj{0.85}{\shaletter}}}
\newcommand{\leftadjointmark}{\square}
\newcommand{\rightadjointmark}{\triangle}
\newcommand{\ordinalomega}{\omegaup}
\newcommand{\operad}{P}
\newcommand{\coperad}{Q}
\renewcommand{\op}{^\mathrm{op}}
\newcommand{\einfinity}{\ensuremath{\mathsf{E}_∞}}
\newcommand{\ainfinity}{\ensuremath{\mathsf{A}_∞}}
\newcommand{\linfinity}{\ensuremath{\mathsf{L}_∞}}
\newcommand{\diagonal}{\Deltaup}
\newcommand{\homology}{\operatorname{H}}
\newcommand{\bvinfinity}{\ensuremath{\mathsf{BV}_∞}}
\newcommand{\bdinfinity}{\ensuremath{\mathsf{BD}_∞}}
\newcommand{\coend}[1]{\operatorname{Coend}_{#1}}
\newcommand{\relhom}[3]{\operatorname{Hom}_{#1}\mleft(#2,#3\mright)}
\newcommand{\symmoncat}{\mathcal{C}}
\newcommand{\fieldk}{{\mathbf{K}}}
\newcommand{\barfunctor}{\ensuremath{\operatorname{B}}}
\newcommand{\baradj}{\ensuremath{\operatorname{B^\leftadjointmark}}}
\newcommand{\cobaradj}{\ensuremath{\operatorname{C^\rightadjointmark}}}
\newcommand{\cobarfunctor}{\ensuremath{\operatorname{C}}}
\newcommand{\barfunctorfull}{\ensuremath{\operatorname{Bar}}}
\newcommand{\cobarfunctorfull}{\ensuremath{\operatorname{Cobar}}}
\newcommand{\cobaradjfull}{\ensuremath{\operatorname{Cobar}^\rightadjointmark}}
\newcommand{\leftadjoint}[1]{#1^\leftadjointmark}
\newcommand{\rightadjoint}[1]{#1^\rightadjointmark}
\newcommand{\catofalg}[1]{#1\text{-}\mathsf{alg}}
\newcommand{\catofcog}[1]{#1\text{-}\mathsf{cog}}
\newcommand{\catofpermutations}{\mathbf{S}}
\newcommand{\subobject}{\subset}
\newcommand{\freecog}[1]{\operatorname{L}^{#1}}
\newcommand{\freealg}[1]{\operatorname{L}_{#1}}
\newcommand{\mathoverflow}{\textsf{mathoverflow.net}}
\newcommand{\catofmod}[1]{#1\text{-}\mathsf{mod}}
\newcommand{\linearlogos}{\mathcal{L}}
\newcommand{\vect}[1]{\mathsf{Vect}_{#1}}
\newcommand{\treemodule}{\operatorname{\mathsf{T}}}
\newcommand{\planartreemodule}{\operatorname{\mathsf{T}}_\mathrm{pl}}
\newcommand{\treemoduleheight}[1]{\operatorname{\mathsf{T}}^{\leq #1}}
\newcommand{\planartreemoduleheight}[1]{\operatorname{\mathsf{T}}_{\mathrm{pl},\mathrm{h} \leq #1}}
\newcommand{\symgroup}[1]{\mathbf{S}_{#1}}
\newcommand{\symseq}[1]{#1^{\catofpermutations\op}}
\newcommand{\catofop}{\operatorname{\mathsf{Op}}}
\newcommand{\catofcop}{\operatorname{\mathsf{Coop}}}
\newcommand{\catofmon}{\operatorname{\mathsf{Mon}}}
\newcommand{\catofcomon}{\operatorname{\mathsf{Comon}}}
\newcommand{\comonadl}{\mathbf{L}}
\newcommand{\monadl}{\mathbf{L}}
\newcommand{\freefunctor}{\operatorname{L}}
\newcommand{\monoidalunit}{\mathbf{1}}
\newcommand{\coprodw}{\mathrm{w}}
\newcommand{\counitw}{\othertau}
\newcommand{\planartree}{\operatorname{\mathsf{T}_\mathrm{pl}}}
\newcommand{\laxmap}{\operatorname{l}}
\newcommand{\laxmapsha}{\operatorname{l}_{\sha}}
\newcommand{\filtration}[1]{\operatorname{F}_{#1}}
\newcommand{\filtrationd}[1]{\operatorname{F}_{#1}^{\mathsf{d}}}
\newcommand{\cofiltration}[1]{\operatorname{F}^{#1}}
\renewcommand{\lim}{\varprojlim}
\newcommand{\limn}{\varprojlim_{n\in\ordinalomega\op}}
\newcommand{\hlimn}{{{\varprojlim_{n\in\ordinalomega\op}}^{\!\!\!\mathrm{h}}}}
\newcommand{\colimn}{\varinjlim_{n\in\ordinalomega}}
\newcommand{\limover}[1]{\varprojlim_{#1}}
\newcommand{\chain}[1]{\operatorname{\mathrm{Ch}}(#1)}
\newcommand{\gr}[1]{\operatorname{\mathrm{gr}}^{#1}}
\newcommand{\grfilt}[1]{\operatorname{\mathrm{gr}}_{#1}}
\newcommand{\grfiltd}[1]{\operatorname{\mathrm{gr}}_{#1}^{\mathsf{d}}}
\newcommand{\coker}{\operatorname{coker}}
\newcommand{\deltacoprod}{\Deltaup}
\newcommand{\counittau}{\tauup}
\newcommand{\catss}{\mathcal{A}}
\newcommand{\s}{\operatorname{s}}
\newcommand{\inftymorphism}{\rightsquigarrow}
\newcommand{\h}{\mathrm{h}}
\newcommand{\dergrset}[2]{\operatorname{\mathrm{Der}}_{#1}(#2)}
\newcommand{\codergrset}[2]{\operatorname{\mathrm{Coder}}_{#1}(#2)}
\newcommand{\catofcompletealg}[1]{#1\text{-}\widehat{\mathsf{alg}}}
\newcommand{\ucom}{\mathsf{uC}}
\newcommand{\idfunctor}{\operatorname{id}}
\newcommand{\catc}{\mathcal{C}}
\newcommand{\dual}{^\vee}
\newcommand{\interval}{\mathrm{I}}
\newcommand{\compofplanarseq}{\vartriangleleft_\mathrm{pl}}
\newcommand{\image}{\operatorname{Im}}
\newcommand{\uass}{\mathsf{uA}}
\newcommand{\setofobj}{\operatorname{Ob}}
\newcommand{\catofenrichement}{\mathcal{V}}
\newcommand{\nilfunctor}{\operatorname{nil}}
\newcommand{\catofnilalg}[1]{#1\text{-}\mathsf{alg}^\mathrm{nil}}
\newcommand{\procat}[1]{\operatorname{Pro}\mleft(#1\mright)}
\newcommand{\hombracket}[3]{\operatorname{Hom}_{#1}\mleft(#2,#3\mright)}
\newcommand{\fullhombracket}[4]{\operatorname{Hom}_{#1}^{#2}\mleft(#3,#4\mright)}
\newcommand{\evfunctor}{\operatorname{ev}}
\newcommand{\under}[2]{#1_{#2}}
\newcommand{\fullunder}[2]{{\mleft(#1\mright)}_{#2}}
\newcommand{\power}[2]{#1^{#2}}
\newcommand{\fullpower}[2]{{\mleft(#1\mright)}^{#2}}
\newcommand{\internalhom}[2]{\mleft[#1,#2\mright]}
\newcommand{\forget}{\mathrm{U}}
\newcommand{\forgetsym}{\mathrm{U}_{\mathbf{S}}}
\newcommand{\catofcompletesmodalg}[1]{#1\text{-}\widehat{\mathsf{alg}}\mleft(\catofmod {\s^{-1}}\mright)}
\newcommand{\catofsmodalg}[1]{#1\text{-}\mathsf{alg}\mleft(\catofmod {\s^{-1}}\mright)}
\newcommand{\twisting}[2]{\operatorname{Tw}\mleft(#1,#2\mright)}
\newcommand{\alphatwisting}[3]{\operatorname{Tw}^{#1}\mleft(#2,#3\mright)}
\newcommand{\dg}{\mathrm{dg}}
\newcommand{\homotopypullbackmark}{\lrcorner_{_\mathrm{h}}}
\newcommand{\forgetd}{\mathrm{U}_\mathsf{d}}
\newcommand{\dzero}{\mathsf{D}_0}
\newcommand{\counitdzero}{\varepsilonup}
\newcommand{\canonicalaction}{\mathrm{a}}
\newcommand{\canonicaldiff}{\mathrm{d}_\mathrm{c}}
\newcommand{\induceddiff}[1]{\dd_{#1}}
\newcommand{\canonicalproj}{\operatorname{\tauup}}
\newcommand{\canonicalinj}{\mathrm{i}}
\newcommand{\caninjj}{\mathrm{l}}
\newcommand{\ideal}[1]{\operatorname{\mathrm{I}^{#1}}}
\newcommand{\ideali}{I}
\newcommand{\idealj}{J}
\newcommand{\idealk}{K}
\newcommand{\completionmap}{\varphiup}
\newcommand{\triangularmatrices}{\mathbf{T}}
\newcommand{\extendedsummap}{\Sigma}
\newcommand{\actionexample}{\mathrm{S}}
\newcommand{\kronecker}{\deltaup}
\newcommand{\formalx}{\mathrm{X}}
\newcommand{\weakeqofcompletealg}{\mathrm{W}_{\catofcompletealg\coperad}}
\newcommand{\fibrationofcompletealg}{\mathrm{F}_{\catofcompletealg\coperad}}
\newcommand{\planar}{_\mathrm{pl}}
\newcommand{\plan}{\mathrm{pl}}
\newcommand{\unitsymadj}{\zetaup}
\newcommand{\weakeqofcog}{\mathrm{W}_{\catofcog\operad}}
\newcommand{\fibrationofcog}{\mathrm{F}_{\catofcog\operad}}
\newcommand{\cofibrationofcog}{\mathrm{C}_{\catofcog\operad}}
\newcommand{\counitcobaradj}{\varepsilonup}
\newcommand{\eoperad}{\mathsf{E}}
\newcommand{\beoperad}{\mathsf{W}}
\newcommand{\externaltensor}{\boxtimes}
\newcommand{\diagtensor}{\otimes}
\newenvironment{sumtable}{
	\medskip
	\begin{center}
	}
	{\end{center}
	\medskip
	}
\newcommand{\ie}{i.\thinspace e.}
\newcommand{\exchangemap}{\epsilonup}
\newcommand{\unitofcom}{\mathrm{u}}
\newcommand{\unitor}{\upsilonup}
\newcommand{\catofcomod}[1]{#1\text{-}\mathsf{comod}}
\newcommand{\sectionref}[1]{\hyperref[#1]{\S\thinspace\ref{#1}}}
\newcommand{\convolution}{\circledast}
\newcommand{\thesquares}{\mathrm{S}}
\newcommand{\thediagramre}{\mathrm{RE}}
\newcommand{\thediagramex}{\mathrm{EX}}
\newcommand{\thediagramox}{\mathrm{OC}}
\newcommand{\thealgo}{\mathrm{L}}
\newcommand{\formalphi}{\phiup}
\newcommand{\treemonad}{\bm{\mathsf{T}}}
\newcommand{\compofsymseq}{\mathbin{\vartriangleleft}}
\newcommand{\corolla}{\mathrm{c}}
\newcommand{\attach}{\alphaup}
\newcommand{\copcomodstruct}{w_{\reducedtreemodule}}
\newcommand{\grafting}{\gammaup}
\newcommand{\rightindsym}[2]{\otimes_{#1} #2}
\newcommand{\laudayvallettecobar}{\Omegaup}
\newcommand{\fressecobar}{B^c}
\newcommand{\planarconvolution}{\circledast_\mathrm{pl}}
\newcommand{\compofseq}{\compofsymseq_\mathrm{pl}}
\newcommand{\naturalstosym}{\xiup}
\newcommand{\degrafting}{\deltaup}
\newcommand{\picking}{\mathrm{p}}
\newcommand{\reducedtreemonad}{\overline{\bm{\mathsf{T}}}}
\newcommand{\reducedtreemodule}{\operatorname{\overline{\mathsf{T}}}}
\newcommand{\reducedpltreemodule}{\operatorname{\overline{\mathsf{T}}_\mathrm{pl}}}
\newcommand{\pltosym}[1]{#1 \otimes\catofpermutations}
\newcommand{\pltosymfunctor}{(- \otimes\catofpermutations)}
\newcommand{\wrestricted}[1]{w_{|#1}}
\newcommand{\fncaninclusion}[1]{\iotaup_{#1}}
\newcommand{\wgeqn}[1]{w_{>#1}}
\newcommand{\specialprojto}[1]{\mathrm{p}_{#1}}
\newcommand{\projto}[1]{\pi_{#1}}
\newcommand{\degreeonemap}{\varsigmaup}
\newcommand{\degreeminusonemap}{\varsigmaup^{-1}}
\newcommand{\pseudounitcobaradj}[1]{\mathrm{j}}
\newcommand{\augmentation}{\mathrm{z}}
\newcommand{\specialquotient}{\mathrm{q}}
\newcommand{\restrictionone}{\mathrm{R}_1}
\newcommand{\restrictiontwo}{\mathrm{R}_2}
\newcommand{\extensionone}{\mathrm{E}_1}
\newcommand{\extensiontwo}{\mathrm{E}_2}
\newcommand{\operadcompositionone}{\mathrm{O}_1}
\newcommand{\operadcompositiontwo}{\mathrm{O}_2}
\newcommand{\thehomotopy}{\mathrm{h}}
\newcommand{\thebighomotopy}{\mathrm{H}}
\newcommand{\mapg}{\mathrm{g}}
\newcommand{\bigmapg}{\mathrm{G}}
\newcommand{\theboundary}{\mathrm{B}}
\newcommand{\specialoperad}{\mathrm{P}}
\newcommand{\smalldiff}{\mathrm{d}}
\newcommand{\smalldiffex}{\mathrm{d}_{\mathrm{ex}}}
\newcommand{\smalldiffw}{\mathrm{d}_{w}}
\newcommand{\bigdiff}{\mathrm{D}}
\newcommand{\bigdiffex}{\mathrm{D}_{\mathrm{ex}}}
\newcommand{\bigdiffin}{\mathrm{D}_{\mathrm{in}}}
\newcommand{\bigdiffw}{\mathrm{D}_w}
\newcommand{\bigdiffcop}{\mathrm{D}_{\coperad}}
\newcommand{\bigdiffv}{\mathrm{D}_{V}}
\newcommand{\bigdiffa}{\mathrm{D}_{a}}
\newcommand{\bigdifftheta}{\mathrm{D}_{\theta}}
\newcommand{\canonicaltwisting}{\alphaup}
\newcommand{\canonicaltwistinginverse}{\betaup}
\newcommand{\specialkernelkmap}{\mathrm{k}}
\newcommand{\tensorbycounitofd}{\rhoup}
\newcommand{\modulispace}[1]{\mathscr{M}^{#1}}
\newcommand{\setofder}[1]{#1\text{-}\mathsf{der}}
\newcommand{\dertag}{\textsf{der}}
\newcommand{\cogtag}{\textsf{cog}}
\newcommand{\groundofe}{\sigmaup}
\newcommand{\simplexcat}{\mathbf{\Delta}}
\newcommand{\nervefunctor}{\operatorname{N}}
\newcommand{\gammafunctor}{\Gammaup}
\newcommand{\positivechain}[1]{\mathrm{Ch}_{\geq 0}(#1)}
\newcommand{\counitofd}{\rhoup}
\newcommand{\gradedd}{\mathsf{d}}
\newcommand{\counitsymadj}{\lambdaup}
\newcommand{\isophi}{\phiup}
\newcommand{\isopsi}{\psiup}
\newcommand{\natabrev}{\mathrm{nat.}}
\newcommand{\imshuffle}[3]{\operatorname{Im}\mleft(\sha\mleft(#1,#2\mright)^{#3}\mright)}
\newcommand{\smallimshuffle}[3]{\operatorname{Im}(\sha(#1,#2)^{#3})}
\newcommand{\lcc}{\mathsf{lcc}}
\newcommand{\catoflpccccoop}[1]{\mathsf{Coop}_\bullet^{\mathsf{lccc}} (#1)}
\newcommand{\unitspecialoperad}{\etaup}
\newcommand{\bigretract}{\mathrm{R}}
\newcommand{\bigsection}{\mathrm{S}}
\newcommand{\node}[1]{\mathrm{N}_{#1}}
\newcommand{\prop}{P}
\newcommand{\catofgebras}[1]{{#1}\text{-}\mathsf{geb}}
\newcommand{\sinv}{\s^{\scalebox{0.6}{$-1$}}\!}
\begin{document}


\maketitle




\begin{abstract}
We study extensively the homotopy theory of coalgebras. By coalgebras, we
mean the full theory of coalgebras: with counits and not necessarily
locally conilpotent. For example \einfinity-coalgebras,
\ainfinity-coalgebras, \linfinity-coalgebras etc. To do so, we
define the category of complete curved algebras --- where the notion
of quasi-isomorphims does not make sense --- and endow it with a model
category structure, equivalent to that of the category of coalgebras.
\end{abstract}


\tableofcontents


\section{Introduction}

This article is the first of a series about the homotopy theory of
differential graded coalgebras. Another article shall follow, focused on
providing examples and applications to well-know types of coalgebras like
$\einfinity$, $\ainfinity$, $\linfinity$-coalgebras
etc.

\subsection{Mathematical context}

\subsubsection{Coalgebras are ubiquitous}

As mathematicians, we are very familiar with the theory of algebras.
Associative algebras, Lie algebras, commutative algebras\ldots We know
how they behave, we know what to expect from them and we know when they
are peculiar. We are so used to them that we recognise them whenever we
encounter them in mathematical experiments.

This is less the case with homotopy algebras, for several reasons.
The homotopy theory of algebras is a much younger field in the
history of mathematics. Stasheff gave the definition of an
$\ainfinity$-algebra in 1963%
~\cite{doi:10.2307/1993609} and Quillen created model categories --- one
of the major tools used to understand homotopy theories --- in 1967
~\cite{doi:10.1007/bfb0097438}. Hence homotopy algebras are not in the
basic curriculum of a mathematician. Furthermore, describing homotopy
structures involves inherently a thicker amount of computations, for
example when displaying the $\ainfinity$\=/structure on the
Fukaya category%
~\cite{doi:10.1090/amsip/046.1}.

This is even less the case with homotopy coalgebras. Since coalgebras are
unfamiliar, people usually prefer to study them through algebras. Indeed
given a coalgebra $C$ it is often possible to build an algebra out of it.
For example if $C$ is linear, its dual $C\dual$ is always an algebra.
More generally, given an algebra $A$, one can study the convolution
product induced on the set of functions from $C$ to $A$. However during
mathematical experiments, homotopical coalgebras tend to appear as often
as their algebraic counterparts. Let us describe an inexhaustible source
of homotopy coalgebras: spaces. In any geometrical context, a space $X$
has the fundamental property of being a cocommutative coalgebra thanks
to its diagonal $\diagonal : X \to X^2$. Consequently, whenever one
wants to study spaces by algebraic homotopy invariants covariantly
computed out of $X$, one often encounters homotopy coalgebras. This is the
case with the homology functor: given a topological space $X$ and an
absolutely flat ring $R$, $\homology_\ast(X, R)$ has a natural structure
of homotopy cocommutative coalgebra or $\einfinity$\=/coalgebra structure.
This mirrors the cup product and Steenrod squares on cohomology. We can
also cite among many other examples, the work on formal geometry through
$\linfinity$-coalgebras by Kontsevitch or Merkulov%
~\cite{doi:10.1007/s00220-010-0987-x}.

The variety of coalgebras is not limited to the examples cited above.
Instead the notion of coalgebra make sense over any operad $\operad$, such
as the unital and non-unital versions of $\ainfinity$ and $\einfinity$,
$\linfinity$, $\bvinfinity$, $\bdinfinity$ etc. For this, given an
object $V$ in an appropriate symmetric monoidal category $(\symmoncat,
\otimes, \monoidalunit)$, we use its operad of coendomorphisms given
in arity $n$
by $\coend V (n) \coloneqq \relhom {\symmoncat} V {V^{\otimes n}}$. A
$\operad$-coalgebra structure on $V$ is then the same data as a morphism
of operads $\operad \to \coend V$.

\subsubsection{Coalgebras are notorious}

In addition to being unfamiliar, coalgebras are also notorious for not
behaving as well as algebras. This comes from a very deep fact of the
mathematics we manipulate: the category of sets is well generated under
colimits --- it is presentable --- and badly generated under limits ---
it is not co-presentable. Thus, there is a strong preference in all
categorical constructions towards colimit preserving constructions and
functors. Tensor structures usually commute with colimits and almost
never with limits, in all non-exotic categories built from the category
of sets.

As a consequence, while it is straightforward to prove that the category
of $\operad$-algebras is monadic and presentable in all cases, it is
far from obvious to prove that the category of $\operad$-coalgebras
is comonadic and presentable. Since 2014 we now have drastic
sufficient conditions on $\symmoncat$ to answer that problem%
~\cite{arXiv:1409.4688}. In the particular case of $\operad$-coalgebras
in chain complexes over a field, one has an explicit formula computing
the comonad associated to the category or $\operad$-coalgebras.

To comonadicity problem of the category of $\operad$-coalgebras
is related to the existence of cofree $\operad$-coalgebras. Even
when they exists they are usually unpleasant to compute. A much
more pleasant coalgebra to compute is the cofree locally conilpotent
coalgebra, which is straightforward to compute.
For example, given a $\fieldk$-vector space $V$, the cofree locally
conilpotent coassociative coalgebra is given by the tensor $\fieldk[V]
\coloneqq \fieldk \oplus V \oplus V^{\otimes 2} \oplus \cdots$ and
the coalgebra structure is given by deconcatenation of tensors. By
contrast, the cofree coassociative coalgebra on one generator is given by
$\under{\fieldk[\formalx]}{(\formalx)} \subset \fieldk[[\formalx]]$.
Because of this, in several references `coalgebra' means by default
`locally conilpotent coalgebra'.

\subsubsection{Using Cobar to study homotopy coalgebras}
In order to study homotopy theory, a common way is to use model
categories. Using a theorem of 2015%
~\cite{doi:10.1090/conm/641/12859},
it is possible given a dg-operad $\operad$ to induce a model category
structure on the category of $\operad$\=/coalgebras from the one of chain
complexes. Once this is done, understanding the homotopy theory of
coalgebras reduces to understanding fibrant\-/cofibrant objects of the
category. These are generated by the cofree $\operad$\=/coalgebras, that
we already have difficulties to handle.

A solution is to use the Koszul duality of operads to find a different
model category equivalent to the first one. This idea has given
successful results in the study of homotopy algebras by Hinich%
~\cite{doi:10.1080/00927879708826055},
Vallette%
~\cite{arXiv:1411.5533},
Lefèvre-Hasegawa%
~\cite{arXiv:0310337}
and LG%
~\cite{arXiv:1612.02254}.
Let $\coperad$ be a cooperad Koszul dual to $\operad$. By dualising
the Bar construction, we obtain a functor \cobarfunctorfull{}
from the category of $\operad$\=/coalgebras to the category of
complete $\coperad$\=/algebras. Given a $\operad$\=/coalgebra
$V$, $\cobarfunctorfull V$ is built by taking the free graded
$\coperad$\=/algebra on $V$ which then receives a differential encoding
the structure of $\operad$\=/coalgebra. After endowing the category of
complete $\coperad$-algebras with a model structure,
we show that \cobarfunctorfull{} can be
promoted to an equivalence of model categories. This means that the
homotopy theory of $\operad$-coalgebras is equivalent to the homotopy
theory of $\coperad$-algebras.

It turns out that the fibrant-cofibrant objects are easy to describe:
they are the $\coperad$-algebras that are free as graded-algebras. In
addition, contrarily to the case of cofree $\operad$-coalgebras, the free
$\coperad$-algebras have a simple description. We may then use the
\cobarfunctorfull{} functor as a dictionary between $\operad$-coalgebras
and complete $\coperad$-algebras to understand the homotopy theory of
$\operad$-coalgebras.

\subsection{Mathematical content}

In this article we study the homotopy theory of coalgebras over a linear
operad by dualising the Bar construction used in the study of the
homotopy theory of algebras. Given a dg-operad $\operad$ and a
locally conilpotent curved cooperad $\coperad$ Koszul dual to $\operad$,
the Bar construction is a functor $\barfunctor : \catofalg P\to
\catofcog\coperad$ between the categories of $\operad$\=/algebras
and $\coperad$\=/coalgebras. It admits a left adjoint that we denote
$\baradj$. Once the category of $\operad$-algebras is
given its natural model structure, the \barfunctorfull{} adjunction can
be promoted to an equivalence of model categories by transferring the
model structure of $\catofalg\operad$ to $\catofcog\coperad$%
~\cite{arXiv:1612.02254}.
This allows the homotopy theory of $\operad$-algebras to be read using
cofree graded $\coperad$\=/coalgebras.

We build the dual theory in several steps. The first one is to
to define the notion of $\coperad$-algebras in order to play a role
symmetric to the one of $\coperad$-coalgebras. For this we introduce
a tool: the cotensor of the category of symmetric sequences
$\symseq\symmoncat$ onto the category $\symmoncat$. Thanks to this
cotensor, a $\operad$-coalgebra is the data of a map $\Lambda \to
\Lambda^{\operad}$ satisfying certain conditions. Likewise, we define a
$\coperad$-algebra as the data of a map $V^{\coperad} \to V$ among other
things. In \sectionref{fig:tableau_algebres_cogebres}, we draw a table
summarising the different definitions of algebras and coalgebras.

The second step is the construction of a functor $\cobarfunctor :
\catofcog\operad \to \catofalg\coperad$. As it is built
using the coconstruction of the \barfunctorfull{} functor, we call it
the \cobarfunctorfull{} 
functor. The construction of its right adjoint, noted
$\cobaradj$ is more
problematic as it involves the cofree $\coperad$-coalgebra functor.
This is the source of many technical computations that were easier in
the case of $\operad$-algebras.

The next step is to endow the category of $\operad$-coalgebras with a
model structure induced from the one on chain complexes on $\symmoncat$.
Since this is a transfer along a \emph{left} adjoint, it requires
the use of a 2015 theorem of Bayeh, Hess, Karpova, Kedziorek, Riehl and
Shipley%
~\cite{doi:10.1090/conm/641/12859}.
Doing so, we answer Porta's Question~1 and Question~3 on
\mathoverflow{}%
~\cite{mathoverflow:148626}: we define coadmissible operads in
\cref{def:operade_coadmissible} and in
\cref{thm:les_operades_planaires_ou_cofibrantes_sont_coadmissibles} we
show that $\eoperad$-split operads such as planar operads and cofibrant
operads are coadmissible.

\begin{theorem-for-introduction}
{\Cref{bigthm:weq_of_operads_eq_of_models}}
Cofibrant dg-operads are coadmissible. Weak equivalences
$f : \operad \to \operad'$ between cofibrant dg-operads yield
model equivalences
\[
	\begin{tikzcd}[ampersand replacement=\&]
		\catofcog\operad
		\arrow[r, shift right, swap, "f_\ast"]
		\&
		\catofcog{\operad'}.
		\arrow[l, shift right, swap, "f^\ast"]
	\end{tikzcd}
\]
\end{theorem-for-introduction}

In addition, we show in \cref{thm:com_n_est_pas_coadmissible} that
the commutative operad is not coadmissible in the category of chain
complexes over an algebraically closed field of characteristic zero.
This is in stark contrast with the algebra case where any operad is
admissible in characteristic zero.

We then transfer the model structure on $\operad$-coalgebras to
$\coperad$\=/algebras, along the functor $\cobaradj$.
However, as $\coperad$ is locally conilpotent, any $\coperad$\=/coalgebra
is also locally conilpotent but not every $\coperad$-algebra
is complete. We give a counter-example of this fact in
\sectionref{sec:contre_exemple}. Hence we shall replace the category of
$\coperad$-algebras with the category of complete $\coperad$\=/algebras.

\begin{theorem-for-introduction}{\Cref{bigthm:theoreme_de_transfert}}
The category of complete
$\coperad$-algebras can be endowed with a combinatorial model category
structure transferred along the \cobarfunctorfull{} adjunction:
\[
	\begin{tikzcd}[ampersand replacement=\&]
		\catofcog\operad
		\arrow[rr, shift left=1.5,"\cobarfunctorfull"]
		\&
		\&
		\catofcompletealg\coperad.
		\arrow[ll, shift left=1.5, "\rightadjoint\cobarfunctorfull"]
	\end{tikzcd}
\]
As a consequence, the \cobarfunctorfull{} adjunction is promoted to a
model adjunction.
\end{theorem-for-introduction}

\sectionref{sec:equivalence_cobar} is dedicated to proof of the model
equivalence.

\begin{theorem-for-introduction}{\Cref{bigthm:equivalence}}
When $\operad$ is cofibrant and
$\coperad$ is a Koszul dual
of $\operad$, the \cobarfunctorfull{} adjunction is a model equivalence.
\end{theorem-for-introduction}

Finally in \sectionref{sec:theorie_homotopique_des_cogebres_lineaires},
we deduce the consequences of the model equivalence for the homotopy
theory of coalgebras.

\begin{theorem-for-introduction}{%
\Cref{bigthm:homotopy_theory_of_coalgebras}}
The following full subcategories of the
category of complete $\coperad$-algebras are equivalent:
\begin{itemize}
	\item the subcategory of cofibrant objects;
	\item the subcategory whose objects are in the image of the
	      \cobarfunctorfull{} functor;
	\item the subcategory whose objects are the complete
	      $\coperad$-algebras that are free as graded $\coperad$-algebras.
\end{itemize}
\end{theorem-for-introduction}

\subsection{Non-standard notations and terminology}

\begin{itemize}
	\item We use lateral marks from the international navigation system
	      to designate adjoint functors:
	      \begin{center}
	      \begin{tikzpicture}
	      	\node[draw = black, rectangle, inner sep = 8pt,
	      	rounded corners]
	      	{%
	      	\begin{minipage}{0.35\textwidth}
	      		\begin{center}
	      			$\square \coloneqq \text{left}
	      			\qquad \triangle \coloneqq \text{right}$
	      		\end{center}
	      	\end{minipage}
	      	};
	      \end{tikzpicture}
	      \end{center}
	      Thus $\rightadjoint F$ designates the right adjoint to a functor
	      $F$ and $\leftadjoint F$ is the left adjoint of $F$;
	\item In this article we define a functor named \cobarfunctorfull{}.
	      It is not the adjoint of the \barfunctorfull{} functor,
	      which we denote by $\leftadjoint\barfunctorfull$. The same
	      functor is denoted by $\laudayvallettecobar$ by Loday and
	      Vallette~%
	      \cite{doi:10.1007/978-3-642-30362-3} and $\fressecobar$ by
	      Fresse~%
	      \cite{doi:10.1090/conm/504/09879};
	\item We shall denote by $\compofsymseq$ the composition of symmetric
	      sequences instead of the traditional $\circ$. This is done in
	      order to avoid any confusion in formulas with the composition
	      of maps. We also find it more appropriate to denote a
	      non-symmetric monoidal structure by a non-symmetric notation;
\end{itemize}

\subsection{Standard hypotheses}%
\label{sec:characteristic_zero}%
\label{sec:logos}

The main context we are interested in is unbounded chain
complexes of vector spaces over a field.
However, our construction
may be performed
in a more general linear context: we shall denote by
$(\linearlogos, \otimes, \monoidalunit)$ any
closed symmetric monoidal category such that
\begin{itemize}
	\item $\linearlogos$ is a presentable additive category;
	\item $\linearlogos$ is exact;
	\item filtered colimits in $\linearlogos$ are exact;
	\item products in $\linearlogos$ are exact;
	\item $\otimes : \linearlogos \times \linearlogos \to
	      \linearlogos$ is exact in both variables;
	\item the adjoint inner-hom bifunctor $\internalhom {-} {-}
	      : \linearlogos\op \times \linearlogos \to \linearlogos$
	      is exact in both variables.
\end{itemize}

Although it is not possible to impose the exactness of cofiltered limits
in general, thanks to the presentability condition%
~\cite{doi:10.1007/s002220100197} one has the following theorem of
Roos.

\begin{theorem}[\cite{doi:10.1112/S0024610705022416}]%
\label{thm:exactness_inverse_limit}
Let
\[
	0 \longrightarrow A^\ast \longrightarrow B^\ast
	\longrightarrow C^\ast \longrightarrow 0
\]
be an exact sequence in the category $\linearlogos^{\ordinalomega\op}$.
If for every $n \in \ordinalomega$, the transition map
\[
	A^{n+1} \longrightarrow A^n
\]
is an epimorphism in $\linearlogos$, then the sequence
\[
	0 \longrightarrow \limn A^\ast \longrightarrow \limn B^\ast
	\longrightarrow \limn C^\ast \longrightarrow 0
\]
is also exact.
\end{theorem}

Besides, our work will be tightened by the following restriction.

\begin{center}
\begin{tikzpicture}
	\node[draw = black, rectangle, inner sep = 10pt, rounded corners]
	{
	\begin{minipage}{0.7\textwidth}
		We shall either suppose that
\medskip
		\begin{itemize}
			\item we restrict our attention to planar operads;
			\item or that all categories are $\rationals$-linear.
		\end{itemize}
	\end{minipage}
	};
\end{tikzpicture}
\end{center}

For instance, if $\linearlogos$
is the category of chain
complexes of vector spaces over a field,
we assume the characteristic of this field
to be zero when dealing with non planar operads.
\section{Operads and cooperads}

Our main reference for operads is the
book \emph{Algebraic Operads} by Loday and
Vallette%
~\cite{doi:10.1007/978-3-642-30362-3}.

\subsection{Definitions and notations}

We shall recall well-known definitions in the context of
linear operads, setup notations and make some remarks that will be
used later.

\begin{itemize}
	\item Let $\catofpermutations$ denote the groupoid of finite
	      ordinals with bijections. It is endowed with a natural
	      symmetric monoidal category structure
	      $(\catofpermutations, + ,0)$;
	\item The category of symmetric sequences in $\linearlogos$ is the
	      functor category $\symseq\linearlogos$. It is endowed with
	      a canonical symmetric monoidal structure via convolution
	      $(\symseq\linearlogos, \convolution,
	      \monoidalunit_\convolution)$. If
	      $M$ and $N$ are two symmetric sequences and $n$ is a natural
	      number then
	      \[
	      	(M \convolution N)[n] \coloneqq \bigoplus_{p+q=n}
	      	(M(p)\otimes N(q))
	      	\rightindsym {\symgroup p \times \symgroup q} {\symgroup n}.
	      \]
	      The unit is given by the sequence
	      $\monoidalunit_\convolution \coloneqq (\monoidalunit, 0,\dots)$;
	\item The composition of two symmetric sequences $M$ and $N$ is
	      defined by
	      \[
	      	M \compofsymseq N \coloneqq \bigoplus_{n\in\naturals}
	      	M(n) \otimes_{\symgroup n} N^{\convolution n},
	      \]
	      where for every $n \in \naturals$, $N^{\convolution n}$ is
	      given its natural left $\symgroup n$-action by permutation.
	      The composition endows the category of symmetric sequences
	      with a monoidal structure
	      $(\symseq\linearlogos,
	      \compofsymseq, \monoidalunit_{\compofsymseq})$, where the unit
	      is the sequence $\monoidalunit_{\compofsymseq} \coloneqq
	      (0,\monoidalunit,0, \dots)$.
\end{itemize}

\begin{definition}
The category of operads and cooperads in $\linearlogos$ are defined as
\[
	\catofop(\linearlogos)
	\coloneqq \catofmon
	\left(\symseq\linearlogos, \compofsymseq, \monoidalunit\right)
	\qand \catofcop(\linearlogos)
	\coloneqq \catofop(\linearlogos\op)\op.
\]
Since $\linearlogos$ is assumed $\rationals$-linear and filtered
colimits are exact, we have a full embedding
\[
	\begin{tikzcd}
		\catofcomon
		\left(\symseq\linearlogos, \compofsymseq, \monoidalunit\right)
		\ar[r, hook] & \catofcop(\linearlogos).
	\end{tikzcd}
\]
By abuse, we shall always use the word \emph{cooperad} to refer to
such comonoids.
\end{definition}

\subsection{Trees}%
\label{sec:trees}

The category of operads is monadic over the category of symmetric
sequences: operads are the modules over a monad
$\treemonad \coloneqq (\treemodule, \grafting,
\attach)$ called the tree monad. We shall recall some well-known
results about it and setup a few notations. A more thorough description
can be found in \emph{Algebraic Operads}%
~\cite[5.5]{doi:10.1007/978-3-642-30362-3}.
The notion of tree that we use is defined in details in
\emph{Simplicial Methods for Operads and Algebraic Geometry}%
~\cite[2]{doi:10.1007/978-3-0348-0052-5},
in particular trees are finite, non-empty and rooted such as

\begin{center}
	\begin{tikzpicture}
		\tikzset{every node/.style={draw, circle, thick, inner sep=0,
		minimum size=4}, every path/.style={thick}}
		\node (a) at (0,0) {};
		\node (b) at (1,1) {};
		\node (c) at (-1,1) {};
		\draw (a) -- (b);
		\draw (a) -- (c);
		\draw (a) -- (0,-1);
		\draw (c) -- (-2,2);
		\draw (c) -- (0,2);
	\end{tikzpicture}
\end{center}
which has three vertices and two leaves.

Every tree $t$ induces an endofunctor $t(-)$ of the category of
$\catofpermutations$\=/modules. It can be defined by induction on the
height of the trees, that is the maximum number of vertices that one can
meet along an ascending path from the root to a leaf:
\begin{itemize}
\item If the height of $t$ is zero, that is $t$ the trivial tree, then
\[
	t (-) \coloneqq \monoidalunit;
\]
\item Suppose that we have defined $t'(-)$ for any tree $t'$ of
height equal or lower than $n\in \naturals$. Let $t$ be a tree
of height $n+1$. It is obtained by grafting $k$ subtrees to a corolla
with one of these subtrees of height $n$.
Let us label these subtrees
$t_1, \ldots, t_k$ in such a way that isomorphic subtrees
are gathered. More precisely, there exists natural integers
$p,n_1, \dots,n_p$ so that $k = n_1 + \cdots + n_p$, and
for any $0\leq i< p$, the subtrees
\[
	t_{n_1+\cdots+ n_i+1}, t_{n_1+\cdots+ n_i+2}, \ldots, 
	t_{n_1+\cdots+ n_i+n_{i+1}}
\]
are all isomorphic to each other and not isomorphic 
to any other subtree.
Then, for any $\catofpermutations$-modules $M$,
\[
	t(M) \coloneqq M(k) \otimes_{\symgroup k}
	\left( \bigoplus_{\sigma}  t_{\sigma^{-1}(1)} (M)
	\convolution \cdots \convolution t_{\sigma^{-1}(k)}(M) 
	\right),
\]
where the sum is taken over the $(n_1,\ldots,n_p)$-shuffle
permutations. The construction of $t(M)$ does not
depend on the choice of labelling of the subtrees of $t$.
\end{itemize}

Given an $\catofpermutations$-module $M$,
the tree module functor $\treemodule$ is then given by
taking the sum over all equivalence classes of trees
\[
	\treemodule M \coloneqq \bigoplus_{[t]\text{ tree}} t(M),
\]
while $\grafting M : \treemodule(\treemodule M) \to \treemodule M$ is
given by the grafting of trees and the attachment map $\attach M : M
\to \treemodule M$ is the map sending $M$ to the sum of $\corolla_n(M)$
where $\corolla_n$ is the $n$-corolla tree. Finally, given an integer $n
\geq 0$ and $\catofpermutations$-module $M$, the weight $n$ tree module
is the sub $\catofpermutations$-module of $\treemodule M$ made up of
trees with $n$-vertices
\[
	\treemodule_n M
	\coloneqq \bigoplus_{[t]\text{ tree with n vertices}} t(M).
\]
In particular $\treemodule_0 M = \monoidalunit$.

\subsection{Locally conilpotent cooperads}

There exists several equivalent ways to define locally
conilpotent cooperads. It will be convenient to be
able to switch between
the different descriptions that we give here.
We also give the definition of the
coradical filtration of a locally conilpotent 
cooperad; ours is different from the one of Loday-Valette%
~\cite[5.7.6]{doi:10.1007/978-3-642-30362-3}.

Given a symmetric sequence $M$, the attachment map $\attach M : 
M \to \treemodule M$ has a canonical section, the picking map
\[
	\picking M : \treemodule M \longrightarrow M.
\]
Together with the degrafting of trees
\[
	\degrafting :
	\treemodule M \longrightarrow \treemodule (\treemodule M)
\]
obtained by summing over all possible cuts of a given tree,
they endow the tree monad $\treemonad$ with the structure of
a bimonad $(\treemodule$, $\grafting$, $\degrafting$, $\attach$,
$\picking)$.

We denote by $\reducedtreemonad$ the sub-bimonad of $\treemonad$
obtained by removing the unique tree with no vertices: the unit tree.

\begin{definition}%
\label{def:loc_conilpotent_coperad}
The functor $\overline\coperad \mapsto \coperad \coloneqq
\overline\coperad \oplus \monoidalunit$ sends fully faithfully
comodules over $\reducedtreemonad$ to pointed  cooperads%
~\cite[5.7.13]{doi:10.1007/978-3-642-30362-3}
\[
	\begin{tikzcd}
		\catofcomod\reducedtreemonad
		\left(\symseq\linearlogos\right)
		\ar[r, hook] &
		\catofcomon_\bullet
		\left(\symseq\linearlogos, \compofsymseq, \monoidalunit\right).
	\end{tikzcd}
\]
The objects of this subcategory shall be called \emph{locally
conilpotent cooperads}.
\end{definition}

\begin{notation}
Given a $\reducedtreemonad$-comodule
\[
	\left(\overline \coperad, \copcomodstruct : \overline \coperad
	\longrightarrow \reducedtreemodule
	\left(\overline\coperad\right)\right) 
\]
we shall denote by
$\overline w_2$ the projection of the decomposition $\copcomodstruct$
on trees with two vertices
\[
	\overline w_2 : \overline\coperad \xrightarrow{\copcomodstruct}
	\reducedtreemodule \left(\overline\coperad\right) \twoheadrightarrow
	\treemodule_2\left(\overline\coperad\right),
\]
and by $\overline w$, its projection on trees of level two
\[
	\overline w : \overline\coperad \xrightarrow{\copcomodstruct}
	\reducedtreemodule \left(\overline\coperad\right) \longrightarrow
	\overline\coperad \compofsymseq \coperad.
\]
The associated pointed cooperad $\coperad$ shall be denoted
by
\[
	\left(\coperad \coloneqq \overline\coperad \oplus \monoidalunit,
	w : \coperad \to \coperad \compofsymseq
	\coperad, \tau : \coperad \to \monoidalunit,
	\iota :\monoidalunit \to \coperad \right).
\]
If $\projto \monoidalunit$ denotes the projector $\iota \circ \tau$,
one has on $\overline\coperad$
\[
	\overline w = (\id{\coperad\compofsymseq\coperad}
	- \projto\monoidalunit \compofsymseq
	\id\coperad - \id\coperad \compofsymseq \projto\monoidalunit)
	\circ w.
\]
\end{notation}

\begin{definition}%
\label{def:coradical_filtration}
Let $(W,\copcomodstruct)$ be a comodule over $\reducedtreemonad$. The
tree module
$\reducedtreemodule W$ is naturally filtered by the number of vertices
of trees
\[
	\reducedtreemodule_{\leq0} W \coloneqq 0
	\hookrightarrow \reducedtreemodule_{\leq 1} W \hookrightarrow
	\cdots \hookrightarrow \reducedtreemodule_{\leq n} W \hookrightarrow
	\cdots \hookrightarrow \reducedtreemodule W.
\]
One can then induce an exhaustive filtration on $W$ via pullback
\[
	\begin{tikzcd}[ampersand replacement=\&]
		\filtration n W
		\arrow[r, ""]
		\arrow[d, "",swap]
		\arrow[rd, very near start, phantom, "\lrcorner"]
		\& \reducedtreemodule_{\leq n} W
		\arrow[d, hook] \\
		W
		\arrow[r, hook, "\copcomodstruct", swap]
		\& \reducedtreemodule W.
	\end{tikzcd}
\]
The induced filtration on the cooperad $W \oplus \monoidalunit$
shall be called the \emph{coradical filtration}.
\end{definition}

\subsection{Planar setting versus symmetric setting}%
\label{sec:from_planar}
There exists a notion of planar operads and
of planar cooperads that we recall here. Planar techniques will
prove to be key in the demonstration of \cref{bigthm:equivalence}.
The relation between operads and planar operads is well-known%
~\cite[5.8.12]{doi:10.1007/978-3-642-30362-3}; we shall
build here the bridge linking  cooperads and planar 
cooperads.

\begin{definition}[(Planar analogues)]
Replacing $(\catofpermutations, +, 0)$ with $(\naturals, +,0)$, one
can define planar analogues of all the definitions described so far. One
has $\linearlogos^\naturals$ the category of planar sequences, endowed
with planar convolution $\planarconvolution$ and planar composition of
sequences $\compofseq$.

One then has planar operads in $\linearlogos$, planar cooperads
and planar  cooperads. The tree monad also admits a
planar version: given a planar sequence $M$ and a planar tree $t$, one
can define
\[
	t(M) \coloneqq \bigotimes_{v} M(|v|).
\]
where the tensor is taken over the totally ordered set of vertices of
$t$ and where for a vertex $v$, $|v|$ denotes its valence.

One can then define the planar tree module
$\planartree M$ as the sum over all equivalence classes of planar trees%
~\cite[5.8.6]{doi:10.1007/978-3-642-30362-3}
\[
	\planartree M \coloneqq \bigoplus_{[t]\text{ planar}} t(M).
\]
Finally, one has planar locally conilpotent cooperads
as comodules over the comonad of reduced planar trees.
\end{definition}

The canonical functor
$\naturalstosym :\naturals \to \catofpermutations$ induces a forgetful
functor
$\forgetsym \coloneqq \naturalstosym^\ast
: \symseq\linearlogos \to \linearlogos^\naturals$. Since $\linearlogos$
is additive, it has the same left and right adjoints
\[
	\begin{tikzcd}[ampersand replacement=\&]
		\linearlogos^{\naturals}
		\arrow[rr, shift left=3, "\pltosymfunctor"]
		\arrow[rr, shift right=3, "\pltosymfunctor"']
		\&
		\&
		\symseq\linearlogos .
		\arrow[ll,"\forgetsym" description]
	\end{tikzcd}
\]
The top adjunction is well-known; by straightforward inspection, one has
the following description of the bottom adjunctions.

\begin{proposition}%
\label{thm:sym_adjunctions}
Given a planar sequence $M$, the symmetric sequence $\pltosym M$ is
given in arity $n \in \naturals$ by
\[
	(\pltosym M) (n) \coloneqq \bigoplus_{\sigma\in\symgroup n} M(n)
\]
endowed with its natural right $\symgroup n$-action.

\begin{itemize}
	\item The unit of the adjunction $\forgetsym \dashv 
	      (\pltosym{-})$ is given in arity $n \in \naturals$ by
	      \[
	      	\unitsymadj(n) : M(n) \xrightarrow{\sum_{\sigma\in
	      	\symgroup n} \id {M(n)}}\bigoplus_{\sigma\in\symgroup n}
	      	M(n) \isonat \pltosym {\forgetsym(M)},
	      \]
	      for any symmetric sequence $M$;
	\item The counit of the adjunction $\forgetsym \dashv 
	      (\pltosym{-})$ is given in arity $n \in \naturals$ by
	      \[
	      	\counitsymadj(n) : \forgetsym (\pltosym M)(n)
	      	\isonat \bigoplus_{\sigma \in \symgroup n} M (n)
	      	\xrightarrow{\sigma = 1} M(n),
	      \]
	      for any sequence $M$.
\end{itemize}
\end{proposition}

\begin{remark}
A remarkable fact of this setting is that
$\forgetsym : \symseq\linearlogos \to \linearlogos^\naturals$ and
$(\pltosym{-}) : \linearlogos^\naturals \to \symseq\linearlogos$ are
both bimonadic functors.
\end{remark}

Since $\naturalstosym :(\naturals, +, 0)
\to (\catofpermutations, +, 0)$
is symmetric monoidal, the left adjoint $\naturalstosym_{!}$ is
symmetric monoidal with respect to the convolution products: for any
two sequences $M$ and $N$ there are natural isomorphism
\[
	\pltosym{(M \planarconvolution N)}
	\isonat (\pltosym M) \convolution (\pltosym N).
\]
Since it also commutes with colimits, it is monoidal with respect
to the composition products
\[
	\pltosym{(M \compofplanarseq N)}
	\isonat (\pltosym M) \compofsymseq (\pltosym N).
\]
Thus $\pltosymfunctor$ is promoted to a monoidal functor.

Since $\pltosymfunctor$ is monoidal, the unit and the counit of the
adjunction $\forgetsym \dashv \pltosymfunctor$ are monoidal natural
transformations of colax functors. As a consequence we have the
following adjunction.

\begin{proposition}%
\label{thm:from_planar}
The functor $\pltosymfunctor$ supplies a bimonadic functor
\[
	\begin{tikzcd}[ampersand replacement=\&]
		\catofcomon
		\left(\symseq\linearlogos, \compofsymseq, \monoidalunit\right)
		\arrow[rr, shift left=3, "\forgetsym"]
		\arrow[rr, shift right=3,
		"{\internalhom \monoidalunit -}^\catofpermutations"']
		\& \&
		\arrow[ll, "\pltosymfunctor" description]
		\catofcomon
		\left(\linearlogos^\naturals, \compofseq, \monoidalunit\right).
	\end{tikzcd}
\]
\end{proposition}

Let us describe the comonadic functor
$\forgetsym$ in a concrete way.
It sends any  cooperad $(\coperad,w,\tau)$ to the
planar  cooperad whose underlying sequence is
$\forgetsym (\coperad)$
(that is $\coperad$ without the actions of the symmetric groups), and
equipped with the decomposition
\[
	\begin{tikzcd}[ampersand replacement=\&]
		\forgetsym (\coperad)
		\arrow[r,"\forgetsym (w)"]
		\&
		\forgetsym \left(\coperad \compofsymseq \coperad \right)
		\dar{\unitsymadj\compofsymseq\unitsymadj}
		\&
		\\
		\&
		\forgetsym \left(
		\left(\pltosym{\forgetsym(\coperad)}\right)\compofsymseq
		\left(\pltosym{\forgetsym(\coperad)}\right)\right)
		\dar[equal]
		\&
		\\
		\&
		\forgetsym \left( \pltosym{\left(\forgetsym(\coperad)
		\compofplanarseq \forgetsym(\coperad) \right)}\right)
		\rar{\counitsymadj}
		\&
		\forgetsym(\coperad)
		\compofplanarseq \forgetsym(\coperad). 
	\end{tikzcd}
\]

\begin{remark}%
\label{rmk:droppl}
Since the functor $\pltosymfunctor$ commutes to all operations, as
with planar operads, one can
fully study the planar theory of  cooperads through the
symmetric one.  This allows us to drop the
`pl' subscript.
The `pl' subscript shall be used only when emphasising the relation
between the symmetric framework and the planar framework.
\end{remark}

\begin{lemma}%
\label{thm:planar_trees_vs_trees}
One has an equivalence of cooperads
\[
	\treemodule(\pltosym M)
	\isonat \pltosym{\left(\planartreemodule M\right)}
\]
natural in $M \in \linearlogos^\naturals$.
\end{lemma}

\begin{proof}
We can prove this by induction on the height of the trees.
For any natural integer $n$ let us denote by
$\treemoduleheight n (\pltosym M)$
be the sub cooperad of 
$\treemodule(\pltosym M)$
made up of the trees whose height is equal or lower
than $n$. We define the sub planar cooperad
$\planartreemoduleheight n M$ of $\planartreemodule M$
in the same way.
There exist natural isomorphisms of pointed cooperads
\[
	\treemoduleheight 0 (\pltosym M)
	\isonat  \monoidalunit
	\isonat \pltosym{\left(\planartreemoduleheight 0 M\right)}.
\]
Suppose that we have built an isomorphism of pointed cooperads
\[
	\treemoduleheight n (\pltosym M)
	\isonat \pltosym{\left(\planartreemoduleheight n M\right)},
\]
for a natural integer $n$.
Then, we have an isomorphism of
pointed symmetric sequences
\begin{align*}
 	\treemoduleheight{n+1} (\pltosym M)
	&\isonat \monoidalunit \oplus ( \pltosym M ) \compofsymseq
	\treemoduleheight n (\pltosym M) \\
	&\isonat \monoidalunit \oplus  (\pltosym M) \compofsymseq
	\left(\pltosym{\left(\planartreemoduleheight n M\right)} \right) \\
	&\isonat \monoidalunit \oplus \pltosym{\left( M \compofplanarseq
	\planartreemoduleheight n M \right)}\\
	&\isonat \pltosym{\left( \planartreemoduleheight{n+1}  M \right)} .
\end{align*}
A straightforward checking shows that
this is actually an
isomorphism of pointed cooperads.
\end{proof}

\begin{remark}
One can also prove that
the cooperad $\pltosym{\left(\reducedpltreemodule M\right)}$
is conilpotent. Then
both functors
\begin{align*}
 	M &\mapsto \reducedtreemodule(\pltosym M)
	\\
	M &\mapsto 
	\pltosym{\left(\reducedpltreemodule M\right)}
\end{align*}
are right adjoint the forgetful functor from
conilpotent cooperads to $\naturals$-modules.
$\forget$. Thus, they are isomorphic.
\end{remark}

As a straightforward consequence, the symmetrisation functor
$\pltosymfunctor$ preserves locally conilpotent cooperads.

\begin{proposition}%
\label{thm:symmetrisation_of_loc_conilpotent_coperads}
The following diagram is commutative
\[
	\begin{tikzcd}[ampersand replacement=\&]
		\catofcomod\reducedtreemonad
		\left(\linearlogos^\naturals\right)
		\arrow[rr, "\pltosymfunctor"]
		\arrow[d, hook]
		\&
		\&
		\catofcomod\reducedtreemonad
		\left(\symseq\linearlogos\right)
		\arrow[d, hook] \\
		\catofcomon_\bullet
		\left(\linearlogos^\naturals, \compofseq, \monoidalunit\right)
		\arrow[rr, "\pltosymfunctor"]
		\&
		\&
		\catofcomon_\bullet
		\left(\symseq\linearlogos, \compofsymseq, \monoidalunit\right)
	\end{tikzcd}
\]
\end{proposition}
\section{Algebras and coalgebras}

\begin{center}
\begin{tikzpicture}
	\node[draw = black, rectangle, inner sep = 8pt, rounded corners]
	{%
	\begin{minipage}{0.8\textwidth}
		\begin{center}
			In this section, we shall be given $(\operad, m,
			\eta)$ an operad and $(\coperad, w, \tau)$ a  cooperad.
		\end{center}
	\end{minipage}
	};
\end{tikzpicture}
\end{center}

The category $\linearlogos$ is tensored over 
$(\symseq\linearlogos, \compofsymseq, \monoidalunit)$. The tensorisation
functor $ \compofsymseq : \symseq\linearlogos \times
\linearlogos
\to \linearlogos$ is given by the formula
$ M \compofsymseq X \coloneqq \bigoplus_{n\in\naturals} M(n)
\otimes_{\symgroup n} X^{\otimes n}$,
where $X^{\otimes n}$ is given its natural structure of left
$\symgroup n$-module.

One can then define the category of $\operad$-algebras as the category
of modules over the monoid $\operad$. That is a $\operad$-algebra is
the data of an object $\Lambda \in \linearlogos$ endowed with a
structure map $\operad \compofsymseq \Lambda \to \Lambda$ subject to
compatibility conditions. Likewise, one can define $\coperad$-coalgebras
given by structure maps $V \to \coperad \compofsymseq V$.

We shall now describe in a dual way the categories of $\operad$-coalgebras
and $\coperad$-algebras.

\subsection{Definitions}

\begin{definition}%
\label{def:cotensor}
Let $\wedge : (\symseq\linearlogos)\op\times
\linearlogos \to \linearlogos$ be the functor defined as
\[
	X^M \coloneqq \prod_{n\in\naturals}
	{\left[M(n),X^{\otimes n}\right]}^{\symgroup n},
\]
where $X^{\otimes n}$ is given its natural structure of right
$\symgroup n$-module.

We shall call it the cotensor.
\end{definition}

\begin{definition}
Let $\linearlogos$ be a category and $(\catofenrichement, \compofsymseq,
\monoidalunit)$ be a monoidal category. We shall say that $\linearlogos$
is lax cotensored over $\catofenrichement$ if there is given a functor
\[
  \wedge : \catofenrichement\op \times \linearlogos \to \linearlogos
\]
and two natural transformations $l$ and $\varepsilon$,
\[
	\begin{tikzcd}[ampersand replacement=\&]
		\catofenrichement\op \times
		\catofenrichement\op \times \linearlogos 
		\arrow[rr, "\id {\catofenrichement\op} \times \wedge"]
		\arrow[d, "\compofsymseq \times \id {\linearlogos}", swap]
		\& \& \catofenrichement\op \times \linearlogos
		\arrow[d, "\wedge"]
		\arrow[lld, Rightarrow,"l", swap] \\
		\catofenrichement\op \times \linearlogos 
		\arrow[rr, "\wedge", swap]
		\& \& \linearlogos ;
	\end{tikzcd}
\]
\[
	\begin{tikzcd}[ampersand replacement=\&]
		\linearlogos \ar[rr, "\monoidalunit  \times  \id {\linearlogos}"]
		\arrow[rrrr, bend right, "\id {\linearlogos}"',""{name=E}] 
		\& \& \catofenrichement\op
		\times \linearlogos \arrow[Leftarrow, to=E,
		"\varepsilon~"']
		\ar[rr, "\wedge"]
		\& \& \linearlogos ,
	\end{tikzcd}
\]
such that
\[
	\begin{tikzcd}[ampersand replacement=\&]
		{\left({\left(X^M\right)}^N\right)}^S
		\arrow[rr, "{l(N{,}M{,}X)}^S"]
		\arrow[d, "l\left(Q{,} N{,}X^M\right)", swap]
		\&
		\& {\left(X^{N \compofsymseq M}\right)}^S
		\arrow[rr, "l(Q{,}N\compofsymseq M{,}X)"] 
		\&
		\&
		X^{Q \compofsymseq (N \compofsymseq M)}
		\arrow[d, "X^{\alpha(Q{,}N{,}M)}"]
		\arrow[d, shift right=1.5, phantom, "\rotatebox{90}{\(\iso\)}"] \\
		{\left(X^M\right)}^{Q \compofsymseq N}
		\arrow[rrrr, "l(Q\compofsymseq N{,} M {,} X)", swap]
		\&
		\&
		\&
		\&
		X^{(Q \compofsymseq N) \compofsymseq M};
	\end{tikzcd}
\]
\begin{align*}
&
	\begin{tikzcd}[ampersand replacement=\&]
		X^M \arrow[rr, "{\varepsilon(X)}^M"] 
		\arrow[rrrr, bend right, "X^{\rho(M)}"']
		\arrow[rrrr, bend right, shift left=1.5, phantom, "\iso"]
		\& \& {\left(X^{\monoidalunit}\right)}^M
		\arrow[rr, "l(M{,}\monoidalunit{,}X)"]
		\& \& X^{M \compofsymseq \monoidalunit}
	\end{tikzcd}
\\
\text{and}\quad &
	\begin{tikzcd}[ampersand replacement=\&]
		X^M \arrow[rr, "\varepsilon(X^M)"] 
		\arrow[rrrr, bend right, "X^{\lambda(M)}"']
		\arrow[rrrr, bend right, shift left=1.5, phantom, "\iso"]
		\& \& {\left(X^M\right)}^{\monoidalunit}
		\arrow[rr, "l(\monoidalunit{,}M{,}X)"]
		\& \& X^{\monoidalunit \compofsymseq M}
	\end{tikzcd}
\end{align*}
are commutative for every $(Q,N,M,X) \in \setofobj (\catofenrichement
\times \catofenrichement \times \catofenrichement \times \linearlogos)$
and where $\monoidalunit$ is the monoidal unit of $\catofenrichement$;
$\alpha(Q,N,M) : (Q \compofsymseq N) \compofsymseq M \to Q \compofsymseq
(N \compofsymseq M)$ is the monoidal associator; $\lambda(M) :
\monoidalunit \compofsymseq M \to M$ is the left unitor and $\rho(M) : M
\compofsymseq \monoidalunit \to M$ is the right unitor.
\end{definition}

\begin{proposition}%
\label{thm:cotenseur_sym_lax}
Let $X$ be an object of $\linearlogos$, then the functor
\[
	X^{(-)} : \left(\symseq\linearlogos, \convolution,
	\monoidalunit_\convolution\right)\op
	\longrightarrow (\linearlogos, \otimes, \monoidalunit)
\]
is naturally endowed with a lax symmetric structure, where
\[
	X^{\monoidalunit_\convolution} \isonat \monoidalunit
\]
and for a pair $(M,N)$ of symmetric sequences, the lax map
\[
	\laxmap^\convolution(M,N,X) : X^M \otimes X^N \longrightarrow
	X^{M \convolution N}
\]
is obtained by distribution:
\[
	\begin{tikzcd}
		X^M \otimes X^N \isonat \displaystyle\prod_{p\in\naturals}
		{\internalhom {M(p)} {X^{\otimes p}}}^{\symgroup p}
		\otimes \prod_{q\in\naturals}
		{\internalhom {N(q)} {X^{\otimes q}}}^{\symgroup q}
		\dar
		\\
		\displaystyle
		\prod_{p,q\in\naturals}\fullpower{
		\internalhom{M(p)}{X^{\otimes p}} \otimes
		\internalhom{N(q)}{X^{\otimes q}}
		}{\symgroup p \times \symgroup q}
		\dar
		\\
		\displaystyle
		\prod_{p,q\in\naturals}\power{
		\internalhom{M(p) \otimes N(q)}{X^{\otimes p+q}}
		}{\symgroup p \times \symgroup q}
		\isonat X^{M \convolution N}.
	\end{tikzcd}
\]
\end{proposition}

\begin{proof}
Since limits distribute over bifunctors in an associative manner, the
associativity of the lax map results from the commutativity of the
diagram
\[
	\begin{tikzcd}[ampersand replacement=\&]
		[A, B] \otimes [C,D] \otimes [E,F]
		\arrow[r]
		\arrow[d]
		\&
		{[A,B]} \otimes [C \otimes E, D \otimes F]
		\arrow[d] \\
		{[A \otimes C, B \otimes D]} \otimes [E,F]
		\arrow[r]
		\& {[A \otimes C \otimes E,
		B \otimes C \otimes F]} .
	\end{tikzcd}
\]
for any objects $A, B, C, D, E, F$ of $\linearlogos$, which itself
comes from the associativity of the tensor structure on $\linearlogos$.
Likewise, the symmetry of the lax map is a direct
consequence of the symmetry of the tensor structure on $\linearlogos$.
\end{proof}

\begin{corollary}
The category $\linearlogos$ is naturally lax cotensored over the
category of symmetric sequences
$(\symseq\linearlogos, \compofsymseq, \monoidalunit_{\compofsymseq})$,
where for $X \in \linearlogos$
\[
	X^{\monoidalunit_{\compofsymseq}} \isonat X
\]
and for two symmetric sequences $M,N$ the lax map $\power {(\power X N)}
M \to X^{M\compofsymseq N}$ is given by
\[
	\laxmap(M,N,X) \coloneqq \prod_n \power{\internalhom{M(n)}
	{\laxmap^\convolution(N,\dots,N,X)}}{\symgroup n}.
\]
\end{corollary}

\begin{remark}%
\label{rmk:cotenseur_fonctoriel}
By assumptions%
~[\sectionref{sec:characteristic_zero}],
the cotensor is continuous in the first variable and
if both left and right exact in each variable.
\end{remark}

\begin{definition}
The category $\catofalg\coperad$ of $\coperad$\=/algebras is the
category of modules over $\coperad$. That
is, an algebra over $\coperad$ is the data of an object $\Lambda$ and a
map
\[
	a : \Lambda^\coperad \longrightarrow \Lambda ,
\]
such that the following diagrams commute:
\[
	\begin{tikzcd}[ampersand replacement=\&]
		{\left(\Lambda^\coperad\right)}^\coperad 
		\arrow[d, "a^\coperad"']
		\arrow[rr, "\laxmap(\coperad{,}\coperad{,}\Lambda)"]
		\&\& \Lambda^{\coperad\compofsymseq\coperad}
		\arrow[r, "\Lambda^w"]
		\& \Lambda^\coperad
		\arrow[d,"a"]
		\\
		\Lambda^\coperad \arrow[rrr, "a"']
		\&\&\&
		\Lambda ;
	\end{tikzcd}
\]
\[
	\begin{tikzcd}[ampersand replacement=\&]
		\Lambda
		\arrow[r, "\varepsilon(\Lambda)"]
		\arrow[r, shift right=1.7, phantom, "\iso"]
		\arrow[d, equal]
		\& \Lambda^{\monoidalunit}
		\arrow[d, "\Lambda^\tau"] \\
		\Lambda
		\& \Lambda^\coperad.
		\arrow[l, "a"]
	\end{tikzcd}
\]
A morphism of $\coperad$-algebras $(\Lambda, a) \longrightarrow
(\Gamma,b)$ is the data of a map $f : \Lambda \to \Gamma$ such that the
following diagram commutes:
\[
	\begin{tikzcd}[ampersand replacement=\&]
		\Lambda^\coperad 
		\arrow[r, "f^\coperad"]
		\arrow[d, "a", swap]
		\& \Gamma^\coperad
		\arrow[d, "b"] \\
		\Lambda
		\arrow[r, "f", swap]
		\& \Gamma.
	\end{tikzcd}
\]
\end{definition}

\begin{definition}%
\label{def:categorie_des_cogebres}
The category of $\operad $\=/coalgebras, that we shall denote
by $\catofcog\operad$, is the category whose objects
are pairs $(V,a)$ of an object $V$ and a map
\[
	a : V \longrightarrow V^\operad ,
\]
such that the following diagrams are commutative:
\[
	\begin{tikzcd}[ampersand replacement=\&]
		V \arrow[r, "a"] \arrow[d, "a"']
		\& V^\operad  \arrow[r, "a^\operad "]
		\& {\left(V^\operad \right)}^\operad 
		\arrow[d,"\laxmap(\operad {,}\operad {,}V)"]
		\\
		V^\operad  \arrow[rr, "V^m"]
		\& \&
		V^{\operad \compofsymseq\operad } ;
	\end{tikzcd}
\]
\[
	\begin{tikzcd}[ampersand replacement=\&]
		V \arrow[r, "a"]
		\arrow[rr, bend right, "\varepsilon(V)"']
		\arrow[rr, bend right, shift left=1.5, phantom, "\iso"]
		\& V^\operad  \arrow[r, "V^\eta"]
		\& V^{\monoidalunit}
	\end{tikzcd}
\]
and whose morphisms $(V, a) \to (W,b)$ are the maps $f : V \to W$ such
that
\[
	\begin{tikzcd}[ampersand replacement=\&]
		V^\operad  
		\arrow[r, "f^\operad "]
		\arrow[d, "a", swap]
		\& W^\operad 
		\arrow[d, "b"] \\
		V 
		\arrow[r, "f", swap]
		\& W
	\end{tikzcd}
\]
commutes.
\end{definition}

The definition of $\operad$-coalgebras above coincide with the one
given in \emph{Algebraic Operads}%
\cite[5.2.17]{doi:10.1007/978-3-642-30362-3}.

\begin{theorem}%
\label{thm:def_of_coalgebras}
The data of a $\operad$-coalgebra structure on $V$ amounts to the data
of an operad morphism
\[
	\operad \longrightarrow \coend V.
\]
\end{theorem}

\begin{proof}
On the one hand, a morphism of symmetric sequence
$f :\operad \to \coend V$
amounts to the data of $\symgroup n$-equivariant morphisms
\[
	g(n) : \operad (n)  \otimes V \to V^{\otimes n}.
\]
given by the composition
\[
	\operad (n)  \otimes V \to V^{\otimes n}
	\xrightarrow{f(n) \otimes \id{}} [V, V^{\otimes n} ] \otimes V
	\to V^{\otimes n}.
\]
For $k, i_1, \ldots, i_k, n$ natural integers such that $n=
i_1 + \cdots +i_k$, one can draw
\[
\begin{tikzcd}
	\operad (k) \otimes \left( \operad (i_1)
	\otimes \cdots \otimes \operad (i_k)\right) \otimes V
	\arrow[rr,"m\otimes \id{}"] \arrow[d,"g(k)\otimes \id{}"']
	&& \operad (n) \otimes V
	\arrow[d,"g(n)"]
	\\
	\left( \operad (i_1) \otimes \cdots \otimes \operad (i_k)\right)
	\otimes V^{\otimes k}
	\arrow[rr,"g(i_1) \otimes \cdots \otimes g(i_k)"']
	&& V^{\otimes n},
\end{tikzcd}
\]
which commutes for every such family of natural integers if
and only if $f$ is a morphism of operads.

On the other hand, equivariant
morphisms $g(n): \operad (n) \otimes V \to V^{\otimes n}$ amount to the
data of $\symgroup n$-equivariant morphisms
\[
	h(n): V \to \internalhom{\operad(n)}{V^{\otimes n}},
\]
given by the composition
\[
	V \to [\operad(n), \operad(n) \otimes V]
	\xrightarrow{[\id{}, g(n)]}
	\internalhom {\operad(n)} {V^{\otimes n}}.
\]
Since the action of  $\symgroup n$ on $V$ is trivial, the
$\symgroup n$-equivariance of $h(n)$ is equivalent to the fact that 
it factors through the invariants
\[
	\internalhom{\operad(n)}{V^{\otimes n}}^{\symgroup n}.
\]
Besides, the above square involving $g$ commutes if and only if
its adjoint square
\[
\begin{tikzcd}
	V
	\arrow[rr,"h(k)"] 
	\arrow[dd,"h(n)"']
	&& \internalhom{\operad (k)}{V^{\otimes k}}
	\arrow[d,"\internalhom{\idfunctor}{h(i_1) \otimes
	\cdots \otimes h(i_k)}"] 
	\\
	&&\internalhom{\operad (k)} {\internalhom{\operad (i_1) \otimes \cdots
	\otimes \operad (i_k)}{V^{\otimes n}}}
	\arrow[d,equal]
	\\
	\internalhom{\operad (n)}{V^{\otimes n}}
	\arrow[rr,"{[m, \id{}]}"] 
	&&
	\internalhom {\operad (k) \otimes \left( \operad (i_1) \otimes \cdots
	\otimes \operad (i_k)\right)}{V^{\otimes n}}
\end{tikzcd}
\]
commutes.
Finally, the maps $h(n)$ can be gathered into a map
$h : V \to V^\operad$. It gives $V$ a $\operad$-coalgebra structure if
and only if the above square commutes for any natural integers $k,
i_1,\ldots, i_k$.
\end{proof}

\paragraph{Summary}
We summarise the four different notions we have just defined by a table
of their structure maps.

\begin{sumtable}%
	\label{fig:tableau_algebres_cogebres}
	\begin{tikzpicture}
		\matrix (mat) [matrix of nodes, inner sep=0cm,
			nodes={align=center, text width=3cm,
			inner sep=0.3cm, text depth=0.2cm, text height=0.4cm}]{
			\emph{The musketeers} & over $\operad$
			& over $\coperad$ \\
			Algebra $\Lambda$ & $\operad \compofsymseq\Lambda
			\longrightarrow \Lambda$
			& $\Lambda^\coperad \longrightarrow \Lambda$ \\
			Coalgebra $V$ & $V \longrightarrow V^\operad $ 
			& $V \longrightarrow \coperad\compofsymseq V$ \\
		};
		\draw[gray, thick] (mat-1-2.north west) -- (mat-3-1.south east);
		\draw[gray, thick] (mat-1-3.north west) -- (mat-3-3.south west);
		\draw[gray, thick] (mat-2-1.north west) -- (mat-2-3.north east);
		\draw[gray, thick] (mat-3-1.north west) -- (mat-3-3.north east);
		\draw[gray, thick, rounded corners=.5em]
		(mat.north west) rectangle (mat.south east);
	\end{tikzpicture}
\end{sumtable}

\subsection{Free functors}

For each of the four notions that we have been describing, there is
a corresponding `free object' generated by some object in the ground
category $\linearlogos$. However, not all notions are equal: although
it is always possible to build the free algebra on an operad \& a
 cooperad or the coalgebra over a  cooperad, it is not
clear how to build a cofree coalgebra over an operad. For this we need to
add special assumptions on the category $\linearlogos$.

Thanks to the tensorisation of $\linearlogos$ over the category
of sequences, the functor $\freefunctor_\operad \coloneqq \operad
\compofsymseq (-)$ can be endowed with the structure of a monad
$\monadl_\operad$, the category of $\operad$-algebras turns out to be
equivalent to the one of $\monadl_\operad$-modules. Likewise,
the category of $\coperad$-coalgebras is equivalent to the category
of comodules over a comonad with underlying functor
$\freefunctor_\coperad \coloneqq \coperad \compofsymseq (-)$.

The functor $\freecog \coperad
\coloneqq{(-)}^\coperad$ can be also endowed with the structure
of a monad $\monadl^\coperad \coloneqq (\freecog \coperad, \power{(-)}w,
\power{(-)}\tau)$, thanks to the fact that the cotensorisation is a lax
functor, hence it sends algebras to algebras.

\begin{theorem}%
\label{thm:algebre_libre_sur_une_coperade}
The category of $\coperad$-algebras is equivalent to the category of
modules over the monad $\monadl^\coperad$.

As a consequence, for every object $X$ of $\linearlogos$,
the free algebra over $\coperad$ is given by $X^\coperad$ together with
the map
\[
	\begin{tikzcd}[ampersand replacement=\&]
		{\left(X^\coperad\right)}^\coperad
		\arrow[rr, "\laxmap(\coperad{,} \coperad{,} X)"]
		\&
		\&
		X^{\coperad \compofsymseq \coperad}
		\arrow[r, "{(\id X)}^w"]
		\&
		X^\coperad.
	\end{tikzcd}
\]
\end{theorem}

\subsection{Cofree coalgebra over an operad}

The case of coalgebras over the operad $\operad $ can’t be treated in the
same way. Indeed, because the cotensorisation is only lax, the functor
${(-)}^{\operad}$ cannot be given the structure of a comonad.

The problem of constructing the cofree coalgebra over a linear operad has
been solved by Smith%
~\cite{doi:10.1016/s0166-8641(03)00037-3}
following ideas from Fox%
~\cite{doi:10.1016/0022-4049(93)90038-u}. In what follows, we use
the more conceptual approach of Anel%
~\cite{arXiv:1409.4688} which we rephrase using the cotensor.

\begin{theorem}[(Rewrite conditions)]%
\label{thm:conditions_de_reecriture}
Assume that
\begin{itemize}
	\item for any four objects $A, B, C, D$ of $\linearlogos$, the
	      natural map
	      \begin{equation*}
	      	[A,B] \otimes [C,D] \longrightarrow [A \otimes
	      	C, B \otimes D]
	      	\tag{$\otimes \leftrightarrow [-]$}
	      \end{equation*}
	      is a monomorphism;
	\item the tensor product commutes with countable intersections. That
	      is, given an object $A$ and a countable sequence of subobjects
	      \[
	      	\cdots \subobject B_n \subobject \cdots \subobject B_1
	      	\subobject B_0
	      \]
	      one has the equality
	      \begin{equation*}
	      	A \otimes
	      	\bigcap_{n\in\naturals} B_n
	      	= \bigcap_{n\in\naturals}
	      	A \otimes B_n.
	      	\tag{$\otimes \leftrightarrow \cap$}
	      \end{equation*}
\end{itemize}
Then the category $\catofcog\operad(\linearlogos)$ of $\operad
$-coalgebras in $\linearlogos$, is equivalent to the category of comodules
over a comonad
\[
	\comonadl^\operad \coloneqq
	\left(\freecog\operad , \coprodw^\operad , \counitw^\operad \right).
\]
which can be computed via a specific algorithm, described below.
\end{theorem}

This is proved by Anel in \emph{Cofree coalgebras over operads and
representative functions}%
~\cite[Hypothesis 2.7.5]{arXiv:1409.4688}.
The assumptions on $(\linearlogos, \otimes, \monoidalunit)$ guaranty
the following facts. The first rewrite condition ensures that the lax
map
\[
	\laxmap(N, M, X) : {\left(X^M\right)}^N
	\longrightarrow X^{N \compofsymseq  M}
\]
becomes a monomorphism for every objects $X, M, N$.

The second rewrite condition ensures that given a symmetric sequence
$M$, the cotensor ${(-)}^M$ preserves countable intersections in the
sense that it preserves monomorphisms and given a sequence of subobjects
\[
	\cdots \subobject Y_n \subobject \cdots \subobject Y_1
	\subobject Y_0
\]
we get the equality
${\left(\bigcap_{n\in\naturals} Y_n\right)}^M
=
\bigcap_{n \in\naturals} Y_n^M$.

Let us spend a few words on why those conditions may be seen as natural.
Given an operad $(\operad , m, \eta)$
in $\linearlogos$ and a coalgebra $(V,a)$ on $\operad $,
the associativity condition reads
\[
	\begin{tikzcd}[ampersand replacement=\&]
		V \arrow[r, "a"] \arrow[d, "a"']
		\& V^\operad  \arrow[r, "a^\operad "]
		\& {\left(V^\operad \right)}^\operad 
		\arrow[d,"\laxmap(\operad {,}\operad {,}V)"]
		\\
		V^\operad  \arrow[rr, "V^m"]
		\& \&
		V^{\operad \compofsymseq\operad }
	\end{tikzcd}
\]
The fact that the lax map $\laxmap$ goes in the ‘wrong’ direction is
what stops us from having a comonadic structure for the functor
${(-)}^\operad $. Thanks to the first rewrite condition, we see that
the composition $V^m \circ a$ actually lends in
$\power{(V^\operad )}\operad $
which allows the rewriting to take place. In this case, given an object
$X$ in $\linearlogos$, $\freefunctor_0^\operad (X)
\coloneqq X^\operad $ is very
close to be the cofree coalgebra on $X$. A putative candidate would be the
fibre product
\[
	\begin{tikzcd}[ampersand replacement=\&]
		\freefunctor_1^\operad (X)
		\arrow[r, "\deltaup_1"]
		\arrow[d, "\iotaup_1"',hookrightarrow]
		\arrow[rd, very near start, phantom, "\lrcorner"]
		\& {\left(X^\operad \right)}^\operad 
		\arrow[d, "\laxmap(\operad {,}\operad {,}X)", hookrightarrow] \\
		X^\operad 
		\arrow[r, "X^m"]
		\& X^{\operad  \compofsymseq \operad } .
	\end{tikzcd}
\]
In some cases this is the correct candidate.

\begin{proposition}[{\cite[Corollary 3.3.2]{arXiv:1409.4688}}]%
\label{thm:algorithme_arrete_cran_un}
Let $\operad$ be an operad in the category of chain complexes over a
field. Then the category of $\operad$-coalgebras is comonadic and
the underlying functor of the comonad $\comonadl^{\operad}$ is
given by $\freefunctor_1^{\operad}$.
\end{proposition}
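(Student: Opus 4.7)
My plan is to deduce the statement from the preceding \Cref{thm:conditions_de_reecriture} in two moves: first, verify that the category $\mathrm{Ch}(k)$ of chain complexes over a field $k$ satisfies the two rewrite conditions, so that a comonad $\mathbb W^{\mathcal P}$ exists and controls $\mathcal P$-cogebras; second, show that in this concrete setting the inductive construction of the cofree cogebra stabilises at the first step, so that the underlying functor of $\mathbb W^{\mathcal P}$ is $L_1^{\mathcal P}$.

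For the first rewrite condition, I would argue that in $\mathrm{Ch}(k)$ the natural transformation
\[
	[A,B]\otimes[C,D] \longrightarrow [A\otimes C, B\otimes D]
\]
is a monomorphism. The underlying statement for $k$-vector spaces is classical: picking a basis of $[A,B]\otimes[C,D]$ of simple tensors $f_i\otimes g_j$ with the $f_i$ (resp. $g_j$) linearly independent, the image on a split simple tensor $a\otimes c$ is $\sum f_i(a)\otimes g_j(c)$ and vanishes identically only if all coefficients vanish. One then assembles the degreewise statement into a degreewise mono of chain complexes. For the second rewrite condition, I would use that $k$ is a field, hence every functor $V\otimes(-)$ on $\mathrm{Ch}(k)$ is exact, and further that for a decreasing sequence of subcomplexes $Y_0\supset Y_1\supset\cdots$ and an element $\xi\in V\otimes Y_0$ written in a normal form $\xi=\sum_i v_i\otimes y_i$ with $v_i$ linearly independent, the membership $\xi\in V\otimes Y_n$ forces every $y_i\in Y_n$. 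Thus $V\otimes\bigcap_n Y_n = \bigcap_n (V\otimes Y_n)$, as required. With both conditions verified, \Cref{thm:conditions_de_reecriture} produces the comonad $\mathbb W^{\mathcal P}$.

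It remains to identify its underlying functor with $L_1^{\mathcal P}$. Recall that the general construction produces $L_1^{\mathcal P}(X)$ as the pullback of $X^m$ along the monomorphism $l(\mathcal P,\mathcal P,X)$, and that the full cofree cogebra $L_\infty^{\mathcal P}(X)$ is obtained by iterating this equaliser-type construction through a sequence of subobjects $L_0^{\mathcal P}(X)\hookleftarrow L_1^{\mathcal P}(X)\hookleftarrow L_2^{\mathcal P}(X)\hookleftarrow\cdots$, each $L_{n+1}^{\mathcal P}(X)$ being the locus where the putative coaction stays inside $L_n^{\mathcal P}(X)$, and then taking the countable intersection. The second rewrite condition guarantees that this intersection is preserved by the cotensor ${(-)}^{\mathcal P}$, so the limit $L_\infty^{\mathcal P}$ carries a genuine cogebra structure. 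The claim is that $L_1^{\mathcal P}(X) = L_n^{\mathcal P}(X)$ for every $n$; equivalently that the restriction to $L_1^{\mathcal P}(X)$ of the coaction $\delta_1$ already factors through ${(L_1^{\mathcal P}(X))}^{\mathcal P}\hookrightarrow {(X^{\mathcal P})}^{\mathcal P}$.

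The hard part is precisely this stabilisation. I would establish it by exploiting that over a field every element of a chain complex lies in a finite-dimensional subcomplex and every morphism out of it is determined by finitely many coefficients. Concretely, I would test the required factorisation on a generalised element $x\in L_1^{\mathcal P}(X)$, write $\delta_1(x)$ in a finite normal form using the first rewrite condition (a monomorphism, hence allowing coefficient-by-coefficient arguments over $k$), and then use the defining pullback square of $L_1^{\mathcal P}(X)$ \emph{at the level of each coefficient} to conclude that each of these coefficients itself lies in $L_1^{\mathcal P}(X)$. This finite-dimensional normal-form argument is exactly what fails over a general base but succeeds over a field, and it is the only place where the field hypothesis is genuinely used; once it is in place, an immediate induction gives $L_n^{\mathcal P}(X)=L_1^{\mathcal P}(X)$ for all $n\geq 1$, hence $L_\infty^{\mathcal P}(X)=L_1^{\mathcal P}(X)$, completing the proof.
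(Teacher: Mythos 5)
This proposition is not proved in the paper at all: it is quoted verbatim from Anel's \emph{Cofree coalgebras over operads and representative functions} (Corollary~3.3.2 of that reference), so there is no internal argument to compare yours against. Your two-step plan --- check the rewrite conditions of \cref{thm:conditions_de_reecriture} for chain complexes over a field, then show the tower $L_0^{\mathcal P}\supseteq L_1^{\mathcal P}\supseteq L_2^{\mathcal P}\supseteq\cdots$ stabilises after one step --- is exactly the route taken by the cited source, and your verification of the two rewrite conditions is essentially sound (the paper itself re-proves them later, in the semi-simple setting, where the only real subtlety is that $\bigl(\prod_p M_p\bigr)\otimes N\to\prod_p(M_p\otimes N)$ is injective; you should make sure your ``assemble degreewise'' step addresses this, since $[A,B]$ for chain complexes involves infinite products over degrees and over arities).

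The weak point is the stabilisation step, which is where all the content of the statement lives and which you only describe in outline. What has to be shown is the operadic analogue of the fundamental theorem of representative functions: if $x\in L_1^{\mathcal P}(X)$ and $\delta_1(x)$ is written in a normal form $\sum_i u_i\otimes v_i$ with linearly independent factors, then the coefficients $v_i$ (and $u_i$) again lie in $L_1^{\mathcal P}(X)$, and moreover $\delta_1(x)$ lands in $L_1^{\mathcal P}\circ L_1^{\mathcal P}(X)$ (not merely in ${(L_1^{\mathcal P}X)}^{\mathcal P}$) while equalising the two maps $\iota_1\circ\delta_1$ and $\delta_1\circ\iota_1$ appearing in the defining diagram of $L_2^{\mathcal P}$. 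Deriving this from ``the defining pullback square at the level of each coefficient'' requires playing the two compatibilities with the operad multiplication $m$ against each other (the associativity $X^{m}$-relations evaluated in two different orders) and keeping track of the $\mathfrak S_n$-invariants and the product over arities; none of this is automatic, and it is precisely the body of Anel's Section~3. So your proposal identifies the right mechanism and the right use of the field hypothesis, but as written it asserts rather than proves the one lemma that the proposition actually amounts to; the remaining induction $L_n^{\mathcal P}=L_1^{\mathcal P}$ is, as you say, immediate once that lemma is in hand.
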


In the
general case the process must be iterated until it stabilises.
For $n \geq 1$, draw the diagram:
\[
\label{fig:free_coalgebra_algorithm}
\begin{tikzpicture}
\node[draw = black, rectangle, rounded corners] (b) at (1.2,-0.5) {
$\thealgo$
};
\node (a) at (0,0){
	\begin{tikzcd}[ampersand replacement=\&]
	\freefunctor_{n+1}^\operad 
	\arrow[d, "\iotaup_{n+1}"']
	\arrow[r, "\deltaup_{n+1}"]
	\&
	\freefunctor_n^\operad  \circ \freefunctor_n^\operad 
	\arrow[rr, shift left,
	"\iotaup_n\circ \deltaup_n"]
	\arrow[rr, shift right,
	"\deltaup_n\circ\iotaup_n"']
	\arrow[d, "\iotaup_n\circ \iotaup_n"]
	\&\&
	\freefunctor_{n-1}^\operad  \circ \freefunctor_{n-1}^\operad
	\circ \freefunctor_{n-1}^\operad 
	\\
	\freefunctor_n^\operad 
	\arrow[r, "\deltaup_n"]
	\&
	\freefunctor_{n-1}^\operad  \circ \freefunctor_{n-1}^\operad  .
	\&\&
	\end{tikzcd}
};
\end{tikzpicture}
\]

\begin{remark}
The symbol $\circ$ here denotes the so-called `horizontal composition'
of natural transformations.
\end{remark}

The functor $\freefunctor_{n+1}^\operad $ is defined as the limit, in
the category
of functors, of the remaining diagram, this defines also the natural
transformations $\iotaup_{n+1}$ and $\deltaup_{n+1}$. This is where the
second rewrite condition is needed: it allows passing to the limit in
this construction.

\begin{proposition}[{\cite[Corollary 2.7.12]{arXiv:1409.4688}}]%
\label{thm:LOmega-intersection}
Let $\operad $ be an operad in $\linearlogos$, suppose the rewrite
conditions are met. Then the category of $\operad$-coalgebras is
comonadic and the underlying functor of the comonad $\comonadl^\operad
$ is the intersection $\freecog\operad \coloneqq \bigcap_{n\in\naturals}
\freefunctor_n^\operad $.
\end{proposition}

\begin{remark}
For any $X$, the map $\coprodw^\operad (X) : \freecog\operad (X) \to
\freecog\operad \circ \freecog\operad (X)$ determines the $\operad
$-coalgebra structure of $\freecog\operad (X)$,
\[
	\begin{tikzcd}[ampersand replacement=\&]
		\freecog\operad (X)
		\arrow[r, "\canonicalaction"]
		\arrow[rd, "\coprodw^\operad (X)", swap]
		\&
		{\left(\freecog\operad (X)\right)}^\operad 
		\\
		\& \freecog\operad  \circ \freecog\operad (X) .
		\arrow[u, hookrightarrow]
	\end{tikzcd}
\]
hence we shall abuse notations and speak of $\canonicalaction$ when we
mean $\coprodw^\operad $.
\end{remark}

\paragraph{Summary}%
\label{fig:tableau_algebres_cogebres_libres}

Let us summarise the different constructions
of free functors.

\begin{sumtable}
\begin{tikzpicture}
	\matrix (mat) [matrix of nodes, inner sep=0cm,
		nodes={align=center, text width=3cm, text depth=0.2cm,
		text height=0.4cm, inner sep=0.3cm}]{
		\emph{Free} [...] & over $\operad$
		& over $\coperad$ \\
		Algebra & $\operad  \compofsymseq X$
		& $X^\coperad$ \\
		Coalgebra & $\freecog\operad (X) \subobject X^\operad $ 
		& $\coperad \compofsymseq X$ \\
	};
	\draw[gray, thick] (mat-1-2.north west) -- (mat-3-2.south west);
	\draw[gray, thick] (mat-1-3.north west) -- (mat-3-3.south west);
	\draw[gray, thick] (mat-2-1.north west) -- (mat-2-3.north east);
	\draw[gray, thick] (mat-3-1.north west) -- (mat-3-3.north east);
	\draw[rounded corners=.5em, gray, thick]
	(mat.north west) rectangle (mat.south east);
\end{tikzpicture}
\end{sumtable}

\subsection{Presentability}

\begin{theorem}%
\label{thm:les_mousquetaires_sont_presentables}
The category
$\catofalg\coperad(\linearlogos)$ is presentable.
Assuming rewrite conditions on $(\linearlogos, \otimes, [-,-])$ are
met, $\catofcog\operad(\linearlogos)$ is also
presentable.
\end{theorem}

\begin{proof}
Categories of modules over an accessible monad \& categories of
comodules over an accessible comonad are known to be presentable when
the ground category is%
~\cite{doi:10.1017/CBO9780511600579,doi:10.4310/HHA.2014.v16.n2.a9}.
Hence we shall prove that both free functors are accessible.

Let $Y$ be any sequence and let us show that the functor ${(-)}^Y :
\linearlogos \longrightarrow \linearlogos$ is accessible. For this
we shall use the fact that because $\linearlogos$ is presentable, every
object of $\linearlogos$ is small for a certain cardinal. Let $\kappa$
be an infinite cardinal such that $\linearlogos$ is $\kappa$-accessible.
The accessibility of ${(-)}^Y$ reduces to the following sequence of
facts:
\begin{itemize}
	\item The functor $X \mapsto X^{\otimes n}$
	      commutes with sifted colimits;
	\item For every $n$, the functor
	      $[Y(n), - ] : \linearlogos \longrightarrow \linearlogos$
	      is a right adjoint, hence there exists a cardinal $\kappa_n$
	      such that this functor is $\kappa_n$\=/accessible;
	\item When the indexing category is finite --- such as
	      $\symgroup n$ --- the end functor becomes
	      $\ordinalomega$-accessible
	      since ends on finite categories can be computed with finite
	      limits; \item For any object $Z$ of $\linearlogos$, the functor
	      $Z \times (-)$ is $\kappa$-accessible;
	\item countable limits commute with $\tau$-filtered colimits in 
	      $\linearlogos$ for $\kappa \ll \tau$, since $\kappa$ is
	      infinite.
\end{itemize}
We choose a cardinal $\tau$ such that $\kappa \ll \tau$ and $\kappa_n
\ll \tau$ for every $n$. Then all functors described above become
$\tau$-accessible and $\tau$-filtered colimits commute with countable
limits. From this, we deduce that ${(-)}^Y$ is $\tau$-accessible.

Now remains the case of the functor $\freecog Y$ assuming the
rewrite conditions. We have already proved that $\freefunctor_0^Y$
is accessible. The functors $\freefunctor_n^Y$ are then inductively
built using limits of accessible functors, hence they are accessible
for every integer $n$ since limits are computed pointwise in functor
categories. For the same reason the intersection $\freecog Y =
\bigcap_{n\in\naturals} \freefunctor_n^Y$ is accessible.
\end{proof}

\subsection{External tensor product of coalgebras}

Given two operad $\operad $ and $\operad '$ the arity-wise tensor
product $\operad  \diagtensor \operad '$ is defined in arity $n$ by
\[
	(\operad  \diagtensor \operad ') (n) \coloneqq \operad (n)
	\otimes \operad '(n).
\]
endowed with a diagonal action of $\symgroup n$ with unit
the operad $\ucom$. The resulting
sequence is naturally endowed with a structure of operad induced by
those of $\operad $ and $\operad '$. This is a consequence of the
fact that $\ucom$ has a unit
\[
	\unitofcom : \monoidalunit_{\compofsymseq} \longrightarrow \ucom
\]
and that the $\compofsymseq$ tensor is colax with respect to this new
tensor product: there are natural exchange maps of symmetric sequences
\[
	\exchangemap : (A \diagtensor B) \compofsymseq (C \diagtensor D)
	\longrightarrow (A \compofsymseq C)
	\diagtensor (B \compofsymseq D) .
\]
for any tuple $(A,B,C,D)$ of symmetric sequences; they satisfy the
usual relations of a 2-monoidal structure%
~\cite[Proposition 6]{doi:10.1515/CRELLE.2008.051}.

\begin{proposition}
The cotensor
\[
	\wedge : {(\symseq\linearlogos, \diagtensor, \ucom)}\op \times
	(\linearlogos, \otimes, \monoidalunit) \longrightarrow
	(\linearlogos, \otimes, \monoidalunit)
\]
is lax 2-monoidal \ie{} $(\linearlogos, \otimes, \monoidalunit)$ is
a module over $(\symseq\linearlogos, \diagtensor, \ucom)\op$ in the
2\=/category of monoidal categories with lax functors. In particular
there is a unitor $\unitor : \monoidalunit \to \monoidalunit^\ucom$
and exchange maps
\[
	\exchangemap : X^M \otimes Y^N \longrightarrow
	\fullpower{X\otimes Y}{M \diagtensor N}
\]
natural in $X, Y \in \linearlogos$, $M, N \in (\symseq\linearlogos)\op$.
\end{proposition}

\begin{proof}[Sketch of proof]
This proof is analogous to the proof of the 2\=/monoidal structure of
the tensor%
~\cite[Proposition 6]{doi:10.1515/CRELLE.2008.051}. The
unitor is obtained as $\unitor \coloneqq \monoidalunit^\unitofcom$ and
the exchange map is intimately related to the natural maps
\[
	\internalhom M {X^{\otimes n}} \otimes \internalhom N {Y^{\otimes n}}
	\longrightarrow \internalhom {M\otimes N} {X^{\otimes n} \otimes
	Y^{\otimes n}} \isonat
	\internalhom {M\otimes N}{\fullpower{X \otimes Y}{\otimes n}}
\]
for $X, Y, M, N \in \linearlogos$ and $n \in \naturals$.
\end{proof}

\begin{corollary}
Let $\operad$ and $\operad'$ be two operads.
The lax 2-monoidal nature of the cotensor allows us to define a
bifunctor
\[
	\externaltensor : \catofcog\operad \times \catofcog{\operad'}
	\longrightarrow \catofcog{(\operad\diagtensor\operad')}
\]
that we shall call the \emph{external tensor product} of coalgebras.
Where,
for $(V, a)$ a $\operad$-coalgebra and $(W, b)$ a $\operad'$-coalgebra,
the external tensor product $V \externaltensor W$ is the tensor
product $V \otimes W$ endowed with a
$\operad \diagtensor \operad'$-coalgebra structure given by
\[
	(V \otimes W) \xrightarrow{a \otimes b} (V^\operad \otimes
	W^{\operad'}) \xrightarrow{\exchangemap}
	\fullpower{V\otimes W}{\operad \diagtensor \operad'}.
\]
\end{corollary}
\section{Algebras over  cooperads}

Since the monad governing algebras over a  cooperad is better
behaved that the comonad governing
coalgebras over an operad, we shall start by studying the easiest
setting --- algebras over  cooperads --- and
increase the number of required hypotheses gradually.

In this section we shall let $(\coperad, w, \tau)$ be a
 cooperad and $(\Lambda, a)$ be a $\coperad$-algebra.

\subsection{The shuffle notation}%
\label{sec:the_shuffle_notation}

Let $X$ and $Y$ be two objects of $\linearlogos$ and $n \geq 1$
an integer. We shall write ${\sha(X, Y)}^{\otimes n}$ to designate
the object
\[
	{\sha(X, Y)}^{\otimes n} \coloneqq \bigoplus_{i+1+j=n}
	X^{\otimes i} \otimes Y \otimes X^{\otimes j}.
\]
In particular ${\sha(X, Y)}^{\otimes 1} \coloneqq Y$.
Moreover, for any symmetric sequence $M$, we shall write
\[
	{\sha(X, Y)}^M
	\coloneqq \prod_{n \geq 1}{\left[M(n), \bigoplus_{i+1+j=n}
	X^{\otimes i} \otimes Y \otimes
	X^{\otimes j} \right]}^{\symgroup n}.
\]
In particular the zero component is trivial. 
Besides, when $Y \subset X$ we shall denote the image of the induced
map
\[
	\begin{tikzcd}
		{\sha(X, Y)}^M \ar[r, hook] & {\sha(X, X)}^M \ar[r, "+"]
		& X^M.
	\end{tikzcd}
\]
by $\imshuffle X Y M$.
\subsection{Ideals of an algebra over a  cooperad}

\begin{definition}
An ideal of $\Lambda$ is
a subobject $\ideali$ such that the quotient $\Lambda/\ideali$ can be
endowed with a structure of $\coperad$-algebra
\[
	\overline{a} : {(\Lambda/\ideali)}^\coperad \to \Lambda/\ideali
\]
such that the quotient map $\varpiup
: \Lambda \longrightarrow \Lambda/\ideali$ becomes a morphism of
$\coperad$-algebras.
\end{definition}

\begin{remark}
Since $\varpiup^\coperad$ is an epimorphism, the map $\overline{a}$ is
uniquely determined by $a$. Hence we shall often forget to mention it.
\end{remark}

\begin{proposition}%
\label{thm:ideal-associativite-auto}
A subobject of $\ideali$ of $\Lambda$ is an ideal
if and only if there exists a map
\[
	\overline{a} : {(\Lambda/I)}^\coperad \to \Lambda/I
\]
in $\linearlogos$ such that the following square commutes,
\[
	\begin{tikzcd}[ampersand replacement=\&]
		\Lambda^\coperad
		\arrow[r, "\varpiup^\coperad"]
		\arrow[d, "a", swap]
		\& {(\Lambda/I)}^\coperad
		\arrow[d, "\overline{a}"] \\
		\Lambda
		\arrow[r, "\varpiup", swap]
		\& \Lambda/I . 
	\end{tikzcd}
\]
\end{proposition}

\begin{proof}
We already have a map
\[
	\overline{a}: {\left(\Lambda/I\right)}^\coperad \to \Lambda/I.
\]
We have to show that it is actually a $\coperad$-algebra structure.
Consider the following cubical diagram
\begin{center}
\begin{tikzpicture}[baseline= (a).base]
\node[scale=.75] (a) at (0,0){
	\begin{tikzcd}
		&\left(\left(\Lambda/I\right)^{\coperad}\right)^{\coperad}
		\arrow[rr, "\laxmap(\coperad{,}\coperad{,}\Lambda/I)"]
		\arrow[ddd, "\overline{a}^\coperad" description, near end]
		&& \left(\Lambda/I\right)^{\coperad\compofsymseq\coperad}
		\arrow[rr, "(\Lambda/I)^w"]
		&& \left(\Lambda/I\right)^\coperad
		\arrow[ddd, "\overline{a}"]
		\\
		\left(\Lambda^{\coperad}\right)^{\coperad}
		\arrow[rr, "\laxmap(\coperad{,}\coperad{,}\Lambda)"
		description, crossing over]
		\arrow[ddd,"a^\coperad"']
		\arrow[ru, twoheadrightarrow, "(\varpiup^\coperad)^\coperad"]
		&& \Lambda^{\coperad\compofsymseq\coperad} \arrow[rr, "\Lambda^w"]
		\arrow[ru, "\varpiup^{\coperad\compofsymseq\coperad}" description]
		&& \Lambda^\coperad
		\arrow[ru, "\varpiup^\coperad"]
		\\
		\\
		&\left(\Lambda/I\right)^\coperad
		\arrow[rrrr, "\overline{a}"] &&&& \Lambda/I
		\\
		\Lambda^\coperad \arrow[rrrr, "a"] \arrow[ru, "\varpiup^\coperad"]
		&&&& \Lambda. \arrow[ru,"\varpiup"']
		\arrow[from=uuu, "a", crossing over]
	\end{tikzcd}
};
\end{tikzpicture}
\end{center}
We have to show that the back face is a commutative square knowing that
all other faces are commutative. This can be proved by diagram chasing
after having noticed that the map
\[
	{\left(\varpiup^\coperad\right)}^\coperad :
	{\left(\Lambda^{\coperad}\right)}^{\coperad}
	\to {\left({\left(\Lambda/I\right)}^{\coperad}\right)}^{\coperad}
\]
is an epimorphism. The same argument applied to the following diagram
ends the proof:
\[
	\begin{tikzcd}
	& \Lambda^{\monoidalunit}
	\arrow[rrr, twoheadrightarrow, "\varpiup^{\monoidalunit}"]
	&&& {(\Lambda/I)}^{\monoidalunit}
	\arrow[ddr, "{(\Lambda/I)}^w"]
	\\
	\Lambda
	\arrow[rrr,"\varpiup", near start]
	\arrow[ru, equal]
	&&& \Lambda/I
	\arrow[ru, equal]
	\\
	&& \Lambda^\coperad
	\arrow[rrr, "\varpiup^\coperad"']
	\arrow[llu, "a" description]
	\arrow[from=uul, crossing over, "\Lambda^\tau", near end]
	&&& {(\Lambda/I)}^\coperad .
	\arrow[llu, "\overline{a}" description]
	\end{tikzcd}
\]
\end{proof}

\begin{lemma}%
\label{lemma_condition_ideal}
A subobject $\ideali$ of $\Lambda$ is an ideal
if and only if
\[
	a \left(\imshuffle \Lambda \ideali \coperad\right)
	\subobject \ideali .
\]
\end{lemma}

\begin{proof}
$\ideali$ is an ideal of
$\Lambda$ if we can find a dotted arrow filling the diagram%
~[\ref{thm:ideal-associativite-auto}]
\[
	\begin{tikzcd}[ampersand replacement=\&]
		\Lambda^\coperad
		\arrow[r, "\varpiup^\coperad"]
		\arrow[d, "a", swap]
		\& {(\Lambda/I)}^\coperad
		\arrow[d, dotted] \\
		\Lambda
		\arrow[r, "\varpiup", swap]
		\& \Lambda/I . 
	\end{tikzcd}
\]
This happens if and only if the composition $\varpiup \circ a$ vanishes
on the kernel of $\varpiup^\coperad$. One has
$\ker \varpiup^\coperad = \smallimshuffle\Lambda\ideali\coperad$, so
one may find the dotted arrow if and only if one has $a (\smallimshuffle
\Lambda\ideali\coperad) \subobject \ideali$.
\end{proof}

\begin{proposition}%
\label{thm:somme-d-ideaux}
Let $\Lambda$ be a $\coperad$-algebra and let $\idealj$ and $\idealk$
be two
ideal of $\Lambda$, then the sum $\idealj + \idealk$, image of the map
$\idealj \oplus \idealk \longrightarrow \Lambda$
is also an ideal of $\Lambda$.
\end{proposition}

\begin{proof}
This follows directly from the previous lemma and the fact that
\[
	\imshuffle \Lambda {\ideali + \idealk} \coperad
	= \imshuffle \Lambda \ideali \coperad
	+ \imshuffle \Lambda \idealk \coperad .
\]
\end{proof}

\begin{proposition}%
\label{thm:image-ideal}
Let $f : \Lambda \to \Gamma$ be a morphism of $\coperad$-algebras. If
$f$ is an epimorphism in $\linearlogos$ and $\ideali$ is an ideal of
$\Lambda$, then $f(\ideali)$ is an ideal of $\Gamma$.
\end{proposition}

\begin{proof}
Since $f$ is an epimorphism, its kernel $\idealk$ is an ideal of
$\Lambda$ and we have
$\Gamma/f(\ideali) \isonat \Lambda/(\ideali + \idealk)$
and by the previous proposition, a sum of ideals is again an ideal.
\end{proof}

\begin{lemma}
Let $\Lambda$ be a $\coperad$-algebra and let
${\{\ideali_p\}}_{p\in
P}$ be a set of ideals of $\Lambda$, then the intersection
$\bigcap_{p} \ideali_p$ is again an ideal of $\Lambda$.
\end{lemma}

\begin{proof}
For every $p \in P$, let $\varpiup_p : \Lambda \to \Lambda/\ideali_p$
be the morphism of $\coperad$\=/algebras associated to the ideal
$\ideali_p$. Since the category of $\coperad$-algebras is a category of
algebras over a monad, products may be computed in the ground category.
As a consequence, the intersection $\bigcap_p \ideali_p$ is the
kernel of the composition of $\coperad$-algebras maps
\[
	\begin{tikzcd}
		\Lambda \arrow[r, "\diagonal"]
		& \displaystyle\prod_{p\in P} \Lambda
		\arrow[r, "\prod_p \varpiup_p"]
		& \displaystyle\prod_{p\in P} \Lambda/\ideali_p
	\end{tikzcd}
\]
hence an ideal.
\end{proof}

Consequently, it makes sense to define the ideal generated by a graded
subobject of an algebra.

\begin{definition}
Given a subobject $X$ of $\Lambda$,
we shall denote by $(X)$ the smallest ideal of $\Lambda$ that contains
$X$. It is the intersection of all the ideals of $\Lambda$ that contain
$X$.
\end{definition}

\begin{theorem}[(Generation of ideals)]%
\label{thm:generation_d_ideal}
Assume $X$ is a subobject of$\Lambda$.
Then the ideal generated by $X$ is the image
\[
	(X) = a \left(\imshuffle \Lambda X \coperad\right).
\]
\end{theorem}

\begin{proof}
Let us first remark that given $X$ a graded subobject of $\Lambda$,
the following sequence is exact
\[
	\imshuffle \Lambda X \coperad \longrightarrow \Lambda^\coperad
	\longrightarrow {(\Lambda/X)}^\coperad .
\]
As a consequence $X$ is an ideal of $\Lambda$
if and only if one has
~[\ref{thm:ideal-associativite-auto}]
\[
	a \left(\imshuffle \Lambda X \coperad\right) \subobject X.
\]
Let us denote by $\ideali$ the image $a(\power{\sha(\Lambda,
X)}\coperad)$. By the unit property of $\Lambda$, the subobject
$X$ sits inside of $\ideali$ and since $(X)$ is an ideal we get
\[
	X \subobject \ideali \subobject
	a\left(\imshuffle \Lambda X \coperad\right) \subobject (X).
\]
Hence we only need to show that $\ideali$ is an ideal. That is
\[
	a\left(
	\imshuffle
	\Lambda
	{a\left(\imshuffle\Lambda X \coperad\right)}\coperad
	\right)
	\subobject a\left(\imshuffle\Lambda X\coperad \right).
\]
That is a direct consequence of the associativity condition for
$(\Lambda,a)$.
\end{proof}

\subsection{Generated ideals of free algebras}%
\label{sec:ideal_of_a_free_algebra}
When $Y \subset X^\coperad$ is a subobject of a free
$\coperad$\=/algebra, the ideal generated by $Y$ can be obtained by
looking at the image of $\power{\sha(X, Y)}\coperad \to X^\coperad$
instead of the bigger $\power{\sha(X^\coperad, Y)}\coperad$. This is
what we shall prove now.

Similarly to the fact that there exists an $\symgroup n$-equivariant
lax map
$\power{(X^\coperad)}{\otimes n} \to X^{\coperad^{\convolution n}}$
for any natural $n$%
~[\ref{thm:cotenseur_sym_lax}],
there exists an $\symgroup n$\=/equivariant map
\[
	\laxmap^\convolution_n(\coperad, X, Y)
	: {\sha\left(X^\coperad, Y\right)}^{\otimes n}
	\longrightarrow {\sha(X, Y)}^{\coperad^{\convolution n}}
\]
whose projection on the component $\power{[{\coperad^{\convolution
n}}(k), \power{\sha(X, Y)}{\otimes k}]}{\symgroup k}$ is the sum over
$\gamma +1 +\delta = n$ of the following composite map
\[
\begin{tikzcd}
	\fullpower{X^\coperad}{\otimes \gamma} \otimes
	Y \otimes \fullpower{X^\coperad}{\otimes \delta}
	\arrow[d]
	\\
	X^{\coperad^{\convolution \gamma}} \otimes Y^\monoidalunit
	\otimes X^{\coperad^{\convolution \delta}}
	\arrow[d]
	\\
	\displaystyle\prod_{a+1+b=k} 
	\internalhom{\coperad^{\convolution \gamma} (a) \otimes \monoidalunit
	\otimes \coperad^{\convolution \delta}(b)}{
	X^{\otimes a} \otimes Y \otimes X^{\otimes b}}
	^{\symgroup a \times \symgroup b}
	\arrow[d,hook]
	\\
	\displaystyle\prod_{a+1+b=k} 
	\internalhom{\coperad^{\convolution \gamma} (a) \otimes \coperad (1)
	\otimes \coperad^{\convolution \delta}(b)}{{\sha(X,Y)}^{\otimes k}
	}^{\symgroup a \times \symgroup b}
	\arrow[d,equal]
	\\
	\displaystyle\prod_{a+1+b=k} 
	\internalhom{\left( \coperad^{\convolution \gamma} (a)
	\otimes \coperad (1)
	\otimes \coperad^{\convolution \delta}(b)\right)
	\rightindsym{\symgroup a \times \symgroup b} {\symgroup k}}{
	{\sha(X,Y)}^{\otimes k}}^{\symgroup k}
	\arrow[d,hook]
	\\
	\internalhom{{\coperad^{\convolution n}}(k)}{
	{\sha(X, Y)}^{\otimes k}}^{\symgroup k}
\end{tikzcd}
\]

\begin{definition}%
\label{def:lax_map_shuffle}
Let $\laxmapsha (\coperad,X,Y): \power{\sha (X^\coperad,Y)}\coperad
\to {\sha (X,Y)}^{\coperad\compofsymseq\coperad}$ be
the morphism in $\linearlogos$ given by the following formula
\[
	\begin{tikzcd}
		{\sha\left(X^\coperad, Y\right)}^\coperad =
		\displaystyle\prod_{n \geq 0} {\internalhom {\coperad(n)}
		{{\sha\left(X^\coperad, Y\right)}^{\otimes n}}}^{\symgroup n}
		\ar[d]
		\\
		\displaystyle\prod_{n \geq 0} {\internalhom {\coperad(n)}
		{{\sha\left(X, Y\right)}^{\coperad^{\convolution n}}}}^{\symgroup n}
		= 
		\displaystyle\prod_{n \geq 0}
		{
		\internalhom {\coperad(n)}
		{\displaystyle\prod_{k \geq 0}{\internalhom
		{\coperad^{\convolution n}(k)}
		{{\sha(X,Y)}^{\otimes k}}}^{\symgroup k}}
		}^{\symgroup n}
		\ar[d]
		\\
		{\sha(X,Y)}^{\coperad \compofsymseq \coperad} =
		\displaystyle\prod_{k \geq 0}
		{
		\internalhom {(\coperad \compofsymseq \coperad) (k)}
		{{\sha(X,Y)}^{\otimes k}}
		}^{\symgroup k}.
	\end{tikzcd}
\]
\end{definition}

\begin{lemma}%
\label{thm:idea_of_a_free_algebra}
The following diagram commutes
\[
\begin{tikzcd}
	{\sha (X^\coperad,Y)}^\coperad
	\arrow[rr,"\laxmapsha(\coperad{,}X{,}Y)"] \arrow[ddd]
	&& {\sha (X,Y)}^{\coperad\compofsymseq\coperad}
	\arrow[r,"\id{}^{w}"]
	\arrow[d,hook]
	& {\sha (X,Y)}^{\coperad}
	\arrow[d,hook]
	\\
	&&{\sha (X^\coperad,Y)}^{\coperad\compofsymseq\coperad}
	\arrow[r,"\id{}^{w}"]
	\arrow[d]
	&{\sha (X^\coperad,Y)}^{\coperad}
	\arrow[d]
	\\
	&&X^{\coperad\compofsymseq\coperad\compofsymseq\coperad}
	\arrow[r,"\id{}^{w \compofsymseq \id{}}"]
	\arrow[d,"\id{}^{\id{}\compofsymseq w}"]
	& X^{\coperad\compofsymseq\coperad}
	\arrow[d,"\id{}^{w}"]
	\\
	\fullpower{X^\coperad}\coperad
	\ar[rr, "\laxmap(\coperad{,}\coperad{,}X)"]
	&& X^{\coperad\compofsymseq\coperad}
	\arrow[r,"\id{}^{w}"']
	& X^{\coperad}.
\end{tikzcd}
\]
\end{lemma}

\begin{proof}
Since the inclusion $X \hookrightarrow X^\coperad$ is given by
$X^\tau$,
the commutation of the left rectangle comes from the counit identity
\[
	\left(\id{\coperad} \compofsymseq \tau \right) \circ w
	=\id{\coperad},
\]
while the commutation of the right rectangles follow from the
definitions and the coassociativity
$(w \compofsymseq \id \coperad) \circ w = (\id \coperad \compofsymseq
w) \circ w$.
\end{proof}

\begin{proposition}%
\label{thm:other_presentation_of_the_ideal_j}
The ideal $(Y)$ is the image of the following map
\[
	{\sha(X,Y)}^\coperad \hookrightarrow
	{\sha\left(X^\coperad,Y\right)}^\coperad
	\hookrightarrow {\left(X^\coperad\right)}^\coperad
	\xrightarrow{\canonicalaction} X^\coperad .
\]
\end{proposition}

\begin{proof}
The ideal $(Y)$ can be obtained as the image of the composite map,
\[
\begin{tikzcd}
	{\sha(X^\coperad, Y)}^\coperad 
	\arrow[r]
	& \fullpower{X^\coperad}\coperad \rar
	& X^{\coperad \compofsymseq \coperad}
	\arrow[r,"X^w"]
	& X^\coperad~[\ref{thm:generation_d_ideal}].
\end{tikzcd}
\]
By the previous lemma%
~[\ref{thm:idea_of_a_free_algebra}], this map factors through
$\power{\sha(X, Y)}\coperad$.
\end{proof}

\subsection{Complete algebras}

Let $(\coperad, w, \tau, \iota)$ be a locally conilpotent cooperad.

\begin{definition}
For any natural $n$ let
$\varpiup_n : \coperad \twoheadrightarrow \coperad/\filtration
n\coperad$
denote the quotient map where
${\{\filtration n \coperad\}}_{n \in \ordinalomega}$
denotes the coradical filtration%
~[\ref{def:coradical_filtration}] of $\coperad$.

For any integer $n \geq 0$, let $\ideal n  \Lambda$ be the image of the
composition $a \circ \Lambda^{\varpiup_n}$:
\[
	\begin{tikzcd}[ampersand replacement=\&]
		\Lambda^{\coperad/\filtration n\coperad}
		\arrow[r, "\Lambda^{\varpiup_n}"]
		\arrow[d, "", swap, twoheadrightarrow]
		\& \Lambda^\coperad
		\arrow[d, "a"] \\
		\ideal n \Lambda
		\arrow[r, "", swap, hookrightarrow]
		\& \Lambda .
	\end{tikzcd}
\]
\end{definition}

\begin{lemma}
For
every natural $n$, the subobject $\ideal n  \Lambda$ is an ideal
of $\Lambda$.
\end{lemma}

\begin{proof}
Let $n$ be a natural integer.
The object $\Lambda^\coperad$ may
be endowed with the structure of a free $\coperad$-algebra%
~[\ref{thm:algebre_libre_sur_une_coperade}]. Moreover the map
$\varpiup_n$ is an epimorphism, so that $\Lambda^{\coperad/\filtration
n\coperad}$ is a subobject of $\Lambda^\coperad$; it is in fact an
ideal.

Indeed, the exact sequence
\[
	\filtration n\coperad \subobject
	\coperad \twoheadrightarrow \coperad/\filtration n\coperad .
\]
yields the following exact sequence,
\[
	\Lambda^{\coperad/\filtration n\coperad}
	\subobject \Lambda^\coperad \twoheadrightarrow
	\Lambda^{\filtration n\coperad} .
\]
Hence we shall only find a map
$\power{(\Lambda^{\filtration n\coperad})}\coperad
\to \Lambda^{\filtration n\coperad}$
that is compatible with the algebra map
$\power{(\Lambda^\coperad)}\coperad \to \Lambda^\coperad$.

By definition of the coradical filtration on $\coperad$%
~[\ref{def:coradical_filtration}], $\filtration n\coperad$ is a
subcooperad of $\coperad$, then
$\filtration n\coperad$ is a comodule over $\coperad$. Let
$\wrestricted n$ be the comodule map
\[
	\wrestricted n : \filtration n \coperad \longrightarrow \coperad
	\compofsymseq \filtration n \coperad
\]
then the algebra map that we are looking for is given by
\[
	\Lambda^{\wrestricted n}
	\circ \laxmap(\coperad, \filtration n\coperad,\Lambda)
\]
which fits into the following commutative diagram,
\[
	\begin{tikzcd}[ampersand replacement=\&]
		{\left(\Lambda^\coperad\right)}^\coperad
		\arrow[rr, "{\left(\Lambda^{\fncaninclusion n}\right)}^\coperad"]
		\arrow[d, "\laxmap(\coperad{,}\coperad{,}\Lambda)",
		swap]
		\& \&{\left(\Lambda^{\filtration n\coperad}\right)}^\coperad
		\arrow[d, "\laxmap(\coperad{,} \filtration n\coperad{,} \Lambda)"]
		\\
		\Lambda^{\coperad\compofsymseq\coperad}
		\arrow[rr, "\Lambda^{(\id \coperad\compofsymseq\fncaninclusion n)}"]
		\arrow[d, "\Lambda^w", swap]
		\& \&\Lambda^{\coperad\compofsymseq \filtration n\coperad}
		\arrow[d, "\Lambda^{\wrestricted n}"]
		\\
		\Lambda^\coperad \arrow[rr, "\Lambda^{\wrestricted n}"]
		\& \&\Lambda^{\filtration n\coperad}
	\end{tikzcd}
\]
where $\fncaninclusion n$ is the inclusion $\filtration n \coperad
\subobject \coperad$. This proves that $\Lambda^{\coperad/\filtration n
\coperad}$ is an ideal of $\Lambda^\coperad$.

In addition to this, the map $a : \Lambda^\coperad \to \Lambda$ is a
map of $\coperad$-algebras by definition and it is also an epimorphism
in $\linearlogos$ by the unit property. As a consequence, $\ideal n
\Lambda$ is the image of an ideal by an epimorphism; it is an ideal of
$\Lambda$%
~[\ref{thm:image-ideal}].
\end{proof}

\begin{definition}%
\label{defintion:canonical-topology}
We shall call the decreasing sequence of ideals
\[
	\dots \subobject \ideal n \Lambda
	\subobject \dots \subobject \ideal 1\Lambda
	\subobject \ideal 0\Lambda \subobject \Lambda
\]
the canonical topology on $\Lambda$. The associated diagram of quotients
\[
	\Lambda \to \dots \twoheadrightarrow \cofiltration n\Lambda
	\twoheadrightarrow \dots \twoheadrightarrow \cofiltration 1\Lambda
	\twoheadrightarrow \cofiltration 0\Lambda
\]
shall be called the radical cofiltration of $\Lambda$. The associated
graded object is defined by
\[
	\gr 0 \Lambda \coloneqq \cofiltration 0 \Lambda \qand
	\gr n \Lambda \coloneqq \ker(\cofiltration n \Lambda 
	\longrightarrow \cofiltration {n-1} \Lambda), \text{ for } n \geq 1.
\]
\end{definition}

\begin{remark}%
\label{remark:les-morphismes-d-algebres-sont-continus}
The following commutative diagram,
\[
	\begin{tikzcd}
		& \Gamma^{\coperad/\filtration n \coperad}
		\arrow[rr, "\Gamma^{\varpiup_n}"]
		\arrow[ddd]
		&& \Gamma^\coperad
		\arrow[ddd, "m"]
		\\
		\Lambda^{\coperad/\filtration n \coperad}
		\arrow[rr, "\Lambda^{\varpiup_n}" description]
		\arrow[ddd, twoheadrightarrow]
		\arrow[ru, "f^{\coperad/\filtration n \coperad}"]
		&& \Lambda^\coperad \arrow[ru, "f^\coperad"]
		\\
		\\
		&\ideal n  \Gamma \arrow[rr, hookrightarrow] && \Gamma
		\\
		\ideal n  \Lambda \arrow[rr] \arrow[ru, "f"]
		&& \Lambda. \arrow[ru,"f"']
		\arrow[from=uuu, "a", crossing over]
	\end{tikzcd}
\]
shows that every morphism $f : (\Lambda,a) \to (\Gamma,m)$ of
$\coperad$-algebras is `continuous' in the sense that for every natural
$n$, $f(\ideal n \Lambda)$ is a subobject of $\ideal n \Gamma$:
\[
	f(\ideal n \Lambda) \subobject \ideal n \Gamma .
\]
Thus, the constructions $\ideal \ast \Lambda$, $\cofiltration \ast
\Lambda$ and $\gr \ast \Lambda$ are functorial.
\end{remark}

\begin{definition}
A $\coperad$-algebra $\Lambda$ shall be called nilpotent
if $\ideal n \Lambda = 0$ for a natural number $n$.
We shall call the full subcategory of $\catofalg\coperad$
whose objects are the nilpotent algebras $\catofnilalg\coperad$.
\end{definition}

\begin{proposition}
The realisation functor from the pro-category of nilpotent
$\coperad$-algebras to the category of $\coperad$-algebras has a left
adjoint,
\[
	\begin{tikzcd}[ampersand replacement=\&]
		\procat{\catofnilalg\coperad}
		\arrow[r, shift right=1.5, "\varprojlim"']
		\&
		\catofalg\coperad
		\arrow[l, shift right=1.5,"\nilfunctor"']
	\end{tikzcd}
\]
that associates to a $\coperad$-algebras $\Lambda$ its radical
cofiltration.
\end{proposition}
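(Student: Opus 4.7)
The plan is to verify the adjunction by directly computing both hom sets and exhibiting a natural bijection. First I would check that the proposed left adjoint is well-defined on objects: for each integer $n$, the quotient $F^n\Lambda = \Lambda/\mathcal{I}^n\Lambda$ must itself be nilpotent, so that the radical cofiltration genuinely defines an object of $\mathrm{Pro}(\mathcal{Q}\text{-}\mathcal A\mathrm{lg}^{\mathrm{nil}})$. Concretely, since the cotensor preserves epimorphisms, the map $\Lambda^{\mathcal{Q}/F_n\mathcal{Q}} \twoheadrightarrow {(F^n\Lambda)}^{\mathcal{Q}/F_n\mathcal{Q}}$ is surjective; composing with the structure map $\Lambda^{\mathcal{Q}/F_n\mathcal{Q}} \to \Lambda \twoheadrightarrow F^n\Lambda$ (which vanishes by definition of $\mathcal{I}^n\Lambda$) forces $\mathcal{I}^n(F^n\Lambda) = 0$. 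On morphisms, \cref{remark:les-morphismes-d-algebres-sont-continus} guarantees that a morphism of $\mathcal{Q}$-algebras $f : \Lambda \to \Lambda'$ sends $\mathcal{I}^n \Lambda$ into $\mathcal{I}^n \Lambda'$, so it descends to a morphism between the cofiltrations, giving the functoriality of $\mathrm{nil}$.

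The key step is the following factorisation lemma: for any nilpotent $\mathcal{Q}$-algebra $N$, the canonical map
\[
    \operatorname*{colim}_{n\thinspace\in\thinspace\omega}
    \mathrm{Hom}_{\mathcal{Q}\text{-}\mathcal A\mathrm{lg}}(F^n\Lambda, N)
    \longrightarrow
    \mathrm{Hom}_{\mathcal{Q}\text{-}\mathcal A\mathrm{lg}}(\Lambda, N)
\]
is a bijection. Indeed, fix $n_0$ such that $\mathcal{I}^{n_0} N = 0$. Given any $f: \Lambda \to N$, continuity forces $f(\mathcal{I}^{n_0}\Lambda) \rightarrowtail \mathcal{I}^{n_0} N = 0$, so $f$ factors (uniquely) through the quotient map $\Lambda \twoheadrightarrow F^{n_0}\Lambda$; this gives surjectivity. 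Injectivity follows from the fact that each $\Lambda \twoheadrightarrow F^n\Lambda$ is an epimorphism in $\mathcal C$ and the forgetful functor to $\mathcal C$ is faithful.

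Finally, I would assemble the adjunction. For a pro-object $(N_i)_{i \in I}$ of nilpotent $\mathcal{Q}$-algebras, I would compute
\[
    \mathrm{Hom}_{\mathcal{Q}\text{-}\mathcal A\mathrm{lg}}\bigl(\Lambda, \varprojlim_i N_i\bigr)
    \simeq \varprojlim_i \mathrm{Hom}_{\mathcal{Q}\text{-}\mathcal A\mathrm{lg}}(\Lambda, N_i)
    \simeq \varprojlim_i \operatorname*{colim}_{n} \mathrm{Hom}_{\mathcal{Q}\text{-}\mathcal A\mathrm{lg}}(F^n\Lambda, N_i) \,,
\]
where the first isomorphism comes from the universal property of the limit in the presentable category $\mathcal{Q}\text{-}\mathcal A\mathrm{lg}$ (which exists by \cref{corolaire:les-mousquetaires-sont-presentables}) and the second from the factorisation lemma applied pointwise in $i$. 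The right-hand side is by definition $\mathrm{Hom}_{\mathrm{Pro}(\mathcal{Q}\text{-}\mathcal A\mathrm{lg}^{\mathrm{nil}})}(\mathrm{nil}(\Lambda), (N_i))$, and the bijections are clearly natural in both variables.

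The main obstacle I anticipate is a purely bookkeeping one: making the naturality of the factorisation lemma transparent, so that the hom bijection is seen to commute with the transition maps of the pro-system. The underlying mathematical content is entirely carried by the continuity of morphisms of $\mathcal{Q}$-algebras recorded in \cref{remark:les-morphismes-d-algebres-sont-continus}.
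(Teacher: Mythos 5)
Your proposal is correct and follows essentially the same route as the paper's proof: both reduce to the observation that for a nilpotent target $N$ with $\mathcal I^{n_0}N = 0$, the continuity of algebra morphisms forces every map out of $\Lambda$ to factor uniquely through $F^{n_0}\Lambda$, so the colimit over $n$ stabilises and the pro-Hom computes to $\mathrm{Hom}(\Lambda, \varprojlim \Gamma)$. Your explicit check that each $F^n\Lambda$ is itself nilpotent is a small welcome addition that the paper leaves implicit.
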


\begin{proof}
Let $\Lambda$ be a $\coperad$-algebra and
$\Gamma : I \to \catofnilalg\coperad$ be a pro-object. By definition we
have
\[
	\hombracket{\procat{\catofnilalg\coperad}}
	{\nilfunctor(\Lambda)} \Gamma \isonat \limover{i\in I} \colimn
	\hombracket{\catofalg\coperad}{\cofiltration n\Lambda}{\Gamma(i)}
\]
For every $i \in I$, since $\Gamma(i)$ is nilpotent, there exists a
natural number $n$ such that $\ideal n \Gamma(i) = 0$ so that for
every $m \geq n$, we get
\[
	\hombracket{\catofalg\coperad}{\cofiltration m\Lambda}{\Gamma(i)}
	\isonat
	\hombracket{\catofalg\coperad}{\Lambda}{\Gamma(i)}.
\]
which implies that
\[
	\colimn
	\hombracket{\catofalg\coperad}{\cofiltration n\Lambda}{\Gamma(i)}
	\isonat 
	\hombracket{\catofalg\coperad}{\Lambda}{\Gamma(i)}.
\]
As a conclusion we get
\[
	\hombracket{\procat{\catofnilalg\coperad}}{
	\nilfunctor(\Lambda)} \Gamma
	\isonat
	\hombracket{\catofalg\coperad}\Lambda {\varprojlim \Gamma}. 
\]
\end{proof}

\begin{definition}
Let us denote by
\[
	\completionmap_\Lambda : \Lambda \longrightarrow
	\widehat{\Lambda} \coloneqq \limn \cofiltration n\Lambda.
\]
the unit of the monad
\[
	\widehat{(\phantom{a})} :
	\catofalg\coperad(\linearlogos)
	\longrightarrow \catofalg\coperad(\linearlogos)
\]
associated to the adjunction described above. We shall say that
the topology of $\Lambda$ is separated if $\completionmap_\Lambda$ is
a monomorphism and that it is complete if $\completionmap_\Lambda$ an
isomorphism.

We shall write $\catofcompletealg\coperad(\linearlogos)$
to denote the full subcategory of complete $\coperad$\=/algebras.
\end{definition}

\begin{proposition}%
\label{thm:cofiltration_des_algebres_libres}
For any free $\coperad$-algebra $\Lambda = X^\coperad$, one has
\[
	\ideal n  X^\coperad = X^{\coperad/\filtration n\coperad},
\]
so that
\[
	\cofiltration n \Lambda  \isonat X^{\filtration n \coperad}.
\]
\end{proposition}

\begin{proof}
 Let $n$ be a natural integer and consider the composition
\[
	(\varpiup_n \compofsymseq \id \coperad) \circ w : \coperad
	\longrightarrow \coperad \compofsymseq \coperad \longrightarrow
	(\coperad/\filtration n\coperad) \compofsymseq \coperad .
\]
since by definition of the coradical filtration of $\coperad$%
~[\ref{def:coradical_filtration}], $\filtration n \coperad$ is a
subcooperad of $\coperad$, this map factors
through $\coperad/\filtration n\coperad$. Let us call $\wgeqn n
: \coperad/\filtration n\coperad
\longrightarrow \coperad/\filtration n \coperad \compofsymseq \coperad$
the resulting map.
Since $\varpiup_n$ is an epimorphism, the following diagram shows that
$X^{\coperad/\filtration n\coperad}$ lies inside $\ideal n
X^\coperad$,
\[
	\begin{tikzcd}[ampersand replacement=\&]
		{\left(X^\coperad\right)}^{\coperad/\filtration n\coperad}
		\arrow[rr, "{\left(X^\coperad\right)}^{\varpiup_n}"]
		\arrow[d, "\laxmap(\coperad/\filtration n\coperad{,}
		\coperad{,} X)", swap]
		\& \& {\left(X^\coperad\right)}^\coperad
		\arrow[d, "\laxmap(\coperad{,}\coperad{,} X)"] \\
		X^{\coperad/\filtration n\coperad\compofsymseq\coperad}
		\arrow[rr, "X^{(\varpiup_n\compofsymseq\id \coperad)}"]
		\arrow[d, "X^{\wgeqn n}", swap]
		\& \& X^{\coperad\compofsymseq\coperad}
		\arrow[d, "X^w"]
		\\
		X^{\coperad/\filtration n\coperad}
		\arrow[rr, "X^{\varpiup_n}", hookrightarrow]
		\& \& X^\coperad .
	\end{tikzcd}
\]
In addition, the composite morphism
\[
	X^{\coperad/\filtration n\coperad}
	\xrightarrow{{\left(X^\tau\right)}^{\coperad/\filtration n\coperad} }
	{\left(X^\coperad\right)}^{\coperad/\filtration n\coperad}
	\to X^{\coperad/\filtration n\coperad} 
\]
obtained using the counit of the cooperad $\coperad$ is the identity
of $X^{\coperad/\filtration n\coperad}$. Therefore, the morphism
\[
	{\left(X^\coperad\right)}^{\coperad/\filtration n\coperad}
	\to X^{\coperad/\filtration n\coperad} 
\]
is an epimorphism. We conclude the proof by using the
unicity of the epi-mono factorisation of a morphism in $\linearlogos$.
Besides, by using the exactness property of the cotensor, we get that
$\cofiltration n X^\coperad \isonat X^{\filtration n \coperad}$.
\end{proof}

\begin{proposition}%
\label{thm:les_algebres_libres_sont_completes}
Any free $\coperad$-algebra is complete.
\end{proposition}

\begin{proof}
Let $X$ be an object of $\linearlogos$ and let us prove that
$X^\coperad$ is complete.
For every
natural $n$,
$\cofiltration n X^\coperad \isonat X^{\filtration n \coperad}$%
~[\ref{thm:cofiltration_des_algebres_libres}].
Then using the continuity of the
cotensor~[\ref{rmk:cotenseur_fonctoriel}]
and the local conilpotency of $\coperad$, we conclude that
\[
	X^\coperad \isonat X^{\underset{}{\varinjlim} 
	 \filtration n \coperad}
	\isonat \limn
	 X^{\filtration n \coperad}
	\isonat \limn
	 \cofiltration n X^\coperad .
\]
\end{proof}

\begin{lemma}%
\label{thm:epi-cofiltration}
Let $f : \Lambda \to \Gamma$ be a morphism of $\coperad$-algebras.
Suppose that for any integer $n \geq 0$, the maps
\[
	\gr n f : \gr n \Lambda \to\gr n \Gamma
\]
are epimorphisms in $\linearlogos$. Then, the morphism
$ \widehat{f} : \widehat{\Lambda} \longrightarrow \widehat{\Gamma}$
is also an epimorphism in $\linearlogos$.
\end{lemma}

\begin{proof}
By assumption $\cofiltration 0 \Lambda \to \cofiltration 0 \Gamma$ is
an epimorphism. Now let $n \geq 1$ and let us contemplate the diagram
\[
	\begin{tikzcd}
		\ker\left(\cofiltration {n-1} f\right)
		\arrow[r, hookrightarrow]
		& \cofiltration {n-1}\Lambda
		\arrow[rr, "\cofiltration {n-1} f", twoheadrightarrow]
		&& \cofiltration {n-1}\Gamma
		\\
		\ker\left(\cofiltration n f\right)
		\arrow[u, "\specialkernelkmap^{n-1}"]
		\arrow[r, hookrightarrow]
		& \cofiltration n\Lambda
		\arrow[u, "", twoheadrightarrow]
		\arrow[rr, "\cofiltration n f", twoheadrightarrow]
		&& \cofiltration n\Gamma
		\arrow[u, "", twoheadrightarrow]
		\\
		& \gr n \Lambda
		\arrow[u, hookrightarrow]
		\arrow[rr,"\gr n f", twoheadrightarrow]
		&& \gr n \Gamma ,
		\arrow[u, hookrightarrow]
	\end{tikzcd}
\]
If $\filtration {n-1} f$ is an epimorphism, then by the two-four
lemma, $\filtration n f$ is also an epimorphism.
Moreover, by the $3 \times 3$ lemma, the map $\specialkernelkmap^n$
is also an epimorphism. We have
subsequently proven by induction that for any integer $n$, the maps
$\cofiltration n f$ and $\specialkernelkmap^n$ are epimorphisms.
Combining this
with the fact that countable products are exact in $\linearlogos$ and
$\linearlogos$ is presentable, one can conclude that
\[
	\limn
	\cofiltration n f :
	\limn
	\cofiltration n\Lambda \longrightarrow
	\limn
	\cofiltration n\Gamma
\]
is an epimorphism%
~[\ref{thm:exactness_inverse_limit}].
\end{proof}

\begin{proposition}%
\label{thm:epi-chapeau}
Let $f : \Lambda \to \Gamma$ be a morphism of $\coperad$-algebras. If
$f$ is an epimorphism in $\linearlogos$, then
\[
	\widehat{f} : \widehat{\Lambda} \longrightarrow \widehat{\Gamma}
\]
is also an epimorphism in $\linearlogos$.
\end{proposition}

\begin{proof}
By looking at the
diagram in \cref{remark:les-morphismes-d-algebres-sont-continus} we
notice that the map
\[
	\ideal n f : \ideal n \Lambda \to \ideal n \Gamma
\]
is an epimorphism. Thus, the quotient
\[
	\gr n f : \gr n \Lambda \to \gr n \Gamma
\]
is also an epimorphism. We conclude using the previous lemma%
~[\ref{thm:epi-cofiltration}].
\end{proof}

\begin{proposition}%
\label{thm:les_algebres_sont_completes}
For all $\coperad$-algebras $\Lambda$, the unit map
$\completionmap_\Lambda$ is an epimorphism in $\linearlogos$.
\end{proposition}

\begin{proof}
Let $\Lambda$ be a $\coperad$-algebra. Then by definition the action
map
\[
	a : \Lambda^\coperad \longrightarrow \Lambda
\]
is an algebra map, when $\Lambda^\coperad$ is endowed with the
structure of a free
$\coperad$\=/algebra. Moreover by the unit property, the map $a$
is an epimorphism in $\linearlogos$.

We can draw the commutative diagram
~[\ref{thm:les_algebres_libres_sont_completes},%
~\ref{thm:epi-chapeau}]:
\[
	\begin{tikzcd}[ampersand replacement=\&]
		\Lambda^\coperad
		\arrow[r, equal]
		\arrow[d, "a",twoheadrightarrow, swap]
		\& \widehat{\Lambda^\coperad}
		\arrow[d, "\widehat{a}", twoheadrightarrow] \\
		\Lambda
		\arrow[r, "\completionmap_\Lambda", swap]
		\& \widehat{\Lambda} ,
	\end{tikzcd}
\]
from which we deduce that $\completionmap_\Lambda$ is an epimorphism is
$\linearlogos$.
\end{proof}

\begin{proposition}%
\label{thm:la_categorie_des_algebres_completes_est_reflexive}
The category of complete $\coperad$-algebras is a reflective and
accessible localisation
of the category of $\coperad$-algebras.
\end{proposition}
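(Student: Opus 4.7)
The natural candidate for the left adjoint is the completion functor $\Lambda \mapsto \widehat{\Lambda} = \varprojlim_n F^n \Lambda$, already built as a monad from the adjunction of the preceding proposition. The plan is to show that $\widehat{\Lambda}$ lies in the subcategory of complete algebras, i.e.\ that $\varphi_{\widehat{\Lambda}} : \widehat{\Lambda} \to \widehat{\widehat{\Lambda}}$ is an isomorphism for every $\Lambda$. Once this is established, the reflection property follows formally: for any complete $\Gamma$ and any morphism $f : \Lambda \to \Gamma$, the map $\widehat{f} : \widehat{\Lambda} \to \widehat{\Gamma} \simeq \Gamma$ factors $f$ through $\varphi_\Lambda$, and the factorisation is unique because \cref{proposition:les-algebres-sont-completes} ensures $\varphi_\Lambda$ is an epimorphism in $\mathcal C$.

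The key technical step will be to identify $F^n \widehat{\Lambda}$ with $F^n \Lambda$ via the canonical projection $\pi_n : \widehat{\Lambda} \to F^n \Lambda$ from the limit. First I would show $\pi_n$ is an epimorphism in $\mathcal C$: the transition maps $F^{n+1} \Lambda \twoheadrightarrow F^n \Lambda$ are epimorphisms by construction, and since countable products are exact in $\mathcal C$, the same Roos-type argument used in the proof of \cref{proposition:epi-chapeau} applies. Next I would identify $\ker \pi_n$ with $\mathcal I^n \widehat{\Lambda}$. One direction is straightforward: $\pi_n$ is a morphism of $\mathcal{Q}$-algebras, so by \cref{remark:les-morphismes-d-algebres-sont-continus} it sends $\mathcal I^n \widehat{\Lambda}$ into $\mathcal I^n F^n \Lambda = 0$. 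For the reverse direction I would exploit that $\varphi_\Lambda$ is epi and that $\pi_n \circ \varphi_\Lambda$ is the quotient map $q_n : \Lambda \twoheadrightarrow F^n \Lambda$ with kernel $\mathcal I^n \Lambda$: a diagram chase then gives $\ker \pi_n = \varphi_\Lambda(\mathcal I^n \Lambda)$, which sits inside $\mathcal I^n \widehat{\Lambda}$ by continuity of $\varphi_\Lambda$. Hence $\pi_n$ descends to an isomorphism $F^n \widehat{\Lambda} \xrightarrow{\sim} F^n \Lambda$; passing to the limit yields $\widehat{\widehat{\Lambda}} \simeq \widehat{\Lambda}$ compatibly with $\varphi_{\widehat{\Lambda}}$, so $\widehat{\Lambda}$ is complete.

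The hard part will be the diagram chase showing $\ker \pi_n = \varphi_\Lambda(\mathcal I^n \Lambda)$, since set-theoretic elements are not available in a general linear logos. In an $AB5$-category this reduces to pulling back the epi $\pi_n$ along itself and combining with the epi property of $\varphi_\Lambda$ and the snake lemma — a routine but delicate piece of homological algebra. Everything else in the argument is formal once this identification is in hand.
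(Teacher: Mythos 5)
Your argument is correct and proves the same thing the paper proves --- that the completion monad is idempotent --- but it gets there by a more explicit route. The paper's proof is two lines: since $\varphi_\Lambda$ is an epimorphism (\cref{proposition:les-algebres-sont-completes}) with kernel $\mathcal I^\infty\Lambda = \bigcap_n \mathcal I^n\Lambda$, one has $\widehat{\Lambda} \simeq \Lambda/\mathcal I^\infty\Lambda$; the canonical topology of this quotient is separated (because $\mathcal I^n\widehat{\Lambda} = \varphi_\Lambda(\mathcal I^n\Lambda)$ by \cref{proposition:image-ideal}, so the intersection of these ideals vanishes), hence $\varphi_{\widehat{\Lambda}}$ is a monomorphism, and being also an epimorphism by \cref{proposition:les-algebres-sont-completes} applied to $\widehat{\Lambda}$, it is an isomorphism. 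You never invoke the epimorphy of $\varphi_{\widehat{\Lambda}}$; instead you construct the inverse directly by identifying $F^n\widehat{\Lambda}$ with $F^n\Lambda$ and passing to the limit. Both routes rest on the same two inputs (epimorphy of $\varphi_\Lambda$ and the identity $f(\mathcal I^n\Lambda) = \mathcal I^n\Gamma$ for epimorphisms of algebras), so the difference is one of packaging rather than substance. Two small simplifications to your write-up: the ``hard part'' you flag is not hard --- in a category where all monomorphisms and epimorphisms are normal, $\ker \pi_n = \varphi_\Lambda(\varphi_\Lambda^{-1}(\ker\pi_n)) = \varphi_\Lambda(\ker q_n) = \varphi_\Lambda(\mathcal I^n\Lambda)$ is immediate from $\varphi_\Lambda$ being an epimorphism, with no snake lemma needed; and the epimorphy of $\pi_n$ follows at once from $\pi_n \circ \varphi_\Lambda = q_n$ being an epimorphism, so the Roos-type exactness argument of \cref{proposition:epi-chapeau} is not required here.
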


\begin{proof}
This amounts to show that the monad
$\widehat{(\phantom{a})} : \catofalg\coperad(\linearlogos)
\longrightarrow \catofalg\coperad(\linearlogos)$
is idempotent and accessible.

To see this let $\Lambda$ be a $\coperad$-algebra and let $\ideal
\infty\Lambda \coloneqq \bigcap_{n\in \naturals} \ideal n \Lambda$.
Since by the previous proposition the unit map $\completionmap_\Lambda$
is an epimorphism, we get the natural equivalence $\Lambda/\ideal
\infty\Lambda \isonat \widehat{\Lambda}$, as $\ideal \infty\Lambda$
is nothing but the kernel of $\completionmap_\Lambda$. The canonical
topology of the quotient algebra is separated by construction, from
which we deduce that the monad is idempotent.

As the monad is defined with an $\ordinalomega_1$-small limit, it is
$\ordinalomega_1$-accessible.
\end{proof}

Since the category of $\coperad$-algebras
is presentable~[\ref{thm:les_mousquetaires_sont_presentables}], we
deduce the following corollary.

\begin{corollary}%
\label{thm:la_categorie_des_algebres_completes_est_presentable}
The category of complete $\coperad$-algebras is presentable.
\end{corollary}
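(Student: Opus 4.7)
The plan is to invoke the previous proposition together with the classical fact that any accessible reflective subcategory of a presentable category is itself presentable. Since $\mathcal{Q}\text{-}\mathcal A\mathrm{lg}(\mathcal{C})$ is presentable by Corollary~\ref{corolaire:les-mousquetaires-sont-presentables} and complete $\mathcal{Q}$-algebras form a reflective subcategory of it by Proposition~\ref{thm:la_categorie_des_algebres_completes_est_reflexive}, it will suffice to check that the reflector $\widehat{(-)}$ is accessible.

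The key technical step, and what I expect to be the main obstacle, is to verify that the endofunctor $\Lambda \mapsto \widehat{\Lambda} = \varprojlim_{n\in\omega^{\mathrm{op}}} F^n\Lambda$ is accessible as a functor of $\Lambda$. For each $n$, the functor $F^n = (-)/\mathcal{I}^n(-)$ is built from the cotensor $(-)^{\mathcal{Q}/F_n\mathcal{Q}}$, whose accessibility is established exactly as in the proof of Corollary~\ref{corolaire:les-mousquetaires-sont-presentables}, composed with the structure action, an image factorisation, and a quotient --- all operations that preserve accessibility in a presentable category. Hence each $F^n$ is accessible.

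It then remains to pick a regular uncountable cardinal $\tau$ large enough that every $F^n$ is $\tau$-accessible; since $\tau$-filtered colimits commute with countable limits in the presentable category $\mathcal{C}$ as soon as $\tau$ is uncountable, this ensures that the countable cofiltered limit $\widehat{(-)}$ is $\tau$-accessible. The inclusion $\mathcal{Q}\text{-}\widehat{\mathcal A\mathrm{lg}}(\mathcal{C}) \hookrightarrow \mathcal{Q}\text{-}\mathcal A\mathrm{lg}(\mathcal{C})$ is therefore accessible, and presentability of $\mathcal{Q}\text{-}\widehat{\mathcal A\mathrm{lg}}(\mathcal{C})$ follows from the standard theorem on accessible reflective subcategories of presentable categories.
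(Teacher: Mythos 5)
Your proposal is correct and follows essentially the same route as the paper: the corollary is deduced from Proposition~\ref{thm:la_categorie_des_algebres_completes_est_reflexive} (reflectivity) together with Corollary~\ref{corolaire:les-mousquetaires-sont-presentables} (presentability of $\mathcal{Q}\text{-}\mathcal A\mathrm{lg}$). The only difference is that you explicitly verify accessibility of the reflector $\widehat{(-)}$ --- the hypothesis needed to apply the reflective-subcategory theorem in ZFC --- which the paper leaves implicit; your verification is sound and uses the same commutation of countable limits with sufficiently filtered colimits that the paper itself invokes earlier.
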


\subsection{A (counter) example}%
\label{sec:contre_exemple}

In most cases, filtered colimits are exact in the ground category. As a
consequence all coalgebras over a locally conilpotent
cooperad are pointed and locally
conilpotent themselves. Dually, if cofiltered limits were exact,
all algebras over a locally conilpotent cooperad would
be complete. However, the exactness of cofiltered limits is rare in
nature. Here we wish to work out an example, in order to show that
the subcategory of complete algebras is a strict one and that the
characterisation theorem cannot be improved.

For this example, let us choose the category
$\vect \reals$ of real vector spaces. We then
consider a cofree locally conilpotent cooperad
in arity one, that is the cofree locally conilpotent coalgebra on
one generator
$(\reals[\formalx] = \formalx\reals[\formalx]\oplus\reals ,
\deltacoprod, \counittau, \iota)$ with
$\deltacoprod(\formalx) = 1 \otimes \formalx + \formalx \otimes 1$.
Its coradical filtration is given by
$\filtration n \reals[\formalx] = \reals_n[\formalx]$ for $n \geq 1$.

An algebra over this cooperad is the data of a real vector space
$\Lambda$ and a linear map
\[
	S : \Lambda^{\naturals} \longrightarrow \Lambda,
\]
satisfying the following two conditions. The unit condition says that
for every element $a \in \Lambda$ we have:
\[
	S (a, 0, 0 , \cdots) = a .
\]
The associativity condition amounts to the following fact: given a
matrix ${\{a_{ij}\}}_{i,j\in\naturals}$ of elements of $\Lambda$, we
have:
\[
	S_i(S_j (a_{ij})) = S_n
	\left(\sum_{i+j=n} a_{ij}\right).
\]
In other words, the `sum' of a matrix may be computed either by
`summing' columns with $S$ then summing lines, or by first summing the
anti-diagonals with the sum of $\Lambda$ and then using $S$. By symmetry
it is also possible to sum first on the lines and then on columns (this
is because the coalgebra we are working with is in fact cocommuative).

The key data in this construction is the following endomorphism
of $\Lambda$:
\[
	\varepsilon(a) = S (0,a,0,0, \cdots) .
\]
Indeed from the axioms above it follows that for any sequence
$\underline{a} \in \Lambda^{\naturals}$ that is eventually zero, we get:
\[
	S(\underline{a}) = \sum_{n\in\naturals}
	\varepsilon^n(a_n) .
\]
Moreover $S$ intertwines the right shift function and $\varepsilon$:
$S \circ [-1] = \varepsilon \circ S$. From this we deduce that the
canonical topology on $\Lambda$ is given by: $\ideal n \Lambda =
\image(\varepsilon^n)$. Thus $\Lambda$ is a nilpotent algebra if and
only if $\varepsilon$ is a nilpotent endomorphism and it is complete if
and only if
\[
	\ideal \infty\Lambda = \bigcap_{n\in\naturals}
	\image(\varepsilon^n) = 0 ,
\]
in which case it can be identified with the inverse limit of the
nilpotent algebras $\Lambda/\image(\varepsilon^n)$.

Nevertheless, $\varepsilon$ satisfies a different nilpotence condition
in general.

\begin{proposition}
Let $V$ be a subspace of $\Lambda$ stable under $\varepsilon$. Suppose
that
\[
	\varepsilon : V \longrightarrow V
\]
is surjective, then $V = 0$.
\end{proposition}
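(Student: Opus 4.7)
The plan is to exploit the associativity axiom for $S$ with a carefully chosen infinite matrix whose row sums vanish but whose antidiagonal sums pick out a single element.

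Let $v_0 \in V$ be arbitrary. Because $\varepsilon : V \to V$ is surjective, I can recursively pick elements $v_1, v_2, \ldots \in V$ such that $\varepsilon(v_{n+1}) = v_n$ for every $n \geq 0$. Now define a matrix $\{a_{ij}\}_{i,j \in \mathbb{N}}$ by setting
\[
a_{i,0} = -v_i, \qquad a_{i,1} = v_{i+1}, \qquad a_{i,j} = 0 \text{ for } j \geq 2.
\]
Each row is eventually zero, so I can evaluate the $S$ of each row using the formula $S(\underline{a}) = \sum_n \varepsilon^n(a_n)$ for finitely supported sequences:
\[
S(\{a_{ij}\}_{j \in \mathbb{N}}) = -v_i + \varepsilon(v_{i+1}) = -v_i + v_i = 0 \quad\text{for every } i.
\]
Hence $S\bigl(\{S(\{a_{ij}\}_j)\}_i\bigr) = S(0,0,0,\ldots) = 0$.

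Next I compute the antidiagonal sums. For $n \geq 1$, the only possibly nonzero entries on the antidiagonal $i+j=n$ are $a_{n,0} = -v_n$ and $a_{n-1,1} = v_n$, which cancel. For $n=0$ the only entry is $a_{0,0} = -v_0$. Thus
\[
S\Bigl(\bigl\{\textstyle\sum_{i+j=n} a_{ij}\bigr\}_n\Bigr) = S(-v_0, 0, 0, \ldots) = -v_0
\]
by the unit condition. The associativity axiom forces these two expressions to be equal, giving $0 = -v_0$, whence $v_0 = 0$. Since $v_0 \in V$ was arbitrary, $V = 0$.

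The only subtle point is justifying that the associativity axiom really applies here: it is stated for arbitrary matrices in $\Lambda^{\mathbb{N} \times \mathbb{N}}$, so no finiteness hypothesis is needed. Everything else is a bookkeeping exercise using the explicit description of $S$ on finitely supported sequences and the intertwining $S \circ [-1] = \varepsilon \circ S$, both already derived in the paper.
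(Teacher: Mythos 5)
Your proof is correct and follows essentially the same strategy as the paper's: use surjectivity to build a compatible chain of $\varepsilon$-preimages and feed a telescoping infinite matrix into the associativity axiom for $S$. The only difference is the shape of the matrix — the paper places the pairs $x_{2i}, -x_{2i+1}$ along the diagonal and compares row-first with column-first summation (implicitly invoking the cocommutativity remark), whereas your matrix concentrates the entries in the first two columns so that the antidiagonal sums are supported at $n=0$ and the stated form of the associativity axiom suffices on its own, which is a marginally more self-contained bookkeeping of the same idea.
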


\begin{proof}
Let $x_0 \in V$, since $\varepsilon$ is surjective on $V$, we can find a
sequence of elements $x_i \in V$ such that 
$x_i = \varepsilon(x_{i+1})$
for all $i \in \naturals$. Then summing the matrix
\[
	\begin{bmatrix}
	x_0 & - x_1 & 0 & 0 \\
	0   & x_2 & - x_3 & 0 \\
	0 & 0 & \ddots & \ddots
	\end{bmatrix}
\]
in two different ways gives us $x_0 = 0$.
\end{proof}

As a consequence, the only eigenvalue of $\varepsilon$ is zero. Hence
in case $\Lambda$ is finite dimensional, it is automatically nilpotent.
In case it is infinite dimensional we can only write the nilpotence
condition
\[
	\bigcap_{n\in\naturals} \image
	(\varepsilon^n) \subset \ker(\varepsilon) .
\]

We will now present an example of such a vector space endowed with an
endomorphism that is not complete but satisfies the nilpotence
condition above. We will then show that it exhibits an example of a
non-complete algebra. For this we shall use a typical example
coming from operator theory.

Let $\triangularmatrices$ be the real vector space of lower triangular
double sequences; its elements are of the form
\[
	\begin{bmatrix}
	\ast & 0 & 0 & 0 \\
	\ast   & \ast & 0 & 0 \\
	\ast & \ast & \ast & \ddots
	\\
	\ast & \ast & \ddots & \ddots
	\end{bmatrix}
	.
\]
We endow $\triangularmatrices$ with a linear operator $[-1]
:\triangularmatrices \longrightarrow \triangularmatrices$, the
right shift on columns. In details, if $a$ is an element of
$\triangularmatrices$, then we have ${(a[-1])}_{ij} = a_{i,j-1}$
if $j \neq 0 \,\&\, j \leq i$ and ${(a[-1])}_{ij} = 0$ otherwise. This
endomorphism has three key properties:
\begin{itemize}
	\item the intersection
	      $\bigcap_{n\in\naturals}
	      \image({[-1]}^{\circ n})$ is trivial;
	\item for every $n\in\naturals$,
	      $\ker({[-1]}^{\circ n})$
	      is different from $\ker({[-1]}^{\circ n+1})$;
	\item for any sequence $\underline{a}
	      \in \triangularmatrices^{\naturals}$ the sum
	      \[
	      	\sum_{n\in\naturals} a_n [-n]
	      \]
	      is well defined in $\triangularmatrices$.
\end{itemize}
The last point comes from the fact that the sum reduces to a finite sum
on each column so that for $i,j\in \naturals$ we have
\[
	{\left(\sum_{n\in\naturals} a_n[-n]\right)}_{ij} =
	\sum_{n \leq j} {(a_n)}_{i,j-n} .
\]
Additionally, let $\extendedsummap : \triangularmatrices \longrightarrow
\reals$ be your preferred choice of linear extension of the usual sum of
finitely supported sequences. Although not invariant under the shift we
still have that
\[
	\image(\extendedsummap \circ [-1] ) =
	\image(\extendedsummap) = \reals,
\]
with the consequence that
\[
	\bigcap_{n\in\naturals}
	\image(\extendedsummap \circ {[-1]}^{\circ n}) \neq 0 .
\]
Our algebra is now declared to be the vector space
$\Lambdaup = \triangularmatrices \oplus \reals$
with the endomorphism
\[
	\varepsilonup =
	\begin{pmatrix}
	[-1] & 0 \\
	\extendedsummap & 0
	\end{pmatrix}.
\]
By construction we have:
\[
	\bigcap_{n\in\naturals}
	\image(\varepsilonup^n) = 0 \oplus \reals .
\]
Indeed, for $n \in \naturals$ let $a^n$ be the element of
$\triangularmatrices$ such that $a^n_{ij} = \kronecker_{in}
\kronecker_{j0}$, that is $a^n$ has only one non-zero coefficient in
position $(n,0)$. Since
\[
	\varepsilonup^n =
	\begin{pmatrix}
	{[-1]}^{\circ n} & 0 \\
	\extendedsummap \circ {[-1]}^{\circ n-1} & 0
	\end{pmatrix},
\]
we have for any real $\lambda$,
$\varepsilonup^n(\lambda a^n,0) = (0,\lambda)$.

Finally we construct the map
$\actionexample : \Lambdaup^{\naturals} \longrightarrow \Lambdaup$
in the following way. Let $\underline{a}$ be a sequence in $\Lambdaup$,
this is the data of sequences $\underline{a}'
\in \triangularmatrices^{\naturals}$ and $\underline{a}''
\in \reals^{\naturals}$, such that
$\underline{a} = (\underline{a}', \underline{a}'')$.
We let
\[
	\actionexample(\underline{a})
	\coloneqq \sum_{n\in\naturals} \varepsilonup^n(\underline{a})
	\coloneqq\left(\sum_{n\in\naturals} a_n'[-n],
	\extendedsummap\left(
	\sum_{n\in\naturals} a_{n+1}'[-n]\right) + a''_0
	\right) .
\]
The map $\actionexample : \Lambdaup^{\naturals}
\longrightarrow \Lambdaup$
is linear and satisfies the unit axiom by construction. Let us check
that it also satisfies the associativity axiom.

Let ${(a_{ij})}_{i,j\in\naturals}$ be a matrix with
coefficients in $\Lambdaup = \triangularmatrices \oplus \reals$. Summing
all columns, we get a sequence in $\Lambdaup$ whose general term is:
\[
	\actionexample_{j}(a_{ij}) =
	\left(\sum_{j\in\naturals} a_{ij}'[-j],
	\extendedsummap\left(\sum_{j\in\naturals}
	a_{i,j+1}'[-j]\right) + a_{i0}''\right) .
\]
Summing now on lines, we get:
\begin{align*}
	\actionexample_i \actionexample_j(a_{ij})' &= \sum_{i,j\in\naturals}
	a'_{ij}[-i-j];
	\\
	\actionexample_i \actionexample_j(a_{ij})'' &=
	\extendedsummap\left(\sum_{i,j\in\naturals}
	a_{i+1,j}'[-i-j]\right) + \extendedsummap\left(
	\sum_{j\in\naturals}a_{0,j+1}'[-j]\right)+ a_{00}'';
	\\
	&=
	\extendedsummap\left(\sum_{i+j
	\geq 1} a'_{ij} [-i-j+1]\right) + a_{00}'' .
\end{align*}

In parallel, by first summing on the anti-diagonal --- that is, using
the action of $\deltacoprod$ --- we get a sequence of general term
\[
	{\left({(a_{ij})}^\deltacoprod\right)}_n = \sum_{i+j = n} a_{ij} ,
\]
so that:
\begin{align*}
\actionexample\left(a_{ij}^\deltacoprod\right)' &= \sum_{n\in\naturals}
\sum_{i+j = n} a_{ij}'[-n];
\\
\actionexample\left(a_{ij}^\deltacoprod\right)'' &=
\extendedsummap
\left(\sum_{n\in\naturals} \sum_{i+j=n+1} a_{ij}'[-n]\right)
+ a_{00}'' .
\end{align*}
This ends the proof that $(\Lambdaup, \actionexample)$ is an algebra
over $(\reals[\formalx], \deltacoprod)$; it is not complete.
\section{General framework}

From now on we work with a closed symmetric monoidal additive category
$(\catss, \otimes, \monoidalunit)$ that is close enough to a category of
type $(\vect\fieldk, \otimes, \fieldk)$; that is we shall assume that
$\catss$ is semi-simple.

\subsection{Sign rules and notations for graded objects and chain
complexes}

We shall use the homological grading convention for chain complexes:
\[
	\dots \to X_2 \to X_1 \to X_0 \to X_{-1} \to X_{-2} \to \dots
\]
As a consequence the shift functor $\s$ shifts complexes `to the left',
that is $\s X_{n} = X_{n-1}$.
Let us denote by $\catss^{\integers}$
the category of graded objects in $\catss$.
The category $\catss^{\integers}$ of
sequences is endowed, as usual, with the
symmetric monoidal product
\[
	{( X \otimes_{{\catss}^{\integers}} Y)}_n
	= \bigoplus_{i+j=n} X_i \otimes_{\catss} Y_j,
\]
for any graded objects $X$ and $Y$. The braiding follows
the `sign rule': if $\sigma_{X_i,Y_j} : X_i \otimes Y_j \to
Y_j \otimes X_i$ is the braiding of $\otimes_{\catss}$,
then
\[
	\sigma^{\integers}_{X,Y} =
	\bigoplus_{i+j=n} {(-1)}^{ij} {\sigma}_{X_i,X_j}.
\]
The associated internal hom is given by
\[
	{[X,Y]}^{\catss^{\integers}}_n =
	=\prod_{p\in\integers}
	{\left[{(s^n X)}_p, Y_p\right]}^{\catss}.
\]
\begin{definition}
We shall denote by $\catofmod{\s^p}(\catss)$ the
category of modules over $\s^p$, for $p \in \integers$. An
object of this category is the data of a graded object $X$
with a degree $p$ map $d : \s^p X \to X$.
The category of chain complex $\chain\catss$
is the full subcategory of $\catofmod{\sinv}(\catss)$
whose objects $(X,d)$ satisfy the equation $d^2=0$;
\end{definition}

\begin{definition}[(Morphisms with degree)]
Let $X$ and $Y$ be two graded objects in $\catss$. A graded
morphism of degree $p$ (or degree $p$ map for short) from $X$ to $Y$ is
a morphism of graded objects $f: \s^p \monoidalunit \otimes X \to Y$.
We will usually denote $f$ as an arrow $f: X \to Y$. We shall denote
the degree $1$ morphism induced by the identity of $X$ by a map
$\degreeonemap : X \longrightarrow \s X$.
\end{definition}

Graded morphisms may be composed, given $g: Y \to Z$ of degree
$q$, $g\circ f$ is the degree $p+q$ map defined as
\[
	\begin{tikzcd}[ampersand replacement=\&]
		s^q \monoidalunit \otimes s^{p} \monoidalunit \otimes X
		\arrow[r]
		\arrow[r, "{\idfunctor \otimes f}"]
		\& 
		s^q \monoidalunit \otimes Y
		\arrow[d,"g"] \\
		s^{p+q} \monoidalunit \otimes X
		\arrow[u]
		\arrow[u, shift left=1.5, phantom, "\rotatebox{90}{\(\iso\)}"]
		\arrow[r, "g \circ f"]
		\& 
		Z.
	\end{tikzcd}
\]

They can also be tensored: $f \otimes g$ is again of degree $p+q$ and
is given by
\[
	\begin{tikzcd}[ampersand replacement=\&]
		s^{p} \monoidalunit \otimes s^{q} \monoidalunit \otimes X \otimes X'
		\arrow[r]
		\arrow[r, phantom, shift left=1.5, "\iso"]
		\& 
		\s^p \monoidalunit \otimes X \otimes  s^q \monoidalunit \otimes X
		\arrow[d,"f\otimes g"] \\
		s^{p+q} \monoidalunit \otimes X \otimes X'
		\arrow[u]
		\arrow[u, shift left=1.5, phantom, "\rotatebox{90}{\(\iso\)}"]
		\arrow[r, "f \otimes g"]
		\& 
		Y \otimes Y'
	\end{tikzcd}
\]
Equivalently, $f \otimes g$ can be defined by the composition
\[
	\begin{tikzcd}[ampersand replacement=\&]
		s^{p} \monoidalunit \otimes s^{q} \monoidalunit
		\arrow[r]
		\& 
		{[X,Y]} \otimes {[X',Y']}
		\arrow[d]
		\\
		s^{p+q}\monoidalunit
		\arrow[u]
		\arrow[u, shift left=1.5, phantom, "\rotatebox{90}{\(\iso\)}"]
		\arrow[r]
		\& 
		{[X \otimes X',Y \otimes Y']}.
	\end{tikzcd}
\]

Likewise, one defines $[f,g]$ of degree $p+q$ as the map associated to
\[
	\begin{tikzcd}[ampersand replacement=\&]
	s^{q} \monoidalunit  \otimes [X',Y]
	\otimes s^{p} \monoidalunit \otimes X
	\arrow[r, "\idfunctor\otimes f"]
	\&
	s^{q} \monoidalunit  \otimes [X',Y] \otimes X'
	\arrow[d, "\idfunctor\otimes\evfunctor_{X'}"]
	\\
	s^{q} \monoidalunit \otimes s^{p}
	\monoidalunit \otimes [X',Y] \otimes X
	\arrow[u]
	\arrow[u, shift left=1.5, phantom, "\rotatebox{90}{\(\iso\)}"]
	\&
	s^q \monoidalunit \otimes Y
	\arrow[d,"g"]
	\\
	s^{p+q} \monoidalunit \otimes [X',Y] \otimes X
	\arrow[u]
	\arrow[u, shift left=1.5, phantom, "\rotatebox{90}{\(\iso\)}"]
	\arrow[r]
	\&
	Y'.
	\end{tikzcd}
\]

\begin{lemma}%
\label{thm:sign}
The above notations satisfy the sign rules
\[
	\begin{cases}
		(f \otimes g) \circ (f' \otimes g') = {(-1)}^{|g||f'|} (f\circ f')
		\otimes (g \circ g');\\
		[f,g] \circ [f',g'] = {(-1)}^{|f||f'|} {(-1)}^{|f||g'|}
		[f' \circ f , g \circ g'];\\
		[f,[g,h]] = {(-1)}^{|f||g|} [f \otimes g , h].
	\end{cases}
\] 
\end{lemma}

Let us introduce some new notations to distinguish between two possible
compositions: given two maps of symmetric sequences $f : M \to N$
and $g : P \to P$, there are two different ways to build a map from
$M \compofsymseq P$ to $N \compofsymseq P$ by either inserting $g$
$n$-times in arity $n$ (that would be called $f \compofsymseq g$) or by
inserting $g$ only once in each arity and completing with identities
(that would be called $f \compofsymseq'g$).

\begin{itemize}
	\item Let $f: M \to N$ be a morphism of graded symmetric sequences
	      and let $g:M \to N$ be a degree $p$ map which commutes with the
	      actions of the symmetric groups. Then
	      $\sha (f,g) : M^{\convolution n}
	      \longrightarrow N^{\convolution n}$
	      is the degree $p$ map which commutes with
	      the right actions of the symmetric groups and which is defined
	      as
	      \[
	      	\sha (f,g) = \sum_i f^{\convolution i}\convolution g
	      	\convolution f^{ \convolution n-1-i}.
	      \]
	      The map $\sha (f,g)$ commutes also with the left action
	      of the symmetric group $\symgroup n$;
	\item Suppose that we also have a morphism of graded symmetric
	      sequences $f':M'\to N'$. Then
	      \[
	      	f' \compofsymseq \sha(f,g) : M' \compofsymseq M
	      	\longrightarrow N' \compofsymseq N,
	      \]
	      is the degree $p$ map which commutes with the actions
	      of the symmetric groups and given on
	      $M'(n) \otimes_{\symgroup n} M^{\convolution n}$ by
	      $f \otimes_{\symgroup n} \sha (f,g)$;
	\item In the same vein, we will use the notation
	      \[
	      	f \compofsymseq' g
	      	\coloneqq f \compofsymseq \sha(\idfunctor,g).
	      \]
	\item Let $f: X \to Y$ be a morphism of graded objects in
	      $\catss$,
	      let $g:X \to Y$ be a degree $p$ map and let $M$ be a symmetric
	      sequence. Then we will denote by
	      \[
	      	{\sha (f,g)}^M : X^M \longrightarrow Y^M
	      \]
	      the degree $p$ map whose projection
	      on $\left[M(n),X^{\otimes n}\right]$ is
	      given by $\left[M(n), \sha (f,g)\right]$;
	\item Let $f: M \to N$ be a degree $p$ map of graded symmetric
	      sequences and let $X$ be a graded object of $\catss$.
	      Then we will denote by
	      \[
	      	X^f : X^N \longrightarrow X^M
	      \]
	      the degree $p$ map whose projection on
	      $[M(n),X^{\otimes n}]$ is given by $[f, \id {X^{\otimes n}}]$.
\end{itemize}

These notations satisfy the following sign rules.

\begin{lemma}%
\label{lemma:calcul-inversion}
Let $f: X \to Y$ be a morphism of graded objects of $\catss$,
let $g:X \to Y$ be a degree $p$ map and let $h:N \to M$ be a
degree $q$ map of graded symmetric
sequences. Then, we have the following equality between degree
$p+q$ maps from $X^M$ to $Y^N$
\[
	Y^{h} \circ {\sha (f,g)}^{N} = {(-1)}^{pq} {\sha (f,g)}^{M}
	\circ X^h.
\]
\end{lemma}

\begin{lemma}%
\label{thm:signe-pour-les-puissances}
Let $X $ be a graded object of $\catss$ and
let $f: P \to N$ and $g:N \to M$ be maps of graded symmetric
sequences of respective degrees $p$ and $q$.
Then, we have the following equality between degree
$p+q$ maps from $X^M$ to $X^P$
\[
	X^f \circ X^g = {(-1)}^{pq} X^{g\circ f}.
\]
\end{lemma}

\subsection{Graded coalgebras versus dg-coalgebras}

The goal of this subsection is to investigate the relation between
differential graded coalgebras over a dg-operad and the
graded coalgebras on its underlying graded operad

\begin{definition}
By a graded operad, we mean an operad in $\catss^\integers$; by a
differential graded operad, we mean an operad in $\chain \catss$.
Given a dg-operad $\operad$, we shall denote by
$\forgetd\operad$ its underlying graded operad.
\end{definition}

Let us begin by setting a notation and recalling well-known facts.

\begin{definition}%
\label{definition-D0}
Let us denote by $\dzero$ the following chain complex concentrated
in degree $0$ and $-1$.
\[
	\begin{tikzcd}
		\cdots \arrow[r] & 0 \arrow[r] & 0
		\arrow[r] & \monoidalunit \arrow[r, "\id {\monoidalunit}"]
		& \monoidalunit \arrow[r]
		& 0 \arrow[r] & \cdots
	\end{tikzcd}
\]
and by $\gradedd$ its underlying graded object. If we identify
$\gradedd$ with $\monoidalunit[\formalphi]$ where $\formalphi$ is a
formal variable of degree $-1$, its commutative algebra structure is
given by $\formalphi^2 = 0$ and its cocommutative coalgebra structure
is given by $\formalphi \mapsto \monoidalunit \otimes \formalphi +
\formalphi \otimes \monoidalunit$. Endowed with these two canonical
structures, $\gradedd$ becomes a bicommutative bigebra. The projection
on the degree zero part $\counitofd : \dzero \to \monoidalunit$ is
actually a chain map and $\dzero$ is in fact a cocommutative coalgebra in
the category of chain complexes.
\end{definition}

\begin{remark}%
\label{rmk:graded_versus_dg}
Since we have the equivalence of categories
\[
	\chain \catss
	\isonat \catofmod \gradedd \left({\catss^\integers}\right)
\]
and $\gradedd$ is a bicommutative bigebra, then
\begin{itemize}
	\item the forgetful functor $\forgetd: \chain {\catss} \to
	      \catss^{\integers}$ is bimonadic with adjoints given by
	      \[
	      	\begin{tikzcd}[ampersand replacement=\&]
	      		\chain {\catss} \arrow[rr, "\forgetd" description]
	      		\& \&
	      		\catss^{\integers};
	      		\arrow[ll, shift right=3, swap, "\gradedd
	      		\otimes -"]
	      		\arrow[ll, shift left=3, "{[\gradedd,-]}"]
	      	\end{tikzcd}
	      \]
	\item the forgetful functor $\forgetd$ is symmetric monoidal and
	      commutes also with internal homs;
	\item both adjoints $\gradedd \otimes -$ and $[\gradedd, -]$ are bilax
	      symmetric monoidal.
\end{itemize}
\end{remark}

\begin{remark}%
\label{rmk:free_graded_versus_free_dg}
As a straightforward consequence of these properties, for any symmetric
sequences $M,N$ and chain complex $X$, we have
\[
	\forgetd (M \compofsymseq N)
	\isonat \forgetd M \compofsymseq \forgetd N
	\qand \forgetd X^M \isonat \fullpower{\forgetd X}{\forgetd M}.
\]
In particular since $\forgetd$ commutes with all limits, for any
dg-operad $\operad$ and any chain complex we have
\[
	\forgetd\left(\freecog{\operad} X\right)
	\isonat \freecog{\forgetd\operad} \forgetd X.
\]
\end{remark}

\begin{remark}
Since both $\gradedd \otimes -$ and $[\gradedd,-]$ are lax symmetric
monoidal, they are also lax monoidal
for the $\compofsymseq$ product; it follows that they send
graded operads to dg-operads in such a way that the forgetful functor
\[
	\begin{tikzcd}[ampersand replacement=\&]
		\catofop\left({\chain\catss}\right)
		\arrow[rr, "\forgetd" description]
		\&\&
		\catofop\left({\catss^\integers}\right)
		\arrow[ll, shift left=3, "{[\gradedd{,}-]}"]
		\arrow[ll, shift right=3, "\gradedd \otimes -"']
	\end{tikzcd}
\]
is bimonadic.
\end{remark}

Let us show an analoguous statement for the categories of coalgebras.

\begin{proposition}%
\label{thm:forget_coderivation_preserves_limits}
Let $\operad$ be a dg-operad. Then the forgetful functor
$\forgetd$ is bimonadic:
\[
	\begin{tikzcd}[ampersand replacement=\&]
		\catofcog\operad
		\arrow[rr, "\forgetd" description]
		\&\&
		\catofcog{\forgetd\operad}
		\arrow[ll, shift left=3, "{[\gradedd{,}-]}"]
		\arrow[ll, shift right=3, "\gradedd \otimes -"']
	\end{tikzcd}
\]
\end{proposition}

\begin{proof}[Sketch of proof]
Since $\forgetd$ is known to be conservative, it is enough to
show that it has a left adjoint and a right adjoint. Let us sketch
the left adjoint (the right adjoint is built in an analogous
manner).

We will use natural compatibility relations between the tensor by
$\gradedd$ and the cotensor. Let $X$ be a graded object and
$M$ be a graded sequence, then there are natural maps
\[
	\gradedd \otimes X^M \longrightarrow \fullpower{\gradedd \otimes X}
	M\qand \gradedd \otimes X^M \longrightarrow X^{[\gradedd, M]}
\]
where the first one uses the fact that $\gradedd \otimes -$ is colax
symmetric monoidal:
\[
	\begin{tikzcd}
		\gradedd \otimes \displaystyle\prod_{n\in\naturals} {\internalhom
		{M(n)} {X^{\otimes n}}}^{\symgroup n}
		\ar[d]
		&
		\\
		\displaystyle\prod_{n\in\naturals}
		\fullpower{\gradedd \otimes \internalhom
		{M(n)} {X^{\otimes n}}}{\symgroup n}
		\ar[d] \ar[r]
		&
		\displaystyle\prod_{n\in\naturals} {\internalhom
		{\internalhom \gradedd {M(n)}} {X^{\otimes n}}}^{\symgroup n}
		\\
		\displaystyle\prod_{n\in\naturals} {\internalhom
		{M(n)} {\gradedd \otimes X^{\otimes n}}}^{\symgroup n}
		\ar[d]
		& \\
		\displaystyle\prod_{n\in\naturals} {\internalhom
		{M(n)} {\fullpower{\gradedd \otimes X}{\otimes n}}}^{\symgroup n}
	\end{tikzcd}
\]
Thanks to the monadicity of $\forgetd$ for dg-operads, we have the
unit morphism $\operad \to [\gradedd,\forgetd \operad]$. Let $V$ be
a $\forgetd \operad$-coalgebra, one can endow $\gradedd \otimes V$ with a
$\operad$-coalgebra structure using the coproduct $\gradedd \to \gradedd
\otimes \gradedd$:
\[
	\gradedd \otimes V \to
	\gradedd \otimes \left(V^{\forgetd\operad}\right)
	\to \gradedd \otimes \gradedd \otimes \left(V^{\forgetd\operad}\right)
	\to {(\gradedd \otimes V)}^{[\gradedd,\forgetd\operad]}
	\to {(\gradedd \otimes V)}^\operad.
\]
Given a $\forgetd\operad$-coalgebra $V$,
we have
$\forgetd(\power{(\gradedd \otimes V)}\operad)
\isonat \fullpower{\forgetd(\gradedd
\otimes V)}{\forgetd \operad}$%
~[\ref{rmk:free_graded_versus_free_dg}].
The unit of the adjunction is then given by the unit
at the level of the underlying graded objects $V \to \forgetd (\gradedd
\otimes V)$:
\[
	\begin{tikzcd}[ampersand replacement=\&]
		V
		\arrow[r, ""]
		\arrow[d, "a", swap]
		\& \forgetd(\gradedd \otimes V)
		\arrow[d, "\forgetd(\gradedd \otimes a)"] \\
		V^{\forgetd \operad}
		\arrow[r, "", swap]
		\& 
		\fullpower{\forgetd(\gradedd \otimes V)}{\forgetd \operad}
	\end{tikzcd}
\]
The same happens for the counit.
\end{proof}

\begin{remark}[(Wording)]%
\label{rmk:graded_epi_versus_dg_epi}
Since $\forgetd$ is both conservative and a left adjoint, it preserves
and reflects isomorphisms and epimorphisms. And since we have the
commutative square of conservative left adjoints
\[
	\begin{tikzcd}[ampersand replacement=\&]
		\catofcog\operad
		\arrow[r, "\forgetd"]
		\arrow[d, "\forget_\operad", swap]
		\& \catofcog{\forgetd\operad}
		\arrow[d, "\forget_{\forgetd\operad}"] \\
		\chain \catss
		\arrow[r, "\forgetd", swap]
		\& \catss^\integers,
	\end{tikzcd}
\]
we can freely switch between the terms ‘epimorphism’
‘degree-wise epimorphism’ when speaking of morphisms of
$\operad$-coalgebras or of a morphism of chain complexes.

On the contrary, the forgetful functor $\forget_\operad$ does not
a priori preserve monomorphisms and we shall always say precisely which
morphism is a monomorphism.
\end{remark}

\begin{definition}[(Degree-wise monomorphism)]%
\label{def:degree-wise_monomorphism_of_coalgebras}
We shall say that a morphism of $\operad$-coalgebras $f : V \to W$ is
a degree-wise monomorphism if $\forget_\operad f : \forget_\operad
V \to \forget_\operad W$ is a monomorphism of chain complexes. Since
$\forget_\operad$ is conservative, it is in particular a monomorphism
of $\operad$-coalgebras.
\end{definition}

A similar issue appears when dealing with epimorphisms of
$\coperad$\=/algebras%
~[\ref{def:degree-wise_epimorphism_of_algebras}].

\subsection{Semi-simplicity}%
\label{sec:hypothese_semi_simple}

\begin{center}
\begin{tikzpicture}
	\node[draw = black, rectangle, inner sep = 10pt, rounded corners]
	{%
	\begin{minipage}{0.8\textwidth}
		\begin{center}
			From now on we shall assume that $\catss$ is semi-simple.
		\end{center}
	\end{minipage}
	};
\end{tikzpicture}
\end{center}

\begin{definition}
We shall say that an abelian category $\catss$ is semi-simple if
every short exact sequence in $\catss$ splits.
\end{definition}

\begin{theorem}%
\label{thm:structure_de_modele_sur_ch}
There exists a combinatorial model structure on
the category $\chain {\catss}$ where
\begin{itemize}
	\item weak equivalences are the quasi-isomorphisms;
	\item fibrations are the degree-wise epimorphisms;
	\item cofibrations are the degree-wise monomorphisms.
\end{itemize}
\end{theorem}

\subsection{Monomorphy conditions}

\begin{center}
\begin{tikzpicture}
	\node[draw = black, rectangle, inner sep = 10pt, rounded corners]
	{%
	\begin{minipage}{0.7\textwidth}
		\begin{center}
			From now on we shall assume that
			$(\catss, \otimes, \monoidalunit)$ satisfies the monomorphy
			conditions.
		\end{center}
	\end{minipage}
	};
\end{tikzpicture}
\end{center}

\begin{definition}%
\label{def:mono_cond}
We shall say that $(\catss, \otimes , \monoidalunit)$ satisfies the
monomorphy conditions if:
\begin{itemize}
	\item for any $A, B, C, D \in \catss$ the natural map
	      \begin{equation*}
	      	\tag{$\otimes \leftrightarrow [-]$}\qquad
	      	\internalhom A B \otimes \internalhom C D \longrightarrow
	      	\internalhom {A \otimes C} {B \otimes D}
	      \end{equation*}
	      is a monomorphism;
	\item for any sequence $A, B_0, B_1, \dots \in \catss$, the
	      natural morphism
	      \begin{equation*}
	      	A \otimes \prod_{n \in \naturals} B_n \longrightarrow
	      	\prod_{n \in \naturals} A \otimes B_n
	      	\tag{$\otimes \leftrightarrow \prod$}
	      \end{equation*}
	      is a monomorphism.
\end{itemize}
\end{definition}

\begin{example}
If $R$ is a semi-simple ring and $\catss$ is the category of left
$R$-modules, then any closed symmetric monoidal structure on
$\catss$ satisfies the monomorphy conditions. One can see this
using the classification of bilinear functors on the category
of $R$-modules: they are given by tensoring over $R$ with
a left $R$-module $\Xi$ endowed with two other right $R$-module
structures%
~\cite{MR:125148, doi:10.1090/s0002-9939-1960-0118757-0,
doi:10.1016/j.jpaa.2010.06.024}.
One then has
\[
	M \otimes N = (\Xi \otimes_R M) \otimes_R N.
\]
This can be extended to the case where $R$ is non-unital and regular,
with $\catss$ being the category of regular left $R$-modules, on the
condition that the $R$-module $\Xi$ be regular for the
three $R$-actions%
~\cite{zbMATH:01203017}.
\end{example}

\begin{remark}
In any exact logos, because of the exactness of filtered colimits,
the canonical map
\begin{equation*}
	\bigoplus_{i \in I} \prod_{j \in J} A_{i,j} \longrightarrow
	\prod_{j\in J}\bigoplus_{i \in I} A_{i,j}
	\tag{$\oplus \leftrightarrow \prod$}
\end{equation*}
is a monomorphism for any family of objects indexed by
a set $I \times J$.
\end{remark}

\begin{proposition}%
\label{thm:ch_monomorphy}
If $(\catss, \otimes, \monoidalunit)$ satisfies the monomorphy
conditions, so does $(\chain {\catss}, \otimes, \monoidalunit)$.
\end{proposition}

\begin{proof}
Let us prove $(\otimes \leftrightarrow [-])$.
For any chain complexes $A,B,\cobarfunctor,D$, the natural 
composition
\[
	\begin{tikzcd}
		\displaystyle\prod_{p\in\integers} [A_p, B_{n+p}]
		\otimes \prod_{q\in\integers} [\cobarfunctor_q, B_{m+q}]
		\arrow[d]
		\\
		\displaystyle\prod_{p,q\in\integers} [A_p, B_{n+p}]
		\otimes [\cobarfunctor_q, D_{m+q}]
		\arrow[d]
		\\
		\displaystyle\prod_{p,q\in\integers}
		[A_p \otimes \cobarfunctor_q, B_{n+p} \otimes D_{m+q}]
		\end{tikzcd}
\]
is a monomorphism for any integers $n,m$ since $\catss$
satisfies the monomorphy conditions $(\otimes \leftrightarrow
\prod)$ and $(\otimes \leftrightarrow [-])$.
As a consequence the composition
\[
	[A,B] \otimes [\cobarfunctor, D] \longrightarrow
	\bigoplus_{n,m\in\integers}
	\displaystyle\prod_{p,q\in\integers}
	[A_p \otimes \cobarfunctor_q, B_{n+p} \otimes D_{m+q}]
\]
is a monomorphism.

Using $(\oplus \leftrightarrow \prod)$, one gets that
\[
	\begin{tikzcd}
		\displaystyle\bigoplus_{n,m\in\integers}
		\displaystyle\prod_{p,q\in\integers}
		\internalhom{A_p \otimes C_q}{B_{n+p} \otimes D_{m+q}}
		\arrow[d]
		\\
		\displaystyle\bigoplus_{\alpha\in\integers}
		\displaystyle\prod_{p,q\in\integers}
		\bigoplus_{n+m=\alpha}
		\internalhom{A_p \otimes C_q}{B_{n+p} \otimes D_{m+q}}
		\arrow[d]
		\\
		\displaystyle\bigoplus_{\alpha\in\integers}
		\displaystyle\prod_{p,q\in\integers}
		\internalhom {A_p \otimes C_q} {\bigoplus_{n+m = \alpha} B_{n+p}
		\otimes D_{m+q}} \isonat \internalhom {A \otimes \cobarfunctor}
		{B \otimes D}
	\end{tikzcd}
\]
is also a monomorphism.

The monomorphy condition $(\otimes \leftrightarrow \prod)$ is a direct
consequence of the
products in $\chain \catss$ being computed degree-wise%
~[\ref{rmk:graded_versus_dg}] and of $(\otimes \leftrightarrow \prod)$
and $(\oplus \leftrightarrow
\prod)$ from $\catss$.
\end{proof}

\begin{corollary}
The category of chain complexes $(\chain \catss, \otimes,
\monoidalunit)$ satisfies the rewrite conditions%
~[\ref{thm:conditions_de_reecriture}].
\end{corollary}

\begin{proof}
For this we only need to prove $(\otimes \leftrightarrow \cap)$.

First we remark that the tensor structure is exact because
$\catss$ is semi-simple. Then
let $A$ be a chain complex and let
\[
	\dots \subset B_n \subset \dots \subset B_1 \subset B_0 = B
\]
be a decreasing sequence of chain complexes. The intersection can be
obtained as the kernel of the natural map
\[
	\bigcap_{n\in\naturals} B_n = \ker \left(
	B \longrightarrow \prod_{n\in\naturals} B/B_n\right).
\]
By exactness one writes
\[
	A \otimes \bigcap_{n\in\naturals} B_n = \ker \left(
	A \otimes B \longrightarrow A \otimes
	\prod_{n\in\naturals} B/B_n\right).
\]
Thanks to the monomorphy condition $(\otimes \leftrightarrow
\prod)$%
~[\ref{thm:ch_monomorphy}], this is equal to
\[
	A \otimes \bigcap_{n\in\naturals} B_n = \ker \left(
	A \otimes B \longrightarrow
	\prod_{n\in\naturals} A \otimes B/B_n\right).
\]
Finally by exactness again, for every natural $n$
\[
	A \otimes B/B_n \isonat \coker \left(A \otimes B_n \longrightarrow
	A \otimes B\right),
\]
from which one gets
\[
	A \otimes \bigcap_{n\in\naturals} B_n = \ker \left(
	A \otimes B \longrightarrow
	\prod_{n\in\naturals} (A \otimes B)/(A \otimes B_n)\right) =
	\bigcap_{n\in\naturals}A \otimes B_n.
\]
\end{proof}
\section{Derivations and coderivations}

This section deals with the transition from graded operads to operads in
$\catofmod {\s^p}(\catss)$. The added data corresponds to what is called
a derivation. We develop in particular the theory of coderivations of
graded coalgebras over an operad and the theory of derivations of algebras
over  cooperads.

Let us first recall well-known facts about derivations and coderivations
on operads and  cooperads%
~\cite{doi:10.1007/978-3-642-30362-3}.

\begin{definition}
By graded operad, graded cooperad, graded algebra and graded coalgebra, we
mean an operad, cooperad etc.\ in the category $(\catss^{\integers},
\otimes, \monoidalunit)$ of graded objects in $\catss$.
\end{definition}

\begin{definition}
Let $(\operad ,m,\eta)$ be a graded operad. A degree $p$
derivation $d$ on this operad is the data of degree $p$ maps $d(n):
\operad (n) \to \operad (n)$ such that the following equation between
maps from $\operad  \compofsymseq \operad $ to $\operad $ holds:
\[
	d \circ m = m \circ
	\left( d \compofsymseq \id {\operad } + \id {\operad }
	\compofsymseq' d  \right).
\]
\end{definition}

\begin{definition}
Let $(\coperad,w,\tau)$ be a graded  cooperad. A degree $p$
coderivation $d$ on this cooperad is the data of degree $p$ maps $d(n):
\coperad(n) \to \coperad(n)$ such that the following equation between
maps from $\coperad$ to $\coperad \compofsymseq \coperad$ holds:
\[
	w \circ d = \left( d \compofsymseq \id \coperad +
	\id \coperad \compofsymseq' d  \right) \circ w.
\]
\end{definition}

\begin{proposition}
The category of graded operads endowed with a degree $p$ derivations
and whose morphisms commute with the derivations is isomorphic to the
category operads in $\catofmod {\s^p}(\catss)$. Similarly,
the category of graded  cooperads with degree $p$
coderivations and whose morphisms commutes with the coderivations is
equivalent to the category of  cooperads in
$\catofmod {\s^p}(\catss)$.
\end{proposition}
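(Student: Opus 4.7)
The plan is to construct explicit functors in both directions and check they are mutually inverse. The first step is to spell out concretely the monoidal structure on $\mathcal{M}\mathrm{od}_{s^p}(\mathcal{C})$. By the proof of \cref{thm:les_conditions_coeur_trefle_sont_satisfaites}, it is inherited from $(\mathcal{C}^{\mathbb Z}, \otimes)$ through the colax monoidal structure on $S^p$ induced by the natural isomorphism $s^p \otimes s^p \simeq (-1)^p s^{2p}$. For two $s^p$-modules $(X, d_X)$ and $(Y, d_Y)$, the induced degree $p$ map on $X \otimes Y$ is the Leibniz-type combination
\[
	d_X \otimes \mathrm{Id}_Y + \mathrm{Id}_X \otimes d_Y,
\]
with the signs dictated by the sign-rule lemmas of the preceding section.

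The next step is to propagate this through the construction of symmetric sequences. For a graded symmetric sequence $\mathcal{P}$ equipped with degree $p$ maps $d(n)$, the induced degree $p$ map on the monoidal product $\mathcal{P} \diamond \mathcal{P}$ inside $\mathcal{M}\mathrm{od}_{s^p}(\mathcal{C})_{\mathfrak{S}}$ unfolds, using the $\sha$-notation and \cref{lemma:signe-pour-les-puissances}, to precisely
\[
	d \diamond \mathrm{Id}_\mathcal{P} + \mathrm{Id}_\mathcal{P} \diamond' d,
\]
i.e.\ inserting $d$ in one slot at a time and completing with identities, with the correct Koszul signs accumulated along the way.

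Once this identification is made, the derivation equation $d \circ m = m \circ (d \diamond \mathrm{Id}_\mathcal{P} + \mathrm{Id}_\mathcal{P} \diamond' d)$ is literally the assertion that $m \colon \mathcal{P} \diamond \mathcal{P} \to \mathcal{P}$ is a morphism of $s^p$-modules in symmetric sequences. The compatibility with the unit $\eta$ is automatic after noting that the unit symmetric sequence $\mathbb{1}$ carries the zero $s^p$-action and that $d \circ \eta = 0$ follows from the derivation equation composed with the unit axiom. One then defines the functor from graded operads with derivation to operads in $\mathcal{M}\mathrm{od}_{s^p}(\mathcal{C})$ by $(\mathcal{P}, m, \eta, d) \mapsto ((\mathcal{P}, d), m, \eta)$, and its inverse by forgetting the $s^p$-action and reading off $d$ from it. Morphisms go along: commuting with the $s^p$-action is, arity by arity, commuting with the degree $p$ map, which is the definition of a morphism of graded operads with derivation. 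The coperad case is strictly dual: replace $m$ by $w$ and observe that $w$ being an $s^p$-module morphism from $\mathcal{Q}$ to $\mathcal{Q} \diamond \mathcal{Q}$ is exactly the coderivation equation.

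The only slightly delicate point is the identification of the induced degree $p$ map on $\mathcal{P} \diamond \mathcal{P}$, where one has to track the Koszul signs coming from the colax monoidal structure on $S^p$ through the symmetric-group (co)invariants defining $\diamond$. Everything else is an unfolding of definitions, so the proposition will essentially be a reformulation once the sign bookkeeping is done.
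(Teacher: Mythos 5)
Your argument is correct and is exactly the intended one: the paper states this proposition without proof (recalling it as a well-known fact whose verification is a direct computation), and your unfolding — identifying the induced degree $p$ map on $\mathcal{P} \diamond \mathcal{P}$ in $\mathcal{M}\mathrm{od}_{s^p}(\mathcal C)_{\mathfrak S}$ with $d \diamond \mathrm{Id}_{\mathcal P} + \mathrm{Id}_{\mathcal P} \diamond' d$ and reading the (co)derivation equation as the statement that $m$ (resp.\ $w$) is a morphism of $s^p$-modules — is that computation. The one point worth spelling out, which you correctly flag, is that $d \circ \eta = 0$ is forced by the derivation equation together with both unit axioms (compose with $\eta \diamond \mathrm{Id}_{\mathcal P}$ and then with $\mathrm{Id} \diamond \eta$), so the correspondence on objects is indeed a bijection and not merely a faithful inclusion.
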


\begin{proposition}%
\label{thm:derivation-operad-lie}
Let $(\operad ,m,\eta)$ be a graded operad. The derivations of $\operad$
may be organised into a graded Lie algebra
$(\dergrset \ast \operad, [-,-])$, where the bracket is given by
\[
	[d,d'] = d\circ d' - {(-1)}^{|d||d'|} d' \circ d .
\]
Similarly, for $(\coperad,w,\tau)$ a graded  cooperad. The
coderivations of $\coperad$ may be organised into a graded Lie algebra
$(\codergrset \ast \coperad,[-,-])$ whose bracket is defined
similarly.
\end{proposition}

\subsection{Derivations of algebras over  cooperads}

Let $(\coperad,w,\tau)$ be a graded  cooperad
and let $(\Lambda,a)$ be a graded $\coperad$\=/algebra.

\begin{definition}
Let $d_\coperad$ be a
degree $p$ coderivation of $\coperad$
We call a derivation of $\Lambda$ relatively to
$d_\coperad$ a degree $p$ map $d$ from $\Lambda$ to itself such that
the following equality holds between maps from $\Lambda^\coperad$ to
$\Lambda$
\[
	d \circ a = a \circ \left( {\sha(\idfunctor, d)}^{\coperad}
	- \Lambda^{d_{\coperad}} \right).
\]
\end{definition}

\begin{remark}
Let $d_\coperad$ be a coderivation of $\coperad$. Then, the category of
$\coperad$\=/algebras in
$\catofmod{\s^p}(\catss)$ is canonically isomorphic to the category
of graded $\coperad$\=/algebras with a derivation relatively to
$d_\coperad$ and whose morphisms commute with the derivations.
\end{remark}

\begin{definition}
We shall denote by $\dergrset\ast{\coperad, \Lambda}$ the graded set
whose degree $p$ part is made up of pairs $(d_\coperad, d_\Lambda)$
where $d_\coperad$ is a degree $p$ coderivation of $\coperad$
and $d_\Lambda$ is a degree $p$ derivation 
of $\Lambda$ relatively to the coderivation $d_\coperad$ of
$\coperad$.
\end{definition}

\begin{proposition}%
\label{thm:addition_of_derivations}
For any integer $p$, the set $\dergrset{p}{\coperad, \Lambda}$ has 
a canonical structure of commutative group whose sum is
given by the formula
\[
	(d_\coperad, d_\Lambda) + (d_\coperad', d_\Lambda')
	= (d_\coperad + d_\coperad, d_\Lambda + d_\Lambda).
\]
\end{proposition}

\begin{proof}
One has
\begin{align*}
	\left(d_\Lambda + d_\Lambda' \right) \circ a
	&=  a \circ \left( 
	{\sha(\idfunctor, d_\Lambda)}^{\coperad}
	- \Lambda^{d_{\coperad}}
	\right) + 
	a \circ \left( 
	{\sha(\idfunctor, d_\Lambda')}^{\coperad}
	- \Lambda^{d_{\coperad}'}
	\right)
	\\
	&= a \circ \left( 
	{\sha(\idfunctor, d_\Lambda+ d_\Lambda')}^{\coperad}
	- \Lambda^{d_{\coperad}+d_{\coperad}'}
	\right).
\end{align*}
\end{proof}

\begin{proposition}%
\label{thm:bracket_derivation_algebras}
The graded commutative group 
$\dergrset\ast{\coperad, \Lambda}$ has a canonical
structure of a graded Lie algebra whose bracket is given by
\[
	\left[(d_\coperad, d_\Lambda),(d_\coperad', d_\Lambda')\right] 
	\coloneqq
	\left( [d_\coperad,d_\coperad'], [d_\Lambda,d_\Lambda']\right),
\]
where
\[
	\left[d_\Lambda,d_\Lambda'\right] \coloneqq 
	d_\Lambda\circ d_\Lambda'
	- {(-1)}^{|d_\Lambda||d_\Lambda'|} d_\Lambda' \circ d_\Lambda.
\]
\end{proposition}

\begin{proof}
In view of the definition of the bracket, it is enough to check that
for any two derivations $d$ and $d'$ of $\Lambda$, the bracket
$[d,d']$ is again a derivation of $\Lambda$.

For any natural $n$, one has
\begin{align*}
	{\sha(\idfunctor,[d,d'])}^{\otimes n}
	&= {\sha(\idfunctor,d\circ d')}^{\otimes n} - {(-1)}^{|d||d'|}
	{\sha(\idfunctor, d' \circ d )}^{\otimes n}
	\\
	\begin{split}
		&={\sha(\idfunctor, d)}^{\otimes n}
		\circ {\sha(\idfunctor, d')}^{\otimes n}
		\\
		& \qquad - {(-1)}^{|d||d'|} {\sha(\idfunctor, d')}^{\otimes n}
		\circ {\sha(\idfunctor, d)}^{\otimes n},
	\end{split}
\end{align*}
from which one gets%
~[\ref{lemma:calcul-inversion},~\ref{thm:signe-pour-les-puissances}]
\[
	\left[d_\Lambda, d_\Lambda'\right] \circ a
	=
	a \circ \left(
	{\sha\left(\idfunctor,
	\left[d_\Lambda,d_\Lambda'\right]\right)}^\coperad
	-\Lambda^{\left[d_\coperad,d_\coperad'\right]}
	\right).
\]
\end{proof}

\subsection{Ideals and derivations}

Let $\coperad$ be a graded  cooperad with coderivation and let
$(\Lambda,a,D)$ be a $\coperad$-algebra.

\begin{proposition}
A graded ideal $\ideali$ of $(\Lambda,a)$ is an ideal of $(\Lambda, a,
D)$ if and only if $D(\ideali) \subobject \ideali$.
\end{proposition}

\begin{proof}
We already know that $\ideali$ is a graded ideal, so that the
quotient map $\Lambda \longrightarrow \Lambda/\ideali$ is a map of
graded $\coperad$-algebras. Since $\ideali$ is the kernel of that
map, the descend condition for $\Lambda/\ideali$ to inherit the
derivation of $\Lambda$ is that $D(\ideali) \subobject \ideali$.
\end{proof}

\begin{example}
For any natural $n$, $D(\ideal n \Lambda) \subset \ideal n \Lambda$.
\end{example}

\begin{proposition}%
\label{thm:derivation_sur_ideal_engendre}
Let $X$ be a graded subobject of $\Lambda$ and let us write $\ideali
= (X)$ the graded ideal generated by $X$. If $D(X)\subobject \ideali$,
then $D(\ideali)\subobject \ideali$.
\end{proposition}

\begin{proof}
Since $D$ is a derivation of $\Lambda$ and we have both
$X \subobject \ideali$ and $D(X) \subobject \ideali$,
one has
\[
	D(\ideali)
	= D\left(a\left({\sha(\Lambda, X)}^\coperad\right)\right)
	\subobject a\left({\sha(\Lambda, \ideali)}^\coperad\right)
	 = \ideali .
\]
\end{proof}

\subsection{Derivations of free algebras}

In this subsection $(\coperad,w,\tau,d_\coperad)$ is again a graded
 cooperad equipped with a degree $p$ coderivation. We deal
here with the
derivations of free graded $\coperad$-algebras, that is algebras of
the form $(X^\coperad, \canonicalaction)$, for $X$ a graded object. We
will denote by $\canonicalinj$ the map $X^\tau : X^{\monoidalunit} \to
X^\coperad$. In the case where $\Lambda=X^\coperad$ is a free graded
$\coperad$-algebra, the affine
space of derivations relative to $d_\coperad$ has a canonical base
point.

\begin{definition}%
\label{def:derivation-canonique-induite}
The degree $p$ derivation of $X^\coperad$ relative to $d_\coperad$
\[
	\canonicaldiff \coloneqq - X^{d_\coperad}
\]
shall be called the canonical derivation.
\end{definition}

\begin{proposition}%
\label{thm:derivation-algebre-libre}
The map $d \mapsto d \circ \canonicalinj$,
sending derivations of $X^\coperad$ relatively to
$d_\coperad$ to degree $p$ maps $X \to X^\coperad$, admits an
inverse given by
\[
	\induceddiff f
	:= \canonicaldiff + \canonicalaction
	\circ   {\sha(\canonicalinj,f)}^\coperad.
\]
\end{proposition}

\begin{proof}
It is enough to show the proposition in the case where $d_\coperad$
is trivial%
~[\ref{thm:addition_of_derivations}].

Let us first show that $d \mapsto d \circ \canonicalinj$ is injective.
Given a derivation $d$ relatively to zero,
one can draw the commutative digram
\[
	\begin{tikzcd}[ampersand replacement=\&]
		{\left(X^{\monoidalunit}\right)}^\coperad
		\arrow[r,"\canonicalinj^\coperad"]
		\arrow[rd]
		\arrow[rd, phantom, shift left, "\rotatebox{-30}{\(\sim\)}"]
		\& {\left(X^\coperad\right)}^\coperad  \arrow[d,"\canonicalaction"]
		\arrow[rrr," {
		{\sha (\idfunctor, d)}^{\coperad}}"]
		\&\&\& {\left(X^\coperad\right)}^\coperad
		\arrow[d,"\canonicalaction"] \\
		\& X^\coperad  \arrow[rrr,"d"] \&\&\& X^\coperad.
	\end{tikzcd}
\]
Then,
\[
	d= \canonicalaction \circ  {\sha (\idfunctor, d)}^\coperad \circ
	\canonicalinj^{\coperad}
	=\canonicalaction\circ
	{\sha(\canonicalinj,d\circ\canonicalinj)}^\coperad.
\]
Then $d$ is determined by $d \circ\canonicalinj$.
Conversely, let $f$ be a degree $p$ map from
$X$ to $X^\coperad$. Let us prove that the map 
\[
	\induceddiff f
	= \canonicalaction  \circ   {\sha(\canonicalinj,f)}^\coperad
\]
is a derivation. This is equivalent to the following square 
being commutative.
\[
	\begin{tikzcd}[ampersand replacement=\&]
		{\left(X^\coperad\right)}^\coperad
		\arrow[rrr," {{\sha(\idfunctor,\induceddiff f )}^\coperad}"]
		\arrow[d]
		\&\&\& {\left(X^\coperad\right)}^\coperad \arrow[d]\\
		X^{ \coperad} \arrow[rrr,"\induceddiff f "']
		\&\&\& X^{\coperad }.
	\end{tikzcd}
\]
This square can be decomposed as follows.
\[
	\begin{tikzcd}[ampersand replacement=\&]
		{\left(X^\coperad\right)}^\coperad
		\arrow[rrr,"{\sha
		\left(\canonicalinj^\coperad,{\sha
		(\canonicalinj,f)}^\coperad\right)}^\coperad"]
		\arrow[d]
		\&\&\& {\left({\left(X^\coperad\right)}^\coperad\right)}^\coperad
		\arrow[r]
		\arrow[d]
		\& {\left( X^{\coperad \circ \coperad} \right)}^\coperad
		\arrow[r]
		\arrow[d]
		\&  {\left(X^\coperad \right)}^\coperad
		\arrow[d]
		\\
		X^{\coperad \compofsymseq \coperad}
		\arrow[rrr,
		"{{\sha (\canonicalinj,f)}^{\coperad \compofsymseq \coperad}}"']
		\arrow[d,"{X^w}"]
		\&\&\& {\left(X^\coperad \right)}^{\coperad \compofsymseq \coperad}
		\arrow[r]
		\arrow[d,"{\left(X^\coperad \right)}^w"]
		\& X^{\coperad \compofsymseq \coperad \compofsymseq \coperad}
		\arrow[r]
		\arrow[d,"X^{ w \compofsymseq \idfunctor}"]
		\& X^{\coperad \compofsymseq \coperad} \arrow[d,"X^w"]
		\\
		X^{\coperad} \arrow[rrr,"{{\sha(\canonicalinj,f)}^\coperad}"']
		\&\&\& {\left(X^\coperad\right)}^{\coperad }
		\arrow[r]
		\& X^{\coperad  \compofsymseq \coperad}
		\arrow[r,"X^w"']
		\& X^{\coperad}
	\end{tikzcd}
\]
All these squares are commutative; $\induceddiff f $ is a derivation.
Finally, one has $\induceddiff f \circ \canonicalinj = f$.
\end{proof}

\begin{proposition}%
\label{thm:morphism_free_algebra}
Let $(\Lambda,a,d_\Lambda)$ be a \mbox{$\coperad$-algebra} equipped with
a derivation $d_\Lambda$ and let
$(X^\coperad,\canonicalaction,\induceddiff f )$ be a free graded
$\coperad$-algebra equipped with a derivation induced by a degree $p$
map $f: X \to X^\coperad$. The morphism of graded $\coperad$-algebras
$\tilde g: X^\coperad \to \Lambda$ induced by a graded map
$g: X\to \Lambda$ commutes with derivations if and only if
\[
	\tilde g \circ  f = d_\Lambda \circ g.
\]
\end{proposition}

\begin{proof}
Suppose that $\tilde g \circ \induceddiff f = d_\Lambda
\circ \tilde g$. Then, $\tilde g \circ \induceddiff f
\circ\canonicalinj= d_\Lambda \circ \tilde g \circ\canonicalinj$,
that is, $\tilde g \circ f = d_\Lambda \circ g$. Conversely,
suppose that $\tilde g \circ f = d_\Lambda \circ g$. On the one
hand, $\tilde g \circ \induceddiff f $ equals the sum of the two
following compositions:
\[
	\begin{tikzcd}[ampersand replacement=\&]
		X^\coperad \arrow[rr,"{{\sha(\canonicalinj,f)}^\coperad}"]
		\& \& {\left(X^\coperad\right)}^\coperad
		\arrow[r,"{{\left(g^\coperad\right)}^\coperad}"]
		\& {\left(\Lambda^\coperad\right)}^\coperad \arrow[r] \& \Lambda.\\
		X^\coperad \arrow[rr,"{-X^{d_\coperad}}"]
		\& \& X^\coperad \arrow[r,"g^\coperad"]
		\& \Lambda^\coperad \arrow[r] \& \Lambda.
	\end{tikzcd}
\]
On the other hand, since $d_\Lambda$ is a derivation,
$d_\Lambda \circ \tilde g$ equals the sum of the two following
compositions:
\[
	\begin{tikzcd}[ampersand replacement=\&]
		X^\coperad \arrow[r,"g^\coperad"]
		\& \Lambda^\coperad
		\arrow[rr,"{{\sha ( \idfunctor, d_\Lambda )}^\coperad}"]
		\&\& \Lambda^\coperad \arrow[r] \& \Lambda.\\
		X^\coperad \arrow[r,"g^\coperad"]
		\& \Lambda^\coperad \arrow[rr,"{-\Lambda^{d_\coperad}}"]
		\&\& \Lambda^\coperad \arrow[r] \& \Lambda.
	\end{tikzcd}
\]
The first composite map equals the third one, and the
second one equals the fourth one.
\end{proof}

\begin{proposition}%
\label{thm:squarederiv}%
\label{thm:derivation_de_carre_nul}
Assume $p$ is odd, then
\[
	\induceddiff f ^2
	=\canonicalaction \circ \sha(\canonicalinj, \induceddiff f \circ f )
	+ \canonicaldiff^2.
\]
In particular, if $d_\coperad^2 = 0$, then $\induceddiff f ^2 = 0$ if
and only if $\induceddiff f \circ f = 0$.
\end{proposition}

\begin{proof}
The square $\induceddiff f ^2$ is the following map
\begin{align*}
	\induceddiff f ^2 &= \canonicaldiff^2 
	- {\left(X^\coperad\right)}^{d_\coperad} \circ\canonicalaction \circ
	{\sha(\canonicalinj,f)}^\coperad
	-\canonicalaction \circ {\sha(\canonicalinj,f)}^\coperad
	\circ {\left(X^\coperad\right)}^{d_\coperad}\\
	&\qquad+\canonicalaction \circ {\sha(\canonicalinj,f)}^\coperad
	\circ\canonicalaction \circ {\sha(\canonicalinj,f)}^\coperad.
\end{align*}
Since $\canonicaldiff$ is a derivation, we have the following equality
between maps from ${\left(X^\coperad\right)}^\coperad$ to $X^\coperad$
\[
	X^{d_\coperad} \circ \canonicalaction
	=\canonicalaction \circ {\left(X^\coperad\right)}^{d_\coperad}
	+\canonicalaction \circ
	\sha {\left(\idfunctor, X^{d_\coperad}\right)}^\coperad.
\]
Since the degrees of $d_\coperad$ and $f$ are odd, then 
\[
	{\left(X^\coperad\right)}^{d_\coperad}
	\circ  {\sha(\canonicalinj,f)}^\coperad
	+  {\sha(\canonicalinj,f)}^\coperad
	\circ X^{d_\coperad} = 0.
\]
So
\begin{align*}
	{\left(X^\coperad\right)}^{d_\coperad}\circ\canonicalaction\!\circ
	{\sha(\canonicalinj,f)}^\coperad 
	+\canonicalaction \circ {\sha(\canonicalinj,f)}^\coperad
	\circ {\left(X^\coperad\right)}^{d_\coperad}
	&=\canonicalaction \circ
	{\sha (\idfunctor, X^{d_\coperad})}^\coperad
	\!\!\circ {\sha(\canonicalinj,f)}^\coperad \\
	&= \canonicalaction \circ {\sha (\canonicalinj,
	X^{d_\coperad} \circ f)}^\coperad.
\end{align*}
The last equality follows from the fact that $X^{d_\coperad}
\circ \canonicalinj=0$. Moreover, since $\canonicalaction \circ
{\sha(\canonicalinj,f)}^\coperad$ is a derivation relatively to zero,
then,
\[
	\canonicalaction \circ {\sha(\canonicalinj,f)}^\coperad \circ
	\canonicalaction \circ {\sha(\canonicalinj,f)}^\coperad
	= \canonicalaction \circ {\sha \left(\idfunctor, \canonicalaction
	\circ {\sha (\canonicalinj,f)}^\coperad\right)}^\coperad
	\circ {\sha(\canonicalinj,f)}^\coperad.
\]
Since the degree of $f$ is odd and since $f = \canonicalaction \circ
{\sha(\canonicalinj,f)}^\coperad \circ \canonicalinj$, then
\[
	{\sha \left(\idfunctor, \canonicalaction
	\circ {\sha (\canonicalinj,f)}^\coperad\right)}^\coperad
	\circ {\sha(\canonicalinj,f)}^\coperad
	= {\sha\left(\canonicalinj , \canonicalaction
	\circ {\sha(\canonicalinj,f)}^\coperad \circ f \right)}^\coperad.
\]
Hence,
\[
	\canonicalaction^\coperad \circ {\sha \left(\canonicalinj^\coperad,
	{\sha (\canonicalinj,f)}^\coperad\right)}^\coperad
	\circ {\sha(\canonicalinj,f)}^\coperad
	= \sha(\canonicalinj, \canonicalaction
	\circ  {\sha(\canonicalinj,f)}^\coperad \circ f) .
\]
Finally, one has
\[
	\induceddiff f ^2 = \canonicalaction \circ
	{\sha(\canonicalinj, (\canonicalaction
	\circ {\sha(\canonicalinj,f)}^\coperad-
	X^{d_\coperad})\circ f)}^\coperad +\canonicaldiff^2
	=\canonicalaction  \circ \sha(\canonicalinj, \induceddiff f \circ f )
	+ \canonicaldiff^2.
\]
\end{proof}

\subsection{Coderivation of a coalgebra over an operad}

Let $(\operad, m, \eta)$ be a graded operad and let $(V,a)$ be a
$\operad$-coalgebra.

\begin{definition}%
\label{def:coderivation}
Let $d_\operad $ be a degree $p$ derivation of $\operad $.
A coderivation of $V$ relatively to $d_\operad $ is a degree $p$ map
$d: V\to V$ such that
\[
	a \circ d =  \left( {\sha (\id {V}, d)}^\operad  -V^{d_\operad }
	\right) \circ  a.
\]
\end{definition}

\begin{remark}%
\label{rmk:coderaffine}
Similarly to the case of derivations of algebras over 
cooperads%
~[\ref{thm:bracket_derivation_algebras}],
the coderivations of $V$ relative to derivations of $\operad$ can be
organised into a graded Lie algebra $\codergrset \ast
{\operad, V}$.
\end{remark}

\subsection{Coderivations of cofree coalgebras}

In this subsection $(\operad ,m,\eta,d_\operad )$ is again a graded
operad equipped with a degree $p$ derivation. We deal here with the
coderivations of the cofree graded $\operad $-coalgebras, that is the
graded coalgebras of the form $\freecog\operad X$. As before, we shall use
$\canonicalproj$ for the map induced by $X^\eta$
\[
	\canonicalproj : X^\operad  \longrightarrow X
\]
or its composite
\[
	\canonicalproj : \freecog\operad (X) \subobject X^\operad
	\longrightarrow X .
\]

\begin{lemma}%
\label{thm:definition-de-sha-sur-LOmega}
Let $X,Y$ be two objects of $\catss^{\integers}$. Let $f: X\to Y$ be
a graded morphism and let $g: X \to Y$ be a degree $p$ map. Then, the
composite map
\[
	\freecog\operad  (X) \subobject X^\operad 
	\xrightarrow{{\sha(f, g)}^\operad }
	Y^\operad 
\]
factorises through $\freecog\operad (Y)$. The induced map from
$\freecog\operad X$ to $\freecog\operad Y$ shall also be denoted
${\sha(f, g)}^\operad $.
\end{lemma}

\begin{proof}
The strategy of the proof is to show by induction that the
shuffle map ${\sha(f,g)}^\operad $ is well defined as map from
$\freefunctor_n^\operad (X)$ to $\freefunctor_n^\operad (Y)$. Let us
reuse the notations $\iotaup_n$ and $\deltaup_n$ to denote the maps
constructed inductively
\begin{align*}
	\iotaup_n(X) : \freefunctor_n^\operad  (X)
	&\to \freefunctor_{n-1}^\operad  (X)\\
	\deltaup_n(X): \freefunctor_n^\operad  (X)
	&\to \freefunctor_{n-1}^\operad 
	\circ \freefunctor_{n-1}^\operad  (X).
\end{align*}
Level $n=0$ is given by assumption, here is why it factors through
$\freefunctor_1^\operad $.
Let us recall the construction of $\freefunctor_1(X)$
\[
	\begin{tikzcd}[ampersand replacement=\&]
		\freefunctor_1(X)
		\arrow[r, "\deltaup_1(X)"]
		\arrow[d, "\iotaup_1(X)",swap]
		\arrow[rd, very near start, phantom, "\lrcorner"]
		\& {\left(X^\operad \right)}^\operad 
		\arrow[d, "\laxmap(\operad {,}\operad {,}X)"] \\
		X^\operad 
		\arrow[r, "X^m", swap]
		\& X^{\operad \compofsymseq\operad }
	\end{tikzcd}
\]
Hence the map ${\sha(f,g)}^\operad $ factors through $\freefunctor_1(Y)$
if we can define another map $\freefunctor_1^\operad (X) \to
\power {(Y^\operad)}\operad$ such that both maps coincide in
$Y^{\operad \compofsymseq\operad }$. Our candidate is the following map
\[
	\freefunctor_1^\operad  X \xrightarrow{\deltaup_1(X)}
	{\left(X^\operad \right)}^\operad 
	\xrightarrow{{\sha\left(f^\operad ,
	{\sha(f,g)}^\operad \right)}^\operad }
	{\left(Y^\operad \right)}^\operad  .
\]
Then,
\begin{align*}
	Y^m \circ {\sha(f,g)}^\operad  \circ \iotaup_1(X)
	& = {\sha(f,g)}^{\operad  \compofsymseq \operad }
	\circ  X^m \circ \iotaup_1(X)\\
	&= {\sha(f,g)}^{\operad  \compofsymseq \operad }\circ
	\laxmap(\operad , \operad , X) \circ \deltaup_1(X)\\
	&= \laxmap(\operad , \operad , Y)\circ
	{\sha\left(f^\operad , {\sha(f,g)}^\operad \right)}^\operad 
	\circ \deltaup_1(X).
\end{align*}
So that ${\sha(f,g)}^\operad  \circ \iotaup_1(X)$ factors through
$\freecog\operad _1(Y)$.

As the diagrams involved for the general case are pretty inconvenient to
display, we shall only show how to induce to $\freefunctor_2^\operad $;
the general case is treated in the same way. For this let us write the
full diagram of definition of $\freecog\operad _2(X)$~%
\cite[Section 2.7 Diagram D]{arXiv:1409.4688}:
\[
	\begin{tikzcd}[ampersand replacement=\&]
		\freefunctor_2^\operad (X)
		\arrow[r, ""]
		\arrow[d, "", hookrightarrow]
		\& \freefunctor_1^\operad  \circ \freefunctor_1^\operad (X)
		\arrow[d, hookrightarrow]
		\arrow[r, shift left]
		\arrow[r, shift right]
		\& {\left({\left(X^\operad \right)}^\operad \right)}^\operad 
		\arrow[dd, hookrightarrow]
		\\
		\freefunctor_1^\operad (X)
		\arrow[d, hookrightarrow]
		\arrow[r, "", swap]
		\& {\left(X^\operad \right)}^\operad 
		\arrow[d, hookrightarrow]
		\&
		\\
		X^\operad 
		\arrow[r]
		\& X^{\operad \compofsymseq\operad }
		\arrow[r, shift right]
		\arrow[r, shift left]
		\& X^{\operad  \compofsymseq \operad 
		\compofsymseq \operad } .
	\end{tikzcd}
\]
As $\freefunctor_2^\operad (X)$ is the limit of a diagram of the shape
above, in order to show that ${\sha(f,g)}^\operad $ can be factored
through as a map
\[
	{\sha(f,g)}^\operad  : \freefunctor_2^\operad (X)
	\longrightarrow \freefunctor_2^\operad (Y) .
\]
it is enough to construct of morphism between the two diagrams that
define them. Using what we have built before, we already have what to
use for the lower left corner. For the third column we shall use the
following morphisms
\begin{align*}
	{\sha(f,g)}^{\operad  \compofsymseq \operad 
	\compofsymseq \operad } :  X^{\operad \compofsymseq
	\operad  \compofsymseq \operad } & \longrightarrow
	Y^{\operad  \compofsymseq \operad 
	\compofsymseq \operad } \\
	\sha\left(f^\operad ,
	\sha\left(f^\operad ,{\sha(f,g)}^\operad \right)\right) :
	{\left({\left(X^\operad \right)}^\operad \right)}^\operad 
	& \longrightarrow
	{\left({\left(Y^\operad \right)}^\operad \right)}^\operad  .
\end{align*}
Finally, for the middle top spot, we use the composition
\[
	\sha\left(f^\operad , {\sha(f,g)}^\operad \right) :
	\freefunctor_1^\operad \circ \freefunctor_1^\operad (X)
	\longrightarrow \freefunctor_1^\operad
	\circ \freefunctor_1^\operad (Y)
\]
where by $f^\operad $ we mean $\freefunctor_1^\operad (f) :
\freefunctor_1^\operad (X) \to \freefunctor_1^\operad (Y)$.

We now have to check the commutativity of this middle top arrow with
each of the top parallel pair of arrows. Fortunately, thanks to the
monomorphisms in the diagram, this can be checked using the bottom
parallel pair where it becomes nothing but the equations
\begin{align*}
	Y^{\id \operad \compofsymseq m} \circ
	{\sha(f,g)}^{\operad \compofsymseq\operad } & =
	{\sha(f,g)}^{\operad  \compofsymseq \operad 
	\compofsymseq \operad } \circ X^{\id \operad 
	\compofsymseq\operad } ;
	\\
	Y^{m\compofsymseq \id \operad } \circ 
	{\sha(f,g)}^{\operad \compofsymseq\operad } & =
	{\sha(f,g)}^{\operad  \compofsymseq \operad 
	\compofsymseq \operad } \circ X^{m
	\compofsymseq\id \operad } .
\end{align*}
Such a morphism of diagrams induces a morphism between their limits
\[
	{\sha(f,g)}^\operad  : \freefunctor_2^\operad (X) \longrightarrow
	\freefunctor_2^\operad (Y) .
\]
Repeating this proof for every $n$, we obtain factorisation
\[
	{\sha(f,g)}^\operad  : \freefunctor_n^\operad (X)
	\to \freefunctor_n^\operad (Y)
\]
for every natural $n$ and thus a map on the intersection:
\[
	{\sha(f,g)}^\operad  :
	\bigcap_{n\in\naturals} \freecog\operad (X) \longrightarrow
	\bigcap_{n\in\naturals}
	\freefunctor_n^\operad (Y) .
\]
\end{proof}

We also need to know that we can induce a coderivation on
$\freecog\operad (X)$ by defining it on $X^\operad $.

\begin{lemma}%
\label{thm:reduction-coderivation-a-LOmega}
The following composite map
\[
	\freecog\operad  X \subobject X^\operad 
	\xrightarrow{X^{d_\operad }} X^{\operad }
\]
factorises through $\freecog\operad X$. The induced map from
$\freecog\operad X$ to itself will also be denoted $X^{d_\operad }$.
\end{lemma}

\begin{proof}
We use the same notations and the same layout as in the proof of
\cref{thm:definition-de-sha-sur-LOmega}.
Consider the two following degree $p$ composite maps
\begin{align*}
	& \freefunctor_1^\operad  X \xrightarrow{\iotaup_1(X)} X^\operad 
	\xrightarrow{X^{d_\operad }} X^\operad .\\
	& \freefunctor_1^\operad  X \xrightarrow{\deltaup_1(X)}
	{\left(X^\operad \right)}^\operad 
	\xrightarrow{{\left(X^\operad \right)}^{d_\operad }
	+ {\sha\left(\id X, X^{d_\operad }\right)}^\operad }
	{\left(X^\operad \right)}^\operad .
\end{align*}
Then,
\begin{align*}
	X^m \circ X^{d_\operad }
	\circ \iotaup_1(X)& = X^{d_\operad  \circ
	 m}  \circ \iotaup_1(X)
	\\
	&=X^{m \circ (d_\operad  \compofsymseq
	\id \operad  + \id \operad 
	\compofsymseq' d_\operad )} \circ \iotaup_1(X)
	\\
	&= X^{d_\operad  \compofsymseq \id \operad  +
	\id \operad  \compofsymseq' d_\operad }
	\circ X^m \circ \iotaup_1(X)
	\\ 
	&= X^{d_\operad  \compofsymseq \id \operad  +
	\id \operad  \compofsymseq' d_\operad }
	\circ \laxmap(\operad , \operad , X) \circ \deltaup_1(X)
	\\
	&= \laxmap(\operad , \operad , X) \circ
	\left({\left(X^\operad \right)}^{d_\operad }
	+ {\sha(\id {X^\operad },
	X^{d_\operad })}^\operad \right) \circ \deltaup_1(X).
\end{align*}
Thus, both maps and in particular $X^{d_\operad } \circ \iotaup_1(X)$
factorise through $\freefunctor_1^\operad  X$.

For the induction to $\freefunctor_2^\operad $, we shall only describe
the relevant maps that induce a morphism of diagrams and refer to the
proof of the previous lemma. For the third column we shall use the
following morphisms
\[
	X^{d_\operad \compofsymseq\id \operad \compofsymseq
	\id \operad  +
	\id \operad \compofsymseq'
	(d_\operad \compofsymseq
	\id \operad ) +
	\id \operad \compofsymseq'(\id \operad 
	\compofsymseq'  d_\operad )
	}
	: X^{\operad \compofsymseq
	\operad  \compofsymseq \operad } \longrightarrow
	X^{\operad  \compofsymseq \operad 
	\compofsymseq \operad }
\]
and for the map $
{\left({\left(X^\operad \right)}^\operad \right)}^\operad 
\longrightarrow
{\left({\left(X^\operad \right)}^\operad \right)}^\operad $:
\[
	{\left({\left(X^\operad \right)}^\operad \right)}^{d_\operad }
	+ {\sha\left(\id {{(X^\operad )}^\operad },
	{\left(X^\operad \right)}^{d_\operad }\right)}^\operad 
	+ {\sha\left(\id {{(X^\operad )}^\operad },
	{\sha\left(\id {X^\operad },
	X^{d_\operad }\right)}^\operad \right)}^\operad  .
\]
Finally, for the middle top spot, we use the map
\[
	{\left(\freefunctor_1^\operad (X)\right)}^{d_\operad }
	+ {\sha\left(\id {\freefunctor_1^\operad (X)},
	X^{d_\operad }\right)}^\operad 
	: \freefunctor_1^\operad  \circ \freefunctor_1^\operad (X)
	\longrightarrow \freefunctor_1^\operad
	\circ \freefunctor_1^\operad (X) .
\]
The same type of construction allows us to reduce to
$\freefunctor_n^\operad (X)$ and then to the intersection
\[
	X^{d_\operad } : \bigcap_{n\in\naturals}
	\freefunctor_n^\operad (X)
	\longrightarrow \freefunctor_n^\operad (X) .
\]
\end{proof}

In the case where $V=\freecog\operad  X$ is free, the affine space
of coderivations relatively to $d_\operad$ has a canonical base point.

\begin{lemma}
Given any chain complex $X$, there exists a degree $p$ coderivation
$\canonicaldiff$ of the cofree $\operad $-coalgebra $\freecog\operad X$
such that the following diagram is commutative
\[
	\begin{tikzcd}[ampersand replacement=\&]
		\freecog\operad (X)
		\arrow[r, "\canonicaldiff"]
		\arrow[d, "", swap, hookrightarrow]
		\& \freecog\operad (X)
		\arrow[d, "", hookrightarrow] \\
		X^\operad 
		\arrow[r, "-X^{d_\operad }", swap]
		\& X^\operad  .
	\end{tikzcd}
\]
We shall call the coderivation $\canonicaldiff$ the canonical
coderivation of $\freecog\operad (X)$.
\end{lemma}

\begin{proof}
We already know that the morphism
$-X^{d_\operad }$ factors through $\freecog\operad (X)$%
~[\ref{thm:reduction-coderivation-a-LOmega}]; we
call it $\canonicaldiff$ and we only need to show that it is indeed a
coderivation of $\freecog\operad (X)$.

Consider the following cubical diagram:
\[
	\begin{tikzcd}
		& X^{\operad  \compofsymseq \operad }
		\arrow[from=ddd, "X^m" description, near start]
		\arrow[rrrr, "-X^{d_\operad  \compofsymseq 
		\idfunctor+ \idfunctor \compofsymseq'  d_\operad }"
		description]
		&&&& X^{\operad  \compofsymseq \operad }
		\\
		{(\freecog\operad (X))}^\operad 
		\arrow[rrrr,"{{\sha (\id X,
		\canonicaldiff)}^\operad  +{(\freecog\operad  X)}^{-d_\operad }}"
		description, crossing over]
		\arrow[ru]
		&&&& 
		{(\freecog\operad (X))}^\operad 
		\arrow[ru, hookrightarrow]
		\\
		\\
		&  X^\operad 
		\arrow[rrrr, "-X^{d_\operad }" description, near start]
		&&&&
		X^\operad 
		\arrow[uuu,"X^m"' description]
		\\
		\freecog\operad  X
		\arrow[uuu, "a" description]
		\arrow[rrrr, "\canonicaldiff" description]
		\arrow[ru]
		&&&& \freecog\operad  X.
		\arrow[ru]
		\arrow[uuu, "a"' description, crossing over]
	\end{tikzcd}
\]
We know that all the faces but the front one are commutative.
Since the map $\power{(\freecog\operad (X))}\operad 
\to X^{\operad  \compofsymseq
\operad }$ is a monomorphism, then the front face
is also commutative. 
\end{proof}

\begin{proposition}%
\label{thm:coderivation_colibre}
The map
$d \longmapsto \canonicalproj \circ d$, sending
coderivations relative to $d_\operad $ to degree $p$ maps
$\freecog\operad X \to X$, had an inverse.

Let $f: \freecog\operad  X \to X$ be a degree $p$ map. Then the
composite map
\[
	\induceddiff f:\freecog\operad  X \xrightarrow{\coprodw^\operad (X)}
	\freecog\operad  \freecog\operad  X
	\xrightarrow{{\sha(\canonicalproj, f)}^\operad } \freecog\operad  X
\]
is a coderivation of $\freecog\operad  X$ relatively to zero.
Where $\coprodw^\operad
: \freecog\operad  \to \freecog\operad  \circ \freecog\operad $
is the coproduct of the
comonad $\comonadl^\operad $. Abusing notations, one can say that
the inverse map is given by
\[
	\induceddiff f  = \canonicaldiff
	+ {\sha (\canonicalproj, f)}^\operad   \circ \canonicalaction.
\]
\end{proposition}

\begin{proof}
It is enough to show the proposition in the case where $d_\operad$ is
trivial%
~[\ref{rmk:coderaffine}].

Notice that by the coreflection theorem~%
\cite[Theorem~2.7.11]{arXiv:1409.4688}
that extends the monomorphism $\freecog\operad \subobject {(-)}^\operad
$ into a monomorphism of lax comonads, the map $\induceddiff f $ can be
rewritten using the following commutative diagram
\[
	\begin{tikzcd}[ampersand replacement=\&]
		\freecog\operad  X
		\arrow[rr, "\canonicalaction"]
		\arrow[rrd, "\coprodw^\operad (X)"']
		\&\& {\left(\freecog\operad  X\right)}^\operad 
		\arrow[rr, "{\sha(\canonicalproj{,} f)}^\operad "]
		\&\& X^\operad 
		\\
		\&\& \freecog\operad  \freecog\operad  X
		\arrow[u, hookrightarrow]
		\arrow[rr, "{\sha(\canonicalproj{,} f)}^\operad ", swap]
		\&\& \freecog\operad  X
		\arrow[u, hookrightarrow]
	\end{tikzcd}
\]
Let us also recall that the coalgebra structure of $\freecog\operad  X$ is
given by
\[
	\begin{tikzcd}[ampersand replacement=\&]
		\freecog\operad  X
		\arrow[r, "\canonicalaction"]
		\arrow[d, "", swap, hookrightarrow]
		\& {\left(\freecog\operad  X\right)}^\operad 
		\arrow[d, "\caninjj", hookrightarrow] \\
		X^\operad 
		\arrow[r, "X^m", swap]
		\& X^{\operad  \compofsymseq \operad }
	\end{tikzcd}
\]
Using all the monomorphisms at our disposal, we deduce that
$\induceddiff f $ is a coderivation if and only if
\[
	X^m \circ {\sha(\canonicalproj,f)}^\operad  \circ \canonicalaction =
	\caninjj \circ
	{\sha\left(\id {\freecog\operad (X)},{\sha(\canonicalproj, f)}^\operad
	\circ \canonicalaction \right)}^\operad 
	\circ \canonicalaction.
\]
To conclude, it suffices to notice that the following diagrams are
commutative:
\[
	\begin{tikzcd}
		\freecog\operad  X \ar[r,"\canonicalaction"]
		& {\left(\freecog\operad  X\right)}^\operad 
		\ar[rrr,"{{\sha ( \canonicalproj , f )}^{\operad }}"]
		\ar[d,"{{\left(\freecog {\operad } X \right)}^m}"']
		&&& X^\operad  \ar[d, "{X^m}"] \\
		&{\left(\freecog\operad  X\right)}^{\operad  \compofsymseq \operad }
		\ar[rrr,
		"{\sha {(\canonicalproj , f )}^{\operad\compofsymseq\operad }}"']
		&&& X^{\operad  \compofsymseq \operad } .
	\end{tikzcd}
\]
\[
	\begin{tikzcd}[ampersand replacement=\&]
		\freecog\operad  X \arrow[r, "\canonicalaction"]
		\&
		{\left(\freecog\operad  X\right)}^\operad 
		\arrow[rrr, "{\sha\left(\id {\freecog\operad (X)}{,}
		{\sha(\canonicalproj{,}f)}^\operad \right)}^\operad "]
		\arrow[d, "{\left(\freecog\operad  X\right)}^m", swap]
		\&\&\& 
		{\left(\freecog\operad  X\right)}^\operad 
		\arrow[d, "\caninjj"] \\
		\&
		{\left(\freecog\operad  X\right)}^{\operad  \compofsymseq \operad }
		\arrow[rrr,
		"{\sha(\canonicalproj{,}f)}^{\operad\compofsymseq\operad}",swap]
		\&\&\& X^{\operad  \compofsymseq \operad } .
	\end{tikzcd}
\]
\end{proof}

\begin{proposition}%
\label{thm:morphisms_to_a_free_coalgebra}
Consider a graded \mbox{$\operad $-coalgebra} $(V,a)$ with coderivation
$d_V$ and a cofree graded $\operad $-coalgebra $\freecog\operad X$ with
coderivation $\induceddiff f $ induced by a degree $p$ map $f:
\freecog\operad X \to X$. Let $g : V \longrightarrow X$ be graded map.
Its extension as a morphism of graded $\operad $-coalgebras
$\tilde g$ commutes with the coderivations if and only if
$f \circ \tilde g = g \circ d_V$.
\end{proposition}

\begin{proof}
Suppose that $\tilde g$ is a morphism of $\operad $-coalgebras.
Then,
\[
	\canonicalproj \circ \induceddiff f  \circ \tilde g
	=\canonicalproj \circ  \tilde g \circ d_V
	\Longrightarrow f \circ \tilde g = g \circ d_V.
\]
Conversely, suppose tha $f \circ \tilde g = g \circ d_V$. Since
$d_V$ is a coderivation, we get
\[
	\tilde g \circ d_V = g^\operad  \circ
	\left({\sha(\id V,d_V)}^\operad  - V^{d_V}\right) \circ a .
\]
hence using the assumption
\[
	\tilde g \circ d_V = \left({\sha(\canonicalproj, f)}^\operad 
	- X^{d_\operad }\right) \circ {\left(\tilde g\right)}^\operad
	\circ a = \induceddiff f  \circ \tilde g .
\]
\end{proof}

\begin{proposition}%
\label{thm:coderivation_de_carre_nul}
Assume $p$ is odd, then
\[
	\induceddiff f ^2 = \sha (\canonicalproj, f \circ \induceddiff f )
	\circ \canonicalaction +\canonicaldiff^2.
\]
In particular, if $d_\operad^2 =0$, then $\induceddiff f^2 = 0$ if
and only if $f \circ \induceddiff f = 0$.
\end{proposition}

\begin{proof}
The map $\induceddiff f ^2$ is as follows
\begin{align*}
	\induceddiff f ^2 &=  \canonicaldiff^2
	- X^{d_\operad } \circ {\sha (\canonicalproj, f)}^\operad
	\circ \canonicalaction
	- {\sha (\canonicalproj, f)}^\operad
	\circ \canonicalaction \circ X^{d_\operad }\\
	&\qquad+ {\sha (\canonicalproj, f)}^\operad
	\circ \canonicalaction \circ {\sha (\canonicalproj, f)}^\operad 
	\circ \canonicalaction.
\end{align*}
Since $\canonicaldiff$ is a coderivation
\[
	a \circ X^{d_\operad } = \left( {\sha
	\left(\idfunctor, X^{d_\operad } \right)}^\operad 
	+ {\left(\freecog\operad  X\right)}^{d_\operad } \right) \circ
	\canonicalaction.
\]
Besides, since the degree of $f$ and $d_\operad $ is odd, then we have
the following equality between maps from ${(\freecog\operad X)}^{\operad
}$ to $X^\operad $
\[
	X^{d_\operad } \circ {\sha (\canonicalproj, f)}^\operad  
	+ {\sha (\canonicalproj, f)}^\operad 
	\circ {\left(\freecog\operad  X\right)}^{d_\operad }=0.
\]
So
\[
	 X^{d_\operad } \circ {\sha (\canonicalproj, f)}^\operad
	 \circ\canonicalaction + {\sha (\canonicalproj, f)}^\operad
	 \circ\canonicalaction \circ X^{d_\operad } =
	{\sha (\canonicalproj, f)}^\operad 
	\circ {\sha\left(\idfunctor, X^{d_\operad } \right)}^{\operad }\circ
	\canonicalaction .
\]
Since $\canonicalproj \circ X^{d_\operad }=0$,
\[
	{\sha (\canonicalproj, f)}^\operad   
	\circ {\sha (\idfunctor, X^{d_\operad } )}^\operad 
	= {\sha\left(\canonicalproj, f \circ X^{d_\operad }\right)}^\operad .
\]
So
\[
	X^{d_\operad } \circ {\sha (\canonicalproj, f)}^\operad
	\circ\canonicalaction + {\sha (\canonicalproj, f)}^\operad
	\circ\canonicalaction \circ X^{d_\operad } =
	{\sha \left(\canonicalproj, f \circ X^{d_\operad }\right)}^\operad
	\circ \canonicalaction.
\]
Besides, since ${\sha (\canonicalproj, f)}^\operad \circ
\canonicalaction$ is a coderivation relatively to zero, then
\begin{align*}
	{\left({\sha (\canonicalproj, f)}^\operad
	\circ \canonicalaction\right)}^2
	&=  {\sha (\canonicalproj, f)}^\operad 
	\circ {\sha \left(\idfunctor, {\sha (\canonicalproj, f)}^\operad 
	\circ \canonicalaction\right)}^{\operad }
	\circ \canonicalaction\\
	&= {\sha \left(\canonicalproj, f \circ
	{\sha (\canonicalproj, f)}^\operad
	\circ \canonicalaction\right)}^{\operad } \circ
	\canonicalaction.
\end{align*}
The last equality follows from the fact that $\canonicalproj \circ {\sha
(\canonicalproj, f)}^\operad \circ a =f$ and the fact that the degree of
$f$ and $\induceddiff f $ is odd. Finally,
\begin{align*}
	\induceddiff f ^2
	&= {\sha \left(\canonicalproj, f
	\circ {\sha (\canonicalproj, f)}^\operad 
	\circ \canonicalaction\right)}^{\operad } \circ \canonicalaction 
	- {\sha \left(\canonicalproj, f \circ X^{d_\operad }\right)}^\operad 
	\circ\canonicalaction  +\canonicaldiff^2\\
	&= {\sha (\canonicalproj, f \circ \induceddiff f )}^\operad
	\circ \canonicalaction+\canonicaldiff^2.
\end{align*}
\end{proof}

\section{Curved  cooperads}%
\label{sec:coperades_courbees}

In order to manipulate the homotopy theory of dg-operads, one may
use the operadic Bar adjunction. There are several models for this; the
one we use was first introduced by Ginzburg \& Kapranov%
~\cite{doi:10.1215/s0012-7094-94-07608-4}, then extended to the
case of augmented dg-operads by Getzler \& Jones%
~\cite{arXiv:hep-th/9403055}, semi-augmented dg-operads by
Hirsh \& Millès%
~\cite{doi:10.1007/s00208-011-0766-9} and to all dg-operads in
\emph{Algebraic operads up to
homotopy}%
~\cite{arXiv:1707.03465}.

When a dg-operad $\operad$ is not augmented, the cooperad
$\barfunctorfull \operad$ is no longer a  dg-cooperad; it is
curved. In this section we shall recall the notion of curved 
cooperad and then study the properties of complete algebras over locally
conilpotent curved cooperads. We then also recall the
main points of the operadic Bar adjunction.

\subsection{Definitions and properties}

\begin{definition}
A curved  cooperad is the data of a graded
 cooperad with coderivation
$(\overline\coperad, \overline w,d)$ equipped with a degree $-2$ arrow
$\theta : \overline\coperad(1) \rightarrow \monoidalunit$ such that
$ \theta \circ d = 0$
and $d$ satisfies the curvature equation
\[
	d^2 = \left( \idfunctor \otimes \theta
	-  \theta \otimes \idfunctor \right)
	\circ \overline w_2,
\]
where, given a two vertices tree, $\theta$ is first applied to the
top vertex and then $-\theta$ is applied to the vertex next to the root.
This may be rephrased as
\[
	d^2 = \left( \id \coperad \compofsymseq \sha(\tau, \theta)
	- \theta \compofsymseq \id \coperad\right) \circ w .
\]
\end{definition}

\begin{definition}
A (curved) $\coperad$-algebra is
the data of a graded $\coperad$\=/algebra with derivation
$(\Lambda,a, d_\Lambda)$
whose derivation satisfies the curvature equation
\[
	d_\Lambda^2 = a \circ \Lambda^{\theta}.
\]
The category of $\coperad$-algebras is the full subcategory of the
category of graded algebras with derivation over $\coperad$ whose
objects are the curved algebras over $\coperad$. We define likewise
the full subcategory of complete $\coperad$\=/algebras when $\coperad$
is locally conilpotent. The first category shall be denoted by
$\catofalg\coperad$ and the second by $\catofcompletealg\coperad$.
\end{definition}

\begin{remark}%
\label{rmk:gr_is_dg}
Curved  cooperads and their algebras are not far from the
dg-case when cooperads are pointed and locally conilpotent.
Indeed,
the graded object associated to the coradical filtration
$\grfilt \ast \coperad$ is then differential graded. Like-wise, if
$\Lambda$ is an algebra over $\coperad$, then the restriction of
$d_\Lambda$ to the graded object associated to its canonical
cofiltration $\gr \ast \Lambda$ also satisfies
$\power{(\gr \ast d_\Lambda)}2 = 0$%
~[\ref{thm:gr_is_dg}].
\end{remark}

\begin{proposition}%
\label{thm:algebre_courbee_libre}
Let $(X^\coperad, \canonicalaction)$
be a free graded $\coperad$-algebra and let $\induceddiff f $
be a derivation of $X^\coperad$ induced by a degree $-1$ map $f: X \to
X^\coperad$. Then, $(X^\coperad, \canonicalaction,
\induceddiff f )$ is a curved $\coperad$-algebra
if and only if
\[
	\induceddiff f  \circ f = X^{\theta}.
\]
\end{proposition}

\begin{proof}
Suppose first that $X^\coperad$ is curved, that is $\induceddiff f ^2
=\canonicalaction
\circ {(X^\coperad)}^\theta$. Then by restriction,
\[
	\induceddiff f ^2 \circ \canonicalinj =\canonicalaction
	\circ {\left(X^\coperad\right)}^\theta \circ \canonicalinj
	\Longrightarrow \induceddiff f  \circ f = X^\theta.
\]
Conversely, suppose that $\induceddiff f \circ f = X^\theta$. We know
that
\[
	\induceddiff f ^2 =\canonicalaction
	\circ {\sha(\canonicalinj, \induceddiff f \circ f )}^\coperad
	+ \canonicaldiff^2%
	~[\ref{thm:squarederiv}].
\]
Since
\[
	\canonicaldiff^2 = X^{d_\coperad} \circ X^{d_\coperad}
	= - X^{d_{\coperad}^2}
\]
and
\[
	d_\coperad^2 = \left( \id \coperad \compofsymseq \sha(\tau, \theta)
	-\theta \compofsymseq \id \coperad\right) \circ w ,
\]
one has
\[
	\canonicaldiff^2 =
	\canonicalaction \circ {\left(X^\coperad\right)}^\theta
	-\canonicalaction
	\circ {\sha\left(\canonicalinj, X^\theta\right)}^\coperad
\]
so that $\induceddiff f ^2=\canonicalaction
\circ \power{(X^\coperad)} \theta$.
\end{proof}

\begin{theorem}%
\label{thm:categorie_des_algebres_completes_courbees_est_presentable}
Let $\coperad$ be a locally conilpotent curved 
cooperad, then the category of complete curved $\coperad$-algebras is
presentable.
\end{theorem}

\begin{proof}
The category of complete graded $\coperad$-algebras with
derivation
is a reflective and accessible localisation of the presentable category
of graded $\coperad$-algebras with derivation~%
[\ref{thm:la_categorie_des_algebres_completes_est_presentable}]:
\[
	\begin{tikzcd}[ampersand replacement=\&]
		\catofcompletesmodalg\coperad
		\arrow[r, shift right, swap, ""]
		\&
		\catofsmodalg\coperad
		\arrow[l, shift right, swap, ""]
	\end{tikzcd}
\]
We shall now prove that the full subcategory of curved
$\coperad$\=/algebras is also reflective and accessible. For this
let $(\Lambda, a, d_\Lambda)$ be a graded $\coperad$\=/algebra with
derivation. Let $\ideali$ be the smallest ideal of $\Lambda$ that
contains the image of the map $d^2_\Lambda - a \circ \Lambda^\theta$.
Then by construction the quotient algebra $\Lambda/\ideali$ is a curved
$\coperad$-algebra. Moreover any morphism of graded $\coperad$-algebras
with derivation $f : \Lambda \longrightarrow \Gamma$ where $\Gamma$ is
curved must be trivial on the image
of $d^2_\Lambda - a \circ \Lambda^\theta$ and thus~%
[\ref{thm:generation_d_ideal}], factors uniquely to a map of
curved $\coperad$-algebras $\overline{f} : \Lambda/\ideali
\longrightarrow \Gamma$. Hence, the category of curved 
$\coperad$-algebras is an accessible localisation of the category
of graded $\coperad$-algebras with derivation
\[
	\begin{tikzcd}[ampersand replacement=\&]
		\catofalg\coperad
		\arrow[r, shift right, swap, ""]
		\&
		\catofsmodalg\coperad
		\arrow[l, shift right, swap, ""]
	\end{tikzcd}
\]
The category of complete curved $\coperad$-algebra becomes then the
intersection
of two reflective and accessible full subcategories of a presentable
category, hence it is also presentable.
\end{proof}

A similar attention to wording must be taken when speaking of
epimorphisms of $\coperad$-algebras as it has been for
monomorphisms of $\operad$-coalgebras%
~[\ref{def:degree-wise_monomorphism_of_coalgebras}].

\begin{definition}[(Degree-wise epimorphism)]%
\label{def:degree-wise_epimorphism_of_algebras}
We shall say that a morphism $f : \Lambda \to \Gamma$
of $\coperad$-algebras is a degree-wise epimorphism if the
corresponding morphism $\forget_\coperad f :
\forget_\coperad \Lambda \to \forget_\coperad \Gamma$
in $\catss^\integers$, is an epimorphism.
In particular, since $\forget_\coperad$ is faithful, it is also
an epimorphism of $\coperad$-algebras.
\end{definition}

\subsection{Functoriality}

Consider a morphism of curved conilpotent cooperads
\[
	\coperad \xrightarrow{f} \coperad' .
\]
It induces an adjunction relating
$\coperad$-algebras in $\sinv$-modules
to  $\coperad'$-algebras in $\sinv$-modules
\[
\begin{tikzcd}
 	\catofsmodalg{\coperad'}
	\arrow[rr, shift left, "\freealg{f,\sinv}"]
	&& \catofsmodalg\coperad
	\arrow[ll, shift left,"\forget^{f,\sinv}"]
\end{tikzcd}
\]
Indeed, for any $\coperad$-algebras in $\sinv$-modules
$(\Lambda,a)$, the $\sinv$-module
$\Lambda$
inherits a structure of a $\coperad'$-algebra
through the map
\[
	\Lambda^{\coperad'} \xrightarrow{\Lambda^f}\Lambda^\coperad
	\xrightarrow{a} \Lambda
\]
This gives the functor $\forget^{f,\sinv}$. Its left adjoint send a $\coperad'$-algebra
$\Lambda$ to the coequalizer in $\coperad$-algebras of the following reflexive diagram
 that exists since $\catofsmodalg\coperad$ is presentable
\[
\begin{tikzcd}
 	\left(\Lambda^{\coperad'}\right)^\coperad 
	\arrow[r, shift left]
	\arrow[r, shift right]
	& \Lambda^\coperad .
\end{tikzcd}
\]

\begin{remark}
 On can also notice that the adjunction $\freealg{f,\sinv} \dashv \forget^{f,\sinv}$
 is monadic.
\end{remark}

\begin{lemma}\label{lemma-uf-curved}
 If the $\coperad$-algebras in $\sinv$-modules
$(\Lambda,a)$ satisfies the equation
$d_\Lambda^2 = a \circ \Lambda^{\theta}$,
then, its image $(\Lambda,a')$ under the functor $\forget^{f,\sinv}$,
satisfies the equation
$d_\Lambda^2 = a' \circ \Lambda^{\theta'}$.
\end{lemma}

\begin{proof}
It follows from the fact that
\[
	\theta = \theta' \circ f.
\]
\end{proof}

\begin{proposition}\label{proposition-uf-complete}
Suppose that the $\coperad$-algebras in $\sinv$-modules
$(\Lambda,a)$ is complete.
Then, its image $(\Lambda,a')$ under the functor $\forget^{f,\sinv}$,
is complete.
\end{proposition}

\begin{proof}
Consider the following diagram
\[
\begin{tikzcd}
 	\Lambda^{\coperad'/ \filtration n \coperad'}
	\arrow[r]
	\arrow[d]
	& \Lambda^{\coperad'}
	\arrow[d]
	\\
	\Lambda^{\coperad/ \filtration n \coperad}
	\arrow[r]
	\arrow[d]
	& \Lambda^{\coperad}
	\arrow[d]
	\\
	\ideal n \Lambda 
	\arrow[r]
	& \Lambda
\end{tikzcd}
\]
For any natural integer $n$, the morphism
$\Lambda^{\coperad'/ \filtration n \coperad'}
\to \Lambda$
 factorises through
$\ideal{n}_\coperad \Lambda$. Subsequently
$\ideal{n}_{\coperad'} \Lambda$ is a sub-object of
$\ideal{n}_\coperad \Lambda$. Thus
$\ideal{\infty}_{\coperad'} \Lambda$ is a sub-object of
\[
	\ideal{\infty}_{\coperad} \Lambda = 0.
\]
So, $\ideal{\infty}_{\coperad'} \Lambda=0$.
\end{proof}

Let us denote respectively $s$ and $s'$ the inclusions
\begin{align*}
 \catofcompletealg{\coperad} &\xrightarrow{s}
	\catofsmodalg{\coperad};
\\
 \catofcompletealg{\coperad'} &\xrightarrow{s'}
	\catofsmodalg{\coperad'}	.
\end{align*}
We know that $s$ and $s'$ are accessible reflective fully faithful embeddings.
Let us denote respectively $r$ and $r'$ their left adjoints.
The previous \cref{proposition-uf-complete} and the previous \cref{lemma-uf-curved}
tells us that the composite functor
\[
	\catofcompletealg{\coperad} \xrightarrow{s}
	\catofsmodalg{\coperad} \xrightarrow{\forget^{f,\sinv}}
	\catofsmodalg{\coperad'}
\]
actually factorises as
\[
	\catofcompletealg{\coperad} \xrightarrow{\forget^f}
	\catofcompletealg{\coperad'} \xrightarrow{s'}
	\catofsmodalg{\coperad'}.
\]

\begin{proposition}
The functor
\[
	\freealg f \coloneqq r \circ \freealg{f,\sinv}\circ s'
\]
is left adjoint to $\forget^f$. 
\end{proposition}

\begin{proof}
This is a consequence of the fact that the functor $s'$ is fully faithful.
Indeed, for any complete curved $\coperad$-algebra $\Lambda$ and
any complete curved $\coperad'$-algebra $\Lambda'$, we have a sequence
of canonical natural isomorphisms
\begin{align*}
 \hombracket{\catofalg\coperad}
	{\freealg f \Lambda'} \Lambda
	&\isonat   \hombracket{\catofalg\coperad}
	{r\freealg{f,\sinv}s'( \Lambda')} \Lambda
	\\
	&\isonat   \hombracket{\catofsmodalg{\coperad}}
	{s'( \Lambda')}{\forget^{f,\sinv}s(\Lambda)}
	\\
	&\isonat  \hombracket{\catofsmodalg{\coperad}}
	{s'( \Lambda')}{s'\forget^{f}(\Lambda)}
	\\
	&\isonat  \hombracket{\catofalg{\coperad}}
	{\Lambda'}{\forget^{f}(\Lambda)} .
\end{align*}
\end{proof}

\begin{example}
Let $n$ be a natural integer and let us consider the canonical morphism
of curved conilpotent cooperad
$$
f : \filtration n \coperad \to \coperad .
$$
In this context, the functor $\forget^f$ is a faithful embedding
whose image consists in complete curved $\coperad$-algebras
$\Lambda$ so that the $n^{th}$ radical ideal is zero: $\ideal n \Lambda = 0$.

Moreover, $\freealg f$ sends a curved complete algebra $\Lambda$
to its quotient by the $n^{th}$ radical ideal
$$
\cofiltration n \Lambda  = \Lambda / \ideal n \Lambda .
$$
\end{example}

\subsection{Operadic Bar adjunction}

In this subsection, we recall the main points of the operadic Bar
adjunction introduced in
\emph{Algebraic operads up to homotopy}~%
\cite{arXiv:1707.03465}
and which relates dg-operads with locally conilpotent curved
 cooperads:
\[
	\begin{tikzcd}[ampersand replacement=\&]
		\catofop_{\dg}(\catss)
		\arrow[rr, shift right=1.5, swap, "\barfunctorfull"]
		\&
		\&
		\catoflpccccoop\catss. 
		\arrow[ll, shift right=1.5, swap, "\leftadjoint\barfunctorfull"]
	\end{tikzcd}
\]
The importance of this adjunction
comes from the fact that for any dg-operad $\operad$,
the counit morphism
\[
	\leftadjoint\barfunctorfull \barfunctorfull \operad
	\longrightarrow \operad
\]
is a cofibrant replacement of $\operad$. For this reason we shall only
recall the construction of $\leftadjoint\barfunctorfull$.

The left adjoint to the Bar construction is given as a graded
operad by
\[
	\leftadjoint\barfunctorfull(\coperad)
	\coloneqq \treemodule\left(\sinv \overline{\coperad}\right),
\]
which is then equipped with the derivation whose
restriction to the generators $\sinv\overline\coperad$ is the sum
of the three following compositions
\begin{align*}
	& \smalldiff_w : \sinv\overline{\coperad}
	\xrightarrow{\sinv\degreeminusonemap}
	\s^{-2}\overline{\coperad}
	\xrightarrow{\s^{-2}\overline w_2} \s^{-2}
	\treemodule_2\overline{\coperad}
	\isonat \treemodule_2\left(\sinv\overline{\coperad}\right);
	\\
	& \smalldiff_{\sinv\overline\coperad} : \sinv\overline
	\coperad \xrightarrow{d_{\sinv\overline\coperad}} \sinv
	\overline\coperad;
	\\
	& \smalldiff_\theta: \sinv\overline{\coperad}
	\xrightarrow{\degreeminusonemap \otimes \overline \coperad}
	\overline{\coperad} \xrightarrow{-\theta}
	\monoidalunit .
\end{align*}

\begin{definition}
A twisting morphism from a pointed curved 
cooperad $(\coperad, w, d_\coperad, \theta)$ to a
differential graded operad $(\operad , m, \eta, d_\operad )$
is a degree $-1$ map
\[
	\alpha : \overline\coperad \longrightarrow \operad 
\]
such that
\[
	\eta \circ \theta + \partial \alpha
	+ m \circ (\alpha \otimes \alpha ) \circ \overline w_2 = 0.
\]
where $\partial\alpha \coloneqq d_\operad  \circ \alpha + \alpha
\circ d_\coperad$.

We shall also denote by $\alpha$ the twisting morphism extended to
$\coperad$ by zero.
Let us write $\twisting\coperad\operad$ for the set of
twisting morphisms between
them.
\end{definition}

\begin{proposition}[{\cite[Proposition 21]{arXiv:1612.02254}}]
There exists functorial isomorphisms:
\[
	\hombracket{\dg}
	{\leftadjoint\barfunctorfull(\coperad)}\operad
	\isonat \twisting\coperad\operad
	\isonat \fullhombracket {\bullet} \lcc
	\coperad {\barfunctorfull(\operad )}.
\]
\end{proposition}

\begin{notation}
For the sake of brevity, we will often simply denote
the bar functor by $\barfunctor$ and its left adjoint by
$\leftadjoint\barfunctor$. 
\end{notation}

\subsection{Transferred model structure on conilpotent curved cooperads and ladders}

\begin{theorem}\cite[Theorem 5]{arXiv:1707.03465}
 There exists a combinatorial model structure on
 the category of conilpotent curved cooperads that is transferred along the adjunction
 $\baradj\dashv \barfunctor$, that is
\begin{itemize}
 \item the cofibrations are the morphisms $f$ so that $\baradj(f)$ is a cofibration
 of dg-operads; they are actually inclusions that is aritywise and degreewise monomorphisms;
 \item the weak equivalences are the morphisms $f$ so that $\baradj(f)$ is a weak equivalence
 of dg-operads.
\end{itemize}
Moreover, the adjunction  $\baradj\dashv \barfunctor$ becomes a model equivalence.
\end{theorem}

\begin{definition}
 A morphism of curved conilpotent cooperads $f:\coperad \to \coperad'$
 is an infinitesimal inclusion if it is an inclusion (that is an
 aritywise degreewise monomorphism) so that the map
 $$
 w: \coperad' \to \treemodule (\overline{\coperad'})  \twoheadrightarrow
 \treemodule_{\geq 2} (\overline{\coperad'})
 $$
 factorises through the subobject $\treemodule_{\geq 2} (\overline{\coperad})$. In particular,
the coderivation
squares to zero on the quotient $\coperad'/\coperad$.
\end{definition}

\begin{lemma}\label{lemma:ladeq}
 Let us consider a square diagram of curved conilpotent cooperads
 $$
\begin{tikzcd}
 \coperad
 \ar[r, "f"]  \ar[d, "i"']
 & R
  \ar[d, "j"]
  \\
  \coperad'
  \ar[r, "g"']
  & R'
\end{tikzcd}
 $$
 and let us suppose that
\begin{itemize}
\item the vertical maps $i,j$ are infinitesimal inclusions;
 \item the map $f : \coperad \to R$ is a weak equivalence;
 \item the map $\coperad'/\coperad \to R'/R$ induced by $g$
 is aritywise a quasi-isomorphism.
\end{itemize}
Then $g$ is a weak equivalence.
\end{lemma}

\begin{proof}
The operad $\baradj \coperad'$ is made up of sums trees labeled
by $\coperad'$. One can filter this operad as follows: $\filtration{n}^{(i)}\baradj \coperad'$
is made up of labeled trees with $m \in \naturals$ nodes so that the labels of at least
$m-n$ nodes are in the suboject $\coperad$ of $\coperad'$. One can 
filter $\baradj R'$ similarly. Then, the map $\baradj g$ preserves these filtrations.

In that context, the square diagram
$$
\begin{tikzcd}
 \baradj \coperad
 \ar[r, "f"]  \ar[d, "i"']
 & \baradj R
  \ar[d, "j"]
  \\
  \baradj(\coperad \oplus \coperad'/\coperad)
  \ar[r] \ar[d, equal]
  &
    \baradj(R \oplus R'/R)
    \ar[d, equal]
  \\
  \grfilt{}  \baradj\coperad'
  \ar[r, "\grfilt{}\baradj g"']
  & \grfilt{}  \baradj R'
\end{tikzcd}
$$
is a homotopy pushout of operads.
Thus, the map $\grfilt{}\baradj g$ is a quasi
isomorphism. Subsequently, the map
$\baradj g$ is also a quasi-isomorphism.
\end{proof}

\begin{definition}
A ladder of curved conilpotent cooperads is the data of a functor
$$
G : \ordinalomega \to \catoflpccccoop\catss
$$
so that $G(0)= \monoidalunit_{\compofsymseq}$ and any map
$$
G(n) \to G(n+1), \quad n \geq 9
$$
is an infinitesimal inclusion. Then, in particular, the quotient $G(n+1)/G(n)$
is a dg-symmetric sequence. Finally, we denote
$$
G(\infty) = \varinjlim_{n \in \ordinalomega} G(n) .
$$
\end{definition}

\begin{definition}
 A ladder equivalence is a morphism of ladders $f: G \to G'$ so that
the map
$$
G(n+1)/G(n) \to G'(n+1)/G'(n)
$$
is aritywise a quasi-isomorphism for any natural integer $n$.
More generally, we say that a morphism of conilpotent curved cooperads
is a ladder equivalence if it appears as the colimit of a ladder equivalence between
ladders.
\end{definition}

\begin{proposition}\label{thm:filtered-qis-equivalences}
Weak equivalences of conilpotent curved cooperads
is the smallest subset of morphisms that contains ladder equivalences and that
follows the 2-out-of-3 rule.
\end{proposition}

\begin{proof}
The fact that it contains ladder equivalences is a direct consequence of
\cref{lemma:ladeq}.
Conversely, let $f: \coperad \to \coperad'$ be a weak equivalence.
Let us contemplate
 the following diagram
 $$
\begin{tikzcd}
 \barfunctor \baradj \coperad
 \ar[r]
 & \barfunctor \baradj \coperad'
 \\
 \coperad
  \ar[r]  \ar[u]
 & \coperad' .
  \ar[u]
\end{tikzcd}
 $$
 The vertical arrows are compositions of ladder equivalences by
 \cite[Proposition 15 and Proposition 17]{arXiv:1707.03465}.
 Moreover, the top horizontal arrow is a ladder equivalence.
 Hence, $f$ belongs to this subset that contains ladder equivalences
 and that follows the 2-out-of-3 rule.
\end{proof}

\section{Model structure on coalgebras over an operad}

In this section we discuss the necessary conditions so that
the category of coalgebras over a dg-operad $\operad$ can be endowed with
a model structure transferred along the forgetful functor from the
standard model structure on the category of chain complexes.
When the transferred model structure on the category of
$\operad$-algebras exists, the dg-operad $\operad$ is called
\emph{admissible}; when the transferred model structure on the
category of coalgebras exists, we shall call it \emph{coadmissible}.

The first general admissibility result is due to Hinich%
~\cite{doi:10.1080/00927879708826055},
he proves in particular that
all planar dg-operads and all dg-operads in characteristic zero are
admissible. A more conceptual approach has been then given by
Berger and Moerdijk%
~\cite{doi:10.1007/s00014-003-0772-y}.
Their result increases the
family of admissible dg-operads by adding all cofibrant operads.

An early coadmissibility result is due to
Getzler and Goerss%
~\cite{goerss1999model},
who prove that $\uass$ is coadmissible. More recently, Smith
has given a general coadmissibility result%
~\cite{zbMATH:05934769}.
However, his result requires finiteness conditions on the dg-operad%
~\cite[Condition 4.3]{zbMATH:05934769} that are unnecessary.
Using a similar statement to the one of Berger and Moerdijk, we shall
prove that all planar and all cofibrant dg-operads are coadmissible%
~[\ref{thm:les_operades_planaires_ou_cofibrantes_sont_coadmissibles}].
We then show that Hinich's result cannot be proved for coalgebras as
$\ucom$ is \emph{not} coadmissible in the category $\chain \fieldk$ for
$\fieldk$ an algebraically closed field of characteristic zero%
~[\ref{thm:com_n_est_pas_coadmissible}].

Contrarily to what happens for algebras over dg-operads, where
any weak equivalence of dg-operads $f: \operad \to \operad'$ yields a
model equivalence%
~\cite[4.7.4]{doi:10.1080/00927879708826055}
\[
	\begin{tikzcd}[ampersand replacement=\&]
		\catofalg\operad \arrow[r, shift left,"{f_!}"]
		\&
		\catofalg{\operad'} \arrow[l, shift left, "f^!"]
	\end{tikzcd}
\]
we shall show that one cannot hope for such a general statement in the
case of coalgebras%
~[\sectionref{sec:counter_example_weq_operads}]. One has to rely instead
on a minimal version of the theorem.

Yalin has shown that a weak equivalence $f : \prop \to \prop'$ between
\emph{cofibrant} props induce an equivalence
${\catofgebras {\prop'}}^\infty \to {\catofgebras \prop}^\infty$
between the ∞\=/categories associated to their respective categories of
gebras%
~\cite{doi:10.1017/s0305004114000437}.
In the case of coalgebras over operads, Yalin's
result can be lifted at the level of model categories.

\begin{theorem}%
\label{bigthm:weq_of_operads_eq_of_models}
Suppose that $f: \operad \to \operad'$
is a weak equivalence between cofibrant dg-operads. Then the
adjunction
\[
	\begin{tikzcd}[ampersand replacement=\&]
		\catofcog\operad
		\arrow[r, shift right, swap, "f_\ast"]
		\&
		\catofcog{\operad'}
		\arrow[l, shift right, swap, "f^\ast"]
	\end{tikzcd}
\]
is a model equivalence.
\end{theorem}

The fact that homotopy equivalent dg-operads have model equivalent
categories of algebras is a good indicator that the associated
∞\=/categories are the ∞\=/categories of algebras over the ∞\=/operads
associated to the dg-operads; in short: that the model structure
on $\operad$-algebras actually presents the ∞-category that it is
suppposed to. A partial result in that direction has been obtained by
Hinich in the
case where the dg-operad stems from a topological operad, following
a strategy first implemented by Lurie for the operads
$\uass$ and $\ucom$.

\begin{theorem}[{\cite[4.1.1]{zbMATH:06572171}}]
Let $\operad$ be a fibrant simplicial operad and let $\operad^\infty$
be the associated ∞-operad. Let $\fieldk$ be a field of characteristic
zero. Then there exists an equivalence of ∞-categories
\[
	{\catofalg\operad\left(\chain\fieldk\right)}^\infty \longrightarrow
	\catofalg{\operad^\infty}\left(\chain\fieldk^\infty\right)
\]
where we denoted for simplicity $\catc^\infty$ the ∞-category associated
with a category $\catc$ with a set of weak equivalences.
\end{theorem}

\Cref{bigthm:weq_of_operads_eq_of_models} naturally leads us to
formulate an equivalent conjecture in the case of coalgebras.

\begin{conjecture}%
\label{conj:infinity_cat_of_cogs}
If $\operad$ is a cofibrant dg-operad and if
$\operad^\infty$ is its associated ∞-operad, then there exists
an equivalence of ∞-categories
\[
	{\catofcog\operad\left(\chain\catss\right)}^\infty \longrightarrow
	\catofcog{\operad^\infty}\left({\chain\catss}^\infty\right)
\]
\end{conjecture}

\subsection{Coadmissible operads}

Given an operad $\operad $ in $\chain {\catss}$, we may wish to
transfer the standard model structure on $\chain {\catss}$ to
$\catofcog\operad$ using the adjunction
\[
	\begin{tikzcd}[ampersand replacement=\&]
		\chain {\catss}
		\arrow[r, shift right,"\eta_{\ast}"']
		\&
		\catofcog\operad .
		\arrow[l, shift right, "\eta^\ast"']
	\end{tikzcd}
\]
That is, thanks to the adjunction we can define three sets of arrows
$(\weakeqofcog,\fibrationofcog,\cofibrationofcog)$ where
\begin{itemize}
	\item an arrow $f$ is a weak equivalence if
	      $\eta^\ast f$ is a quasi-isomorphism of chain complexes in
	      $\chain {\catss}$:
	      \[
	      	f \in \weakeqofcog \Longleftrightarrow \eta^\ast f
	      	\text{ is a quasi-isomorphism;}
	      \]
	\item an arrow $f$ is a cofibration if
	      $\eta^\ast f$ is a degree-wise monomorphism of chain complexes:
	      \[
	      	f \in \cofibrationofcog \Longleftrightarrow \eta^\ast f
	      	\text{ is a degree-wise monomorphism;}
	      \]
	\item an arrow $f$ is a fibration if it has the
	      right lifting property against all elements of $\weakeqofcog
	      \cap \cofibrationofcog$. That is
	      \[
	      	\fibrationofcog \coloneqq
	      	{(\cofibrationofcog \cap \weakeqofcog)}^\boxslash .
	      \]
\end{itemize}

\begin{definition}%
\label{def:operade_coadmissible}
We shall call a dg-operad $\operad $ coadmissible if the category
of $\operad $-coalgebras endowed with the three sets of arrows
$(\weakeqofcog$, $\fibrationofcog$, $\cofibrationofcog)$ is a model
category.
\end{definition}

In the article
\emph{Left-induced model structures and diagram categories}%
~\cite[Theorem~2.23]{doi:10.1090/conm/641/12859}, the authors
apply a result of Makkai and Rosicky%
~\cite[Remark~3.8]{doi:10.1016/j.jpaa.2014.01.005} to show that
the transferred model structure exists and is combinatorial when a
certain acyclicity condition is satisfied. Using standard techniques
for proving the acyclicity condition%
~\cite[Theorem~2.2.1]{doi:10.1112/topo.12011}, we get the following
proposition.

\begin{proposition}~%
\label{thm:coadmissible}
A dg-operad $\operad $ is coadmissible if and only if the
category $\catofcog\operad$ of $\operad $-coalgebras has functorial
cylinder objects. Moreover, since the model structure on $\chain
{\catss}$ is combinatorial, so is the transferred one.
\end{proposition}

\begin{proposition}[(Strong coadmissibility)]%
\label{thm:les_fibrations_entre_cogebres_sont_des_epis}
The fibrations of $\operad$-coalgebras are in particular degree-wise
epimorphisms.
That is, the forgetful functor $\catofcog\operad
\longrightarrow \chain\catss$
preserves fibrations.
\end{proposition}

\begin{proof}
Let $f : V \to W$ be a fibration of $\operad $-coalgebras. Since
$\gradedd$ is a cocommutative coalgebra%
~[\ref{rmk:graded_versus_dg}],
the external tensor product $\gradedd \externaltensor W$ is a coalgebra
over $\ucom \diagtensor \operad \isonat \operad $ and the induced
map $\counitdzero \externaltensor W : \gradedd \externaltensor W
\longrightarrow W$ is a $\operad$-coalgebra morphism; it is also
a degree-wise epimorphism. In the meantime since $\gradedd$ is
nullhomotopic, $\gradedd \externaltensor W$ is also nullhomotopic. This
allows us to lift $\counitdzero \externaltensor W$ against $f$:
\[
	\begin{tikzcd}[ampersand replacement=\&]
		0
		\arrow[rr, ""]
		\arrow[d, "", hookrightarrow, swap]
		\arrow[d, shift left=1.7, phantom, "\rotatebox{90}{\(\sim\)}"]
		\& \&
		V
		\arrow[d, "f"]
		\\
		\gradedd \externaltensor W
		\arrow[rr, "\counitdzero \externaltensor
		 W", swap] \arrow[urr, dashed]
		\& \&
		W,
	\end{tikzcd}
\]
proving that $f$ is a degree-wise epimorphism.
\end{proof}

\begin{remark}%
\label{rmk:L_preserve_les_epi}
As a straightforward consequence, for any $\operad$ the
functor $\freecog \operad$ preserves degree-wise epimorphisms.
\end{remark}

\subsection{Sufficient condition}

We shall now focus on the sufficient conditions for the existence of
a functorial cylinder object.
In the category of chain
complexes, the canonical way to build cylinders is obtained by
tensorisation with the standard interval $\interval$.
It is the chain complex
\[
	\begin{tikzcd}
		\cdots \ar[r] & 0 \ar[r] & \monoidalunit_{\{0,1\}} \ar[r,"\dd"]
		& \monoidalunit_{\{0\}} \oplus \monoidalunit_{\{1\}} \ar[r]
		& 0 \ar[r]
		& 0 \ar[r]
		& \cdots
	\end{tikzcd}
\]
with $\dd \coloneqq (-\id\monoidalunit, \id\monoidalunit)$.

Since the model structure on coalgebras is transferred from the one on
chain complexes, it is natural to wish for a transfer of the cylinder
functor. However, given a $\operad$-coalgebra $V$, the
chain complex $\interval \otimes V$ does not usually bear a
$\operad$-coalgebra structure. Hence we need to require more data;
this will lead us to a coadmissibility theorem, akin to the
(incomplete) one of Berger and Moerdijk%
~\cite[Proposition~4.1]{doi:10.1007/s00014-003-0772-y}
and its avatars%
~\cite[Theorem~6.3.1]{doi:10.1112/topo.12011}.

The interval $\interval$ has a natural coalgebra structure over the
operad $\eoperad$%
~\cite[Theorem 2.1.1]{doi:10.1017/S0305004103007138}, where
$\eoperad$ is a linear analogue introduced by Berger and Fresse%
~\cite{doi:10.1017/S0305004103007138}
of the topological operad $\beoperad$ of
Barratt and Eccles%
~\cite{doi:10.1016/0040-9383(74)90036-6}.
One of its remarkable properties is to factorise the canonical morphism
$\uass \to \ucom$:
\[
	\begin{tikzcd}
		\uass \ar[r, hook]& \eoperad \ar[r, two heads]
		\ar[r, phantom, shift left, "\sim"] & \ucom
	\end{tikzcd}
\]
in such way that the $\uass$-interval structure of $\interval$
is in fact induced by its $\eoperad$-interval structure.

\begin{definition}
Let us denote by $\sigma$ the map $\eoperad \to \ucom$.
Then a dg\=/operad $\operad$ shall be called $\eoperad$-split if the map
$\groundofe \diagtensor \id\operad : \eoperad \diagtensor \operad \to
\operad$ admits a section in the category of dg-operads.
\end{definition}

\begin{theorem}%
\label{thm:les_operades_planaires_ou_cofibrantes_sont_coadmissibles}
Any $\eoperad$-split dg-operad is coadmissible. In particular
all planar operads, all cofibrant operads
and the dg-operad $\eoperad$ itself are coadmissible.
\end{theorem}

\begin{proof}
It is enough to show the
existence of functorial cylinders%
~[\ref{thm:coadmissible}]. Let $V$ be any $\operad $-coalgebra.
Since $\catss$ is semi-simple, the tensor product of
$\chain {\catss}$ preserves both cofibrations and weak
equivalences so that we get
\[
	\begin{tikzcd}
		(\groundofe^\ast\monoidalunit \externaltensor V)
		\oplus (\groundofe^\ast\monoidalunit \externaltensor V)
		\arrow[r, hookrightarrow]
		& \interval \externaltensor V
		\arrow[r]
		\arrow[r,phantom,"\sim", shift left]
		& \groundofe^\ast\monoidalunit \externaltensor V.
	\end{tikzcd}
\]
Moreover these maps are maps of $(\eoperad \diagtensor
\operad )$-coalgebras by construction. Let $s : \operad \to
\eoperad\diagtensor\operad$ be a section of $\groundofe\diagtensor
\id\operad$, then we get morphisms of $\operad$-coalgebras
\[
	\begin{tikzcd}
		s^\ast(\groundofe^\ast\monoidalunit \externaltensor V)
		\oplus s^\ast(\groundofe^\ast\monoidalunit \externaltensor V)
		\arrow[r, hookrightarrow]
		& s^\ast(\interval \externaltensor V)
		\arrow[r]
		\arrow[r,phantom,"\sim", shift left]
		& s^\ast(\groundofe^\ast\monoidalunit \externaltensor V).
	\end{tikzcd}
\]
By asumption $(\groundofe\diagtensor\id\operad)\circ s = \id\operad$
hence
$s^\ast(\sigma^\ast\monoidalunit\externaltensor V) = V$ and the functor
\[
	s^\ast(\interval\externaltensor -) : \catofcog\operad \longrightarrow
	\catofcog\operad
\]
creates cylinder objects of $\operad$-coalgebras.

Let us quickly recall why all the dg-operads cited above are
$\eoperad$-split:
\begin{itemize}
	\item The operad $\eoperad$ is $\eoperad$-split because it is a Hopf
	      operad;
	\item Planar operads are $\uass$-split, hence $\eoperad$-split;
	\item The map $\groundofe \diagtensor \id\operad
	      : \eoperad \diagtensor
	      \operad \to \operad$ is a trivial fibration of dg-operads. If
	      $\operad$ is cofibrant, it admits a section.
\end{itemize}
\end{proof}

\subsection{A counter-example}%
\label{sec:no_cocom_interval}%
\label{sec:counter_example_weq_operads}

Notice that the above proposition%
~[\ref{thm:les_operades_planaires_ou_cofibrantes_sont_coadmissibles}]
eludes the case of symmetric operads in
characteristic zero, which are known to be admissible in the algebraic
case%
~\cite[4.2]{doi:10.1080/00927879708826055}.
There is actually no corresponding theorem of coadmissibility for a good
reason.

\begin{proposition}%
\label{thm:com_n_est_pas_coadmissible}
The operad $\ucom$ is \emph{not} coadmissible in the category of
chain complexes over an algebraically closed field $\fieldk$
of characteristic zero.
\end{proposition}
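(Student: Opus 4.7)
The plan is to argue by contradiction. Suppose that $\mathrm{Com}$ is coadmissible. By \cref{proposition:coadmissible} this is equivalent to the existence of functorial cylinder objects in $\mathrm{Com}\text{-}\mathcal{C}\mathrm{og}$, i.e.\ for every cocommutative dg-coalgebra $V$ a natural factorization of the codiagonal
\[
	V \oplus V \;\rightarrowtail\; \mathrm{Cyl}(V) \;\overset{\sim}{\longrightarrow}\; V
\]
in which the first arrow is a degree-wise monomorphism of $\mathrm{Com}$-cogebras and the second is a quasi-isomorphism of the underlying chain complexes.

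The strategy is then to exploit the very rigid structure of cocommutative coalgebras over an algebraically closed field $k$ of characteristic zero. By Cartier's theorem, every cocommutative coalgebra in $\mathcal V\mathrm{ect}_k$ splits canonically as the direct sum of its irreducible pointed components, indexed by its set of grouplike elements. In particular the functor $G$ sending a cocommutative coalgebra to its set of grouplikes carries degree-wise monomorphisms to injections of sets, so the putative cylinder gives an injection $G(V) \sqcup G(V) \hookrightarrow G(\mathrm{Cyl}(V))$, while the quasi-isomorphism $\mathrm{Cyl}(V) \to V$ forces each pair of doubled grouplikes to become homologous in $\mathrm{Cyl}(V)$.

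To produce the contradiction I would test the would-be cylinder functor against the family $V_S = k[S]$ of grouplike cogebras attached to a set $S$, with $\Delta(s) = s \otimes s$. Naturality of $\mathrm{Cyl}$ with respect to arbitrary set maps and the large symmetric group $\mathfrak S_S$ acting on $V_S$ forces the connecting one-cells $x_{s,s'} \in \mathrm{Cyl}(V_S)_1$ satisfying $dx_{s,s'} = s - s'$ to be chosen $(\mathfrak S_S \times \mathfrak S_S)$-equivariantly, and compatibly with the collapsing maps $V_S \to V_{S'}$ obtained from surjections $S \twoheadrightarrow S'$. Cocommutativity of $\mathrm{Cyl}(V_S)$, combined with the Cartier decomposition into pointed irreducible components, puts a very restrictive form on the comultiplication of $x_{s,s'}$ inside the irreducible component above a given grouplike. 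The tension is that each irreducible pointed component of a cocommutative coalgebra in $\mathcal V\mathrm{ect}_k$ is a filtered colimit of finite-dimensional pieces, whereas the equivariant data required by $\mathrm{Cyl}$ cannot be packaged inside such a colimit when $S$ is sufficiently large.

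The main obstacle will be to translate this heuristic rigidity into a precise incompatibility. Concretely, I would try to identify, above each grouplike $s$ of $V_S$, the graded vector space of primitive elements of degree $1$ in $\mathrm{Cyl}(V_S)$ containing the $x_{s,s'}$, and show via Milnor--Moore that the naturally forced $\mathfrak S_S$-representation on this space cannot be embedded into any filtered union of finite-dimensional representations compatibly with varying $S$. Algebraic closure and characteristic zero enter exactly at this step, through the Cartier--Kostant splitting and the total reducibility of finite-dimensional representations; relaxing either hypothesis breaks the argument, which is consistent with the classical fact that $\mathrm{Com}$ is admissible in the algebra case in characteristic zero.
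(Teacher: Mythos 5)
Your opening move coincides with the paper's: assume a cylinder object exists and invoke the Cartier--Kostant structure theorem, which over an algebraically closed field of characteristic zero decomposes a cocommutative dg-coalgebra $I$ as a direct sum $\bigoplus_{\alpha} I_\alpha$ of irreducible pointed \emph{dg-sub}coalgebras indexed by the grouplikes. But at the decisive moment you walk away from the contradiction that is already in your hands and launch an unfinished representation-theoretic detour; the step you yourself label the ``main obstacle'' is exactly the missing proof. The point you overlook is that the differential preserves each summand $I_\alpha$. Hence if $\alpha \neq \beta$ are the two grouplikes coming from $k \oplus k \rightarrowtail I$, an element $x$ with $dx = \alpha - \beta$ decomposes along the summands, and projecting onto $I_\alpha$ shows that $\alpha$ must be a boundary \emph{inside} $I_\alpha$ (equivalently $H_0(I_\alpha)=0$, which is forced anyway for at least one of the two summands since $H_\ast(I)\simeq k$). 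But then the chain map $I \to k$, whose target has zero differential, kills $\alpha$, so the composite $k \oplus k \to I \to k$ vanishes on that factor --- contradicting that it is the codiagonal. This closes the argument already for the cylinder of the single object $k$ (a cocommutative interval), with no functoriality, no family $V_S$, no $\mathfrak S_S$-equivariance, and no Milnor--Moore. This is precisely the paper's proof.

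Beyond being unnecessary, your proposed obstruction is doubtful on its own terms: the fundamental theorem of coalgebras makes \emph{every} coalgebra the filtered union of its finite-dimensional subcoalgebras, with or without a group acting on it, so ``the equivariant data cannot be packaged inside such a colimit'' is not a contradiction you can reach without a substantially new idea --- none is supplied, and the elementary homological argument above suggests none is needed. As written, the proposal identifies the right structural input but does not constitute a proof.
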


\begin{proof}
In order to prove this claim, we shall show that there cannot exists a
cocommutative interval object in $\chain\fieldk$.
Since the unit $\fieldk$ is a cocommutative coalgebra in
$\chain\fieldk$, if $\ucom$ were
coadmissible, there would exists a cocommutative coalgebra
$I$ such that the sum map $\fieldk \oplus \fieldk \to
\fieldk$ factors as
\[
	\begin{tikzcd}
		\fieldk \oplus \fieldk
		\arrow[r, hookrightarrow]
		& I
		\arrow[r]
		\arrow[r,phantom,"\sim", shift left]
		& \fieldk .
	\end{tikzcd}
\]
This is impossible. By a theorem of structure of cocommutative
coalgebras~%
\cite[Theorem 8]{arXiv:1612.02254}, every cocommutative coalgebra
$(I, \Delta)$ can be decomposed as a sum
\[
	I \isonat \bigoplus_{\alpha \in A} I_\alpha
\]
where $A$ is the set of atoms of $I$, each $I_\alpha$ is an irreducible
subcoalgebra. Each of these irreducible subcoalgebras can be split into
$I_\alpha
\iso \fieldk.\alpha \oplus \overline{I}_\alpha$ where
$\Delta(\alpha) =
\alpha \otimes \alpha$ and $\overline{I}_\alpha$ is non-counital and
locally conilpotent.

Using the injection $\fieldk \oplus \fieldk \subobject I$
we deduce that there
exists $\alpha, \beta \in A$ non-zero with $\alpha \neq \beta$ and
$d\alpha = d\beta = 0$. But since $I$ is quasi-isomorphic to
$\fieldk$, we either have $\homology_0(I_\alpha) = 0$ or
$\homology_0(I_\beta) = 0$. In either case, this means that the map
$\fieldk \oplus \fieldk \to \fieldk$ is zero on one of the
factors which contradicts the definition of $I$.
\end{proof}

The fact that $\ucom$ is not coadmissible allows us to create a
conter-example for the functoriality of the model categories of
coalgebras:
given a quasi-isomorphism of dg-operads $f : \operad \to \operad'$, it
is straightforward to prove that the induced morphism $f \compofsymseq
(-) : \operad \compofsymseq (-) \to \operad' \compofsymseq (-)$ is
a point-wise weak equivalence, which by ripple effect gives a model
equivalence between the respective model categories of algebras when
both dg-operads are admissible.

The same phenomenon is false in general for coalgebras.
Indeed, the
obvious weak equivalence $X^f : X^{\operad'} \to X^\operad$ might not
restrict to a weak equivalence $\freecog f : \freecog {\operad'} X \to
\freecog \operad X$ for every $X$. As $\groundofe :
\eoperad \to \ucom$ is a trivial fibration,
one could legitimately hope that for any fibrant
$\eoperad$-coalgebra $V$, the counit morphism
\[
	\groundofe^\ast \groundofe_\ast V \longrightarrow V
\]
be a quasi-isomorphism. This cannot happen. Indeed, we know that the
dg-operad $\eoperad$ is coadmissible%
~[\ref{thm:les_operades_planaires_ou_cofibrantes_sont_coadmissibles}],
hence one can find a fibrant interval $I$
\[
	\begin{tikzcd}
		\groundofe^\ast \monoidalunit \oplus
		\groundofe^\ast \monoidalunit
		\ar[r, hook]
		& I
		\ar[r]
		\ar[r, phantom, shift left, "\sim"]
		&
		\groundofe^\ast \monoidalunit.
	\end{tikzcd}
\]
As $\groundofe$ is a fibration of dg-operads,
$\groundofe^\ast \groundofe_\ast I$ is the biggest cocommuative
subcoalgebra of $I$. Then, one would have the following factorisation
\[
	\begin{tikzcd}
		\groundofe^\ast \monoidalunit \oplus
		\groundofe^\ast \monoidalunit
		\ar[r, hook]
		&
		\groundofe^\ast \groundofe_\ast I
		\ar[r, hook]
		\ar[r, phantom, shift left, "\sim"]
		& I
		\ar[r]
		\ar[r, phantom, shift left, "\sim"]
		&
		\groundofe^\ast \monoidalunit.
	\end{tikzcd}
\]
Again because $\groundofe$ is a fibration, $\groundofe^\ast$
is fully faithful and creates cofibrations
and weak equivalences so that would mean that $\groundofe_\ast I$ is
a cocommutative interval, which we know does not exist%
~[\sectionref{sec:no_cocom_interval}].

We actually think that this counter example is not an exception.

\begin{conjecture}
The following trivial fibrations $f$ of dg-operads do not
induce weak equivalences $\comonadl^f$ of comonads:
\begin{itemize}
	\item $\eoperad \longrightarrow \ucom$;
	\item $\uass_\infty \longrightarrow \uass$.
\end{itemize}
In particular, we conjecture that the ∞-categories associated to
counital
coassociative dg-coalgebras and to $\uass_\infty$-coalgebras are not
equivalent, even though $\uass_\infty \to \uass$ is a trivial fibration
between two coadmissible dg\=/operads.
\end{conjecture}

\begin{remark}
A result of Soré%
~\cite{doi:10.1007/s40062-018-0210-x}
shows that the model adjunction
\[
	\begin{tikzcd}[ampersand replacement=\&]
		\catofcog\uass\left(\catss^{\simplexcat\op}\right)
		\arrow[r, shift right,"\nervefunctor"']
		\&
		\catofcog\uass\left(\positivechain\catss\right)
		\arrow[l, shift right, "\gammafunctor"']
	\end{tikzcd}
\]
between the categories of
simplicial coalgebras and dg-coalgebras endowed with their natural
model structures, is not a model equivalence.
This corroborates our conjecture
that the ∞-category of $\uass$-coalgebras is not equivalent to the
one of $\uass_\infty$-coalgebras.
\end{remark}

\subsection{Model equivalence for cofibrant dg-operads}

In this subsection, we prove \cref{bigthm:weq_of_operads_eq_of_models}.
The idea underlying this proof is that the set of morphisms between cofibrant
operads that induces a model equivalence between the related categories of coalgebras
contains cellular trivial cofibrations satisfies the 2-out-of-3 rule and is stable through retracts.
Then, it contains all weak equivalences
between cofibrant operads.

\begin{lemma}\label{thm:comorita-equivalences-2-out-of-3}
 The class of morphisms $f: \operad \to \operad'$ between cofibrant operads
 so that the adjunction $f^\ast \dashv f_\ast$ is a model equivalence 
 satisfies the 2-out-of-3 property adn is stable through retracts.
\end{lemma}

\begin{proof}
This is a direct consequence of the fact
that equivalence of categories satisfy these properties.
\end{proof}

\begin{lemma}%
\label{thm:free_quasi_iso_induces_weak_equivalence_of_comonads}
Let $\operad$ be an $\eoperad$-split dg-operad and
let $K$ be a contractible symmetric sequence.
Then the free morphism of free dg-operads
$\monoidalunit \to \treemodule(K)$ induces a model equivalence
equivalence
\[
	\begin{tikzcd}[ampersand replacement=\&]
		\catofcog\operad
		\arrow[r, shift right=1.5]
		\&
		\catofcog{(\operad \amalg \treemodule K)}.
		\arrow[l, shift right=1.5]
	\end{tikzcd}
\]
\end{lemma}

\begin{proof}
By $\rationals$-linearity the map $\monoidalunit \to \treemodule(K)$ is
a retract of $\monoidalunit \to \treemodule(\pltosym{\forgetsym K})
\isonat \pltosym{(\planartreemodule \forgetsym K)}$%
~[\ref{thm:planar_trees_vs_trees}].
Hence we can restrict our attention to the planar case.

Let $h$ be a contracting homotopy of $K$. For simplicity let us write
$\bigretract : \operad\amalg\treemodule(K) \to \operad$ for the
map coming from the canonical augmentation of $\treemodule(K)$ and let
$\bigsection :\operad \to \operad \amalg \treemodule(K)$ be its
canonical section. By functoriality we deduce that
\[
	\bigretract^\ast: \catofcog\operad \longrightarrow
	\catofcog{\operad\amalg\treemodule(K)}
\]
is faithful at the homotopy category level. So the
only remaining thing to show is that this functor is homotopy
full and homotopy surjective.

Let $V$ be a $(\operad \amalg \treemodule(K))$-coalgebra, that is a chain
complex with both the structure of a $\operad$-coalgebra and the structure
of a $\treemodule(K)$-coalgebra. Since by assumption $\bigretract^\ast$
and $\bigsection^\ast$ are the identity on the $\operad$-coalgebra
structure part, we shall only focus on the $\treemodule(K)$-coalgebra
structure part.

Since $\treemodule(K)$ is a free
dg-operad, the coalgebra structure on $V$ amounts to the data of maps
\[
	\delta^K_n : K(n) \otimes V \longrightarrow V^{\otimes n}
\]
with no relations between them. By construction $\bigretract^\ast
\bigsection^\ast \delta^M_n$ is the zero map and $\bigretract^\ast
\bigsection^\ast V$ has a trivial $\treemodule(K)$\=/structure.

We now need to show that those two coalgebras are isomorphic in the
homotopy category of $\treemodule(K)$\=/coalgebras. For this we use the
homotopy $h$ the zero map and $\id K$. It gives us a map
\[
	\delta_n^K \circ (h \otimes \id V) :
	K(n) \otimes \interval \otimes V \longrightarrow V^{\otimes n}
\]
which we extend into a $\treemodule(K)$-coalgebra structure on
$\interval \otimes V$ using the standard $\uass$-coalgebra structure
of the interval:
\[
	K(n) \otimes \interval \otimes V \to
	V^{\otimes n} \to V^{\otimes n} \oplus V^{\otimes n} \to
	\interval \otimes V^{\otimes n} \to
	\fullpower{\interval \otimes V}{\otimes n}.
\]
Since $\operad$ is $\eoperad$-split, one can extend the
$\operad$-coalgebra structure of $V$ to a $\operad$-coalgebra structure on
$\interval \otimes V$ such that the canonical maps
\[
	\begin{tikzcd}
		V \oplus V\ar[r, hook]
		& \interval \otimes V
		\ar[r] \ar[r, phantom, shift left, "\sim"]
		& V
	\end{tikzcd}
\]
are all $\operad$-coalgebra maps.

Let us denote by $\under{(\interval \otimes V)}h$ the resulting
$(\operad \amalg \treemodule(K))$-coalgebra.
By construction, one gets two weak equivalences of coalgebras
\[
	\begin{tikzcd}
		V \ar[r, hook] \ar[r, shift left, phantom, "\sim"]
		& \fullunder{\interval \otimes V}h
		& \bigretract^\ast \bigsection^\ast V. \ar[l, hook']
		\ar[l, phantom, shift right, "\sim"]
	\end{tikzcd}
\]
This shows that $\bigretract^\ast$ is homotopically
essentially surjective.

Now let $f : V \to W$ be a morphism of $\operad \amalg
\treemodule(K)$-coalgebras. Then we obtain a new morphism
$\bigretract^\ast \bigsection^\ast f : \bigretract^\ast
\bigsection^\ast V \to \bigretract^\ast \bigsection^\ast W$ that is
equivalent to $f$ in the derived category of $\catss$. Using the same
type of construction from the homotopy $h$, one can build a commutative
diagram of $(\operad\amalg\treemodule(K))$\=/coalgebras
\[
	\begin{tikzcd}
		V \ar[d, hook] \ar[rr, "f"]
		\ar[d, phantom, "\rotatebox{90}{$\sim$}", shift left=1.5] && W
		\ar[d, phantom, "\rotatebox{90}{$\sim$}", shift right=0.75]
		\ar[d, hook']
		\\
		\fullunder{\interval \otimes V}h \ar[rr, "\interval \otimes f"]
		&& \fullunder{\interval \otimes W}h
		\\
		R^\ast S^\ast V
		\ar[u, hook'] \ar[rr, "R^\ast S^\ast f"']
		\ar[u, phantom, "\rotatebox{90}{$\sim$}", shift right=1.5] &&
		R^\ast S^\ast W
		\ar[u, phantom, "\rotatebox{90}{$\sim$}", shift left=0.75]
		\ar[u, hook]
	\end{tikzcd}
\]
showing that $R^\ast$ is homotopically full.
\end{proof}

Now, we can prove \cref{bigthm:weq_of_operads_eq_of_models}. Let us consider a
weak equivalence of cofibrant operads $f : \operad \to \operad'$. The map
$$
f \sqcup \id{} : \operad \sqcup \operad' \to \operad'
$$
may be factorised as a cofibration followed by a weak equivalence $p$.
$$
\operad \sqcup \operad' \xrightarrow{(u,v)} \operad'' \xrightarrow{p} \operad'
$$
Moreover, since the operads $\operad , \operad'$ are cofibrant both injections
$$
\operad \xrightarrow{i} \operad \sqcup \operad' \xleftarrow{j} \operad'
$$
are cofibrations. Then, the maps $u \circ i : \operad \to \operad''$
and $v \circ j : \operad' \to \operad''$ are trivial cofibration. As retracts of morphisms
of the form described in \cref{thm:free_quasi_iso_induces_weak_equivalence_of_comonads},
they induces model equivalence between their model categories of coalgebras. Using the fact
that $\id{\operad'} = p \circ v \circ j$ and the 2-out-of-3 property, $p$ also yields a model equivalence
between categories of coagebras. So this is also the case for $p \circ u \circ i$.

\section{Cobar and its right adjoint}

\begin{center}
\begin{tikzpicture}
	\node[draw = black, rectangle, inner sep = 10pt, rounded corners]
	{
	\begin{minipage}{0.9\textwidth}
		\begin{center}
			From now on, we shall consider a locally conilpotent
			curved  cooperad
			$(\overline\coperad,\overline w,d_\coperad,\theta)$, a
			dg-operad $(\operad , m,\eta,d_\operad )$ and
			a twisting morphism $\alpha: \sinv\overline\coperad
			\longrightarrow \operad $.
		\end{center}
	\end{minipage}
	};
\end{tikzpicture}
\end{center}

In this section we are going to define the Cobar functor. Its
construction is entirely dual to the one of the Bar functor between the
category of dg-algebras over $\operad $ and the category of curved
coalgebras on $\coperad$~%
\cite{arXiv:1707.03465}. Given the twisting morphism $\alpha$, it is
possible to construct
\[
	\cobarfunctorfull{}_\alpha : \catofcog\operad
	\longrightarrow
	\catofalg\coperad .
\]
This functor admits a right adjoint that we shall also construct
\[
	\cobaradjfull_\alpha : \catofalg\coperad
	\longrightarrow
	\catofcog\operad .
\]
Since there is no ambiguity here, we shall forget to make a reference to
the twisting morphism $\alpha$ in what follows.
When necessary, we shall shorten $\cobaradjfull$ to $\cobaradj$ and
$\cobarfunctorfull{}$ to $\cobarfunctor$.

\subsection{The Cobar functor}

Let $(V,a_V,d_V)$ be a $\operad$-coalgebra in the category of chain
complexes.  We shall define a
\mbox{$\coperad$-algebra}
$(\cobarfunctorfull{} (V), a, d_b)$ as follows:
\begin{itemize}
	\item the underlying graded algebra is the free algebra over $V$
	      \[
	      	\cobarfunctorfull{} V \coloneqq V^\coperad.
	      \]
	\item the derivation $\induceddiff b$ is freely generated by a degree
	      $-1$ map
	      $b : V \to V^\coperad \isonat V \oplus V^{\overline\coperad}$
	      given by the sum of the maps
	      \begin{itemize}
	      	\item $V \xrightarrow{d_V} V$;
	      	\item $V \xrightarrow{a_V} V^\operad \xrightarrow{-V^\alpha}
	      	      V^{\overline \coperad}$,
	      \end{itemize}
	      that is
	      \[
	      	b \coloneqq \canonicalinj \circ d_V - V^\alpha \circ a_V .
	      \]
\end{itemize}

\begin{proposition}%
\label{thm:definition_de_cobar}
The free graded $\coperad$-algebra $V^\coperad$ together with the
derivation $d_b$ is a
curved $\coperad$-algebra. Hence the Cobar construction yields a
functor
\[
	\cobarfunctorfull{} : \catofcog\operad
	\longrightarrow \catofalg\coperad .
\]
\end{proposition}

\begin{proof}
In what follows, we shall forget to mention the use of the lax
map $\laxmap$ for the reader's sake.

It is enough to show that $\induceddiff b \circ b = V^\theta$%
~[\ref{thm:algebre_courbee_libre}].
We know that
\begin{align*}
	\induceddiff b &= \canonicaldiff + \canonicalaction_{V^\coperad}
	\circ {\sha (\canonicalinj ,b)}^\coperad;\\
	&= -V^{d_\coperad} + {\sha (\id V ,d_V)}^\coperad
	- \canonicalaction_{V^\coperad}
	\circ {\sha (\canonicalinj , V^\alpha \circ a_V)}^\coperad.
\end{align*}
Hence,
\begin{align*}
	\begin{split}
		\induceddiff b \circ b
		&= \canonicaldiff \circ \canonicalinj\circ d_V
		- \canonicaldiff \circ V^\alpha
		\circ a_V + \canonicalinj\circ d_V^2
		\\
		& \qquad- {\sha (\id V ,d_V)}^\coperad \circ
		V^\alpha \circ a_V
		- V^\alpha \circ a_V \circ d_V
		\\
		& \qquad \qquad+ \canonicalaction_{V^\coperad}
		\circ {\sha (\canonicalinj , V^\alpha \circ a_V)}^\coperad \circ
		V^\alpha \circ a_V;
	\end{split}
	\\
	\begin{split}
		&= -V^{\alpha \circ d_\coperad} \circ a_V
		\\
		& \qquad + V^\alpha \circ {\sha (\id V ,d_V)}^\operad 
		\circ a_V
		- V^\alpha \circ a_V \circ d_V
		\\
		& \qquad\qquad + \canonicalaction_{V^\coperad}
		\circ {\sha (\canonicalinj , V^\alpha \circ a_V)}^\coperad \circ
		V^\alpha \circ a_V
	\end{split}
\end{align*}
Since $d_V$ is a coderivation, one has
\[
	V^\alpha \circ {\sha (\id V ,d_V)}^\operad  -V^\alpha
	\circ a_V \circ d_V
	\circ a_V = V^\alpha \circ V^{d_\operad } \circ a_V 
	= -V^{d_\operad  \circ \alpha} \circ a_V.
\]
Besides, since
$\canonicalinj = V^{\eta\circ\tau} \circ a_V$, one has
\begin{align*}
\canonicalaction_{V^\coperad} \circ
{\sha (\canonicalinj , V^\alpha \circ a_V)}^\coperad
\!\circ V^\alpha \!\!\circ a_V
	&= \canonicalaction_{V^\coperad} \circ {\sha(V^{\eta\circ\tau}
	\circ a_V, V^\alpha \circ a_V)}^\coperad \circ V^\alpha
	\!\!\circ a_V
	\\
	&= \canonicalaction_{V^\coperad} \circ {\sha(V^{\eta\circ\tau},
	V^\alpha)}^\coperad \circ a_V^\coperad \circ V^\alpha
	\circ a_V
	\\
	&= \canonicalaction_{V^\coperad}\circ {\sha(V^{\eta\circ\tau},
	V^\alpha)}^\coperad \circ {\left(V^\operad \right)}^\alpha
	\circ a_V^\operad
	\circ a_V
	\\
	&= V^w \circ V^{\id \coperad \compofsymseq \sha(\eta\circ\tau,
	\alpha)} \circ V^{\alpha \compofsymseq \idfunctor} \circ V^m
	\circ a_V
	\\
	&=- V^{m\circ (\alpha\compofsymseq\sha(\eta\circ\tau,\alpha))
	\circ w}
	\circ a_V
	\\
	&=- V^{  m \circ (\alpha  \otimes \alpha) \circ
	\overline w_2} \circ a_V.
\end{align*}
So
\begin{equation*}
	\induceddiff b \circ b
	= - V^{\partial \alpha + m \circ (\alpha  \otimes \alpha)
	\circ \overline w_2} \circ a_V =  V^{\eta\circ\theta}\circ  a_V =
	V^\theta.
\end{equation*}
\end{proof}

\subsection{The \texorpdfstring{$\cobaradjfull$}{Cobartip} functor}

Let $(\Lambda,a_\Lambda,d_\Lambda)$ be a
$\coperad$-algebra. We shall define the \mbox{$\operad $-coalgebra}
\[
	(\cobaradjfull  (\Lambda), a, \induceddiff b)
\]
as follows:
\begin{itemize}
	\item the underlying graded algebra is the cofree coalgebra over
	      $\Lambda$:
	      \[
	      	\cobaradjfull\Lambda \coloneqq \freecog\operad \Lambda.
	      \]
	\item the coderivation $d_b$ is freely generated by a degree $-1$ map
	      $b : \freecog\operad \Lambda \to \Lambda$ given by
	      \[
	      	b \coloneqq d_\Lambda \circ \canonicalproj + a_\Lambda \circ
	      	\Lambda^\alpha.
	      \]
\end{itemize}

\begin{proposition}~%
\label{thm:coderivationloop}
The cofree graded $\operad $-coalgebra $\freecog\operad \Lambda$
together with the derivation $\induceddiff b$ is a
differential graded $\operad $-coalgebra. Then, the construction
$\cobaradjfull$ defines a functor
\[
	\cobaradjfull : \catofalg\coperad \longrightarrow
	\catofcog\operad .
\]
\end{proposition}

\begin{proof}
It is enough to show that
$b \circ \induceddiff b=0$~[\ref{thm:coderivation_de_carre_nul}].
One has
\begin{align*}
	\induceddiff b &=  \canonicaldiff + {\sha (\canonicalproj, b)}^\operad
	\circ \canonicalaction_{\freecog\operad
	\Lambda}~[\ref{thm:coderivation_colibre}]
	\\
	&=- \Lambda^{d_\operad }
	+ {\sha(\id \Lambda, d_\Lambda)}^\operad 
	+{\sha (\canonicalproj, a_\Lambda \circ  \Lambda^{\alpha})}^\operad
	\circ \canonicalaction_{\freecog\operad \Lambda}.
\end{align*}
Hence we get
\begin{align*}
	\begin{split}
		b\circ \induceddiff b
		&= d_\Lambda\circ\canonicalproj \circ \canonicaldiff
		+ a_\Lambda \circ  \Lambda^{\alpha} \circ \canonicaldiff
		+ d_\Lambda \circ \canonicalproj \circ
		{\sha(\id \Lambda, d_\Lambda)}^\operad
		\\
		& \qquad + a_\Lambda \circ  \Lambda^{\alpha} \circ
		{\sha(\id \Lambda, d_\Lambda)}^\operad 
		+ d_\Lambda \circ \canonicalproj \circ {\sha (\canonicalproj,
		a_\Lambda \circ \Lambda^{\alpha})}^\operad
		\circ \canonicalaction_{\freecog\operad \Lambda}
		\\
		& \qquad \qquad + a_\Lambda \circ  \Lambda^{\alpha} \circ
		{\sha (\canonicalproj, a_\Lambda \circ  \Lambda^{\alpha})}^\operad
		\circ \canonicalaction_{\freecog\operad \Lambda};
	\end{split}
	\\
	\begin{split}
		&= a_\Lambda \circ \Lambda^{d_\operad  \circ \alpha}
		+d_\Lambda^2 \circ \canonicalproj
		\\
		& \qquad - a_\Lambda \circ {\sha(\id \Lambda,
		d_\Lambda)}^\coperad \circ \Lambda^{\alpha}
		+ d_\Lambda \circ a_\Lambda \circ  \Lambda^{\alpha}
		\\
		& \qquad\qquad + a_\Lambda \circ
		\Lambda^{m \circ (\alpha \otimes \alpha ) \circ \overline w_2}.
	\end{split}
\end{align*}
The fact that
\[
	a_\Lambda \circ  \Lambda^{\alpha} \circ
	{\sha (\canonicalproj, a_\Lambda \circ  \Lambda^{\alpha})}^\operad
	\circ \canonicalaction_{\freecog\operad \Lambda}
	= a_\Lambda \circ  \Lambda^{m \circ (\alpha \otimes \alpha) \circ
	\overline w_2}
\]
follows from the same arguments as those used to prove that
$\cobarfunctorfull V$ is a $\coperad$-algebra%
~[\ref{thm:definition_de_cobar}].
Besides, since $d_\Lambda$ is a derivation, the second line can be
rewritten as
\[
	- a_\Lambda \circ {\sha(\id \Lambda,
	d_\Lambda)}^\coperad \circ \Lambda^{\alpha}
	+ d_\Lambda \circ a_\Lambda \circ  \Lambda^{\alpha}
	= - a_\Lambda \circ \Lambda^{d_\coperad} \circ  \Lambda^{\alpha}
	= a_\Lambda \circ \Lambda^{ \alpha \circ d_\coperad}.
\]
Finally we make the following replacement on the first line,
\[
	d_\Lambda^2 \circ \canonicalproj
	=  a_\Lambda \circ \Lambda^\theta \circ
	\canonicalproj = a_\Lambda \circ \Lambda^{\eta \circ \theta }.
\]
This gives us
\[
	b\circ \induceddiff b = a_\Lambda \circ \Lambda^{\eta \circ \theta +
	\partial \alpha + m \circ (\alpha \otimes \alpha ) \circ
	\overline w_2 } = 0 .
\]
\end{proof}

\subsection{Properties}

Let $(V, a_V, d_V)$ be a $\operad$-coalgebra and let $(\Lambda, a_\Lambda,
d_\Lambda)$ be a $\coperad$-algebra.

\begin{definition}
An $\alpha$-twisting
morphism from $V$ to $\Lambda$ is a degree $0$ map $\phi: V \to \Lambda$
such that
\[
	\partial \phi + a_\Lambda \circ \phi^\alpha\circ a_V=0.
\]
The set of $\alpha$-twisting morphisms from $V$ to $\Lambda$ is denoted
$\alphatwisting\alpha V \Lambda$.
\end{definition}

\begin{theorem}%
\label{thm:adjonction_cobar}
There exists natural isomorphisms
\[
	\hombracket{\catofalg\coperad}
	{\cobarfunctorfull{}
	 V} \Lambda
	\isonat \alphatwisting\alpha V \Lambda
	\isonat
	\hombracket{\catofcog\operad}
	V {\cobaradjfull \Lambda}.
\]
In particular, the functor $\cobarfunctorfull{}$ is left adjoint to 
the functor $\cobaradjfull$.
\end{theorem}

\begin{proof}
One has%
~[\ref{thm:morphism_free_algebra}]
\begin{align*}
f : \cobarfunctorfull{} V \to \Lambda
&\Leftrightarrow f : V^\coperad \to \Lambda, \quad d_\Lambda \circ
f = f \circ \induceddiff b
\\
&\Leftrightarrow
\phi : V \to \Lambda, \quad d_\Lambda \circ \phi = a_\Lambda \circ
\phi^\coperad \circ (\canonicalinj \circ d_V - V^\alpha \circ a_V)\\
&\Leftrightarrow \phi \text{ is an $\alpha$-twisting morphism}
\end{align*}
One has the same bijection for $\cobaradjfull$%
~[\ref{thm:morphisms_to_a_free_coalgebra}].
\end{proof}

\begin{remark}
Notice that  the
Cobar functor takes its values in the full subcategory of complete
$\coperad$-algebras
[\ref{thm:les_algebres_libres_sont_completes}].
Hence we get another adjunction
\[
	\begin{tikzcd}[ampersand replacement=\&]
		\catofcog\operad
		\arrow[rr, shift left=1.5,"\cobarfunctorfull{}"]
		\&
		\&
		\catofcompletealg\coperad.
		\arrow[ll, shift left=1.5, "\cobaradjfull"]
	\end{tikzcd}
\]
\end{remark}

\begin{proposition}%
\label{thm:cobar_est_fidele}
Then the unit morphism $V
\to \cobaradj \cobarfunctor V$ is a degree-wise
monomorphism.
In particular, the $\cobarfunctorfull{}$ functor is faithful.
\end{proposition}

\begin{proof}
The graded map
\[
	\freecog {\operad} V^{\coperad}
	\xrightarrow{{\left(V^{\coperad}\right)}^\eta}
	V^{\coperad}
	\xrightarrow{V^{\tau}}
	V,
\]
is a left inverse of the unit morphism $V \to \freecog {\operad}
V^{\coperad}$.
Subsequently, this unit morphism is a degree-wise monomorphism.
\end{proof}

\begin{proposition}~%
\label{thm:cobaradj_est_fidele}
Assume that $P$ admits a graded augmentation,
then the counit $\cobarfunctor\cobaradj \Lambda \to
\Lambda$ is a degree-wise epimorphism.
In particular, the $\cobaradjfull$ functor is faithful.
\end{proposition}

\begin{proof}
The counit map is given by
\[
	a_\Lambda \circ \fullpower{\counitw^\operad (\Lambda)}\coperad :
	\fullpower{\freecog\operad\Lambda}\coperad \longrightarrow
	\Lambda^\coperad \longrightarrow \Lambda.
\]
The map $\Lambda^\tau$ is a graded section of $a_\Lambda$ so that
$a_\Lambda$ is a degree-wise epimorphism.
Since $\operad$ admits a graded augmentation, $\counitw^\operad
(\Lambda)$ is a degree-wise epimorphism and the cotensor by $Q$ is again
a degree-wise epimorphism%
~[\ref{rmk:cotenseur_fonctoriel}].
By composition the counit is a degree-wise epimorphism.
\end{proof}

\subsection{Functoriality}

Consider the following commutative square
\[
\begin{tikzcd}
 	\coperad 
	\arrow[r,"f"]
	\arrow[d,"\alpha"']
	& \coperad'
	\arrow[d,"\beta"]
	\\
	\operad
	\arrow[r,"g"]
	&
	\operad'
\end{tikzcd}
\]
where vertical arrows are twisting morphisms and horizontal
arrows are morphisms of curved conilpotent cooperads
and operads. This induces several adjunctions
\begin{itemize}
 \item $\freealg f \dashv \forget^f$ relating
 	complete $\coperad$-algebras
	to complete $\coperad'$-algebras ;
\item $\forget_g \dashv \freecog g$ relating $\operad$-coalgebras
	to $\operad'$-coalgebras ;
\item $\cobarfunctor_\alpha \dashv \cobaradj_\alpha$ relating
 	complete $\coperad$-algebras
	to $\operad$-coalgebras ;
\item $\cobarfunctor_\beta \dashv \cobaradj_\beta$ relating
 	complete $\coperad'$-algebras
	to $\operad'$-coalgebras ;
\item $\cobarfunctor_{\gamma} \dashv
	\cobaradj_{\gamma}$ relating
 	complete $\coperad$-algebras
	to $\operad'$-coalgebras, where $\gamma=g\circ \alpha
	=\beta \circ f$.
\end{itemize}

\begin{lemma}
There exists canonical isomorphisms of functors
\begin{align*}
	\cobarfunctor_{\gamma} \isonat& \freealg f \circ \cobarfunctor_{\beta}
	\isonat
	\cobarfunctor_\alpha \circ \forget_g ;
	\\
	\cobaradj_{\gamma}
	\isonat& \cobaradj_\beta \circ \forget^f \isonat
	\freecog g \circ \cobaradj_\alpha .
\end{align*}
\end{lemma}

\begin{proof}
On the one hand, to show that $\cobarfunctor_{\gamma}$ is canonically isomorphic
to $\cobarfunctor_\alpha \circ \forget_g$ it suffices to notice that both functors send
a $\operad'$-coalgebra $V$ to the $\coperad$-algebra whose underlying graded coalgebra
is $V^\coperad$ and whose derivation is freely generated by a degree
$-1$ map from $V$ to $V^\coperad \isonat V \oplus V^{\overline\coperad}$
given by the sum of the maps
	      \begin{itemize}
	      	\item $V \xrightarrow{d_V} V$;
	      	\item $V \xrightarrow{a_V} V^{\operad'} 
			\xrightarrow{V^g} V^\operad  \xrightarrow{-V^\alpha}
	      	      V^{\overline \coperad}$.
	      \end{itemize}
This gives us by adjunction a canonical isomorphism $\cobaradj_{\gamma} \isonat 
\freecog g \circ \cobaradj_\alpha $.

On the other hand, to show that $\cobaradj_{\gamma}$
is canonically isomorphic
to $\cobaradj_\beta \circ \forget^f$ it suffices to notice that both functors send
a $\coperad$-algebra $\Lambda$ to the $\operad'$-coalgebra whose underlying graded coalgebra
is $\freecog{\operad'} \Lambda$ and whose coderivation is freely generated by a degree
$-1$ map from $\freecog{\operad'} \Lambda$ to $\Lambda$
given by the sum of the maps
	      \begin{itemize}
	      	\item $\freecog{\operad'} \Lambda \twoheadrightarrow \Lambda \xrightarrow{d_\Lambda} \Lambda$;
	      	\item $\freecog{\operad'} \Lambda \hookrightarrow \Lambda^{\operad'}
		\xrightarrow{\Lambda^{\beta}} \Lambda^{\overline{\coperad'}}
		\xrightarrow{\Lambda^{f}} \Lambda^{\overline{\coperad}} 
			\xrightarrow{a_\Lambda} \Lambda$.
	      \end{itemize}
This gives us by adjunction a canonical isomorphism $\cobarfunctor_{\gamma} \isonat 
\freealg f \circ \cobarfunctor_{\beta}$.
\end{proof}

\begin{proposition}
\label{proposition:factorisation_unit_morphism}
The canonical morphism of functors
\[
	\forget_g \xrightarrow{\eta \circ \forget_g}
	\cobaradj_\alpha \circ \cobarfunctor_\alpha \circ \forget_g
\]
is the composition
\[
\begin{tikzcd}
 	\forget_g 
	\arrow[r,"\forget_g \circ \eta"]
	& \forget_g \circ \cobaradj_\beta \circ \cobarfunctor_\beta
	\arrow[rr,"\forget_g \circ \cobaradj_\beta \circ \eta \circ \cobarfunctor_\beta"]
	&& \forget_g \circ \cobaradj_\beta \circ
	\forget^f \circ \freealg f \circ \cobarfunctor_\beta
	\arrow[d,equal]
	\\
	&&& \forget_g \circ \freecog g \circ \cobaradj_\alpha
	\circ \cobarfunctor_\alpha \circ \forget_g
	\arrow[d,"\epsilon \circ \cobaradj_\alpha
	\circ \cobarfunctor_\alpha \circ \forget_g"']
	\\
	&&& \cobaradj_\alpha \circ \cobarfunctor_\alpha \circ \forget_g .
\end{tikzcd}
\]
\end{proposition}

\begin{proof}
 This follows from the commutation of the following diagram
\[
\begin{tikzcd}
 	\forget_g \circ \cobaradj_\beta \circ \cobarfunctor_\beta
	\arrow[rrr,"\forget_g \circ \cobaradj_\beta \circ \eta \circ\cobarfunctor_\beta"]
	&&&\forget_g \circ \cobaradj_\beta \circ \forget^f \circ \freealg f
	\circ  \cobarfunctor_\beta
	\arrow[d, equal]
	\\
	\forget_g \arrow[rrr] \arrow[u,"\forget_g \circ \eta"]
	\arrow[d,"\forget_g \circ \eta"]
	&&& \forget_g \circ \cobaradj_\gamma \circ \cobarfunctor_\gamma
	\arrow[d, equal]
	\\
	\forget_g \circ \freecog g \circ \forget_g
	\arrow[rrr,"\forget_g \circ \freecog g \circ \eta \circ \forget_g"]
	\arrow[d,"\epsilon \circ \forget_g"]
	&&& \forget_g \circ \freecog g \circ
	\cobaradj_\alpha \circ \cobarfunctor_\alpha \circ \forget_g
	\arrow[d,"\epsilon \circ \cobaradj_\alpha \circ \cobarfunctor_\alpha \circ \forget_g"]
	\\
	\forget_g 
	\arrow[rrr, "\eta \circ \forget_g"]
	&&& \cobaradj_\alpha \circ \cobarfunctor_\alpha \circ \forget_g ,
\end{tikzcd}
\]
and the fact that the composition
\[
	\forget_g 
	\xrightarrow{\forget_g \circ \eta}
	\forget_g \circ \freecog g \circ \forget_g
	\xrightarrow{\epsilon \circ \forget_g}
	\forget_g
\]
is the identity of $\forget_g$.
\end{proof}
\section{Model structure on complete algebras}

In the previous section, we endowed the category of coalgebras over an
operad $\operad$ with a model structure that is expected to
describe the
relevant ∞\=/category of $\operad$-coalgebras in the ∞-category of chain
complexes%
~[\ref{conj:infinity_cat_of_cogs}].
However, this model structure is cumbersome
due to the impracticability of the comonad
$\comonadl^\operad$.
For instance, identifying the fibrant objects or computing fibrant
replacements is a real challenge.

To bypass this obstacle, we get inspiration from an idea that was
applied with success to describe the homotopy theory of algebras
over a dg-operad: to transfer the model structure of
$\operad$-algebras to the category of $\coperad$-coalgebras along the
$\barfunctorfull$ adjunction
\[
	\begin{tikzcd}[ampersand replacement=\&]
		\catofalg\operad
		\arrow[rr, shift right=1.5, swap, "\barfunctorfull"]
		\& \&
		\catofcog\coperad
		\arrow[ll, shift right=1.5, swap, "\leftadjoint\barfunctorfull"]
	\end{tikzcd}
\]
and then to study the homotopy theory of $\operad$-algebras
in the framework of $\coperad$-coalgebras. Indeed, under some conditions
we obtained a model equivalence, for instance in the case when
$\operad$ is the dg-operad $\leftadjoint\barfunctor \coperad$.
This result has been given in
\emph{Homotopy theory of unital algebras}%
~\cite{arXiv:1612.02254},
following earlier results for Lie algebras by Hinich%
~\cite{doi:10.1016/S0022-4049(00)00121-3}, associative algebras by
Lefèvre-Hasegawa%
~\cite{arXiv:0310337}
and general augmented dg-operads by Vallette%
~\cite{arXiv:1411.5533} and Drummond-Cole \& Hirsh%
~\cite{doi:10.1090/proc/12823}.

Thus, we are going to transfer
the model structure on $\operad $-coalgebras to the
category of complete $\coperad$-algebras along the Cobar adjunction,
\[
	\begin{tikzcd}[ampersand replacement=\&]
		\catofcog\operad
		\arrow[rr, shift left=1.5,"\cobarfunctorfull"]
		\&
		\&
		\catofcompletealg\coperad.
		\arrow[ll, shift left=1.5, "\cobaradjfull"]
	\end{tikzcd}
\]
In the next section we shall prove that this defines a model equivalence
in the case where $\operad$ is the dg-operad $\baradj\coperad$%
~[\sectionref{sec:equivalence_cobar}].
We shall then explain how the homotopy theory of $\operad$-coalgebras
can be read from the homotopy theory of complete $\coperad$-algebras%
~[\sectionref{sec:theorie_homotopique_des_cogebres_lineaires}].

So from now on, we are concerned with proving the following theorem.

\begin{theorem}%
\label{bigthm:theoreme_de_transfert}
If $\operad$ is coadmissible,
the category of complete $\coperad$\=/algebras can be endowed with
a combinatorial
model structure transferred along the $\cobarfunctorfull{}$ adjunction,
where a morphism $f$
of complete $\coperad$-algebras is
\begin{itemize}
	\item a weak equivalence if $\cobaradjfull(f)$ is a
	      quasi-isomorphism of chain complexes
	      \[
	      	f \in \weakeqofcompletealg \Longleftrightarrow \cobaradj f \in
	      	\weakeqofcog;
	      \]
	\item a fibration if $\cobaradjfull(f)$ is a
	      degree-wise epimorphism of chain complexes.
	      \[
	      	f \in \fibrationofcompletealg \Longleftrightarrow \cobaradj f
	      	\in \fibrationofcog.
	      \]
\end{itemize}
As a consequence, the Cobar adjunction is promoted to a model
adjunction.
\end{theorem}

\begin{definition}[(Canonical model structure)]
When $\specialoperad \coloneqq \baradj \coperad$ and
$\canonicaltwisting$ is the
canonical twisting morphism, the transferred model structure on the
category of complete $\coperad$-algebras shall be called the
\emph{canonical model structure}.
\end{definition}

\begin{proof}[Outline of the proof]
For this proof we may just check that the assumptions of the
acyclicity theorem%
~\cite[Sec. 2.5 \& 2.6]{doi:10.1007/s00014-003-0772-y}
are satisfied:
\begin{itemize}
	\item The model category of $\operad $-coalgebras
	      is combinatorial~[\ref{thm:coadmissible}];
	\item The category of complete $\coperad$-algebras is
	      presentable~[\ref{thm:categorie_des_algebres_completes%
	      _courbees_est_presentable}];
	\item Every arrow having the left lifting property against all
	      fibrations is a weak equivalence,
	      ${}^\boxslash(\fibrationofcompletealg)\subset
	      \weakeqofcompletealg$.
\end{itemize}

For the last item
in the planar case, we shall use the path object argument: we
shall show that every complete $\coperad$-algebra is fibrant and that
for any complete $\coperad$-algebra $\Lambda$, we shall show that
$[\interval, \Lambda]$ provides a path-object on $\Lambda$.

The characteristic zero case is less straightforward and we shall show
the acyclicity by direct computation instead.
\end{proof}

\subsection{Infinitesimal extensions}

Using a dévissage argument, any fibration can be decomposed into
a succession of elementary fibrations given by infinitesimal extensions.
The prime example of an infinitesimal extension will be the canonical
map
\[
	\cofiltration {n+1} \Lambda \longrightarrow \cofiltration n \Lambda
\]
for any natural $n$ and any $\coperad$-algebras $\Lambda$.
In the case where $\Lambda$ is a complete associative algebra,
a quotient map $\varpi : \Lambda \to \Lambda/I$ is an infinitesimal
extension when $I$ is an absorbent ideal of $\Lambda$. We shall
generalise this definition to the case of any  cooperad.

\begin{definition}
Let $\Lambda$ be a $\coperad$-algebra and $\ideali$ be an ideal of
$\Lambda$. The
corresponding degree-wise epimorphism
\[
	\varpi : \Lambda \longrightarrow \Lambda/\ideali 
\]
is said to be an infinitesimal extension if the following composition
equals zero
\[
	{\sha(\Lambda,\ideali)}^{\overline{\coperad}} \subobject
	\Lambda^\coperad
	\xrightarrow{a} \Lambda.
\]
By graded splitting of an infinitesimal extension
$\varpi : \Lambda \to \Gamma$
we shall mean a graded map $s : \Gamma \to \Lambda$ such that
$\varpi \circ s$ is the identity of $\Gamma$ as a graded object.
\end{definition}

\begin{remark}
By semi-simplicity of $\catss$, any degree-wise epimorphism admits a
graded splitting.
\end{remark}

\begin{remark}
Notice that by definition, the curvature of $\coperad$ acts trivially on
$\ideali$; hence $\ideali$ is a chain complex.
\end{remark}

We shall now see that infinitesimal extensions are fibrations. The
proof is rather technical so we start with a sketch.

\begin{proof}[Sketch of the proof]
Although we do not have a complete knowledge of fibrations in the
model category of $\operad$-coalgebras, we can still describe some of
them. In the model category of chain complexes, the ‘universal’
fibration is the map $\counitofd : \dzero \to \monoidalunit$; it
generates by tensorisation other fibrations $\counitofd \otimes X :
\dzero \otimes X \to X$ between chain complexes. By ripple effect,
the cofree morphism of cofree $\operad$-coalgebras
$\freecog\operad (\counitofd \otimes X) : \freecog \operad
(\dzero \otimes X) \to \freecog\operad X$ is a fibration of
$\operad$-coalgebras. The goal of the proof is to show that for
a given infinitesimal extension $\varpi : \Lambda \to \Gamma$, the
map $\cobaradj \varpi : \cobaradj \Lambda \to \cobaradj \Gamma$ can
be obtained as a pull-back of such a ‘generation fibration’
\[
	\begin{tikzcd}[ampersand replacement=\&]
		\cobaradj\Lambda
		\arrow[r, ""]
		\arrow[d, "\cobaradj\varpi",swap]
		\arrow[rd, very near start, phantom, "\lrcorner"]
		\& \freecog\operad (\dzero \otimes X)
		\arrow[d, "\freecog\operad(\counitofd \otimes X)"] \\
		\cobaradj\Gamma
		\arrow[r, "", swap]
		\& \freecog\operad X.
	\end{tikzcd}
\]

Consider such an infinitesimal extension
$\varpi: \Lambda \to \Gamma$ and a
graded splitting $s : \Gamma \to \Lambda$. This induces a graded
isomorphism
\[
	\Lambda \iso \Gamma \oplus \ideali .
\]
where $\ideali$ is the kernel of $\varpi$.
Under this isomorphism the derivation of $\Lambda$ is given by
the following matrix
\[
	d_\Lambda = 
	\begin{pmatrix}
	d_\Gamma & 0 \\
	d_t & d_{\ideali}
	\end{pmatrix}.
\]
where $d_t$ is a degree $-1$ map from $\Gamma$ to $\ideali$.
Moreover, since $\varpi$ is an infinitesimal extension,
the map $a_\Lambda :\Lambda^{\overline{\coperad}} \to \Lambda$ 
decomposes as follows
\[
	\begin{tikzcd}[ampersand replacement=\&]
		a_\Lambda : \Lambda^{\overline{\coperad}}
		\arrow[r, twoheadrightarrow, "\varpi^{\overline{\coperad}}"]
		\& \Gamma^{\overline{\coperad}}
		\arrow[rr, "(a_\Gamma{,} a_t)"]
		\& \& \Gamma \oplus \ideali .
	\end{tikzcd}
\]
where $a_t$ is a map from $\Gamma^{\overline{\coperad}}$ to $\ideali$.

The derivation condition for $\Lambda$ rewrites now as the following
equalities between maps from $\Gamma^{\overline{\coperad}}$ to
respectively $\ideali$ and $\Gamma$,
\[
	\begin{cases}
		d_t \circ a_{\Gamma} + d_\ideali \circ  a_t 
		= a_t \circ
		\left({\sha(\id \Gamma, d_{\Gamma})}^{\overline{\coperad}}
		- \Gamma^{d_\coperad}  \right)\\
		d_{\Gamma} \circ a_{\Gamma} 
		= a_{\Gamma} \circ \left({\sha(\id \Gamma,
		d_{\Gamma})}^{\overline{\coperad}} - \Gamma^{d_\coperad}  \right).
	\end{cases}
\]

Given the maps $\varpi$ and $s$, the underlying graded $\operad
$-coalgebra of $\cobaradj\Lambda$ is isomorphic to
\[
	\freecog\operad (\Gamma \oplus \ideali )\isonat
	\freecog\operad (\Gamma) \times \freecog\operad (\ideali).
\]

The coderivation is entirely determined by its projection on
the cogenerators $\Gamma \oplus \ideali$ which
has three components given by
\begin{itemize}
	\item the coderivation of $\cobaradjfull\Gamma$;
	\item the differential of $\ideali$;
	\item a transfer component
	      \[
	      	\freecog\operad (\Gamma \oplus \ideali ) \twoheadrightarrow
	      	\freecog\operad \Gamma \to \ideali
	      \]
	      which is built out from $d_t$ and $a_t$.
\end{itemize}
The key idea is that, composing the transfer component with the degree
$1$ map $\ideali \xrightarrow{\degreeonemap \otimes \id{\ideali}}
\s\ideali$ gives us by coextension along
$\s\ideali \to \freecog\operad (\s\ideali)$, a morphism of $\operad
$-coalgebras
\[
	\tilde t: \cobaradjfull\Gamma \to \freecog\operad (\s\ideali).
\]
This will allow us to realise $\cobaradj\Lambda$ as the fibre product
\[
	\begin{tikzcd}[ampersand replacement=\&]
		\cobaradjfull\Lambda \arrow[r] \arrow[d]
		\arrow[rd, phantom, very near start, "\lrcorner"]
		\& \freecog\operad  (\dzero \otimes \s\ideali)\arrow[d]
		\\
		\cobaradjfull\Gamma \arrow[r,"\tilde t"]
		\& \freecog\operad  (\s\ideali).
	\end{tikzcd}
\]
Thus the map $\cobaradj \varpi$ is the pull-back of a
fibration of $\operad $-coalgebras. As such, it is also a fibration.
\end{proof}

Now that the proof has been sketched, let us go into the details. In the
following we shall denote by $\specialprojto {\ideali}$ the composite
map
\[
	\begin{tikzcd}
		\specialprojto {\ideali} :
		\freecog\operad (\Gamma\oplus\ideali)
		\arrow[rrr, twoheadrightarrow,"\freecog\operad (\Gamma
		\oplus\ideali
		\twoheadrightarrow \ideali)"]
		&&&
		\freecog\operad (\ideali)
		\arrow[r, "\canonicalproj"]
		& \ideali
	\end{tikzcd}
\]
and let us define $\specialprojto \Gamma$ in the same way.

\begin{definition}[(Transfer morphism)]%
\label{def:transfert_morphism}
Let $t$ be the transfer component of the differential of
$\cobaradj \Lambda$, that is the following degree $0$ map
\[
	\begin{tikzcd}[ampersand replacement=\&]
		\freecog\operad \Gamma
		\arrow[r, "\freecog\operad  s", hookrightarrow]
		\&
		\freecog\operad \Lambda
		\arrow[r, "d_{\cobaradj\Lambda}"]
		\& \freecog\operad \Lambda
		\arrow[r, twoheadrightarrow, "\specialprojto {\ideali}"]
		\& \ideali
		\arrow[r]
		\&
		\s\ideali
	\end{tikzcd}
\]
Let $\tilde t : \freecog\operad  \Gamma \to \freecog\operad
(\s\ideali)$ be the morphism
of graded $\operad $-coalgebras obtained by coextension of $t$.
\end{definition}

\begin{remark}%
\label{remark:ecriture-du-morphisme-de-transfer-t}
The map $t$ can be described as the sum
\[
	t =(\degreeonemap \otimes \id{\ideali}) \circ
	\left(a_t \circ \Gamma^\alpha + d_t \circ \tau_\Gamma \right).
\]
\end{remark}

The following lemma is a direct consequence of the fact that $\varpi$ is
an infinitesimal extension.

\begin{lemma}%
\label{thm:reecriture-derivation}
The following equality between maps from $\cobaradj \Lambda$ to
$\s \ideali$
holds
\[
	(\degreeonemap \otimes \id{\ideali}) \circ \specialprojto {\ideali}
	\circ  {d_{\cobaradj \Lambda} }
	=
	 t \circ \freecog\operad \varpi
	 + (\degreeonemap \otimes \id{\ideali})\circ d_\ideali \circ
	 \specialprojto {\ideali} .
\]
\end{lemma}

\begin{lemma}%
\label{thm:morphisme_t}
The following morphism of graded $\operad $-coalgebras
\[
	\begin{tikzcd}[ampersand replacement=\&]
		\freecog\operad (\Lambda)
		\arrow[rr, ""]
		\arrow[d, equal]
		\& \& \freecog\operad (\s \ideali \oplus \ideali)
		\arrow[d,equal]
		\\
		\freecog\operad (\Gamma) \times \freecog\operad (\ideali)
		\arrow[rr, "\tilde t \times
		\id {\freecog\operad (\ideali)}"]
		\& \& \freecog\operad (\s\ideali) \times
		\freecog\operad (\ideali)
	\end{tikzcd}
\]
is actually a morphism of dg-$\operad $-coalgebras from
$\cobaradj \Lambda$ to $\freecog\operad (\dzero \otimes \s\ideali)$.
\end{lemma}

\begin{proof}
We have to show that the following diagram commutes
\[
	\begin{tikzcd}[ampersand replacement=\&]
		\freecog\operad (\Gamma \oplus \ideali)
		\arrow[rr,"{(t \circ  \freecog\operad \varpi)
		\times
		\specialprojto {\ideali}}"]
		\arrow[d,"{d_{\cobaradj \Lambda} }"']
		\&\& \s\ideali \oplus \ideali 
		\arrow[d,"{d_{\gradedd\otimes \s\ideali }}"]\\
		\freecog\operad (\Gamma \oplus \ideali)
		\arrow[rr,"{(t \circ  \freecog\operad \varpi) \times 
		\specialprojto {\ideali}}"']
		\&\& \s\ideali \oplus \ideali.
	\end{tikzcd}
\]
It suffices to show that both the projections on $\ideali$ and on
$\s\ideali$ of the two composite from $\freecog\operad (\Gamma \oplus
\ideali)$ to $\s\ideali \oplus \ideali$ are the same. On the one
hand, the difference between the two projections on $\s\ideali$ is given
by the following formula
\[
	t \circ \freecog\operad
	\varpi \circ d_{\cobaradj\Lambda} - (\id{\s} \otimes d_{\ideali})
	\circ t \circ \freecog\operad  \varpi.
\]
One has
\begin{align*}
	& t \circ \freecog\operad  \varpi \circ d_{\cobaradj\Lambda}
	- (\id{\s\monoidalunit} \otimes d_{\ideali}) \circ
	t \circ \freecog\operad  \varpi
	\\
	\begin{split}
		&= (\degreeonemap \otimes \id{\ideali})
		\circ \specialprojto {\ideali}
		\circ  {d_{\cobaradj \Lambda}^2 }
		- (\degreeonemap \otimes \id{\ideali})
		\circ d_\ideali \circ \specialprojto {\ideali} \circ
		{d_{\cobaradj \Lambda} }
		\\
		& \qquad - (\id{\s\monoidalunit} \otimes d_{\ideali}) \circ
		t \circ \freecog\operad  \varpi%
		~[\ref{thm:reecriture-derivation}]
	\end{split}
	\\
	\begin{split}
		& = 0 
		+ (\id{\s\monoidalunit} \otimes d_{\ideali})
		\circ (\degreeonemap \otimes \id{\ideali})
		\circ \specialprojto {\ideali} \circ {d_{\cobaradj \Lambda} }
		\\
		& \qquad - (\id{\s\monoidalunit} \otimes d_{\ideali}) \circ
		t \circ \freecog\operad  \varpi
	\end{split}
	\\
	&= (\id{\s\monoidalunit} \otimes d_{\ideali}) \circ
	(\degreeonemap \otimes \id{\ideali})\circ d_\ideali \circ
	\specialprojto {\ideali}
	\\
	&= - (\degreeonemap \otimes \id{\ideali})\circ d^2_\ideali \circ
	\specialprojto {\ideali}
	\\
	&= 0.
\end{align*}
On the other
hand, the two projections on $\ideali$ coincide, since%
~[\ref{thm:reecriture-derivation}] we get:
\[
	\specialprojto {\ideali} \circ d_{\cobaradj \Lambda} =
	(\degreeminusonemap
	\otimes \id{\ideali}) \circ t \circ \freecog\operad
	\varpi + d_{\ideali} \circ \specialprojto{\ideali}.
\]
\end{proof}

\begin{lemma} The following square
\begin{center}
	\begin{tikzpicture}
	\node (a) at (0,0) {
		\begin{tikzcd}[ampersand replacement=\&]
			\cobaradjfull\Lambda
			\arrow[rr, "\tilde t \times \id {\freecog\operad  \ideali}"]
			\arrow[d, "\cobaradj \varpi", swap]
			\& 
			\& 
			\freecog\operad (\dzero \otimes \s\ideali)
			\arrow[d, "\freecog\operad(\counitofd \otimes \s I)"] \\
			\cobaradjfull\Gamma
			\arrow[rr, "\tilde t", swap]
			\& 
			\& 
			\freecog\operad  (\s\ideali)
		\end{tikzcd}
	};
	\node[draw = black, rectangle, rounded corners] (b) at (-0.2,-0.05) {
	$\thesquares$
	};
	\end{tikzpicture}
\end{center}
is a commutative square of differential graded $\operad $-coalgebras.
\end{lemma}

\begin{proof}
By construction of $t$ this square is a commutative square of graded
$\operad $-coalgebras. Moreover all maps but $\tilde t$ are differential
graded maps~[\ref{thm:morphisme_t}]. We deduce that the composite
map $\tilde t \circ \cobaradj \varpi$ commutes with coderivations.
Since $\cobaradj \varpi$ also commutes with the coderivation and is an
epimorphism%
~[\ref{rmk:L_preserve_les_epi}], then
$\tilde t$ also commutes with coderivation and the square $\thesquares$
is a commutative square of differential graded $\operad$\=/coalgebras.
\end{proof}

\begin{lemma}%
\label{thm:une_extension_infinitesimale_est_une_fibration}
The above square $\thesquares$ is a fibre product in the category
of $\operad$-coalgebras.
\end{lemma}

\begin{proof}
This is actually a fibre product of graded $\operad $-coalgebras. We
conclude by the fact that the forgetful functor from differential graded
$\operad $-coalgebras to graded $\operad $-coalgebras preserves and reflects
limits%
~[\ref{thm:forget_coderivation_preserves_limits}].
\end{proof}

\begin{proposition}%
\label{thm:les_extensions_infinitesimales_sont_des_fibrations}
Infinitesimal extensions of $\coperad$-algebras are fibrations.
\end{proposition}

\begin{proof}
Since $\freecog\operad $ is a right adjoint it preserves fibrations.
Hence since the map $\tensorbycounitofd$ is a degree-wise epimorphism,
the induced map $\freecog\operad \tensorbycounitofd$ is a fibration
for the model structure on \mbox{$\operad $-coalgebras}. Since every
infinitesimal extension $\varpi$ is such that $\cobaradj \varpi$ can be
obtained
as a pull-back of a fibration of $\operad $-coalgebras~%
[\ref{thm:une_extension_infinitesimale_est_une_fibration}], it is
itself a fibration. This shows that infinitesimal extensions are
fibrations.
\end{proof}

\subsection{Fibrations}

We shall now use what we know about infinitesimal extensions to
show the following characterisation of fibrations of complete
$\coperad$-algebras: degree-wise epimorphisms of complete
$\coperad$-algebras are always fibrations and in the canonical model
structure the two notions coincide.

\begin{lemma}
Let $\varpi : \Lambda \to \Lambda/\idealk$ be a degree-wise epimorphism
between complete $\coperad$-algebras. Then $\varpi$ is the ordinal
composition
\[
	\Lambda \to \cdots \to \Lambda/(\idealk \cap
	\ideal {n+1} \Lambda)
	\to \Lambda/(\idealk \cap \ideal n \Lambda) \to \cdots \to
	\Lambda/(\idealk \cap \ideal 0 \Lambda) = \Lambda/\idealk .
\]
\end{lemma}

\begin{proof}
Notice first that all the $\coperad$-algebras occurring in the above
diagram are complete. Indeed for any $n$, the $\coperad$-algebras
$\Lambda/\ideal n  \Lambda$ and $\Lambda/(\ideal n \Lambda +
\idealk)$ are nilpotent. Moreover $\Lambda/(\ideal n \Lambda \cap
\idealk)$ is obtained as the fibre product
\[
	\begin{tikzcd}[ampersand replacement=\&]
		\Lambda/(\idealk \cap \ideal n \Lambda )
		\arrow[r, ""]
		\arrow[d, "",swap]
		\arrow[rd, very near start, phantom, "\lrcorner"]
		\& \Lambda/\idealk
		\arrow[d, ""] \\
		\Lambda/\ideal n \Lambda
		\arrow[r, "", swap]
		\& \Lambda/(\idealk + \ideal n \Lambda) .
	\end{tikzcd}
\]
Finally a fibre product of complete $\coperad$-algebras is again
complete, since the category of complete $\coperad$-algebras is a
reflective localisation of the category of $\coperad$-algebras%
~[\ref{thm:la_categorie_des_algebres_completes_est_reflexive}].
In parallel, for $n \geq p$, we have the equivalence
\[
	\cofiltration p(\Lambda/(\idealk \cap \ideal n \Lambda))
	\isonat \cofiltration p \Lambda .
\]
which leads us, by limit switch, to the following computation:
\[
	\Lambda \isonat \limover{p\in\ordinalomega\op}
	\cofiltration p \Lambda \isonat
	\limover{p\in\ordinalomega\op}\limn 
	\cofiltration p(\Lambda/(\idealk \cap \ideal n \Lambda))
	\isonat \limn \Lambda/(\idealk \cap \ideal n \Lambda) .
\]
\end{proof}

\begin{lemma}%
\label{thm:fnelemfib}
Let $(\Lambda, a)$ be a $\coperad$-algebra
and let $\idealk$ be an ideal of
$\Lambda$.  Then, for any natural $n$, the morphism
\[
	\Lambda/\left(\idealk \cap \ideal {n+1}  \Lambda \right)
	\to \Lambda/\left(\idealk \cap \ideal {n} \Lambda \right)
\]
is an infinitesimal extension.
\end{lemma}

\begin{proof}
Given a natural $n$, by definition of the coradical filtration of
$\coperad$%
~[\ref{def:coradical_filtration}], the map
\[
	\begin{tikzcd}
		\overline{\coperad}
		\arrow[r, "\overline w"]
		&
		\overline{\coperad} \compofsymseq \coperad
		\arrow[r, two heads]
		&
		\overline{\coperad} \compofsymseq
		\sha(\coperad, \coperad/\filtration n \coperad)
	\end{tikzcd}
\]
factorises through
$\overline{\coperad}/ \filtration {{n+1}}
\overline{\coperad}$.
Thanks to this, one can write the commutative diagram
\[
	\begin{tikzcd}[ampersand replacement=\&]
		{\sha(\Lambda^\coperad,
		\Lambda^{\coperad/\filtration n \coperad})}^{\overline{\coperad}}
		\arrow[rr,"a^{\overline{\coperad}}"]
		\arrow[d]
		\& \& {\sha(\Lambda,\ideal n  \Lambda)}^{\overline{\coperad}}
		\arrow[d,"a"]\\
		\Lambda^{\overline{\coperad} \compofsymseq
		\sha(\coperad, \coperad/\filtration n \coperad)}
		\arrow[r,"\Lambda^{\overline{w}}"]
		\& \Lambda^{\coperad/\filtration {{n+1}} \coperad} \arrow[r,"a"]
		\& \Lambda.
	\end{tikzcd}
\]
Since by definition $\ideal {n+1}  \Lambda$ is the image of the
ideal $\Lambda^{\coperad/\filtration {{n+1}} \coperad}$ by $a$, the
composition
\[
	{\sha(\Lambda,\ideal n  \Lambda)}^{\overline{\coperad}}
	\xrightarrow{a} \Lambda
\]
factorises through $\ideal {n+1}  \Lambda$. So, since $\idealk$ is an
ideal, the map
\[
	{\sha(\Lambda, \idealk \cap \ideal n  \Lambda)}^{\overline{\coperad}}
	\xrightarrow{a} \Lambda
\]
factorises through $\idealk \cap \ideal {n+1} \Lambda$. Consider now the
following commutative square
\[
	\begin{tikzcd}[ampersand replacement=\&]
		{\sha\left(\Lambda, \idealk \cap \ideal {n}
		\Lambda \right)}^{\overline{\coperad}}
		\arrow[d,"a"]
		\arrow[r, two heads]
		\& {\sha\left(\Lambda/ \left(\idealk \cap \ideal {n+1}
		\Lambda \right),
		\left(\idealk \cap \ideal {n}  \Lambda \right) /
		\left(\idealk \cap \ideal {n+1}  \Lambda 
		\right)\right)}^{\overline{\coperad}}
		\arrow[d,"a"]
		\\
		\Lambda
		\arrow[r,two heads]
		\& \Lambda / \left(\idealk \cap \ideal {n+1}  \Lambda \right).
	\end{tikzcd}
\]
Since the composite maps from
${\sha\left(\Lambda, \idealk \cap \ideal {n}
\Lambda \right)}^{\overline{\coperad}}$
to $\Lambda / \left(\idealk \cap \ideal {n+1}  \Lambda \right)$
are both zero, and since the top horizontal
map is an epimorphism, then the right vertical map is zero.
\end{proof}

\begin{theorem}%
\label{thm:les_quotients_sont_des_fibrations}%
\label{thm:toutes_les_algebres_completes_sont_fibrantes}
A degree-wise epimorphism between complete $\coperad$\=/algebras is a
fibration. In particular all complete $\coperad$\=/algebras are fibrant.
\end{theorem}

\begin{proof}
Let $\varpi : \Lambda \to \Lambda/K$ be a degree-wise epimorphism
of complete $\coperad$-algebras. Then $\varpi$ is the ordinal
composition
\[
	\Lambda \to \cdots \to \Lambda/(\idealk \cap
	\ideal {n+1} \Lambda)
	\to \Lambda/(\idealk \cap \ideal n \Lambda) \to \cdots \to
	\Lambda/(\idealk \cap \ideal 0 \Lambda) = \Lambda/\idealk .
\]
Each term in this diagram is an infinitesimal extension%
~[\ref{thm:fnelemfib}],
hence a fibration%
~[\ref{thm:les_extensions_infinitesimales_sont_des_fibrations}].
Thus $\varpi$ is a fibration as it is an ordinal
composition of fibrations.
\end{proof}

\begin{corollary}%
\label{thm:cobar_preserves_les_fibrations}
The $\cobarfunctorfull$ functor preserves all fibrations.
\end{corollary}

\begin{proof}
Let $f : V \to W$ be a fibration between two $\operad$-coalgebras. Then
it is in particular a degree-wise epimorphism%
~[\ref{thm:les_fibrations_entre_cogebres_sont_des_epis}], it is thus
preserved by $\cobarfunctorfull$. Finally by the previous
proposition%
~[\ref{thm:les_quotients_sont_des_fibrations}], any degree-wise
epimorphism between complete $\coperad$-algebras is a fibration.
\end{proof}

\begin{proposition}%
\label{thm:sane_implies_fibrations_are_epis}
In the canonical model structure, every fibration of $\coperad$-algebras
is a degree-wise epimorphism.
\end{proposition}

\begin{proof}
Let $f : \Lambda \to \Gamma$ be a fibration of $\coperad$-algebras. Then
by definition $\cobaradj f$ is a fibration of
$\baradj\coperad$-coalgebras. By coadmissibility%
~[\ref{thm:les_fibrations_entre_cogebres_sont_des_epis}],
it is a degree-wise epimorphism.
We now contemplate the following counit diagram
\[
	\begin{tikzcd}[ampersand replacement=\&]
		\freecog\operad (\Lambda)
		\arrow[r, "\canonicalproj_\Lambda", twoheadrightarrow]
		\arrow[d, "\freecog\operad  f", twoheadrightarrow, swap]
		\& \Lambda
		\arrow[d, "f"] \\
		\freecog\operad (\Gamma)
		\arrow[r, "\canonicalproj_\Gamma", swap, twoheadrightarrow]
		\& \Gamma
	\end{tikzcd}
\]
to deduce that $f$ is also a degree-wise epimorphism.
\end{proof}

\subsection{Dévissage equivalences}%
\label{sec:devissage_eq}

Now that we understand perfectly the set of fibrations in
the transferred model structure on the category of complete
$\coperad$-algebras, we need to focus on weak equivalences. We will
not have a complete knowledge of what they are but we can single out
a subset of equivalences that we can comprehend: the dévissage
equivalences. After giving the definition, we shall see that any
dévissage equivalence is a weak equivalence for the transferred model
structure on the category of complete $\coperad$-algebras.

\begin{lemma}%
\label{thm:gr_is_dg}
Let $\Lambda$ be any $\coperad$-algebra, then for every natural $n$
the quotient
\[
	\gr n \Lambda = \ideal n \Lambda/\ideal {n+1} \Lambda
\]
is a chain complex.
\end{lemma}

\begin{proof}
It follows from the fact that the morphism
\[
	\cofiltration {n+1} \Lambda \to 
	\cofiltration {n} \Lambda
\]
is an infinitesimal extension%
~[\ref{thm:fnelemfib}],
so that its kernel is a chain complex.
\end{proof}

\begin{definition}%
\label{def:devissage_eq}
A morphism of $\coperad$-algebras $f : \Lambda \to \Gamma$ is said to
be a
dévissage equivalence if for any natural number $n$, the induced map 
\[
	\gr n f : \gr n \Lambda \longrightarrow \gr n\Gamma
\]
is a quasi-isomorphism.
\end{definition}

Let us start with an elementary verification that will be useful later.
\begin{proposition}
Let $\Lambda$ be a $\coperad$-algebra, then the canonical map
\[
	\completionmap_\Lambda : \Lambda \longrightarrow \widehat{\Lambda}
\]
is a dévissage equivalence.
\end{proposition}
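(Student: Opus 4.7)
The plan is to show something slightly stronger than the statement: the induced map on the graded pieces is actually an \emph{isomorphism}, not merely a quasi-isomorphism.

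First I would recall from \cref{proposition:les-algebres-sont-completes} that the unit map $\varphi_\Lambda : \Lambda \to \widehat{\Lambda}$ is an epimorphism in $\mathcal C$, and from the proof of \cref{thm:la_categorie_des_algebres_completes_est_reflexive} that its kernel is the subobject $\mathcal I^\infty \Lambda = \bigcap_{n\in\mathbb N} \mathcal I^n \Lambda$, so that $\widehat{\Lambda} \simeq \Lambda/\mathcal I^\infty\Lambda$ as $\mathcal{Q}$-algebras.

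Next I would identify the canonical topology of $\widehat{\Lambda}$. Since $\varphi_\Lambda$ is an epimorphism of $\mathcal{Q}$-algebras, the computation carried out in \cref{remark:les-morphismes-d-algebres-sont-continus} --- and exploited in the proof of \cref{proposition:epi-chapeau} --- gives the equality of subobjects
\[
\varphi_\Lambda(\mathcal I^n \Lambda) \;=\; \mathcal I^n \widehat{\Lambda}
\]
for every natural integer $n$. Combining this with the identification $\widehat{\Lambda} \simeq \Lambda/\mathcal I^\infty\Lambda$ and the obvious inclusion $\mathcal I^\infty\Lambda \rightarrowtail \mathcal I^n\Lambda$ yields a natural isomorphism
\[
\mathcal I^n \widehat{\Lambda} \;\simeq\; \mathcal I^n\Lambda/\mathcal I^\infty\Lambda\,.
\]

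Finally, I would pass to the graded pieces. Using the above and the inclusion $\mathcal I^\infty\Lambda \rightarrowtail \mathcal I^{n+1}\Lambda$, the third isomorphism theorem gives
\[
\mathfrak{gr}^n \widehat{\Lambda}
\;=\; \mathcal I^n\widehat{\Lambda}/\mathcal I^{n+1}\widehat{\Lambda}
\;\simeq\; \bigl(\mathcal I^n\Lambda/\mathcal I^\infty\Lambda\bigr)\bigm/\bigl(\mathcal I^{n+1}\Lambda/\mathcal I^\infty\Lambda\bigr)
\;\simeq\; \mathcal I^n\Lambda/\mathcal I^{n+1}\Lambda
\;=\; \mathfrak{gr}^n \Lambda\,,
\]
and a diagram chase shows that this isomorphism is precisely the map induced by $\varphi_\Lambda$. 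This is an isomorphism of chain complexes, hence a fortiori a quasi-isomorphism, so $\varphi_\Lambda$ is a dévissage equivalence. There is essentially no obstacle here: the whole argument rests on the already established compatibility of algebra epimorphisms with the canonical topology.
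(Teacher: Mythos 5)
Your proposal is correct and is essentially the paper's own argument: the paper likewise reduces everything to the identification $\widehat{\Lambda}\simeq\Lambda/\mathcal I^\infty\Lambda$, deduces $F^n\widehat{\Lambda}\simeq F^n\Lambda$, and concludes that the graded pieces agree. You merely spell out the details the paper leaves implicit (the equality $\varphi_\Lambda(\mathcal I^n\Lambda)=\mathcal I^n\widehat{\Lambda}$ for an epimorphism and the third isomorphism theorem), correctly obtaining an isomorphism on each $\mathfrak{gr}^n$ rather than just a quasi-isomorphism.
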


\begin{proof}
We have
$\widehat{\Lambda} \isonat \Lambda/\ideal \infty \Lambda
\Longrightarrow \cofiltration n\widehat{\Lambda}
\isonat \cofiltration n\Lambda$,
from which the dévissage equivalence follows.
\end{proof}

One can extend the notion of dévissage equivalence between complete
algebras to more general cofiltrations.

\begin{definition}
 A coladder of complete $\coperad$-algebras is a functor
 $$
 A : \ordinalomega\op \to \catofcompletealg\coperad
 $$
  so that for any natural integer $n$, the map
 $A(n) \to A(n-1)$ is an infinitesimal
 extension (with the convention $A(-1)=0$).
 In particular, for any $n$, the $\s^{-1}$-module
$\mathrm{Ker}(A(n) \to A(n-1))$ is a chain complex.
\end{definition}

\begin{definition}
 A morphism of coladder of complete $\coperad$-algebras
 $f: A \to A'$ is a coladder equivalence if for any natural integer $n$,
 the induced map
 $$
 \mathrm{Ker}(A(n) \to A(n-1)) \to \mathrm{Ker}(A'(n) \to A'(n-1))
 $$
 is a quasi-isomorphism.
 More generally, we say that a morphism of complete $\coperad$-algebras
is a coladder equivalence if it appears as the limit of a coladder equivalence between
coladders.
\end{definition}

\begin{example}
 Given a complete $\coperad$-algebras $\Lambda$, one can build naturally a coladder,
 that is the coladder of its radical cofiltration. Indeed, the
 transition maps $\cofiltration {n+1} \Lambda \to
\cofiltration n \Lambda$ are infinitesimal extensions for
any $\Lambda$ and any natural $n$%
~[\ref{thm:fnelemfib}]. Moreover, a morphism of complete $\coperad$-algebras
$f: \Lambda \to \Lambda'$ is a dévissage equivalence if and only if the 
induced morphism between their radical cofiltraiton coladders
is a coladder equivalence.
\end{example}

\begin{lemma}[(Weak $5$ lemma)]%
\label{thm:weak_five_lemma}
Consider the following diagram
\[
	\begin{tikzcd}[ampersand replacement=\&]
		0 \arrow[r] \arrow[d, equal] \&
		\idealj \arrow[r] \arrow[d,"a"] \&
		\Lambda \arrow[r,"\varpi"] \arrow[d, "b"] \& \Lambda/\idealj
		\arrow[d, "c"] \arrow[r]
		\& 0 \arrow[d,equal]
		\\
		0 \arrow[r] \& \idealk \arrow[r] \&
		\Gamma \arrow[r, "{\varpi'}"] \& \Gamma/\idealk
		\arrow[r] \& 0 
	\end{tikzcd}
\]
where $\varpi$ and $\varpi'$ are infinitesimal extensions of complete
$\coperad$-algebras and $b$ is a morphism of $\coperad$-algebras.

Assume that $a$ is a quasi-isomorphism and that $c$ is a weak
equivalence of complete $\coperad$-algebras. Then $b$ is also a weak
equivalence.
\end{lemma}

\begin{proof}
We proceed by reduction to the simpler case where $c$ is an
isomorphism. Let us denote by $P$ the following fibre product in the
category of $\coperad$-algebras
\[
	\begin{tikzcd}[ampersand replacement=\&]
		\Lambda \arrow[rd,"f"] \arrow[rdd, bend right,"b"']
		\arrow[rrd,"\varpi", bend left]
		\\
		\& P
		\arrow[rd, phantom, "\lrcorner", very near start]
		\arrow[r,"q"] \arrow[d,"r"'] \& \Lambda/\idealj \arrow[d,"c"]
		\\
		\&\Gamma \arrow[r, "{\varpi'}"] \& \Gamma/\idealk.
	\end{tikzcd}
\]
The image under the functor $\cobaradj$ of the
square is also a fibre product. Moreover
since any degree-wise
epimorphism of complete algebras is a fibration
[\ref{thm:les_quotients_sont_des_fibrations}],
the following square
\[
	\begin{tikzcd}[ampersand replacement=\&]
		\cobaradjfull P
		\arrow[rr, "\cobaradj q"]
		\arrow[d, "\cobaradj r",swap]
		\arrow[rrd, very near start, phantom, "\homotopypullbackmark"]
		\&\& \cobaradjfull\Lambda/\idealj
		\arrow[d, "\cobaradj c"] \\
		\cobaradjfull\Gamma
		\arrow[rr, "{\cobaradj \varpi'}", swap]
		\&\& \cobaradjfull \Gamma/\idealk
	\end{tikzcd}
\]
is a homotopy fibre square. As $\cobaradj c$ is a weak equivalence,
$\cobaradj r$ is also a weak equivalence. Besides, the map $q : P \to
\Lambda/\idealj$ is an infinitesimal extension with kernel $\idealk$.
We may then assume that $q = \varpi'$ and $c$ is the identity of
$\Lambda/\idealj$, that is we reduce to the following commutative
diagram:
\[
	\begin{tikzcd}[ampersand replacement=\&]
		0 \arrow[r]
		\arrow[d, equal]
		\&
		\idealj
		\arrow[r, ""]
		\arrow[d, "a"]
		\& 
		\Lambda
		\arrow[r,"\varpi"]
		\arrow[d, "f"]
		\&
		\arrow[d, equal]
		\Lambda/\idealj
		\arrow[r]
		\&
		0
		\arrow[d, equal]
		\\
		0 \arrow[r]
		\&
		\idealk
		\arrow[r, "", swap]
		\& 
		P
		\arrow[r,"{\varpi'}"]
		\&
		\Lambda/\idealj
		\arrow[r]
		\&
		0
	\end{tikzcd}
\]
Let us choose a graded
splitting $s$ of the infinitesimal extension $\varpi$.
This gives us a graded splitting $s'=f \circ s$ of the infinitesimal
extension $\varpi^\prime$.

Performing the construction $t$ of \cref{thm:morphisme_t}, since we
have carefully taken compatible graded splitting, we can draw a
commutative diagram of $\operad $-coalgebras:
\[
	\begin{tikzcd}[ampersand replacement=\&]
		\cobaradj (\Lambda/\idealj)
		\arrow[r, "\tilde t_s"]
		\arrow[d,equal]
		\& \freecog\operad (\s\idealj)
		\arrow[d, "\freecog\operad (\s a)"] \\
		\cobaradj (\Lambda/\idealj)
		\arrow[r, "\tilde t_{s'}", swap]
		\& \freecog\operad (\s\idealk)
	\end{tikzcd}
\]
This is part of the following cube diagram
\begin{center}
\begin{tikzpicture}[baseline= (a).base]
\node[scale=.915] (a) at (0,0){
	\begin{tikzcd}
		&
		\cobaradj P
		\arrow[rr]
		\arrow[ddd]
		&&
		\freecog {\operad } (\dzero \otimes \s\idealk)
		\arrow[ddd]
		\\
		\cobaradj \Lambda
		\arrow[rr, crossing over]
		\arrow[ddd, ""]
		\arrow[ru,"\cobaradj f"]
		&&
		\freecog {\operad } (\dzero \otimes \s\idealj)
		\arrow[ru]
		\\
		\\
		&
		\cobaradj(\Lambda/\idealj)
		\arrow[rr, "\tilde t_{s'}", near start]
		&&
		\freecog {\operad } (\s\idealk)
		\\
		\cobaradj(\Lambda/\idealj)
		\arrow[rr, "\tilde t_s"]
		\arrow[ru, equal]
		&&
		\freecog {\operad } (\s\idealj).
		\arrow[ru,"\freecog\operad (\s a)"']
		\arrow[from=uuu, crossing over]
	\end{tikzcd}
};
\end{tikzpicture}
\end{center}
Notice that both the front face and the back face are both homotopy
fibre product squares. Moreover since $\freecog\operad$ preserves
quasi\=/isomorphisms%
~[\ref{rmk:L_preserve_les_epi}],
the morphisms $\freecog \operad (\s a)$ and $\freecog\operad
(\dzero\otimes \s a)$ are weak equivalences, hence $\cobaradj f$ is also
a weak equivalence.
\end{proof}

\begin{lemma}
\label{thm:cofiltered-alg-equivalence}
 Let us consider a coladder equivalence between
 coladders of complete $\coperad$-algebras $f: A \to A'$.
 Then, the canonical morphism of complete $\coperad$-algebras
 $$
 \varprojlim_n A(n) \to  \varprojlim_n A'(n)
 $$
is a weak equivalence.
\end{lemma}

\begin{proof}
Using the weak 5 lemma%
~[\ref{thm:weak_five_lemma}] one can show by induction that
for any natural number $n$, the morphism of $\operad $-coalgebras
\[
	\cobaradjfull f (n) : 
	\cobaradjfull (A(n))
	\to \cobaradjfull (A'(n)).
\]
is a quasi-isomorphism.
Since the functor $\cobaradjfull$ is right adjoint, it commutes
with limits so we get:
\[
	\cobaradjfull (\limn A(n))
	\isonat \limn
	  \cobaradjfull( A(n)).
\]
Moreover, since for any $n$ the morphism
\[
	\cobaradjfull\left( A(n+1)\right)
	\to \cobaradjfull( A(n) \Lambda)
\]
is a fibration between fibrant objects, we have
\[
	\cobaradjfull (\limn A(n)) 
	\isonat \hlimn \cobaradjfull( A(n))
\]
and the same holds for $A'$. As a consequence,
the morphism
\[
	\cobaradjfull(f) : \cobaradjfull(\limn A(n))
	\longrightarrow \cobaradjfull(\limn A(n))
\]
is the homotopy limit of a weak equivalence of diagrams; it is a weak
equivalence.
\end{proof}

\begin{theorem}%
\label{thm:equivalence_de_devissage=>eq}
Any dévissage equivalence $f: \Lambda \to \Gamma$ between
complete $\coperad$-algebras is a weak equivalence.
\end{theorem}

\begin{proof}
The result is a direct consequence of \cref{thm:cofiltered-alg-equivalence}.
\end{proof}

\subsection{Planar path object}%
\label{sec:planar_path_object}

Assuming that $\operad$ is a planar dg-operad and let
$(\Lambda,a)$ be a complete
$\coperad$\=/algebra, then $[\interval , \Lambda]$ as an object of
$\catofmod {\sinv}(\catss)$,
has a canonical structure of a complete $\coperad$-algebra. This
structure which involves the usual $\uass$-coalgebra structure on
$\interval$ is given by the following composition
\[
	\begin{tikzcd}
		{{[\interval, \Lambda]}^\coperad}
		\arrow[r, equal]
		&{\displaystyle\prod_{n\in\naturals} \left[\coperad (n),
		{[\interval, \Lambda]}^{\otimes n}\right]}
		\arrow[d, "\natabrev"]
		\\
		&{\displaystyle\prod_{n\in\naturals} \left[\interval^{\otimes n},
		\left[\coperad (n), \Lambda^{\otimes n}\right]\right]}
		\arrow[d, "\prod_{n\in\naturals}
		\internalhom{\deltacoprod_\interval^{(n)}} {\idfunctor}"]
		\\
		&{\displaystyle\prod_{n\in\naturals} \left[\interval,
		\left[\coperad (n), \Lambda^{\otimes n}\right]\right]}
		\arrow[d, "\natabrev"]
		\\
		& {\left[\interval, \displaystyle\prod_{n\in\naturals}
		\left[\coperad (n), \Lambda^{\otimes n}\right]\right]}
		\arrow[r,equal]
		& {\left[\interval,\Lambda^{\coperad}\right]}
		\ar[rr, "\internalhom \interval {a_\Lambda}"]
		&& \internalhom \interval \Lambda.
	\end{tikzcd}
\]

Moreover, the functor $[\interval, -]$ commutes with
the radical cofiltration and so $\internalhom \interval \Lambda$ is
complete.

The construction above is functorial and one gets a sequence of
maps
\[
	\Lambda \isonat [\monoidalunit, \Lambda]
	\to [\interval, \Lambda]
	\to [\monoidalunit \oplus \monoidalunit, \Lambda]
	\isonat \Lambda \oplus \Lambda
\]
providing a path object for $\Lambda$ because the map
$[\interval, \Lambda] \to \Lambda \oplus \Lambda$ is a degree\=/wise
epimorphism, hence a fibration%
~[\ref{thm:toutes_les_algebres_completes_sont_fibrantes}]
and $\Lambda \to [\interval,
\Lambda]$ is a dévissage equivalence, hence a weak equivalence%
~[\ref{thm:equivalence_de_devissage=>eq}].

\subsection{Trivial cofibrations in characteristic zero}%
\label{sec:trivial_cofibrations}

The following theorem is true in general but we shall only give a
proof of it in the characteristic zero case.

\begin{theorem}%
\label{thm:trivial_cofibrations}
Any trivial cofibration is a dévissage equivalence.
\end{theorem}

\begin{proof}[Outline of the proof]
The generating trivial cofibrations of complete $\coperad$-algebras
are the image by $\cobarfunctorfull$ of the trivial cofibrations of
$\operad$\=/coalgebras, \ie{} those maps which are
both degree-wise monomorphisms and quasi-isomorphisms.

Since fibrations of complete $\coperad$-algebras are obtained by
right transfer along $\cobaradjfull$, we have
\[
	\fibrationofcompletealg
	= {\left(\cobarfunctorfull\left(\cofibrationofcog \cap
	\weakeqofcog\right)\right)}^\boxslash.
\]
Since $\catofcog\operad$ is combinatorial%
~[\ref{thm:coadmissible}] and $\catofcompletealg
\coperad$ is presentable%
~[\ref{thm:categorie_des_algebres_completes_courbees_est_presentable}],
one can use the small object argument to show
that any arrow having the left lifting property against all
fibrations can be obtained as a retract of a cellular trivial
cofibration, that is one obtained through transfinite composition of
push-outs along generating trivial cofibrations.

Meanwhile, dévissage equivalences are stable under retracts and
transfinite compositions. Hence we need only show that a push-out along
a generating trivial cofibration is a dévissage equivalence, which is
the subject of the next proposition.
\end{proof}

\begin{proposition}
Let $\sigma : W \to V$ be a trivial cofibration of $\operad$-coalgebras
and consider a push-out diagram in the category of complete
$\coperad$-algebras
\[
	\begin{tikzcd}[ampersand replacement=\&]
		\cobarfunctor W
		\arrow[r, "\cobarfunctor \sigma"]
		\arrow[d, "",swap] 
		\arrow[rd, very near end, phantom, "\ulcorner"]
		\& \cobarfunctor V
		\arrow[d, ""] \\
		\Lambda
		\arrow[r, "j", swap]
		\& \Gamma
	\end{tikzcd}
\]
then $j$ is a dévissage equivalence.
\end{proposition}

\begin{proof}[Outline of the proof]
We first operate a reduction to the canonical case.
Let us switch notations
just for this line and denote by
$\cobarfunctor$ the Cobar functor associated to the canonical
twisting morphism and let $\cobarfunctor_\alpha$ be the one associated
to the given twisting morphism $\alpha : \baradj\coperad \to \operad$.
Then one has $\cobarfunctor \circ \alpha^\ast \isonat
\cobarfunctor_\alpha$. Since
$\alpha^\ast$ preserves trivial
cofibrations, one can assume that $\operad = \baradj \coperad$.

Next we can reduce to the case where the map $g : \cobarfunctor
W \to \Lambda$ is a degree-wise epimorphism.
One can factor $g : \cobarfunctor W \to \Lambda$ as
\[
	\begin{tikzcd}[ampersand replacement=\&]
		\cobarfunctor W
		\arrow[r, ""]
		\arrow[d,hook]
		\arrow[rd, very near end, phantom, "\ulcorner"]
		\& \cobarfunctor V 
		\arrow[d, ""] \\
		\cobarfunctor W \amalg \cobarfunctor \cobaradj \Lambda
		\arrow[r, "j^\prime"']
		\arrow[d, two heads, "g \amalg \counitcobaradj(\Lambda)"']
		\arrow[rd, very near end, phantom, "\ulcorner"]
		\& \Gamma'
		\arrow[d] \\
		\Lambda
		\arrow[r, "j", swap]
		\& \Gamma
	\end{tikzcd}
\]
Meanwhile the functor
$\cobarfunctorfull$ preserves push-outs and trivial cofibrations,
moreover in $\catofcog{\baradj\coperad}$
a push-out of a trivial cofibration is again a trivial cofibration, so
$j^\prime$ is also a trivial cofibration.
Since $\counitcobaradj(\Lambda)$ is a degree-wise epimorphism%
~[\ref{thm:cobaradj_est_fidele}],
$g \amalg \counitcobaradj(\Lambda)$ is also a degree-wise epimorphism.

Going back to normal notations, since $\sigma$ is a trivial cofibration
of $\operad$-coalgebras, it
induces a trivial cofibration on the underlying chain complexes,
so one can find a dg\=/retraction $\rho : V \to W$
\[
\begin{tikzcd}
	W \arrow[r,"\sigma"]
	\arrow[rr,bend left,"\id{}"]
	& V \arrow[r,"\rho"]
	& W.
\end{tikzcd}
\]
This induces a degree-wise retraction $r \coloneqq \cobarfunctor \rho$
of $s \coloneqq \cobarfunctor \sigma$ and a degree-wise retraction $q :
\Gamma \to \Lambda$ of $j$.
\[
	\begin{tikzcd}
		\cobarfunctor W \arrow[r,"s"] \arrow[d, two heads]
		\arrow[rr,bend left,"\id{}"]
		\arrow[rd, very near end, phantom, "\ulcorner"]
		& \cobarfunctor V , \arrow[r,"r"] \arrow[d, two heads]
		& \cobarfunctor W\arrow[d, two heads]\\
		\Lambda \arrow[r,"j"] \arrow[rr,bend right,"\id{}"']
		& \Gamma  \arrow[r,"q"]
		&\Lambda.
	\end{tikzcd}
\]
Moreover both $\gr \ast r$ and $\gr \ast q$ are dg\=/retractions.
Finally let us setup a few more notations: let $\ideali$ denote the
ideal of $\cobarfunctor W$ and
let $\idealj$ the ideal of $\cobarfunctor V$ such that
\[
	\Lambda \isonat \cobarfunctor W/\ideali \qand \Gamma \isonat
	\cobarfunctor V/\idealj.
\]
Since $s$ is a degree-wise monomorphism, we shall
identify $\ideali$ with its image $s(\ideali)$ and say that
$\idealj$ is generated by $\ideali$.

Assuming now that one can produce a degree $1$ map $H : \cobarfunctor
V\to \cobarfunctor V$
\begin{itemize}
	\item that stabilises both $\idealj$ and the ideals
	      $\ideal n \cobarfunctor V$;
	\item and such that the map
	      \[
	      	\isophi \coloneqq r \circ s    + \partial H
	      \]
	      is a degree-wise isomorphism of $\cobarfunctor V$ which
	      restricts to degree-wise isomorphisms of both $\idealj$ and
	      the ideal $\ideal n \cobarfunctor V$,
\end{itemize}
then $H$ induces a degree $1$ map on $\Gamma$ that we still denote
by $H$ and which is such that the map
\[
	q \circ j + \partial H
\]
is a degree-wise isomorphism of $\Gamma$ which restricts to isomorphisms
of $\ideal n \Gamma$ for every natural $n$. Thus
$\gr \ast H$ is a degree $1$ map such that
\[
	\gr \ast q \circ \gr \ast j + \partial (\gr \ast H)
\]
is an isomorphism. Besides,
\[
	\gr \ast j \circ \gr \ast q = \id{\gr \ast \Lambda}.
\]
So, $\gr \ast j$ is an homotopy equivalence with homotopical
inverse $\gr \ast q$.
This proves that $j$ is a dévissage equivalence.
\end{proof}

We are now left with the construction of such a homotopy $H$. We shall
decompose the proof in two parts: first we shall assume that the
curved cooperad is planar and we shall then deduce the characteristic
zero case.

Let $K$ be the kernel of $\rho$ in the category of
chain complexes. Since $\rho$ is a weak equivalence
of $\operad $-coalgebras,
the chain complex $K$ is acyclic.
Using the section $\sigma$, one can build a degree $1$ map
$h : V \to V$ such that $\partial h = \projto K$.
This allows us to create a degree $1$ derivation
$\bigdiff_h : V^\coperad \longrightarrow V^\coperad$ relative to the
zero coderivation on $\coperad$ defined as
\[
	\bigdiff_h
	\coloneqq {\sha\left(\id V, h\right)}^\coperad.
\]
\begin{lemma}%
\label{thm:phi_is_an_isomorphism_as_well_as_its_restriction}
The degree $0$ map
\[
	\isophi \coloneqq s \circ r + \partial \bigdiff_h
\]
is a degree-wise isomorphism of $V^\coperad$ which restricts to a
degree-wise
isomorphism of $\ideal n V^\coperad$ for any integer $n$.
\end{lemma}

\begin{proof}
Let $n$ be natural number, since $\partial \bigdiff_h$ is a derivation
relatively to the zero coderivation on $\coperad$%
~[\ref{thm:bracket_derivation_algebras}], it preserves
$\ideal n \cobarfunctor$. As a consequence, the map
$\isophi : \cobarfunctor V \to \cobarfunctor V$ also preserves
$\ideal n \cobarfunctor V$.

By construction of $\bigdiff_h$,
the associated map $\gr n \isophi$ acts on the factor
\[
	{\internalhom{(\grfilt n
	\coperad)(k)}{{(W\oplus K)}^{\otimes k}}}^{\symgroup k}
\]
as
\[
	{\internalhom{(\grfilt n
	\coperad)(k)}{{{(\projto W)}^{\otimes k} + \sha(\id V, \projto
	K)}^{\otimes k}}}^{\symgroup k},
\]
hence it acts locally as an integer multiple of the identity. By the
characteristic zero assumption, $\gr n \isophi$ is an isomorphism.
By immediate induction, $\cofiltration n \isophi$
is a degree-wise isomorphism for any natural $n$ and thus
is $\isophi$. Finally, since the sequence
\[
	\ideal n \cobarfunctor V \to \cobarfunctor V
	\to \cofiltration n \cobarfunctor V
\]
is exact, $\ideal n \phi$ is a degree-wise isomorphism.
\end{proof}

Since the maps $s \circ r$ and $\partial \bigdiff_h$ stabilise
the subobject $\ideali \subset V^\coperad$, the
sum of maps
\[
	{\left(s \circ r \right)}^\coperad
	+ {\sha (\id{}, \partial \bigdiff_h)}^\coperad
\]
from $\power{(V^\coperad)}\coperad$ to itself
may be lifted to a map from
$\power{\sha (V^\coperad , \ideali)}\coperad$ to itself
that we also denote $
\power{(s \circ r)}\coperad
+ \power{\sha (\id{}, \partial \bigdiff_h)}\coperad $, so that the
following diagram commutes
\[
\begin{tikzcd}
	{\sha \left(V^\coperad , \ideali\right)}^\coperad
	\arrow[rrrr,"{\fullpower{s \circ r}\coperad
	+ {\sha(\id{}, \partial \bigdiff_h)}^\coperad}"]
	\arrow[d]
	&&&& {\sha \left(V^\coperad , \ideali\right)}^\coperad
	\arrow[d]
	\\
	\fullpower{V^\coperad}\coperad
	\arrow[rrrr,"{\fullpower{s \circ r}\coperad
	+ {\sha \left(\id{}, \partial \bigdiff_h\right)}^\coperad}"]
	&&&& \fullpower{V^\coperad}\coperad.
\end{tikzcd}
\]

\begin{lemma}%
\label{thm:psi_is_an_isomorphism}
The map $\isopsi$ defined by
\[
\begin{tikzcd}
	{\sha \left(V^\coperad , \ideali\right)}^\coperad
	\arrow[rrr, "{\fullpower{s \circ r}\coperad
	+ {\sha (\id{}, \partial \bigdiff_h)}^\coperad}"]
	&&&
	{\sha \left(V^\coperad , \ideali\right)}^\coperad
	\arrow[d,"\laxmapsha"]
	\\
	&&&{\sha \left(V , \ideali\right)}^{\coperad\compofsymseq \coperad}
	\arrow[d,"\id{}^{w}"]
	\\
	{\sha\left(V, \ideali\right)}^\coperad \ar[rrr, "\isopsi"]
	\ar[uu,hook]
	&&&{\sha \left(V , \ideali\right)}^{\coperad}.
\end{tikzcd}
\]
is a degree-wise isomorphism.
\end{lemma}

\begin{proof}
The map $\isopsi$ stabilises the subobjects
$\power{\sha (V , \ideali)}{\coperad / \filtration n \coperad}$.
For compatibilty reasons, the map $\grfilt n \coperad \to
(\grfilt n \coperad) \compofsymseq \coperad$ induced by $w$ factors
through
$(\grfilt n \coperad) \compofsymseq \monoidalunit$. Meanwhile, on the
generators $V$ the derivation $\partial \bigdiff_h$ is well understood:
one has
\[
	\begin{tikzcd}[ampersand replacement=\&]
		\cobarfunctor V
		\arrow[r, "\partial \bigdiff_h"]
		\& \cobarfunctor V
		\arrow[d, "V^\iota"] \\
		V
		\arrow[u, "V^\tau"]
		\arrow[r, "\projto K", swap]
		\& V.
	\end{tikzcd}
\]
On the object
$\power{\sha (V , \ideali)}{\grfilt n \coperad}$, the
map $\gr n \isopsi$ is then equal to the restriction of the map
\[
	\prod_{k \geq 1} \internalhom{(\grfilt n \coperad)(k)}{
	{(\projto{W^\coperad})}^{\otimes k}
	+ {\sha(\id {V^\coperad}, \projto K)}^{\otimes k}}^{\symgroup k},
\]
from $\power{\sha (\power{V}{\coperad},
\power{V}{\coperad})}{\grfilt n \coperad}$
to itself. Thanks to the characteristic zero assumption, it is an
isomoprhism of $\power{\sha (V , \ideali)}{\grfilt n \coperad}$.
We conclude by direct induction, using the fact that
\[
	{\sha (V , \ideali)}^\coperad
	= \limn {\sha (V , \ideali)}^{\filtration n \coperad}.
\]
\end{proof}

\begin{lemma}%
\label{thm:phi_is_an_isomorphism_of_J}
The map $\isophi$ restricts to an isomorphism of $\idealj$.
\end{lemma}

\begin{proof}
The
ideal $\idealj$ is stable through the map $s \circ r$
and through the structural derivation of the $\coperad$-algebra
$\cobarfunctor V$. Since the restriction
of $\bigdiff_h$ to $\ideali$ is zero, then $\idealj$ is
also stable through $\bigdiff_h$
~[\ref{thm:derivation_sur_ideal_engendre}].
So it is stable through $\isophi$.
Since $\isophi$ is a degree-wise monomorphism, it is also
the case of its restriction to $\idealj$. Let us prove that
this restriction is a degree-wise epimorphism.
Since
$ s \circ r$ is a morphism of
$\coperad$-algebras, and since the map $\partial \bigdiff_h$
is a derivation of the graded $\coperad$-algebra $V^\coperad$
relatively to the zero coderivation on the cooperad $\coperad$%
~[\ref{thm:bracket_derivation_algebras}]
the following square diagram commutes
\[
	\begin{tikzcd}
		{\sha\left(V, \ideali \right)}^{\coperad}
		\arrow[r]
		\arrow[d,"\isopsi"']
		& \idealj \arrow[d,"\isophi"]
		\\
		{\sha \left(V, \ideali \right)}^{\coperad}
		\arrow[r]
		& \idealj,
	\end{tikzcd}
\]
where $\isopsi$ is the map introduced in the previous lemma%
~[\ref{thm:psi_is_an_isomorphism}].
The bottom horizontal arrow is a degree-wise epimorphism%
~[\ref{thm:other_presentation_of_the_ideal_j}]
and $\isopsi$ is a degree-wise isomorphism
~[\ref{thm:psi_is_an_isomorphism}],
so $\isophi$ is a degree-wise epimorphism.
\end{proof}

\subsection{Functoriality}

Let us consider a morphism of curved conilpotent cooperads
$f$, a morphism of operads $g$ and twisting morphisms $\alpha, \beta$
that make the following square diagram commute
$$
\begin{tikzcd}
\coperad
\ar[r,"f"]\ar[d, "\alpha"']
& \coperad'
\ar[d, "\beta"]
\\
\operad
\ar[r, "g"']
& \operad'.
\end{tikzcd}
$$
Let us endow the category of complete $\coperad$-algebras 
with the model structure transferred from that of $\operad$-coalgebras
and the category of complete $\coperad'$-algebras 
with the model structure transferred from that of $\operad'$-coalgebras.

Let us have a look at the adjunction
$$
\begin{tikzcd}
\catofcompletealg{\coperad'}
\ar[rr, shift left, "\freealg f"]
&& \catofcompletealg{\coperad}
\ar[ll, shift left, "\forget^f"]
\end{tikzcd}
$$

\begin{theorem}
The adjunction $\freealg f \dashv \forget^f$ is a model adjunction.
\end{theorem}

\begin{proof}
 It is clear that the functor $\forget^f$ preserves fibrations and weak equivalences.
\end{proof}

\subsection{A useful property of weak equivalences of conilpotent curved cooperads}

We show that given a morphism of conilpotent curved cooperads
$f: \coperad \to \coperad'$, the adjunction between the categories of algebras 
is closed to be a model equivalence when these categories of algebras
are equipped with the model structure transferred respectively from
that of  $\baradj \coperad$-coalgebras and that of $\baradj \coperad'$-coalgebras.
Actually, we will see that this adjunction is a model adjunction and the result 
we show here is a step towards showing that fact.

\begin{theorem}
\label{thm:pre-model-equivalence-algebras}
Let $f: \coperad \to \coperad'$ be a morphism of conilpotent curved cooperads.
Then, for any $\baradj \coperad'$-coalgebra $V$
the map
$$
\cobarfunctor_{\coperad'} V \to \forget^f \freealg f \cobarfunctor_{\coperad'} V
= \forget^f \cobarfunctor_{\coperad} \forget^{\baradj(f)} V
$$
is a weak equivalence for the model structure on complete $\coperad'$-algebras
transferred from that of $\baradj \coperad'$-coalgebras. 
\end{theorem}

\begin{proof}
 By \cref{thm:algebraic-we-ladder}
 and \cref{thm:model-quasi-equivalence-composition}, we know that such a set of weak equivalences
 (that are called algebraic in \cref{def:algebraic-we} below)
contains all ladder equivalences and follows the 2-out-of-3 rule. Hence
it contains all the weak equivalences.
\end{proof}

\begin{definition}
\label{def:algebraic-we}
Let us call algebraic a weak equivalence of conilpotent curved cooperads
that satisfies the property
described in \cref{thm:pre-model-equivalence-algebras}; that is such a 
weak equivalence $f: \coperad \to \coperad'$ is algebraic
if for any $\baradj \coperad'$-coalgebra $V$
the map
$$
\cobarfunctor_{\coperad'} V \to \forget^f \freealg f \cobarfunctor_{\coperad'} V
= \forget^f \cobarfunctor_{\coperad} \forget_{\baradj(f)} V
$$
is a weak equivalence for the model structure on complete $\coperad'$-algebras
transferred from that of $\baradj \coperad'$-coalgebras. 
\end{definition}

\begin{lemma}
\label{thm:model-quasi-equivalence-composition}
 Algebraic weak equivalences of conilpotent curved cooperads
  follow the 2-out-of-3 rule in the sense that
 for any two composable morphisms $f,g$, if two of the tree maps
 $f$, $g$, $g \circ f$ are algebraic weak equivalences, then so is the third.
\end{lemma}

\begin{proof}
Let us consider two composable morphisms of curved conilpotent cooperads
$$
\coperad \xrightarrow{f} \coperad' \xrightarrow{g} \coperad''
$$
and let $V$ be a $\baradj\coperad''$-coalgebra.
Let us contemplate the following composition.
$$
\cobarfunctor_{\coperad''} V  \xrightarrow{a}
\forget^g \freealg g \cobarfunctor_{\coperad''} V
\xrightarrow{b} \forget^g \forget^{f} \freealg{f} \freealg g \cobarfunctor_{\coperad''} V 
=\forget^{gf} \freealg{gf}  \cobarfunctor_{\coperad'} V  .
$$
If $f$ and $g$ are algebraic weak-equivalence, then $g \circ f$ is a weak equivalence and 
$a$ and $b$ are weak equivalences; so is $b \circ a$. Hence, $g\circ f$ is an algebraic weak equivalence.

If $f$ and $g\circ f$ are algebraic weak-equivalence, then $g$ is a weak equivalence and 
$b$ and $a \circ b$ are weak equivalences; so is $a$. Hence, $g$ is an algebraic weak equivalence.

Let us assume that $g$ and $g\circ f$ are algebraic weak-equivalence.
Then $f$ is a weak equivalence and 
$a$ and $a \circ b$ are weak equivalences; so is $b$. Let $V'$ be a $\baradj \coperad'$-coalgebra.
Since the model adjunction
$\forget_{\baradj(g)} \freecog{\baradj(g)}$
relating $\baradj \coperad'$-coalgebras to $\baradj \coperad'$-coalgebras is a model equivalence
[\cref{bigthm:weq_of_operads_eq_of_models}], there exists a $\baradj \coperad''$-coalgebra $V$ and a chain
of weak equivalences of $\baradj \coperad''$-coalgebras
$$
V' \to Z \leftarrow \forget_{\baradj(g)} V.
$$
Since the functor $\forget^g$ reflects weak equivalences (again because the
the model adjunction
$\forget_{\baradj(g)} \freecog{\baradj(g)}$ is a model equivalence) and since
the map $b$ is a weak equivalence, then the morphism
$$
\freealg g \cobarfunctor_{\coperad''} V
\to \forget^{f} \freealg{f} \freealg g \cobarfunctor_{\coperad''} V 
$$
is a weak equivalence. Let us consider now the following diagram
$$
\begin{tikzcd}
\freealg g \cobarfunctor_{\coperad''} V
\ar[r, equal] \ar[d]
& \cobarfunctor_{\coperad'} \forget_{\baradj(g)} V 
\ar[d] \ar[r]
& \cobarfunctor_{\coperad'} Z
\ar[d]
& \cobarfunctor_{\coperad'} V'
\ar[l] \ar[d]
\\
\forget^{f} \freealg{f}\freealg g \cobarfunctor_{\coperad''} V
\ar[r, equal]
& \forget^{f} \freealg{f}\cobarfunctor_{\coperad'} \forget_{\baradj(g)} V 
\ar[r]
&\forget^{f} \freealg{f} \cobarfunctor_{\coperad'} Z
& \forget^{f} \freealg{f}\cobarfunctor_{\coperad'} V'
\ar[l]
\end{tikzcd}
$$
The horizontal arrows are weak equivalences as well as the left vertical arrow.
By the 2-out-of-3 rule, all these arrows are weak equivalences, in particular the right
vertical one. So $f$ is an algebraic weak equivalence.
\end{proof}

\begin{lemma}
\label{thm:algebraic-we-ladder}
Let $f : (\coperad_n)_{n \in \ordinalomega} \to (\coperad'_n)_{n \in \ordinalomega}$
be a ladder equivalence of a ladder of conilpotent curved cooperads.
Then, the morphism
$$
f : \varinjlim_{n \in \ordinalomega} \coperad_n \to \varinjlim_{n \in \ordinalomega} \coperad'_n
$$
is model quasi-equivalence.
\end{lemma}

\begin{proof}
Let us denote
\begin{align*}
 \coperad &= \varinjlim_{n \in \ordinalomega} \coperad_n;
 \\
  \coperad' &= \varinjlim_{n \in \ordinalomega} \coperad'_n.
\end{align*}
Moreover, for any natural integer $n$, let us denote $i_n$ and $j_n$ respectively the inclusions
\begin{align*}
 i_n :& \coperad_n \to \coperad;
 \\
  j_n :& \coperad_n \to \coperad.
\end{align*}
We can notice that
$$
f \circ i_n = j_n \circ f, \quad n \in \ordinalomega.
$$
Let  $\Lambda$ be a quasi-free complete $\coperad'$-algebra. Its underlying graded algebra has the form
$$
\Lambda \isonat X^{\coperad'}
$$
The map $\Lambda \to  \forget^f \freealg f \Lambda$ fits in
the diagram
$$
\begin{tikzcd}
 \Lambda
 \ar[r]  \ar[d]
 & \cdots 
 \ar[r]  \ar[d]
 & {\forget^{j_n} \freealg{j_n}\Lambda}
 \ar[r]  \ar[d]
 & \cdots
 \ar[r]  \ar[d]
 & {\forget^{j_1} \freealg{j_1}\Lambda}
 \ar[r]  \ar[d]
  & {\forget^{j_0} \freealg{j_0}\Lambda}
   \ar[d]
  \\
   \forget^f \freealg f \Lambda
 \ar[r]  
 & \cdots 
 \ar[r]  
 & {\forget^{fi_n} \freealg{fi_n}\Lambda}
 \ar[r]  
 & \cdots
 \ar[r]  
 & {\forget^{fi_1} \freealg{fi_1}\Lambda}
 \ar[r]  
  & {\forget^{fi_0} \freealg{fi_0}\Lambda}
\end{tikzcd}
$$
For any natural integer $n$, the map
$$
{\forget^{j_n} \freealg{j_n}\Lambda} \to {\forget^{j_{n-1}} \freealg{j_{n-1}}\Lambda}
$$
is an infinitesimal extension since at the level of graded algebras, it is the map
$$
X^{\coperad'_n} \to X^{\coperad'_{n-1}}.
$$
Moreover, the map
$$
\Lambda \to \limn {\forget^{j_n} \freealg{j_n}\Lambda}
$$
is an isomorphism. Thus the first line of the above diagram represent a coladder of
complete $\coperad'$-algebras. The same holds for the second line.

For any natural integer $n$, the kernel $K_n$ of the map ${\forget^{j_n} \freealg{j_n}\Lambda}
\to {\forget^{j_{n-1}} \freealg{j_{n-1}}\Lambda}$ is the chain complex
$$
X^{\coperad'_n / \coperad'_{n-1}}
$$
whose differential is induced from that of $X$ (that is the map
$X \hookrightarrow \Lambda \xrightarrow{d_\Lambda} \Lambda \to X$) combined with that of $\coperad'_n$.
Similarly, the $K'_n$ of the map ${\forget^{fi_n} \freealg{fi_n}\Lambda}
\to {\forget^{fi_{n-1}} \freealg{fi_{n-1}}\Lambda}$ is the chain complex
$$
X^{\coperad_n / \coperad_{n-1}}
$$
whose differential is induced from that of $X$ combined with that of $\coperad'_n$.
Since, $f$ is a ladder equivalence, then the map $K'_n \to K_n$ is a quasi-isomorphism.
So the vertical maps of the above diagram represent a coladder equivalence.
So by \cref{thm:cofiltered-alg-equivalence}, the map
$$
\Lambda \to \forget^f \freealg f \Lambda
$$
is a weak equivalence.
\end{proof}

\section{The Cobar equivalence}%
\label{sec:equivalence_cobar}

The goal of this section is to prove the equivalence theorem below.

\begin{theorem}%
\label{bigthm:equivalence}%
\label{thm:equivalence_for_cofibrant}
Let $\coperad$ be a curved conilpotent cooperad and let $\alpha : \coperad
\to \baradj \coperad$ the canonical twisting morphism.
For any $\baradj \coperad$-coalgebra $(V,a)$, the unit morphism
$V \to \cobaradj \cobarfunctor V$ is a quasi-isomorphism.
Hence, the model adjunction
\[
	\begin{tikzcd}[ampersand replacement=\&]
		\catofcog{\baradj \coperad}
		\arrow[rr, shift left=1.5,"\cobarfunctorfull"]
		\&
		\&
		\catofcompletealg\coperad,
		\arrow[ll, shift left=1.5, "\cobaradjfull"]
	\end{tikzcd}
\]
is a model equivalence.
\end{theorem}

\subsection{The strategy}%

Our proof follows two steps:
\begin{enumerate}
	\item first prove the result in a context close to
	the planar context and which encompass it, that we call
	the quasi-planar context ; 
	\item in the $\rationals$-linear framework, one
	replaces $\coperad$ by an equivalent
	curved conilpotent cooperad which is quasi-planar.
\end{enumerate}
Let us describe in details these two steps.

\subsubsection{Retraction and decomposition of the unit map}
\label{subsubsection-retraction-decomposition}

Consider a morphism of curved conilpotent cooperads $f: \coperad \to \coperad'$ and the following square
\[
\begin{tikzcd}
	\coperad
	\ar[r, "f"] \ar[d, "\alpha"']
	& \coperad'
	\ar[d, "{\alpha'}"]
	\\
	\baradj \coperad
	\ar[r, "\phi"']
	& \baradj \coperad'
\end{tikzcd}
\]
where $\phi = \baradj (f)$. It induces several adjunctions: for instance
$\cobarfunctor_\alpha \dashv \cobaradj_\alpha$,
$\cobarfunctor_{\alpha'} \dashv \cobaradj_{\alpha'}$,
$\freealg f \dashv \forget^f$ and 
$\forget_\phi \dashv \freecog\phi$.
We know from \cref{proposition:factorisation_unit_morphism} that the map
\[
\forget_g  \xrightarrow{\eta U_g}\cobaradj_\alpha \circ \cobarfunctor_\alpha \circ \forget_g 
\]
induced by the unit of the adjunction $\cobarfunctor_\alpha \dashv \cobaradj_\alpha$ is
equal to the following composite map
\[
\begin{tikzcd}
 	\forget_g 
	\arrow[r,"\forget_g \circ \eta"]
	& \forget_g \circ \cobaradj_{\alpha'} \circ \cobarfunctor_{\alpha'}
	\arrow[rr,"\forget_g \circ \cobaradj_{\alpha'} \circ \eta \circ \cobarfunctor_{\alpha'}"]
	&& \forget_g \circ \cobaradj_{\alpha'} \circ
	\forget^f \circ \freealg f \circ \cobarfunctor_{\alpha'}
	\arrow[d,equal]
	\\
	&&& \forget_g \circ \freecog g \circ \cobaradj_\alpha
	\circ \cobarfunctor_\alpha \circ \forget_g
	\arrow[d,"\epsilon \circ \cobaradj_\alpha
	\circ \cobarfunctor_\alpha \circ \forget_g"']
	\\
	&&& \cobaradj_\alpha \circ \cobarfunctor_\alpha \circ \forget_g .
\end{tikzcd}
\]
We are interested in two cases.
\begin{itemize}
 \item First, the case where $f$ is the morphism
 \[
 	\monoidalunit \to \coperad.
 \]
 Since complete $\monoidalunit$-algebras
 as well as $\baradj \monoidalunit$-coalgebras
 are just chain complexes and since the associated
 cobar adjunction is just the identity adjunction, then,
 applying \cref{proposition:factorisation_unit_morphism}
in this case leads us to factorise the identity of the
underlying chain complex $\forget V$ of $V$ as follows
\[
\begin{tikzcd}
 	\forget V \arrow[r,"(\forget \circ \eta) (V)"]
	\arrow[rr, bend right,"="]
	& \forget \cobaradj \cobarfunctor V 
	\arrow[r,"\specialquotient"]
	& \forget V .
\end{tikzcd}
\]
for any $\baradj \coperad$-coalgebra $V$.
\item Then the case where $f$ is a cofibration of curved
conilpotent cooperads
\[
	\coperad \xrightarrow{f} \coperad'
\]
that is a morphism so that
so that $\phi \coloneqq \baradj f$ is an acyclic cofibration of
dg operads. As so, it has a retraction $\rho : \baradj \coperad' \to
\baradj \coperad$. The morphism $\rho$ induces an adjunction
$\forget_\rho \dashv \freecog\rho$ relating $\baradj \coperad'$-coalgebras
and $\baradj \coperad$-coalgebras. We have
\[
	\forget_\phi \circ \forget_\rho \isonat \idfunctor .
\]
Thus, applying \cref{proposition:factorisation_unit_morphism}
and pre-composing by $\forget_\rho$, one obtains a decomposition of
the unit map
$$
\eta : \idfunctor \to \cobaradj_{\alpha} \circ \cobarfunctor_{\alpha}
$$
as follows
\[
\begin{tikzcd}
 	\idfunctor
	\ar[r, equal]
	& \forget_\phi \circ \forget_\rho
	\ar[r]
	&\forget_\phi \circ \cobaradj_{\alpha'} \circ \cobarfunctor_{\alpha'} \circ \forget_\rho
	\ar[d]
	\\
	&&\forget_\phi \circ \cobaradj_{\alpha'} \circ \forget^f \circ \freealg f \circ  \cobarfunctor_{\alpha'} \circ \forget_\rho
	\ar[d, "\simeq"]
	\\
	&& \forget_\phi \circ \freecog \phi \circ \cobaradj_{\alpha} \circ \cobarfunctor_{\alpha} \circ \forget_\phi \circ   \circ \forget_\rho
	\ar[d]
	\\
	&& \cobaradj_{\alpha} \circ \cobarfunctor_{\alpha} .
\end{tikzcd}
\]
\end{itemize}

\subsubsection{The strategy in the quasi-planar context}%

The quasi-planar step does not
require $\rationals$-linearity. Moreover,
for conciseness purposes,
we shall pretend that $\freecog\specialoperad  V^\coperad$ is
$\freefunctor_1^\specialoperad  V^\coperad$,
the proof for the
other restrictions $\freefunctor_n^\specialoperad  V^\coperad$
for $n>1$
being only much longer.

This quasi-planar context asserts that the conilpotent curved cooperad $\coperad$
so that the underlying graded conilpotent cooperad arises from a planar one
$$
\coperad = \pltosym{\coperad\planar}
$$
 and that the coderivation is tamed in some sense.

In the previous paragraph \ref{subsubsection-retraction-decomposition}
we have factorises the identity of the
underlying chain complex $\forget V$ of $V$ as follows
\[
\begin{tikzcd}
 	\forget V \arrow[r,"(\forget \circ \eta) (V)"]
	\arrow[rr, bend right,"="]
	& \forget \cobaradj \cobarfunctor V 
	\arrow[r,"\specialquotient"]
	& \forget V .
\end{tikzcd}
\]
Let us denote $W= \forget \cobaradj \cobarfunctor V $
and $\overline W$ the kernel
of the map $\specialquotient$.

Proving that
the unit morphism $V \to \cobaradj \cobarfunctor V$
is a quasi-isomorphism amounts to prove that the chain complex
$\overline W$ is acyclic. 
For that purpose, we shall exhibit a degree $1$ map
$\thebighomotopy$
from $\overline W$ to itself
such that the morphism of chain complexes
\[
	\partial H \coloneqq
	\bigdiff \circ H + H \circ \bigdiff
\]
is an isomorphism. Such a contracting homotopy
may be described as follows. The underlying graded object
of $\overline W$ embeds
into a graded object
of the form $V^T$ where $T$ is the made planar trees whose vertices
is labelled by $\sinv \overline{\coperad\planar}$ and some top vertices
are labelled by $\overline{\coperad\planar}$. Then, one can define the degree
$1$ endomorphism $\thehomotopy$ of $T$ that takes a labelled tree $t$ and remove
the $\sinv$ that may appear on the leftest top vertex. Otherwise it sends $t$ to $0$.
Then $\thebighomotopy = - V^\thehomotopy$.

\subsubsection{The quasi-planar approximation step}

In the second step we shall find a quasi-planar
curved conilpotent cooperad $\coperad'$
together with an acyclic cofibration cofibration of cooperads
\[
\begin{tikzcd}
	\coperad \arrow[r, "s"] 
	& \coperad'
\end{tikzcd}
\]
that is a morphism so that $\baradj s$ is
an acyclic cofibration of dg operads.
Let us denote $\specialoperad' \coloneqq \baradj \coperad'$
and $\sigma \coloneqq \baradj s$ and let $\alpha'$ be
the canonical twisting morphism from $\coperad'$
to $\specialoperad'$.

As an acyclic cofibration, the morphism $\sigma$ has a retraction
$\rho$. Using paragraph \ref{subsubsection-retraction-decomposition},
we decompose the unit map
$V \to \cobaradj \cobarfunctor V$
into
$$
V \to \forget_\sigma \cobaradj_{\alpha'} \cobarfunctor_{\alpha'} \forget_\rho (V)
\to \cobaradj  \cobarfunctor (V) .
$$
From the first quasi-planar step, we know that the first map
is a quasi-isomorphism. We then use
of \cref{thm:pre-model-equivalence-algebras} to prove that the second map is also a
quasi-isomorphism.

\subsection{The quasi-planar case}
\label{sec:quasi_planar_case}

We shall first build a suitable contracting homotopy in context closed to
the planar context that we call the quasi-planar
context. It encompasses in particular the planar case.

\subsubsection{The quasi-planar context}

\begin{definition}
The curved conilpotent coperad $\coperad$ is said to be quasi-planar
 if
 \begin{itemize}
 \item the underlying graded conilpotent cooperad of $\coperad$
is the image under the functor $\pltosymfunctor$ of a graded planar conilpotent
cooperad $\coperad\planar$
\[
	\forgetd \coperad
	 \isonat \pltosym{\coperad\planar} , 
\]
where $\forgetd$ denotes the forgetful functor from curved
conilpotent cooperads
to graded conilpotent cooperads ;
\item for any natural integer $n$,
there exists a filtration on $\grfilt n \coperad\planar$
\[
	\filtrationd 0 \grfilt n \coperad\planar \hookrightarrow
	\filtrationd 1 \grfilt n \coperad\planar \hookrightarrow
	\cdots \hookrightarrow
	\filtrationd m \grfilt n \coperad\planar \hookrightarrow
	\cdots
\]
so that the induced filtration on $\grfilt n \coperad
\isonat \pltosym{\left(\grfilt n \coperad\planar\right)}$,
that is
\[
	\filtrationd m \grfilt n \coperad =  \pltosym{\left(
	\filtrationd m \grfilt n \coperad\planar\right)}
\]
is stable under $d_\coperad$ and so that
the resulting degree $-1$ endomophism 
 $\grfiltd m \grfilt n d_\coperad$
 of 
 \[
 	\grfiltd m \grfilt n \coperad \isonat 
	\pltosym{\left(\grfiltd m \grfilt n \coperad\planar \right)}
 \]
 has the form
 \[
 	\grfiltd m \grfilt n d_\coperad
	= d_{\coperad,\plan} \otimes \id{\catofpermutations}.
 \]
\end{itemize}
\end{definition}

\begin{example}
The image under the functor $\pltosymfunctor$
of a curved conilpotent cooperad is obviously
a quasi-planar cooperad.
\end{example}

In this quasi-planar context, the underlying graded operad of $\specialoperad$
has the form
\[
	\forgetd \specialoperad \isonat
	\treemodule \left(\pltosym{\sinv\overline{\coperad}\planar}\right)
	\isonat
	\pltosym{\left( \treemodule\planar \sinv\overline{\coperad}\planar
	\right)} ,
\]
Then, denoting $\specialoperad\planar= \treemodule\planar 
\sinv\overline{\coperad}\planar$,
we have the following canonical isomorphisms of
graded $\catofpermutations$-modules
\begin{align*}
 	\specialoperad \compofsymseq \specialoperad
	\isonat&
	\pltosym{\left( \specialoperad\planar
	\compofplanarseq \specialoperad\planar
	\right)} ;
	\\
	\specialoperad \compofsymseq \coperad\planar
	\isonat&
	\pltosym{\left( \specialoperad\planar
	\compofplanarseq \coperad\planar
	\right)} .
\end{align*}

\begin{notation}\label{notation-tree}
Let $t$ be a non trivial planar tree. We shall order the top vertices of
$t$ from left to right and we shall let $v$ be the leftest top vertex
of $t$.
We shall also denote the number of leaves of $v$ by $b$,
the number of leaves of the tree $t$ before $v$ by $a$, and
the number of leaves of the tree $t$ after $v$ by $c$.
Subsequently $l \coloneqq a+b+c$ is the total number of leaves of $t$:
\begin{center}
	\begin{tikzpicture}
		\tikzstyle{vertex}=[draw, circle, thick, inner sep=0, minimum
		size=4]
		\tikzstyle{every path}=[thick]
		\node [vertex] (a) at (0,0) {};
		\node [vertex] (c) at (0,1) {};
		\node [vertex] (d) at (2,1) {};
		\node [vertex] (e) at ($(d)+(110:1cm)$) {};
		\node [vertex] (f) at ($(e)+(110:1cm)$) {};
		\node [right of=c, node distance=.3cm] {$v$};
		\node at ($(c)+(0,1.4)$) {$b$};
		\node at ($(a)+(140:2.35cm)$) {$a$};
		\node at ($(d)+(80:2.35cm)$) {$c$};
		\draw (a) -- (c);
		\draw (a) -- ($(a)+(130:2cm)$);
		\draw (a) -- ($(a)+(150:2cm)$);
		\draw (a) -- (0,-1);
		\draw (c) -- (0,2);
		\draw (c) -- ($(c)+(60:1cm)$);
		\draw (c) -- ($(c)+(120:1cm)$);
		\draw (d) -- ($(d)+(70:2cm)$);
		\draw (e) -- ($(d)+(0,2)$);
		\draw (e) -- (f);
		\draw (c) -- (0,2);
		\draw (a) -- (d);
		\draw (d) -- (e);
		\draw ($(c)+(55:1.1cm)$) arc (55:125:1.1cm);
		\draw ($(a)+(125:2.1cm)$) arc (125:155:2.1cm);
		\draw ($(d)+(65:2.1cm)$) arc (65:95:2.1cm);
	\end{tikzpicture}
\end{center}
Besides, $t_1$ shall denote the planar tree made out of $t$ by removing
$v$.
\end{notation}

\subsubsection{The operad
\texorpdfstring{$\operad = \baradj\coperad$}{Btop Q}}%
\label{sec:the_operad_BQ_and_the_cobar_resolution}

Let us introduce some
notations about the operad $\specialoperad= \baradj\coperad$. 
\begin{itemize}
	\item The graded operad underlying $\specialoperad$ is the free
	      operad $\treemodule\left(\sinv\coperad\right)$;
	\item Let $\augmentation: \specialoperad \to \monoidalunit$ be
	      the canonical augmentation of the underlying graded operad of
	      $\specialoperad$,
	      and let $\unitspecialoperad : \monoidalunit \to \specialoperad$
	      be its unit;
	\item We denote $\overline{\specialoperad}$ the kernel of $\augmentation$;
	\item The derivation of $\specialoperad $ is the sum of three
	      derivations $\smalldiff_w$,
	      $\smalldiff_{\sinv\overline{\coperad}}$
	      and $\smalldiff_\theta$ defined
	      on the generators $\sinv\overline{\coperad}$ as follows:
	      \begin{align*}
	      	& \smalldiff_w : \sinv\overline{\coperad}
	      	\xrightarrow{\sinv\degreeminusonemap}
	      	\s^{-2}\overline{\coperad}
	      	\xrightarrow{\s^{-2}\overline w_2} \s^{-2}
	      	\treemodule_2\overline{\coperad}
	      	\iso \treemodule_2\left(\sinv\overline{\coperad}\right);
	      	\\
	      	& \smalldiff_{\sinv\overline\coperad} : \sinv\overline
	      	\coperad \xrightarrow{d_{\sinv\overline\coperad}} \sinv
	      	\overline\coperad;
	      	\\
	      	& \smalldiff_\theta: \sinv\overline{\coperad}
	      	\xrightarrow{\degreeminusonemap \otimes \overline \coperad}
	      	\overline{\coperad} \xrightarrow{-\theta}
	      	\monoidalunit .
	      \end{align*}
	\item Via vertexwise linearity, the coradical filtration of $\coperad$
	      induces an exhaustive filtration on the operad
	      $\specialoperad $ that we shall denote
	      $\under{\{\filtration n \specialoperad\}}{n \in \ordinalomega}$.
	      That is, for $n \in \naturals$, 
	      the part $\filtration n \specialoperad$ of the filtration
	      is made up of trees labelled by $\sinv\overline\coperad$
	      such that the cumulated coradical filtration of the labellings
	      does not exceed $n$;
	\item We shall denote by $\canonicaltwistinginverse$ the `inverse’ of
	      $\canonicaltwisting$:
	      \[
	      	\begin{tikzcd}
	      		\canonicaltwistinginverse : \specialoperad
	      		= \treemodule\left(\sinv\overline{\coperad}\right)
	      		\arrow[rr, two heads,
	      		"\picking\left(\sinv\overline \coperad\right)"]
	      		&&
	      		\sinv\overline{\coperad}
	      		\arrow[r, "\degreeminusonemap"]
	      		& \overline{\coperad};
	      	\end{tikzcd}
	      \]
	\item We shall describe $\specialoperad
	      \compofsymseq \coperad = \pltosym{(\specialoperad\planar \compofplanarseq \coperad\planar)}$
	      as obtained by decorating planar trees
	      with either $\sinv\overline\coperad\planar$ or $\overline\coperad\planar$,
	      with the condition that only top vertices can be
	      decorated with $\overline\coperad$. Here is an example of
	      an admissible decoration
	      	      \begin{center}
	      	\begin{tikzpicture}
	      		\tikzstyle{vertex}=[draw, circle, thick, inner sep=0,
	      		minimum size=4]
	      		\tikzstyle{every path}=[thick]
	      		\node [vertex] (a) at (0,0) {};
	      		\node [vertex] (c) at (0,1) {};
	      		\node [vertex] (d) at ($(a)+(45:1cm)$) {};
	      		\node [left of=c, node distance=0.7cm] {$\overline Q(3)$};
	      		\node [left of=a, node distance=1cm]
	      		{$\sinv\overline Q(2)$};
	      		\node [right of=d, node distance=1cm]
	      		{$\sinv\overline Q(1)$};
	      		\draw (a) -- (c);
	      		\draw (a) -- (0,-1);
	      		\draw (c) -- ($(c)+(60:1cm)$);
	      		\draw (c) -- ($(c)+(120:1cm)$);
	      		\draw (c) -- (0,2);
	      		\draw (a) -- (d);
	      		\draw (d) -- ($(d)+(45:1cm)$);
	      	\end{tikzpicture}
	      \end{center}
\end{itemize}

\begin{definition}
 We call operad-cooperad diagram the following arrow diagram in the category of
 $\naturals$-modules in $\catss^\integers$
 \begin{center}
\begin{tikzpicture}
\node (a) at (0,0) {
	\begin{tikzcd}[ampersand replacement=\&]
		\specialoperad\planar\compofplanarseq
		\specialoperad\planar\compofplanarseq\coperad\planar
		\arrow[rr, "m \compofplanarseq \id_{\coperad\planar}"]
		\&\&
		\specialoperad\planar\compofplanarseq\coperad\planar
	\end{tikzcd}
	};
\node[draw = black, rectangle, rounded corners] (b) at (0.27,-0.4) {
$\thediagramox$
};
\node (c) at (5,0) {shortened to};
\node (d) at (7.4,0) {
	\begin{tikzcd}[ampersand replacement=\&]
		\operadcompositiontwo
		\arrow[r]
		\&
		\operadcompositionone .
	\end{tikzcd}
	};
\node[draw = black, rectangle, rounded corners] (f) at (7.5,-0.35) {
$\thediagramox$
};
\end{tikzpicture}
\end{center}
\end{definition}

\subsubsection{The underlying graded coalgebra of the Cobar resolution
\texorpdfstring{$W = \cobaradj\cobarfunctor V$}{Cobar res}}%
\label{sec:the_operad_BQ_and_the_cobar_resolution}

Let $(V,a)$ be a $\specialoperad $-coalgebra. We denote
\begin{itemize}
 \item $\Lambda \coloneqq \cobarfunctor
V$;
\item $W \coloneqq \cobaradj \Lambda = \cobaradj \cobarfunctor
V$.
\end{itemize}
The underlying graded $\specialoperad $\=/coalgebra of $W$ is
	      \[
	      	\freecog\specialoperad  \left(V^\coperad \right) \subobject
	      	{\left(V^\coperad\right)}^\specialoperad 
	      	\subobject V^{\specialoperad  \compofsymseq \coperad};
	      \]
It may be presented as the pullback in $\catss^\integers$ of a
diagram called the restriction-extension diagram, that we
describe below.

\begin{definition}
The restriction-extension diagram $\thediagramre$ is the following diagram
in $\catss^\integers$
 \begin{center}
\begin{tikzpicture}
\node (a) at (0,0) {
	\begin{tikzcd}[ampersand replacement=\&]
		\freecog\specialoperad V^\coperad \arrow[r] \arrow[d]
		\& \left(\power{\left(V^{\coperad}\right)}{\specialoperad} 
		\right)^\specialoperad
		\arrow[dd]
		\\
		\power{\left(V^{\coperad}\right)}{\specialoperad} 
		\arrow[d]
		\\
		\power{V}{\specialoperad\compofsymseq\coperad}.
		\arrow[r]
		\&
		\power{V}{\specialoperad\compofsymseq\specialoperad\compofsymseq\coperad}.
	\end{tikzcd}
	};
\node[draw = black, rectangle, rounded corners] (b) at (-0.2,-0.3) {
$\thediagramre$
};
\node (c) at (3.5,0) {shortened to};
\node (d) at (6.25,0) {
	\begin{tikzcd}[ampersand replacement=\&]
		W \ar[d] \ar[r]
		\&
		\restrictiontwo
		\arrow[dd]
		\\
		\restrictionone
		\arrow[d]
		\\
		\extensionone
		\arrow[r]
		\&
		\extensiontwo
	\end{tikzcd}
	};
\node[draw = black, rectangle, rounded corners] (f) at (6.3,-0.2) {
$\thediagramre$
};
\end{tikzpicture}
\end{center}
\end{definition}

\begin{definition}
The extension diagram $\thediagramex$ is the subdiagram of the
restriction-extension diagram that contains the arrow relating
$\extensionone$ to $\extensiontwo$:
\begin{center}
\begin{tikzpicture}
\node (a) at (0,0) {
	\begin{tikzcd}[ampersand replacement=\&]
		\power{V}{\specialoperad\compofsymseq\coperad}
		\arrow[r]
		\&
		\power{V}{\specialoperad\compofsymseq\specialoperad\compofsymseq\coperad}
	\end{tikzcd}
	};
\node[draw = black, rectangle, rounded corners] (b) at (-0.15,-0.4) {
$\thediagramex$
};
\node (c) at (3.5,0) {shortened to};
\node (d) at (6.25,0) {
	\begin{tikzcd}[ampersand replacement=\&]
		\extensionone
		\arrow[r]
		\&
		\extensiontwo .
	\end{tikzcd}
	};
\node[draw = black, rectangle, rounded corners] (f) at (6.3,-0.33) {
$\thediagramex$
};
\end{tikzpicture}
\end{center}
This is also the image of the 
operad-cooperad diagram through the operation $V^{(-)}$.
\end{definition}

\begin{lemma}
 The diagram $\thediagramre$ is a pullback.
\end{lemma}

\begin{proof}
 Let us consider the following diagram
 $$
\begin{tikzcd}
 \freecog \specialoperad V^\coperad
 \ar[r]  \ar[d]
 & \left(\left(V^\coperad\right)^\specialoperad\right)^\specialoperad
 \ar[d] 
 \\ \left(V^\coperad\right)^\specialoperad
  \ar[r]  \ar[d]
 & \left(V^\coperad\right)^{\specialoperad\compofsymseq\specialoperad}
  \ar[d]
 \\
 V^{\specialoperad\compofsymseq\coperad}
  \ar[r]
 & V^{\specialoperad\compofsymseq\specialoperad\compofsymseq\coperad}
\end{tikzcd}
 $$
 The two small squares are pullbacks. Hence, the outer square is also a pullback. This proves the result.
\end{proof}

As we are in the quasi-planar context, we have the following isomorphisms
in $\catss^\integers$
\begin{align*}
	\Lambda =& V^\coperad = V^{\coperad\planar} 
	= \prod_n \left[\coperad\planar(n) ,
	V^{\otimes n} \right];
	\\
 	\extensionone =& V^{\operad \compofsymseq \coperad} =
	\prod_n \left[\specialoperad\planar
	\compofplanarseq \coperad\planar(n) ,
	V^{\otimes n} \right];
	\\
	\extensiontwo =& V^{\operad \compofsymseq\operad \compofsymseq \coperad} =
	\prod_n \left[\specialoperad\planar
	\compofplanarseq \specialoperad\planar
	\compofplanarseq \coperad\planar(n) ,
	V^{\otimes n} \right];
	\\
	\restrictionone =& \left(V^{\coperad}\right)^{\operad} 
	= \left(\Lambda\right)^{\operad} = \left(\Lambda\right)^{\operad\planar} ;
	\\
	\restrictiontwo =& \left(\left(V^{\coperad}\right)^{\operad}\right)^{\operad}
	= \restrictionone^{\operad} = \restrictionone^{\operad\planar}.
\end{align*}
Notice that the symmetric groups $\catofpermutations_n$
play no role in these formulas.

Let us described some maps and objects related to the diagram $\thediagramre$
and $V$.
\begin{itemize}
\item The diagram $\thediagramre$ has a cocone $\specialquotient : \thediagramre \longrightarrow V$
targeting $V$ resulting from the unit
$\unitspecialoperad$
of the operad $\specialoperad$ and from the coaugmentation $\iota$ of
the cooperad $\coperad$:
$$
\extensiontwo = V^{\specialoperad \compofsymseq\specialoperad\compofsymseq \coperad}
\xrightarrow{V^{\unitspecialoperad\compofsymseq\unitspecialoperad\compofsymseq\iota}}
V .
$$
This cocone actually extends the map 
$\specialquotient : \forget \cobaradj \cobarfunctor V \to \forget V$ described in Paragraph
\ref{subsubsection-retraction-decomposition}.
\item The diagram $\thediagramre$ has also a cone $\pseudounitcobaradj V: V \to \thediagramre$
given by the graded map
	      \[
	      	\pseudounitcobaradj V: 
	      	V \xrightarrow{V^\tau} V^\coperad
	      	\xrightarrow{\freecog \augmentation V^\coperad}
	      	\freecog\specialoperad  V^\coperad.
	      \]
Composing the cocone $\specialquotient$
with the cone $\pseudounitcobaradj V$, one obtain an endomorphism
of the restriction-extension diagram
\[
	\projto V : \thediagramre \to \thediagramre, \quad \projto V = \pseudounitcobaradj \circ \specialquotient .
\]
\item Let $\overline \Lambda = V^{\overline \coperad}$ be the kernel of the map
$$
\Lambda = V^{\coperad} \xrightarrow{V^\iota} V
$$
 and let $\overline{\thediagramre}$ be the kernel of
	      \[
	      	\specialquotient :
	      	\thediagramre \to V.
	      \]
The components of the diagram $\overline{\thediagramre}$ are 
 \begin{align*}
 	\overline{\extensionone} =& V^{\overline{\operad \compofsymseq \coperad } } =
	V^{(\overline{\operad} \compofsymseq \coperad ) \oplus \overline{\coperad} }
	= V^{\overline{\operad} \compofsymseq \coperad } \oplus
	\overline \Lambda;
	\\
	\overline{\extensiontwo} =& V^{\overline{\operad \compofsymseq\operad \compofsymseq \coperad}} =
	V^{(\overline{\operad} \compofsymseq\operad \compofsymseq \coperad) \oplus
	(\overline{\operad} \compofsymseq \coperad)  \oplus \overline{\coperad} }
	= V^{\overline{\operad} \compofsymseq\operad \compofsymseq \coperad} \oplus \overline{\extensionone};
	\\
	\overline{\restrictionone} =& \left(V^{\coperad}\right)^{\overline\operad} \oplus 
	\overline \Lambda;
	\\
	\overline{\restrictiontwo} =& \left(\left(V^{\coperad}\right)^{\operad}\right)^{\overline\operad}
	\oplus \overline{\restrictionone}.
\end{align*}
\end{itemize}
	      
\subsubsection{The coderivation of the Cobar resolution
\texorpdfstring{$W = \cobaradj\cobarfunctor V$}{Cobar res}}%

Let us describe the coderivation $\bigdiff$ of
	      $\cobaradj \cobarfunctor V$.
	      It decomposes into the following six degree
	      $-1$ maps from $V^{\specialoperad \compofsymseq \coperad}$ to
	      itself which actually induce coderivations on
	      $\freecog\specialoperad (V^\coperad )$.
	      
\begin{definition}
Let $\smalldiffex$ be the degree $-1$ endomorphism of
the operad-cooperad diagram $\thediagramox$ whose component
on $\operadcompositionone$ is
the composition
\[
	      	      	\begin{tikzcd}
	      	      		\specialoperad\planar \compofplanarseq \coperad\planar
	      	      		\arrow[d,"{\id_{\specialoperad\planar}
	      	      		\compofplanarseq \sha\left(\unitspecialoperad
	      	      		\compofplanarseq \id_{\coperad\planar}
	      	      		,(\canonicaltwisting \compofplanarseq \id_{\coperad\planar})
	      	      		\circ w\right)}"]
	      	      		\\
	      	      		{\specialoperad\planar  \compofplanarseq \specialoperad\planar
	      	      		\compofplanarseq \coperad\planar}
	      	      		\arrow[d,"{{m \compofplanarseq \id_{\coperad\planar}}}"]
				\\
	      	      		{\specialoperad\planar\compofplanarseq
	      	      		\coperad\planar}.
	      	      	\end{tikzcd}
\]
and so that
$$
\smalldiffex(\operadcompositiontwo)
= \id_{\specialoperad\planar} \compofplanarseq
\sha\left(\unitspecialoperad
	      	      		\compofplanarseq \id_{\coperad\planar}
	      	      		,\smalldiffex(\operadcompositionone)\right) .
$$
Let $\bigdiffex$ be the degree $-1$ endomorphism of the extension diagram
$\thediagramex$
 given by the formula
	      	      \[
	      	      	\bigdiffex(\extensionone) = V^{\smalldiffex}.
	      	      \]
\end{definition}

The map $\bigdiffex$ restricts to $W$ into the degree $-1$ coderivation
whose projection onto $V$ is the composition
$$
\freecog \specialoperad \Lambda \hookrightarrow \Lambda^\specialoperad
\xrightarrow{\Lambda^\alpha} \Lambda^\coperad 
\xrightarrow{a_\Lambda} \Lambda .
$$
However, it does not restrict in general to the whole restriction extension diagram.

\begin{definition}
Let $\bigdiffa$ be the degree $-1$ endomorphism of the restriction-extension diagram
$\thediagramre$ that results from the
$\specialoperad$-coalgebra structure $a$ on $V$ in the sense
that $\bigdiffa(\extensionone)$ is the following decomposition
	      	      \[
	      	      	\begin{tikzcd}
	      	      		{\left(V^\specialoperad \right)}^{\specialoperad
	      	      		\compofsymseq \coperad}
	      	      		\arrow[r, "\laxmap"]
	      	      		&
	      	      		V^{\specialoperad  \compofsymseq \coperad
	      	      		\compofsymseq \specialoperad }
	      	      		\arrow[rrr,"{V^{(\id_{\specialoperad \compofsymseq
	      	      		\coperad})
	      	      		\compofsymseq \sha(\unitspecialoperad\circ\tau,
	      	      		\canonicaltwisting)}}"]
	      	      		&&&
	      	      		V^{\specialoperad  \compofsymseq
	      	      		\coperad \compofsymseq \coperad}
	      	      		\arrow[d,"{V^{\id_{\specialoperad}\compofsymseq w}}"]
	      	      		\\
	      	      		V^{\specialoperad  \compofsymseq \coperad}
	      	      		\arrow[u,"{-a^{\specialoperad
	      	      		\compofsymseq \coperad}}"]
	      	      		\ar[rrrr, "\bigdiffa"]
	      	      		&&&& V^{\specialoperad  \compofsymseq \coperad};
	      	      	\end{tikzcd}
	      	      \]
 The map $\bigdiffa(\extensiontwo)$ is defined similarly. Moreover, the restrictions are
 by the following formulas
\begin{align*}
 	\bigdiffa (\restrictionone) &=
	      	-{\sha\left(\idfunctor,
	      	a_{V^\coperad} \circ 
	      	{\sha\left(V^\tau,V^\canonicaltwisting \circ a_V
	      	\right)}^\coperad\right)}^{\specialoperad};
		\\
	\bigdiffa (\restrictiontwo) &=
	      	-{\sha\left(\idfunctor,
	      	\bigdiffa (\restrictionone)\right)}^{\specialoperad}.
\end{align*}
\end{definition}

\begin{definition}
Let us $\bigdiffw$, $\bigdifftheta$,
	      	      $\bigdiffcop$ and $\bigdiffv$ 
		      the degree $-1$ endomorphisms
		      of the restriction-extension diagram
		      that
		      result respectively from the
	      	      derivations $\smalldiff_w$ and $\smalldiff_\theta$ of
	      	      the operad $\specialoperad$, the coderivation
	      	      $d_\coperad$ of the cooperad $\coperad$ and the
	      	      coderivation $d_V$ of the $\specialoperad$-coalgebra $V$
	      	      \begin{align*}
	      	      	\bigdiffw(\extensionone) &\coloneqq
	      	      	- V^{\smalldiff_w\compofsymseq\idfunctor};
	      	      	\\
			\bigdiffw (\extensiontwo) &\coloneqq
	      	         - \fullpower{V^\coperad}{\smalldiff_w \compofsymseq \idfunctor \compofsymseq \idfunctor
	      		+ (\idfunctor \compofsymseq' \smalldiff_w) \compofsymseq \idfunctor }  ;
			\\
	      	      	\bigdifftheta(\extensionone) &\coloneqq
	      	      	- V^{\smalldiff_\theta \compofsymseq
	      	      	\coperad};
			\\
			\bigdifftheta(\extensiontwo) &\coloneqq
	      	      	- V^{\smalldiff_\theta \compofsymseq
	      	      	\idfunctor_\specialoperad \compofsymseq
	      	      	\idfunctor_\coperad + (\idfunctor \compofsymseq' \smalldiff_\theta) \compofsymseq \idfunctor};
			\\
	      	      	\bigdiffcop(\extensionone) &\coloneqq
	      	      	-V^{\smalldiff_{\sinv\overline{\coperad}}
	      	      	\compofsymseq
	      	      	\id_\coperad
	      	      	+ \id_\specialoperad  \compofsymseq' d_\coperad};
	      	      	\\
			\bigdiffcop(\extensiontwo) &\coloneqq
	      	      	-V^{\smalldiff_{\sinv\overline{\coperad}}
	      	      	\compofsymseq
	      	      	\id_\specialoperad
			\compofsymseq
	      	      	\id_\coperad
			+ (\id_\specialoperad  \compofsymseq'\smalldiff_{\sinv\overline{\coperad}})
			\compofsymseq
	      	      	\id_\coperad
	      	      	+ \id_{\specialoperad \compofsymseq  \specialoperad}  \compofsymseq' d_\coperad};
			\\
	      	      	\bigdiffv(\extensionone) &\coloneqq
	      	      	{\sha(\id V, d_V)}^{\specialoperad
	      	      	\compofsymseq \coperad}
			\\
			\bigdiffv(\extensiontwo) &\coloneqq
	      	      	{\sha(\id V, d_V)}^{\specialoperad
	      	      	\compofsymseq
			\specialoperad
	      	      	\compofsymseq \coperad}.
	      	      \end{align*}
	      	      Note that the resultant of the derivation
	      	      $\smalldiff_{\sinv\coperad}$
	      	      actually appears in the coderivation $\bigdiffcop$.
\end{definition}

\begin{definition}
Let $\bigdiffin$ be the degree $-1$ endomorphism of the restriction-extension diagram
that is the sum of the previous maps except $\bigdiffex$
	      $$
	      \bigdiffin \coloneqq  \bigdiffw + \bigdiffcop +
	      	\bigdiffv +\bigdiffa + \bigdifftheta.
	      $$
\end{definition}

\begin{lemma}
 The coderivation of $\cobaradj \cobarfunctor V$ is the restriction
 to $\freecog\specialoperad  (V^\coperad )$ of the map
	      \[
	      	\bigdiff \coloneqq \bigdiffex + \bigdiffin (\extensionone).
	      \]
\end{lemma}

\begin{proof}
This follows from the definitions of the functors
$\cobarfunctorfull$ and $\cobaradjfull$.
\end{proof}

\subsubsection{The map \texorpdfstring{$\thebighomotopy$}{H}}%
\label{section:definition_de_H_planaire}

The sequence $\operadcompositionone = \specialoperad\planar
\compofsymseq\planar \coperad\planar$ is made up of planar trees
decorated
by $\sinv\overline \coperad\planar$ and $\overline\coperad\planar$ so that
non top vertices are decorated by $\sinv\overline \coperad\planar$.
For such a tree $t$, let $\thehomotopy_t$ be the degree $1$ operation defined as
follows: it is zero on the trivial tree and
\begin{itemize}
	\item if the leftest top vertex is labelled by
	      $\overline{\coperad}\planar$, then $\thehomotopy(\operadcompositionone)$ is zero;
	\item otherwise, $\thehomotopy(\operadcompositionone)$ applies $\canonicaltwistinginverse$
	      to the leftest top vertex:
\end{itemize}
\begin{center}
	\begin{tikzpicture}
		\tikzstyle{vertex}=[draw, circle, thick, inner sep=0,
		minimum size=4]
		\tikzstyle{every path}=[thick]
		\node [vertex] (a) at (0,0) {};
		\node [vertex] (c) at (0,1) {};
		\node [vertex] (d) at ($(a)+(45:1cm)$) {};
		\node [left of=c, node distance=1cm] {$\sinv\overline \coperad\planar(3)$};
		\node [left of=a, node distance=1cm] {$\sinv\overline \coperad\planar(2)$};
		\node [right of=d, node distance=0.7cm] {$\overline \coperad\planar(1)$};
		\node at (3,0.8) {$\thehomotopy(\operadcompositionone)$};
		\draw (a) -- (c);
		\draw (a) -- (0,-1);
		\draw (c) -- ($(c)+(60:1cm)$);
		\draw (c) -- ($(c)+(120:1cm)$);
		\draw (c) -- (0,2);
		\draw (a) -- (d);
		\draw (d) -- ($(d)+(45:1cm)$);
		\draw[|->] (2.5,0.5) -- (3.5,0.5);
		\node [vertex] (a2) at (6,0) {};
		\node [vertex] (c2) at (6,1) {};
		\node [vertex] (d2) at ($(a2)+(45:1cm)$) {};
		\node [left of=c2, node distance=0.7cm] {$\overline \coperad\planar(3)$};
		\node [left of=a2, node distance=1cm] {$\sinv\overline \coperad\planar(2)$};
		\node [right of=d2, node distance=0.7cm] {$\overline \coperad\planar(1)$};
		\draw (a2) -- (c2);
		\draw (a2) -- (6,-1);
		\draw (c2) -- ($(c2)+(60:1cm)$);
		\draw (c2) -- ($(c2)+(120:1cm)$);
		\draw (c2) -- (6,2);
		\draw (a2) -- (d2);
		\draw (d2) -- ($(d2)+(45:1cm)$);
	\end{tikzpicture}
\end{center}
Let us define the map $\thehomotopy_t$ in a way more compatible with
the cotensor.

\begin{definition}
 Let $t$ be a planar non trivial tree as in \cref{notation-tree}.
 We define $\thehomotopy_t$ as the degree $1$ morphism
 of $\naturals$-modules in $\catss^{\integers}$ defined in arity $n$ as
\[
	\begin{tikzcd}
		t\left(\sinv \overline\coperad\planar\right)
		\otimes \coperad\planar^{\convolution l} (n)
		\arrow[d, equal]
		\\
		{t_1\left(\sinv \overline\coperad\planar\right) \otimes 
		\left(\monoidalunit^{\otimes a} \otimes \sinv \overline\coperad\planar(b)
		\otimes \monoidalunit^{\otimes c}\right) \otimes 
		\coperad\planar^{\convolution a+b+c}}(n)
		\arrow[d,"{\id{} \otimes \left(\id{}^{\otimes a} \otimes
		\canonicaltwistinginverse
		\otimes \id{}^{\otimes c}\right) \otimes \left(\tau^{\convolution a}
		\convolution \tau^{\convolution b} \convolution
		\idfunctor^{\convolution c}\right)}"]
		\\
		t_1\left(\sinv \overline\coperad\planar\right) \otimes 
		\left(\monoidalunit^{\otimes a} \otimes \overline\coperad\planar(b)
		\otimes \monoidalunit^{\otimes c}\right) \otimes 
		\left(\monoidalunit^{\otimes a} \otimes
		\monoidalunit^{\otimes b} \otimes 
		\coperad\planar^{\convolution c}(n-a-b)\right)
		\arrow[d, equal]
		\\
		t_1\left(\sinv \overline\coperad\planar\right) \otimes 
		\left(\monoidalunit^{\otimes a} \otimes
		\overline\coperad\planar(b) \otimes 
		\coperad\planar^{\convolution c}(n-a-b)\right)
		\arrow[d, hook]
		\\
		t_1\left(\sinv \overline\coperad\planar\right) \otimes 
		\coperad\planar^{\convolution a+ 1 + c}(n).
	\end{tikzcd}
\]
The map $\thehomotopy_t$ is defined to be zero if $t$ is the trivial tree.
\end{definition}

\begin{definition}
 Let $\thehomotopy$ be the degree $1$ endomorphism of the operad-cooperad diagram 
defined as follows. 
\begin{itemize}
 \item On the one hand, $\operadcompositionone$ may be decomposed as the sum
 $$
\operadcompositionone =  \specialoperad\planar \compofplanarseq \specialoperad\planar
 \compofplanarseq \coperad\planar
 =
 \bigoplus_{t} t\left(\sinv \overline\coperad\planar\right) \compofplanarseq \coperad\planar
 $$
 where the sum is taken over the equivalence classes of planar trees $t$. Then,
 $\thehomotopy(\operadcompositionone)$ is the sum of the maps $h_t$
 $$
\thehomotopy(\operadcompositionone) = \bigoplus_t h_t.
 $$
 \item On the other hand, $\operadcompositiontwo$ may be decomposed as the sum
 $$
 \specialoperad\planar \compofplanarseq \specialoperad\planar
 \compofplanarseq \coperad\planar
 =
 \bigoplus_{t' \subset t} t\left(\sinv \overline\coperad\planar\right) \compofplanarseq \coperad\planar
 $$
 where the sum is taken pairs $(t', t)$ of a planar tree $t$ and a subtree $t'$ that contains the root edge.
 For such a pair, let us denote $t'_1$ the intersection of $t'$ and $t_1$ in $t$, that is the subtree of $t$
 whose edges are those that belong to both $t'$ and $t_1$.
 Then, $\thehomotopy(\operadcompositiontwo)$ is the map whose restriction to the summand $t' \subset t$
 is the composition
 $$
 t\left(\sinv \overline\coperad\planar\right) \compofplanarseq \coperad\planar \xrightarrow{h_t}
  t_1\left(\sinv \overline\coperad\planar\right) \compofplanarseq \coperad\planar
  \hookrightarrow 
   \specialoperad\planar \compofplanarseq \specialoperad\planar
 \compofplanarseq \coperad\planar
 $$
 whose second part is the inclusion into the $t_1' \subset t_1$ summand.
\end{itemize}
\end{definition}

\begin{definition}
Let $\thebighomotopy$ be the degree $1$ endomorphism
of the extension diagram defined by the formula
$$
\thebighomotopy \coloneqq - V^\thehomotopy.
$$
\end{definition}

\begin{proposition}
 The map $\thebighomotopy$ extends uniquely to the whole restriction-extension diagram
 $\thediagramre$.
\end{proposition}

\begin{proof}
 This is equivalent to the fact that $-\thebighomotopy$ extends
 to the restriction-extension diagram, which is equivalent to the fact that
 $-\thebighomotopy (\extensionone)$ restrict to the subobject
$\restrictionone$ and the fact that
$-\thebighomotopy (\extensiontwo)$ restrict to the subobject $\restrictiontwo$.

On the one hand, one can check that $-\thebighomotopy (\extensionone)$
restricts to $\restrictionone$ into the degree $1$ endomorphism of
$\restrictionone$ whose projection on
$[ t(\sinv\overline\coperad), \power{(V^\coperad)}{\otimes l} ]$
is given by the composition
\[
	\begin{tikzcd}
		{\left(V^\coperad\right)}^\specialoperad  \arrow[d, two heads]
		\\
		{\left[t_1\left(\sinv\overline{\coperad}\planar\right),
		{\left(V^\coperad\right)}^{\otimes a+1+c}\right]}
		\arrow[d,"{\left[\id{} , \fullpower{V^\iota}{\otimes a}
		\otimes V^\canonicaltwistinginverse
		\otimes \id{}^{\otimes c}\right]}"]
		\\
		{\left[t_1\left(\sinv\overline{\coperad}\planar\right),
		V^{\otimes a}\otimes
		V^{\sinv \overline{\coperad}}
		\otimes \fullpower{V^\coperad}{\otimes c} \right]}
		\arrow[d, two heads]
		\\
		{\left[t_1\left(\sinv\overline{\coperad}\planar\right),
		V^{\otimes a}\otimes
		\left[\sinv \overline{\coperad}\planar(b),V^{\otimes b}\right]
		\otimes \fullpower{V^\coperad}{\otimes c} \right]}
		\arrow[d, hook]
		\\
		{\left[t_1\left(\sinv\overline{\coperad}\planar\right)
		\otimes \left(\monoidalunit^{\otimes a} \otimes
		{\sinv \overline{\coperad}\planar}(b) \otimes
		\monoidalunit^{\otimes c}\right) ,
		V^{\otimes a+b} \otimes \fullpower{V^\coperad}{\otimes c}\right]}
		\arrow[d,equal]
		\\
		{\left[t\left(\sinv\overline{\coperad}\planar\right),
		V^{\otimes a+b} \otimes \fullpower{V^\coperad}{\otimes c}\right]}
		\arrow[d, hook]
		\\
		{\left[t\left(\sinv\overline{\coperad}\planar\right),
		\fullpower{V^\coperad}{\otimes l} \right]}.
	\end{tikzcd}
\]

On the other hand, one can check that $-\thebighomotopy (\extensiontwo)$
restricts to $\restrictiontwo$ into the degree $1$ endomorphism of
$\restrictiontwo$ whose projection on $[ t(\sinv\overline{\coperad}\planar),
\power{(\power{(V^\coperad)}\specialoperad)}{\otimes l} ]$
is the sum of the map
\[
	- \left[\id{}, \sum_{k<a+b} \projto{V}^{\otimes k} \otimes 
	\thebighomotopy (\restrictionone)
	\otimes \id{}^{\otimes l- k-1} \right],
\]
together with the same composite map, mutatis mutandis, as the one that
defines $\thebighomotopy (\restrictionone)$ just above in this proof.
More precisely, to build this second component one only needs to
change in the composition
\begin{itemize}
	\item $V^\coperad$ by $\restrictionone$;
	\item $V^\iota$ by $\fullpower{V^\iota}\unitspecialoperad$;
	\item $V^\canonicaltwistinginverse$ by
	      $V^\canonicaltwistinginverse \circ \fullpower{V^\coperad}
	      \unitspecialoperad$.
\end{itemize}
\end{proof}

\subsubsection{The garbage map
\texorpdfstring{$\bigmapg$}{the map G}}

Although the degree $-1$ endomorphism of the extension
diagram $\thediagramex$ 
may not be extended to the diagram
$\thediagramre$,
the boundary map
\[
	\bigdiffex \circ \thebighomotopy +
	\thebighomotopy \circ \bigdiffex
\]
does. We actually, show that such a map is the sum
$\id - \projto V + \bigmapg$ where $\bigmapg$ is a endomorphism of
the extension diagram
that extends to the whole restriction-extension diagram.

\begin{definition}
 Let $\mapg$ be the endomorphism of the operad-composition
 diagram defined by the formula
\[
	\mapg = \smalldiffex \circ \thehomotopy +
\thehomotopy\circ\smalldiffex - \idfunctor + \projto {\monoidalunit} .
\]
Moreover, let $\bigmapg$ be the endomorphism of the extension diagram
defined by the formula
$$
\bigmapg \coloneqq V^\mapg  .
$$
We have in particular
\begin{align*}
 \bigmapg 
 &= V^{\smalldiffex \circ \thehomotopy +
\thehomotopy\circ\smalldiffex - \idfunctor + \projto {\monoidalunit}}
 \\
 &= V^{\smalldiffex \circ \thehomotopy +
\thehomotopy\circ\smalldiffex} - \idfunctor + \projto {V}
\\
&= - V^{\thehomotopy} \circ V^{\smalldiffex} - V^{\smalldiffex} \circ V^{\thehomotopy}
- \idfunctor + \projto {V}
\\
&= \bigdiffex \circ \thebighomotopy +
	\thebighomotopy \circ \bigdiffex - \idfunctor + \projto {V} .
\end{align*}
\end{definition}

Let us unfold what is this map $\mapg$. This is the degree $0$ map
defined on a planar tree decorated by $\sinv\overline\coperad\planar$
and $\overline\coperad\planar$ as follows: it is zero on the trivial
tree and
\begin{itemize}
	\item if the leftest top vertex is decorated by
	      $\overline\coperad\planar$ then $\mapg$ is zero;
	\item otherwise, $\mapg$ applies
	      \[
	      	\sinv\overline w : \sinv\overline\coperad\planar
	      	\longrightarrow \sinv\left(\overline\coperad\planar
	      	\compofplanarseq
	      	\coperad\planar\right) \iso \left(\sinv\overline\coperad
	      	\planar\right)\compofplanarseq \coperad\planar
	      \]
	      to that leftest top vertex.
\end{itemize}
Let us describe $\mapg$ is a more precise way.

\begin{definition}
 Let $t$ be a non trivial planar tree as in \cref{notation-tree}.
 Let $\mapg_t$ be the degree $0$ map of
 $\naturals$-modules defined in arity $n$ by
 the composition
\[
	\begin{tikzcd}
		t\left(\sinv \overline\coperad\planar\right)
		\otimes \coperad\planar^{\convolution l} (n)
		\arrow[d, equal]
		\\
		{t_1\left(\sinv \overline\coperad\planar\right) \otimes 
		\left(\monoidalunit^{\otimes a}\otimes\sinv \overline\coperad\planar(b)
		\otimes \monoidalunit^{\otimes c}\right) \otimes 
		\coperad^{\convolution a+b+c}}(n)
		\arrow[d,"{\id{} \otimes \left(\id{}^{\otimes a} \otimes
		\sinv\overline w
		\otimes \idfunctor^{\otimes c}\right) \otimes
		\left(\tau^{\convolution a+b} \convolution
		\idfunctor^{\convolution c}\right)}"]
		\\
		\!\!\!\!\!
		\displaystyle\bigoplus_{k\in\naturals}
		t_1\left(\sinv \overline\coperad\planar\right) \otimes 
		\left(\monoidalunit^{\otimes a} \otimes
		\sinv\overline\coperad\planar(k)\otimes
		\coperad\planar^{\convolution k}(b)
		\otimes \monoidalunit^{\otimes c}\right) \otimes 
		\left(\monoidalunit^{\otimes a+b} \otimes
		\coperad\planar^{\convolution c}(n-a-b)\right)
		\arrow[d, equal]
		\\
		\displaystyle\bigoplus_{k \in \naturals}
		t'_k\left(\sinv \overline\coperad\planar\right) \otimes 
		\left(\monoidalunit^{\otimes a} \otimes
		\coperad\planar^{\convolution k}(b) \otimes 
		\coperad\planar^{\convolution c}(n-a-b)\right)
		\arrow[d, hook]
		\\
		\displaystyle \bigoplus_{k\in\naturals}
		t'_k\left(\sinv \overline\coperad\planar\right) \otimes 
		\coperad\planar^{\convolution a+ k + c}(n),
	\end{tikzcd}
\]
where $t'_k$ is the planar tree obtained by replacing the leftest top
vertex of $t$ by a corolla vertex of valence $k$.
If $t$ is trivial, we define $\mapg_t$ to be the unique map from
$t\left(\sinv \overline\coperad\planar\right)\compofplanarseq \coperad\planar$
to $0$.
\end{definition}

\begin{lemma}
 The endomorphism $\mapg(\operadcompositionone)$ is the sum of the maps $\mapg_t$,
 that is the morphism
 whose restriction to $t\left(\sinv \overline\coperad\planar\right)
 \compofplanarseq \coperad\planar$ is
 $$
 t\left(\sinv \overline\coperad\planar\right)
 \compofplanarseq \coperad\planar \xrightarrow{\mapg_t}
 \bigoplus_{k\in\naturals}
		t'_k\left(\sinv \overline\coperad\planar\right)
		 \compofplanarseq \coperad\planar
\hookrightarrow \specialoperad\planar \compofplanarseq \coperad\planar.
 $$
Similarly,  $\mapg(\operadcompositiontwo)$ is the endomorphism whose restriction to the
summand indexed $t' \subset t$ is the composition of $\mapg_t$
 $$
 t\left(\sinv \overline\coperad\planar\right)
 \compofplanarseq \coperad\planar \xrightarrow{\mapg_t}
 \bigoplus_{k\in\naturals}
		t'_k\left(\sinv \overline\coperad\planar\right)
		 \compofplanarseq \coperad\planar		 
		 \hookrightarrow \specialoperad\planar \compofplanarseq
		 \specialoperad\planar \compofplanarseq \coperad\planar;
$$
with the inclusions into the summands indexed by $t''_k \subset t'_k$
where $t''_k$ is the subtree of $t$ whose edges are those which belong
to both $t_1$ and $t'$.
\end{lemma}

\begin{proof}
 This follows from the definition of $\smalldiffex$ and $\thehomotopy$ and
 from a straightforward computation.
\end{proof}

\begin{lemma}
 The map $\bigmapg$ extends uniquely to the restriction-extension diagram
 $\thediagramre$.
\end{lemma}

\begin{proof}
It suffices to show that $\bigmapg(\extensionone)$ restricts
to $\restrictionone$ and that $\bigmapg(\extensiontwo)$ restricts
to $\restrictiontwo$.

On the one hand, the restriction of $\bigmapg(\extensionone)$
to $\restrictionone$ is the degree $0$ endomorphism of
$\restrictionone$ whose projection on $[t(\sinv\overline\coperad),
\power{(V^\coperad)}{\otimes l}]$ is $0$ for a trivial tree $t$,
and is given by the following composition for a non-trivial tree $t$:
\[
	\begin{tikzcd}
		{\left(V^{\coperad\planar}\right)}^{\specialoperad\planar}  \arrow[d, two heads]
		\\
		\displaystyle \prod_{k\in\naturals}
		{\left[t'_k\left(\sinv\overline{\coperad\planar}\right),
		{\left(V^{\coperad\planar}\right)}^{\otimes a+k+c}\right]}
		\arrow[d,"{\left[\id{} , \fullpower{V^\iota}{\otimes a}
		\otimes V^{\laxmap^\convolution}
		\otimes \idfunctor^{\otimes c}\right]}"]
		\\
		\displaystyle \prod_{k\in\naturals}
		{\left[t'_k\left(\sinv\overline{\coperad\planar}\right),
		V^{\otimes a}\otimes
		V^{\power{\coperad\planar}{\convolution k}}
		\otimes \fullpower{V^\coperad\planar}{\otimes c} \right]}
		\arrow[d, two heads]
		\\
		\displaystyle \prod_{k\in\naturals}
		{\left[t'_k\left(\sinv\overline{\coperad\planar}\right),
		V^{\otimes a}\otimes
		\left[\coperad\planar^{\convolution k}(b),V^{\otimes b}\right]
		\otimes \fullpower{V^{\coperad\planar}}{\otimes c} \right]}
		\arrow[d, hook]
		\\
		{\left[
		\displaystyle \bigoplus_{k\in\naturals}
		t_1\left(\sinv\overline{\coperad\planar}\right)
		\otimes \left(\monoidalunit^{\otimes a} \otimes
		{\sinv \overline{\coperad\planar}}(k) \otimes
		\coperad\planar^{\convolution k}(b) \otimes
		\monoidalunit^{\otimes c}\right) ,
		V^{\otimes a+b} \otimes \fullpower{V^{\coperad\planar}}{\otimes c}\right]}
		\arrow[d,"\internalhom{\idfunctor \otimes \idfunctor^{\otimes a}
		\otimes\s^{1}\overline w \otimes\idfunctor^{\otimes c}} \idfunctor"]
		\\
		{\left[t_1\left(\sinv\overline{\coperad\planar}\right)\otimes
		\monoidalunit^{\otimes a}\otimes \sinv\overline\coperad\planar(b) \otimes
		\monoidalunit^{\otimes c},
		V^{\otimes a+b} \otimes \fullpower{V^{\coperad\planar}}{\otimes c}\right]}
		\arrow[d, hook]
		\\
		{\left[t\left(\sinv\overline{\coperad\planar}\right),
		\fullpower{V^{\coperad\planar}}{\otimes l} \right]}.
	\end{tikzcd}
\] 

On the other hand, the restriction of $-\thebighomotopy (\extensiontwo)$
to $\restrictiontwo$ is the degree $1$ endomorphism of
$\restrictiontwo$ whose projection on $[ t(\sinv\overline{\coperad}\planar),
\power{(\power{(V^{\coperad\planar})}{\specialoperad\planar})}{\otimes l} ]$
is the sum of the map
\[
	\left[\id{}, \sum_{k<a+b} \projto{V}^{\otimes k} \otimes 
	\bigmapg (\restrictionone)
	\otimes \id{}^{\otimes l- k-1} \right],
\]
together with the same composite map, mutatis mutandis, as the one that
defines $\bigmapg (\restrictionone)$ just above in this proof.
More precisely, to build this second component one only needs to
change in the composition
\begin{itemize}
	\item $V^\coperad$ by $\restrictionone$;
	\item $V^\iota$ by $\fullpower{V^\iota}\unitspecialoperad$;
	\item the map
	$$
	V^{\laxmap^\convolution}: \left(V^{\coperad\planar}\right)^{\otimes k} \to V^{\coperad\planar^{\convolution k}}
	$$
	is replaced by the map
	$$
	\left(\fullpower{V^{\coperad\planar}}{\specialoperad\planar}\right)^{\otimes k} 
	\xrightarrow{\left(\fullpower{V^{\coperad\planar}}{\unitspecialoperad}\right)^{\otimes k}} 
	\left(V^{\coperad\planar}\right)^{\otimes k} 
	\xrightarrow{V^{\laxmap^\convolution}}
	V^{\coperad\planar^{\convolution k}}.
	$$
\end{itemize}
\end{proof}

\subsubsection{The boundary map
\texorpdfstring{$\theboundary$}{the boundary of H}}

\begin{definition}
 Let us consider the following endomorphism of the restriction-extension
 diagram $\thediagramre$
\begin{align*}
 \partial_a\thebighomotopy &\coloneqq \bigdiffa \circ \thebighomotopy + \thebighomotopy \circ \bigdiffa;
 \\
 \partial_w\thebighomotopy &\coloneqq \bigdiffw \circ \thebighomotopy + \thebighomotopy \circ \bigdiffw;
 \\
  \partial_\coperad\thebighomotopy &\coloneqq \bigdiffcop \circ \thebighomotopy + \thebighomotopy \circ \bigdiffcop;
  \\
  \partial_\theta\thebighomotopy &\coloneqq \bigdifftheta \circ \thebighomotopy + \thebighomotopy \circ \bigdifftheta;
  \\
   \partial_V\thebighomotopy &\coloneqq \bigdiffv \circ \thebighomotopy + \thebighomotopy \circ \bigdiffv;
   \\
    \partial_{\mathrm{in}}\thebighomotopy &\coloneqq \bigdiffin \circ \thebighomotopy + \thebighomotopy \circ \bigdiffin
    =\partial_a\thebighomotopy + \partial_w\thebighomotopy + \partial_\coperad\thebighomotopy
    + \partial_\theta\thebighomotopy + \partial_V\thebighomotopy.
\end{align*}
\end{definition}

\begin{definition}
Let $\theboundary$ be the endomorphism of the restriction-extension diagram
$\thediagramre$ given
by
\[
	\theboundary \coloneqq \partial_{\mathrm{in}}\thebighomotopy +
	\bigmapg(-) + \idfunctor.
\]
\end{definition}

Using the sign rules
~[\ref{thm:sign}], one has $\partial_V \thebighomotopy = 0$,
which implies the following equality 
\[
	\theboundary =
	\partial_a\thebighomotopy +
	\partial_w\thebighomotopy +
	\partial_\coperad \thebighomotopy +
	\partial_\theta\thebighomotopy + \bigmapg +
	\idfunctor.
\]

Besides, the subobject $\overline \thediagramre \hookrightarrow \thediagramre$
is stable through the maps $\bigdiffa, \bigdiffw, \bigdiffcop, \bigdifftheta, \bigdiffv, \bigmapg,
\thebighomotopy$ and hence through the maps $\bigdiffin, \theboundary$.

Moreover, on the kernel part of the extension diagram $\overline \thediagramex \hookrightarrow \thediagramex$
and thus on $\overline W$, one has the following equality
$$
\bigdiff \circ \thebighomotopy + \thebighomotopy \circ \bigdiff
= \theboundary .
$$

\begin{lemma}\label{thm-isoimpliesiso}
 Let us suppose that the morphism $\theboundary(W)$ is an automorphism.
 Then, its resrtiction to $\overline W$ is also an automorphism.
 The same result holds if we replace $W$ by $\extensionone, \extensiontwo, \restrictionone$
 or $\restrictiontwo$.
\end{lemma}

\begin{proof}
 Since $\id_V \circ \specialquotient = \specialquotient \circ \theboundary(W)$
 is an isomorphism, it induces together with the identity of $V$
 an automorphism of the cospan diagram
 $$
 W \xrightarrow{\specialquotient} V \leftarrow 0.
 $$
 This induces an automorphism of the pullback of this diagram and this map
 is precisely which is the restriction
 of $\theboundary(W)$ to $\overline W$.
\end{proof}

\subsubsection{The map \texorpdfstring{$\thebighomotopy$}{H}
is a contracting homotopy}

\begin{proposition}
\label{thm:B-est-un-automorphisme-general}
 The map $\theboundary  : {\thediagramre} \to {\thediagramre}$ is an automorphism.
\end{proposition}

\begin{proof}
 The fact that it is an isomorphism is a consequence of
 \cref{thm:B_est_un_automorphisme_E}, 
 \cref{B_est_un_automorphisme_R1} and \cref{thm:B_est_un_automorphism_de_R2}.
 The fact that its restriction to $\overline{\thediagramre}$ is also an automorphism
 is a consequence of \cref{thm-isoimpliesiso}.
\end{proof}

\begin{lemma}%
\label{thm:B_est_un_automorphisme_E}
The map $\theboundary(\extensionone)$ and $\theboundary(\extensiontwo)$
are automorphisms.
\end{lemma}

\begin{proof}
 Let us prove the case of $\extensionone$.
 We can cofilter $\extensionone$ using the radical filtration on
$\specialoperad$
\[
	{\extensionone} \isonat \limn \left(
	V^{{(\filtration n \specialoperad) \compofsymseq \coperad}}\right).
\]
All the maps
$\thebighomotopy$, $\bigdiffw$, $\bigdiffa$, $\bigdifftheta$, $\bigdiffcop$ and
$\bigmapg$ are compatible with this cofiltration.
Then one has for every natural $n$
\[
\gr n \thebighomotopy = \gr n \bigmapg = 0\Longrightarrow
	\gr n \theboundary = \idfunctor.
\]
The claimed result then follows by induction.

In the case of $\extensiontwo$, one can filter
$\specialoperad \compofsymseq \specialoperad$
in a similar fashion as $\specialoperad$, that is using the coradical
filtration of $\coperad$. Then the proof follows from the same arguments.
\end{proof}

\begin{lemma}%
\label{B_est_un_automorphisme_R1}
The map $\theboundary(\restrictionone)$ is an automorphism of $\restrictionone$.
\end{lemma}

\begin{proof}
We can cofilter $\restrictionone$ using the radical filtration on
$\specialoperad$
\[
	\restrictionone \isonat \limn
	\left(
	\fullpower{V^{\coperad}}{{\filtration n \specialoperad}}
	\right).
\]
All the maps
$\thebighomotopy$, $\bigdiffw$, $\bigdiffa$, $\bigdifftheta$, $\bigdiffcop$ and
$\bigmapg$ are compatible with this cofiltration.
Then one has for every natural $n$
\[
\gr n \thebighomotopy = \gr n \bigmapg = 0\Longrightarrow
	\gr n \theboundary = \idfunctor.
\]
The claimed result follows by induction.
\end{proof}

\begin{lemma}%
\label{thm:B_est_un_automorphism_de_R2}
The map $ \theboundary(\restrictiontwo)$ is an automorphism
of $\restrictiontwo$.
\end{lemma}

\begin{proof}
As before we shall start by cofiltering $\restrictiontwo$ using the radical filtration
on $\specialoperad$:
\[
	\restrictiontwo \isonat 
	\limn{\left( \restrictionone^{{\filtration n \specialoperad}}\right)}.
\]
Such a cofiltration is preserved by the components of $\bigdiffin$, by $\thebighomotopy$
and $\bigmapg$. One can notice that
$$
\gr n \restrictiontwo = \restrictionone^{{\grfilt n \specialoperad}}, \quad \forall n \in \naturals.
$$
In particular, $\gr 0 \restrictiontwo = \restrictionone$ and the map
$\gr 0 \theboundary$
is the isomorphism $\theboundary(\restrictionone)$ [\cref{B_est_un_automorphisme_R1}].

Let $n >0$ be a natural integer.
We shall further filter $\grfilt n \specialoperad$ two times.
\begin{itemize}
 \item  First, we filter it by the opposite of the
number of vertices of the trees. The induced cofiltration of $\gr n \restrictiontwo$
is also preserved by the map $\gr n\thebighomotopy$, the components of $\gr n \bigdiffin$
and by $\gr n\bigmapg$.
One then has
\[
	\gr {-v} \gr n \overline\restrictiontwo =
	\restrictionone^{\grfilt n \treemodule_v\left(\sinv \overline{\coperad}\right)},
	\quad 1 \leq v\leq n.
\]
We fix a natural integer $0 \leq v\leq n$.
This second filtration on $\specialoperad$ has the added benefit that
\[
	\grfilt {-v} \grfilt n \smalldiffw = 0.
\]

\item Then, we filter by the d-filtration, that is the filtration induced by the quasi-planar structure
on $\coperad$. Again,
the induced cofiltration of $\gr {-v} \gr n\restrictiontwo$
is also preserved by all the map $\gr n\thebighomotopy$, the components of $\gr n \bigdiffin$
and by $\gr n\bigmapg$. One has
\[
	\gr e \gr {-v} \gr n \restrictiontwo =
	\restrictionone^{\grfilt e \grfilt n \treemodule_v\left(\sinv \overline{\coperad}\right)},
	\quad e \geq 0.
\]
We fix a natural integer $e\geq 0$.
This third filtration on $\specialoperad$ has the added benefit that
the map
\[
	\grfilt e \grfilt {-v} \grfilt n d_{s^{-1}\coperad}
\]
is a planar map since it decomposes as
$$
\begin{tikzcd}
\grfilt e \grfilt n \treemodule_v\left(\sinv \overline{\coperad}\right)
\ar[rr, "{\grfilt e \grfilt {-v} \grfilt n d_{s^{-1}\coperad}}"] \ar[d, equal]
&& \grfilt e \grfilt n \treemodule_v\left(\sinv \overline{\coperad}\right)
\ar[d, equal]
\\
\grfilt e \grfilt n \pltosym{\treemodule_{v,\plan}\left(\sinv \overline{\coperad}\planar\right)}
\ar[rr,"{d_{\sinv\overline{\coperad}\plan} \otimes \id{\catofpermutations}}"]
&&\grfilt e \grfilt n \pltosym{\treemodule_{v,\planar}\left(\sinv \overline{\coperad}\planar\right)}
\end{tikzcd}
$$
The map $d_{\sinv\overline{\coperad}\planar}$
is the sum of degree $-1$ endomorphisms
$$
d_t : \grfilt e \grfilt n t \left(\sinv \overline{\coperad}\planar\right)
\to \grfilt e \grfilt n t \left(\sinv \overline{\coperad}\planar\right),
$$
indexed by planar trees with $v$ vertices.
\end{itemize}
The object $\gr e \gr {-v} \gr n \restrictiontwo$ is a product indexed by
planar trees with $v$ vertices
$$
\gr e\gr {-v} \gr n \restrictiontwo \isonat \prod_{t} {\restrictiontwo}_t,
$$
where
$$
{\restrictiontwo}_t = \internalhom{\grfilt e\grfilt n t\left(\sinv\overline\coperad\planar\right)(l)}
{\fullpower{\restrictionone}{\otimes l}}.
$$
Remember that $l = a +b+c$ is the number of leaves of the tree $t$.
In that perspective, the maps $\gr e \gr {-v} \gr n\thebighomotopy(\restrictiontwo)$,
$\gr e \gr {-v} \gr n \bigmapg(\restrictiontwo)$ and $\gr e \gr {-v} \gr n\bigdiffin(\restrictiontwo)$
can be described as the product over the planar trees $t$ with $v$ vertices of the
respective endomorphisms $\thebighomotopy_t$, $\bigmapg_t$ and ${\bigdiffin}_t$
given by
of ${\restrictiontwo}_t$
\begin{align*}
 	\thebighomotopy_t 
	&=  \left[\idfunctor, \thebighomotopy_{a+b,c}\right];
	\\
	\bigmapg_t 
	&= \left[\idfunctor, \bigmapg_{a+b,c}\right];
	\\
	{\bigdiffin}_t
	&= - \left[d_t, \idfunctor^{\otimes a+b+c}
	\right]
	+
	\left[\idfunctor, {\bigdiffin}_{a+b,c}\right].
\end{align*}
where the endomorphisms $\thebighomotopy_{a+b,c}$, $\bigmapg_{a+b,c}$
and ${\bigdiffin}_{a+b,c}$ of $\restrictionone^{\otimes (a+b+c)}$
are given by
\begin{align*}
 	\thebighomotopy_{a+b,c}
	&= \sum_{p+1+q=a+b} \projto{V}^{\otimes p}
	\otimes \thebighomotopy(\restrictionone)
	\otimes \idfunctor^{\otimes q+c};
	\\
	\bigmapg_{a+b,c}
	&= \sum_{p+1+q=a+b} \projto{V}^{\otimes p}
	\otimes \bigmapg(\restrictionone)
	\otimes \idfunctor^{\otimes q+c};
	\\
	{\bigdiffin}_{a+b,c}
	&=
	\sum_{p+1+q=l}  \idfunctor^{\otimes p}
	\otimes \bigdiffin(\restrictionone)
	\otimes \idfunctor^{\otimes q}.
\end{align*}
Then, the map
${\gr e \gr {-v} \gr n\theboundary(\restrictiontwo)}$ is equal to
the product over planar trees with $v$ nodes of the sum
\[
	\theboundary_t = \idfunctor + \bigmapg_t 
	+ {\bigdiffin}_t \circ \thebighomotopy_t  
	+ \thebighomotopy_t  \circ {\bigdiffin}_t.
\]
One can notice that for a planar tree $t$ with no leaf, we have
$\theboundary_t  = \idfunctor$.
Besides, for any planar tree $t$, since
$$
-\thebighomotopy_t  \circ \left[d_t, \idfunctor^{\otimes a+b+c}\right]
- \left[d_t, \idfunctor^{\otimes a+b+c}\right] \circ -\thebighomotopy_t = 0 ,
$$
then the map $\theboundary_t$
has the form
$$
\theboundary_t = \left[\idfunctor, \theboundary_{a+b,c}\right]
$$
where 
$$
\theboundary_{a+b,c} = \idfunctor + \bigmapg_{a+b,c}
	+ {\bigdiffin}_{a+b,c} \circ \thebighomotopy_{a+b,c}
	+ \thebighomotopy_{a+b,c}  \circ {\bigdiffin}_{a+b,c}.
$$

Let us show by indunction on the number of leaves $l$ of any planar tree $t$
that the map $\theboundary_{a+b,c}$ is an automorphism of
the tensor product $\restrictionone^{\otimes l}$. This result is clear if $l=0$. For
$l=1$, it follows from \cref{B_est_un_automorphisme_R1}. So, let us consider a planar tree
with $l > 1$ leaves. Then, necessarily, $b \geq 1$. The tensor product $\restrictionone^{\otimes l}$
may be 
filtered as
$$
\filtration k \left(\restrictionone^{\otimes l}\right) =  {\overline \restrictionone}^{\otimes a+b - k}
\otimes  \restrictionone^{c + k},
\quad 0 \leq k \leq a+b .
$$
This filtration is preserved by the maps $\bigmapg_{a+b,c}$, ${\bigdiffin}_{a+b,c}$
and $\thebighomotopy_{a+b,c}$ and so by $\theboundary_{a+b,c}$. We have
\begin{align*}
	\grfilt 0 \left(\restrictionone^{\otimes l}\right)& =
 	{\overline \restrictionone}^{\otimes a+b} \otimes  \restrictionone^{c} 
	\\
	\grfilt k \left(\restrictionone^{\otimes l}\right)
	&=
	{\overline \restrictionone}^{\otimes a+b - k} \otimes V \otimes
	\restrictionone^{\otimes c+k-1}
	\text{ for } 0<k \leq a+b ,
\end{align*}
Let $0 \leq k < a+b$. In this case, we can notice successively that
\begin{align*}
 \grfilt k \bigmapg_{a+b,c} &= \bigmapg (\restrictionone) \otimes \id{}^{\otimes l-1};
 \\
  \grfilt k \thebighomotopy_{a+b,c} &= \thebighomotopy(\restrictionone) \otimes \id{}^{\otimes l-1};
  \\
  \grfilt k (\partial_{\mathrm{in}}\thebighomotopy)_{a+b,c} &= 
	 \partial_{\mathrm{in}}\thebighomotopy(\restrictionone)\otimes \id{}^{\otimes l-1};
\\
\grfilt k \theboundary_{a+b,c} &= \theboundary(\restrictionone) \otimes \id{}^{\otimes l-1}.
\end{align*}
Hence, by \cref{B_est_un_automorphisme_R1}, $\grfilt k \theboundary_{a+b,c}$ is invertible.
Now, let $k=a+b$. In this case,
$$
\grfilt k \left(\restrictionone^{\otimes l}\right)
	=
	V \otimes
	\restrictionone^{\otimes l-1}
$$
We have two subcases.
\begin{itemize}
 \item If $k=a+b =1$, then $\grfilt k \bigmapg_{a+b,c} = \grfilt k \thebighomotopy_{a+b,c}=0$;
 so $\grfilt k \theboundary_{a+b,c}= \id{}$ is invertible.
 \item If $k > 1$, then we can notice successively that
\begin{align*}
 \grfilt k \bigmapg_{a+b,c} &= \id{} \otimes \bigmapg_{a+b-1,c};
 \\
  \grfilt k \thebighomotopy_{a+b,c} &= \id \otimes \thebighomotopy_{a+b-1,c};
  \\
  \grfilt k (\partial_{\mathrm{in}}\thebighomotopy)_{a+b,c} &= \id{} \otimes
	(\partial_{\mathrm{in}}\thebighomotopy)_{a+b-1,c};
\\
\grfilt k \theboundary_{a+b,c} &= \id{} \otimes \theboundary_{a+b-1,c}.
\end{align*}
By the induction hypothesis $\grfilt k \theboundary_{a+b,c}$ is invertible.
\end{itemize}

We have shown that for any planar tree, the map
map $\grfilt k \theboundary_t$ is an automorphism.
Thus one concludes
with
\begin{align*}
	& \quad \forall k, \enskip
	\grfilt k \theboundary_{a+b,c}
	\text{ is an isomorphism}
	\\
	\Longrightarrow & \quad \forall t,\enskip
	\theboundary_{a+b,c}
	\text{ is an isomorphism}
	\\
	\Longrightarrow & \quad \forall t,\enskip
	\theboundary_t
	\text{ is an isomorphism}
	\\
	\Longrightarrow & \quad \forall e, \enskip
	\gr e \gr {-v} \gr n \theboundary(\restrictiontwo)
	\text{ is an isomorphism}
	\\
	\Longrightarrow & \quad \forall v, \enskip
	\gr {-v} \gr n \theboundary(\restrictiontwo)
	\text{ is an isomorphism}
	\\
	\Longrightarrow & \quad \forall n, \enskip
	\gr n \theboundary(\restrictiontwo)
	\text{ is an isomorphism}
	\\
	\Longrightarrow & \quad 
	\theboundary(\restrictiontwo)
	\text{ is an isomorphism.}
\end{align*}
\end{proof}

\subsubsection{Conclusion}

In the quasi-planar context, we have introduced a degree $1$ endomorphism
$\thebighomotopy(W)$
of $W$ that restricts
to $\overline W$. Moreover, on this subobject, the boundary of $\thebighomotopy(W)$
$$
\bigdiff \circ \thebighomotopy + \thebighomotopy \circ \bigdiff
= \theboundary
$$
is an isomorphism [\cref{thm:B-est-un-automorphisme-general}
and \cref{thm-isoimpliesiso}]. Hence the zero map is homotopic to an automorphism
and so $\overline W$ is contractible.
Finally, the map $V \to W$ is a quasi-isomorphism.

\subsection{The characteristic zero case}%
\label{sec:charac_zero_case}

In this section, we suppose that the category $\catss$ is $\rationals$-linear.
Our strategy to prove that the unit morphism $V \to \cobaradj\cobarfunctor V$
is a quasi-isomorphism is to replace $\coperad$ by an equivalent quasi-planar
curved conilpotent coperad.

\subsubsection{The quasi-planar resolution}

Remember that the underlying symmetric sequence of the Barratt-Eccles operad $\eoperad$
is so that for any two natural integers $n, d \geq 0$
$$
\eoperad(n)_d = \bigoplus_{(\sigma_0, \ldots,\sigma_{d}) } \monoidalunit
$$
where the sum is taken over sequence $(\sigma_0, \ldots,\sigma_{d})$
of $d+1$ permutations in $\catofpermutations_n$ so that $\sigma_i \neq \sigma_{i+1}$ for $0 \geq i < d$.

\begin{definition}
 Let $\eoperad\planar$ be the graded planar sequence of $\eoperad$ so that
 for any two natural integers $n, d \geq 0$
 $$
 \eoperad\planar(n)_d = \bigoplus_{(\sigma_0, \ldots,\sigma_{d}),\  \sigma_0= \id } \monoidalunit
 $$
is the subobject of $\eoperad(n)_d$ that corresponds the sequences $(\sigma_0, \ldots,\sigma_{d})$
so that $\sigma_0 = \id$. One has a canonical inclusion of graded planar sequences
$$
\eoperad\planar \hookrightarrow \eoperad
$$
that gives an isomorphism of graded symmetric sequences
$$
\pltosym{\eoperad\planar} \simeq \eoperad.
$$
\end{definition}

\begin{lemma}
For any dg-symmetric sequence $Y$, one has a canonical isomorphism
of graded symmetric sequence
$$
Y \diagtensor \eoperad \to \pltosym{(\forget_{\catofpermutations} (Y) \diagtensor\planar \eoperad\planar)}
$$
that commutes with the map $d_Y \otimes \id$.
\end{lemma}

\begin{proof}
Such an isomorphism is the composition
$$
Y \diagtensor \eoperad \to Y \diagtensor (\pltosym{\eoperad\planar})
\to \pltosym{(\forget_{\catofpermutations} (Y) \diagtensor\planar \eoperad\planar)}
$$
where the last map is given for any natural
integer $n$,by
\[
	Y(n) \otimes  \eoperad\planar(n) \otimes \catofpermutations_n
	= \bigoplus_{\sigma \in \catofpermutations_n}
	Y(n) \otimes  \eoperad\planar(n) \otimes \{\sigma\}
	\xrightarrow{\sigma^{-1} \otimes \id \otimes \id}
	\bigoplus_{\sigma \in \catofpermutations_n}
	Y(n) \otimes  \eoperad\planar(n) \otimes \{\sigma\}.
\]
One can also notice also that this last map commutes with $d_Y \otimes \id \otimes \id$.
\end{proof}

\begin{lemma}
For any dg-operad $\operad$, the curved cocomplete
cooperad $\barfunctor \left(\operad \diagtensor \eoperad\right)$
is quasi-planar.
\end{lemma}

\begin{proof}
On has a canonical isomorphism of graded symmetric sequences
$$
\operad \diagtensor \eoperad \simeq \pltosym{ (\forget_\catofpermutations (\operad) \diagtensor\planar \eoperad\planar)}
$$
that commutes with the map $d_\operad \diagtensor \id$.

A direct consequence of the such an isomorphsim
is that the underlying graded cooperad of
 $\coperad' = \barfunctor \left(\operad \diagtensor \eoperad\right)$
 arise from
a planar graded cooperad ~[\ref{thm:planar_trees_vs_trees}]
$$
\coperad'\planar= \planartreemodule
(\s^2 \monoidalunit \oplus \s \forget_\catofpermutations (\operad) \diagtensor\planar \eoperad\planar) .
$$
in the sense that we have a canonical isomorphism of graded conilpotent cooperads
$$
\coperad = \pltosym{\coperad'\planar}.
$$

Besides, one can filter $\eoperad\planar$ by the degree
 \[
 	\left(\filtrationd m \eoperad\planar(k)\right)_n \coloneqq 	
	\begin{cases}
 		\eoperad\planar (k)_n \text{ if }n\leq m ,\\
		0 \text{ otherwise} .
	\end{cases}
 \]
For any natural integer $n \geq 1$, this induces a filtration on
$$
\grfilt n \coperad'\planar = \treemodule_{n,\plan} 
(\s^2 \monoidalunit \oplus \s \forget_\catofpermutations (\operad) \diagtensor\planar \eoperad\planar).
$$
and then on 
$$
\grfilt n \coperad' = \pltosym{\grfilt n \coperad'\planar} 
= \treemodule_{n}  (\s^2 \monoidalunit \oplus \s \operad \diagtensor \eoperad).
$$
Moreover, for any natural integer $m \geq0$, the map
$\grfilt m \grfilt n d_{\coperad'}$ is planar as we have an equality
$$
\grfilt m \grfilt n d_{\coperad'} = \pltosym{\grfilt m \grfilt n d_{\coperad'\planar}}
$$
where $d_{\coperad'\planar}$ is the degree $-1$ coderivation of the
graded conilpotent planar cooperad $\coperad'\planar$ the projects the map
$$
\treemodule_{n,\plan} 
(\s^2 \monoidalunit \oplus \s \forget_\catofpermutations (\operad) \diagtensor\planar \eoperad\planar)
\twoheadrightarrow \s^2 \monoidalunit \oplus \s \forget_\catofpermutations (\operad) \diagtensor\planar
\eoperad\planar 
\xrightarrow{\s \eta_{\operad\planar} \diagtensor\planar \oplus d_\operad \diagtensor\planar \id_{\eoperad\planar}}
\s \forget_\catofpermutations (\operad) \diagtensor\planar \eoperad\planar.
$$
\end{proof}

\begin{lemma}
For any conilpotent curved cooperad $\coperad$
there exists a quasi-planar conilpotent curved cooperad $\coperad'$ and an acyclic cofibration
$$
\coperad \to \coperad'.
$$
One calls such a morphism a quasi-planar resolution of $\coperad$.
\end{lemma}

\begin{proof}
 Let us take
\[
\coperad'  \coloneqq \barfunctor
\left( \specialoperad \diagtensor \eoperad \right) .
\]
Since $\specialoperad$ is cofibrant, the canonical morphism
$\specialoperad \diagtensor \eoperad \to \specialoperad$
has a section $s$. We thus obtain a morphism from $\coperad$
to $\coperad'$ as follows
\[
	\coperad \to \barfunctor\baradj \coperad
	= \barfunctor \specialoperad
	\xrightarrow{\barfunctor s} 
	\barfunctor
	\left( \specialoperad \diagtensor \eoperad \right) .
\]
This is an acyclic cofibration as a composition of two acyclic cofibrations.
\end{proof}

\subsubsection{Conclusion}

Let us consider a quasi-planar resolution
\[
\begin{tikzcd}
	\coperad \arrow[r, "s"] 
	& \coperad'
\end{tikzcd}
\]
of the conilpotent curved cooperad $\coperad$.
Let us denote $\specialoperad' \coloneqq \baradj \coperad'$
and $\sigma \coloneqq \baradj s$ and let $\alpha'$ be
the canonical twisting morphism from $\coperad'$
to $\specialoperad'$.

As an acyclic cofibration, the morphism $\sigma$ has a retraction
$\rho$. We know from paragraph \ref{subsubsection-retraction-decomposition}
that the unit map
$$
V \to \cobaradj \cobarfunctor V
$$
decomposes as
$$
\begin{tikzcd}
	V 
	\ar[d, equal]
	\\
	\forget_\sigma \forget_\rho (V)
	\ar[d]
	\\
	\forget_\sigma \cobaradj_{\alpha'} \cobarfunctor_{\alpha'} \forget_\rho (V)
	\ar[d]
	\\
	\forget_\sigma \cobaradj_{\alpha'} \forget^s  \freealg s \cobarfunctor_{\alpha'} \forget_\rho (V)
	\ar[d, "\simeq"]
	\\
	\forget_\sigma \freecog \sigma \cobaradj  \cobarfunctor \forget_\sigma \forget_\rho (V)
	\ar[d,  equal]
	\\
	\forget_\sigma \freecog \sigma \cobaradj  \cobarfunctor (V)
	\ar[d]
	\\
	 \cobaradj  \cobarfunctor (V) .
\end{tikzcd}
$$
From the quasi-planar step and since the adjunction $\forget_\sigma \freecog \sigma$ is a Quillen equivalence,
we know that the maps
\begin{align*}
 &\forget_\sigma \forget_\rho (V) \to \forget_\sigma \cobaradj_{\alpha'} \cobarfunctor_{\alpha'} \forget_\rho (V);
 \\
 & \forget_\sigma \freecog \sigma \cobaradj  \cobarfunctor (V) \to  \cobaradj  \cobarfunctor (V);
\end{align*}
are quasi-isomorphisms. So proving that the unit map is a quasi isomorphism amounts to prove that the map
$$
\forget_\sigma \cobaradj_{\alpha'} \cobarfunctor_{\alpha'} \forget_\rho (V)
\to \forget_\sigma \cobaradj_{\alpha'} \forget^s  \freealg s \cobarfunctor_{\alpha'} \forget_\rho (V)
$$
is one. Since the composite functor $\forget_\sigma \cobaradj_{\alpha'}$ reflects weak equivalences,
this is equivalent to the fact that the map
$$
\cobarfunctor_{\alpha'} \forget_\rho (V)
\to \forget^s  \freealg s \cobarfunctor_{\alpha'} \forget_\rho (V)
$$
is a weak equivalence of complete $\coperad'$-algebras for the model structure transferred
from that of $\specialoperad'$-coalgebras. This is actually a consequence
of \cref{thm:pre-model-equivalence-algebras}.


\section{Homotopy theory of linear coalgebras}%
\label{sec:theorie_homotopique_des_cogebres_lineaires}

In this section we assume that $\specialoperad \coloneqq \baradj
\coperad$ and that $\canonicaltwisting$ is the canonical twisting
morphism. The results obtained in the previous section tell us that
the homotopy theory of $\specialoperad$-coalgebras is equivalent
to the homotopy theory of complete $\coperad$-algebras. We shall
now see that this homotopy theory can be explicitly described.

\begin{theorem}%
\label{bigthm:homotopy_theory_of_coalgebras}
When the category of complete $\coperad$-algebras is endowed with
its canonical model structure, the
three following full subcategories are equivalent:
\begin{itemize}
	\item the full subcategory of cofibrant objects;
	\item the full subcategory whose objects are images of
	      $\cobarfunctorfull$;
	\item the full subcategory whose objects are complete
	      $\coperad$-algebras
	      that are free as graded $\coperad$-algebras.
\end{itemize}
Moreover, given a morphism $f : \cobarfunctor V \to \cobarfunctor W$
between two cofibrant complete $\coperad$-algebras,
\begin{itemize}
	\item $f$ is a weak equivalence if and only if $\gr 0 f : V \to W$ is
	      a quasi-isomorphism;
	\item $f$ is a fibration if and only if $\gr 0 f : V\to W$ is a
	      degree-wise epimorphism;
	\item $f$ is a cofibration if and only if $\gr 0 f : V \to W$ is a
	      degree-wise monomorphism.
\end{itemize}
\end{theorem}

The remaining of this section will be dedicated to the proof of the
statements contained in this theorem.
The corresponding theorem, relating fibrant-cofibrant
$\coperad$-coalgebras with the image of the $\barfunctorfull$ functor
was proved by Loday \& Vallette in the augmented case%
~\cite[10.1.22]{doi:10.1007/978-3-642-30362-3} and proved in
\emph{Homotopy theory of unital algebras} in the general case%
~\cite[Proposition 33]{arXiv:1612.02254}

\subsection{The Rosetta stone}

\begin{theorem}[(Rosetta stone)]%
\label{thm:Rosetta_stone}
Given a chain complex $(V,d_V)$, the $\cobarfunctorfull$ functor
induces a bijection
\[
	\cobarfunctorfull : \modulispace {\catofcog\specialoperad}
	\left(V, d_V\right) \longrightarrow \modulispace {\setofder \coperad}
	\left(V^\coperad, d_V\right)
\]
between the moduli set of $\specialoperad$-coalgebra structure on
$(V, d_V)$ and the moduli set of $\coperad$\=/derivations on
$V^\coperad$ which restrict to $d_V$ on $V \subset V^\coperad$.
\end{theorem}

\begin{proof}
On one hand, any $\coperad$-derivation $d$ of $V^\coperad$ is determined
by its restriction $d \circ \canonicalinj : V \to V^\coperad$%
~[\ref{thm:algebre_courbee_libre}]. If moreover $d$ restricts to
$d_V$ on $V$, then such a derivation is in fact given by a degree $-1$
map $f :V \to V^{\overline\coperad} \subset V^\coperad$.
The map $d$ is a
$\coperad$-derivation if and only if the map $f$ satisfies the equation
\[
	\induceddiff {(f+\canonicalinj \circ d_V)}
	\circ (f + \canonicalinj \circ d_V) + V^\theta = 0~%
	[\ref{thm:algebre_courbee_libre}].
\]
This equation is equivalent to
\begin{equation}
	f \circ d_V =
	V^{d_\coperad} \circ f - {\sha(\id V, d_V)}^\coperad \circ f
	- \canonicalaction_{V^\coperad} \circ
	{\sha(\canonicalinj, f)}^\coperad \circ f - V^\theta.
	\tag{\dertag}
\end{equation}

On the other hand, since $\specialoperad$ is free as a graded operad,
a structure of $\specialoperad$-coalgebra on
$(V, d_V)$ amounts to the data of a map $g : V \to V^\specialoperad$
that satisfies the equation
\[
	g \circ d_V = {\sha(\id V, d_V)}^\specialoperad \circ g
	- V^{\smalldiff_\specialoperad} \circ g~%
	[\ref{def:coderivation}]
\]
which equivalent to
\begin{equation}
	g \circ d_V = {\sha(\id V, d_V)}^\specialoperad \circ g
	- V^{\smalldiff_w} \circ g - V^{\smalldiff_{\sinv\overline\coperad}}
	\circ g - V^{\smalldiff_\theta} \circ g.
	\tag{\cogtag}
\end{equation}
Since $\specialoperad$ is free as a graded operad, a morphism of
graded operads
\[
	\specialoperad = \treemodule \left(\sinv\overline{\coperad}\right)
	\to \coend V.
\]
is in fact given by a graded map $\sinv\overline\coperad \to
\coend V$. As a consequence any $\specialoperad$-coalgebra structure $g$
on $(V, d_V)$ is generated by the corestriction
\[
	V^{\attach\left(\sinv\overline\coperad\right)}
	\circ g : V \longrightarrow V^{\sinv\overline \coperad}
\]
Since $g$ satisfies (\cogtag), the degree $-1$ map
\[
	f \coloneqq
	\left(\degreeminusonemap \otimes V^{\overline\coperad}\right)
	\circ
	V^{\attach(\sinv\overline\coperad)} \circ g : V \longrightarrow
	V^{\overline \coperad}
\]
satisfies (\dertag). Conversely, if $f : V \to V^{\overline \coperad}$
satisfies (\dertag), then the extension of the map
\[
	\left(\degreeonemap \otimes V^{\overline\coperad}\right) \circ f : V
	\to V^{\sinv\overline\coperad}
\]
to a structure $g$ of graded $\specialoperad$-structure on $V$ satisfies
(\cogtag).
\end{proof}

The $\coperad$-algebras that are free as graded algebras coincide with
the fibrant-cofibrant objects in the model category of complete
$\coperad$-algebras. This is what we are going to show now.

\begin{lemma}%
\label{thm:factorisation_for_free=fibrant}
Let $\Lambda$ be a complete $\coperad$-algebra. Then for any $n \in
\naturals$, the morphism $\Lambda^{\grfilt n \coperad} \to \gr n
\Lambda$ induced by the structure map $a_\Lambda$ factorises through
\[
	\fullpower{\cofiltration 0 \Lambda}{\grfilt n\coperad} .
\]
\end{lemma}

\begin{proof}
For any $n \in \naturals$ the composite map
\[
	\sha \fullpower{\Lambda, \Lambda^{\overline{\coperad}}}{\coperad /
	\filtration n \coperad}
	\xrightarrow{\sha\fullpower{\idfunctor, a_\Lambda}{\coperad /
	\filtration n \coperad}} \Lambda^{\coperad / \filtration n \coperad}
	\to \ideal n  \Lambda
\]
factorises through $\Lambda^{\coperad / \filtration {n+1} \coperad}$
and so through $\ideal {n+1} \Lambda$. Subsequently, the composite map
\[
	\sha \fullpower{\Lambda, \Lambda^{\overline{\coperad}}}{\coperad /
	\filtration n \coperad} \to
	\sha \fullpower{\Lambda, \Lambda^{\overline{\coperad}}}{\grfilt
	n\coperad} \to \Lambda^{\grfilt n\coperad} \to \gr n\Lambda,
\]
is zero. Since the first map is an epimorphism, then the composition of
the two last maps is zero. So the morphism $\Lambda^{\grfilt n \coperad}
\to \gr n \Lambda$ factorises through the kernel of the map
\[
	\sha \fullpower{\Lambda, \Lambda^{\overline{\coperad}}}{\coperad /
	\filtration n \coperad} \to
	\sha \fullpower{\Lambda, \Lambda^{\overline{\coperad}}}{\grfilt
	n\coperad},
\]
that is $\power{(\cofiltration 0 \Lambda)}{\grfilt n\coperad}$.
\end{proof}

\begin{lemma}%
\label{thm:retract_of_free}
A retract of a free graded $\coperad$-algebra is free.
\end{lemma}

\begin{proof}
Consider a retract diagram of graded $\coperad$-algebras as follows
\[
	\begin{tikzcd}[ampersand replacement=\&]
		(\Lambda,a) \arrow[r,"s"]  \&
		(X^{\coperad}, \canonicalaction) \arrow[r,"r"] \& (\Lambda,a).
	\end{tikzcd}
\]
The composite map
\[
	\cofiltration 0 \Lambda \xrightarrow{\cofiltration 0(s)}
	\cofiltration 0 X^\coperad \isonat X
	\xrightarrow{X^\tau}
	X^{\coperad} \xrightarrow{r} \Lambda.
\]
induces a morphism of graded $\coperad$-algebras
\[
	f:{\left(\cofiltration 0 \Lambda\right)}^\coperad \to \Lambda.
\]
Let us show that this is an isomorphism. Since $f$ is a morphism
between two complete algebras,
we need only show
that $\gr \ast f$ is an isomorphism: first the map
\[
	\gr 0 (f): \cofiltration 0\left({\left(\cofiltration 0
	\Lambda\right)}^\coperad \right) \to \cofiltration 0\Lambda
\]
is an isomorphism. Besides,
for every $n \in \naturals$ the morphism
$\Lambda^{\grfilt n \coperad} \to \gr n \Lambda$ induced
by the structure map $a_\Lambda$ actually factorises through
$\power{(\cofiltration 0 \Lambda)}{\grfilt n\coperad}$%
~[\ref{thm:factorisation_for_free=fibrant}].
Then for any natural $n$, consider the following
retract diagram:
\[
	\begin{tikzcd}[ampersand replacement=\&]
		\fullpower{\cofiltration 0 \Lambda}{\grfilt n\coperad} 
		\arrow[rr,"\fullpower{\cofiltration 0 s}{\grfilt n\coperad}"]
		\arrow[d,"{a_{\Lambda}}"]
		\&\&
		\fullpower{\cofiltration 0 X^{\coperad}}{\grfilt n\coperad} 
		\arrow[rr,"{\fullpower{\cofiltration 0 r}{\grfilt n\coperad}}"]
		\arrow[d,"a_{X^{\coperad}}"]
		\&\&
		\fullpower{\cofiltration 0 \Lambda}{\grfilt n \coperad}
		\arrow[d,"{a_\Lambda}"]
		\\
		\gr n \Lambda  
		\arrow[rr,"\gr n s"]
		\&\& \gr n X^\coperad 
		\arrow[rr,"\gr n r"]
		\&\& \gr n \Lambda.
	\end{tikzcd}
\]
Since the vertical middle arrow is an isomorphism, then the left
vertical arrow is also an isomorphism. Therefore, three of the four
faces of the following square are isomorphisms
\[
	\begin{tikzcd}[ampersand replacement=\&]
		\fullpower{\cofiltration 0 \Lambda}{\grfilt n\coperad} 
		\arrow[rr,"\fullpower{\cofiltration 0 f}{\grfilt n\coperad}"]
		\arrow[d,"{a_{\Lambda}}"]
		\&\& \fullpower{\cofiltration 0
		\fullpower{\cofiltration 0 X}{\coperad}}{\grfilt n\coperad} 
		\arrow[d,"a_{\fullpower{\cofiltration 0 X}{\coperad}}"]
		\\
		\gr n \Lambda  
		\arrow[rr,"\gr n f"]
		\&\& \gr n \fullpower{\cofiltration 0 X}\coperad 
	\end{tikzcd}
\]
By the 2-out-of-3 rule, the map
$\gr n (f)$ is an isomorphism.
\end{proof}

\begin{theorem}%
\label{thm:objets_fibrants_dans_les_algebres_completes}
The three following full subcategories of the category of
the model category of complete $\coperad$-algebras are
equivalent:
\begin{itemize}
	\item the full subcategory of cofibrant objects;
	\item the full subcategory whose object are images of Cobar;
	\item the full subcategory whose objects are complete
	      $\coperad$-algebras that
	      are free as graded \mbox{$\coperad$-algebra}.
\end{itemize}
\end{theorem}

\begin{proof}
Using the Rosetta stone%
~[\ref{thm:Rosetta_stone}], we already know that the full subcategory
whose object are images of Cobar is equivalent to the full subcategory
whose objects are complete $\coperad$-algebras that are free as graded
$\coperad$\=/algebra. Besides, let $\Lambda$ be a cofibrant object.
The following square has a lifting
\[
	\begin{tikzcd}[ampersand replacement=\&]
		0 \arrow[r] \arrow[d]
		\& \cobarfunctor\cobaradj \Lambda \arrow[d]
		\\
		\Lambda \arrow[r,equal]
		\& \Lambda .
	\end{tikzcd}
\]
Thus, $\Lambda$ is a retract of $\cobarfunctor\cobaradj
\Lambda$ which is free as a graded $\coperad$\=/algebra.
Hence it is free as a graded $\coperad$\=/algebra%
~[\ref{thm:retract_of_free}]. We conclude by noticing that any image of
the functor $\cobarfunctorfull{}$ is cofibrant.
\end{proof}

\subsection{\texorpdfstring{∞}{Infinity}-Morphisms}

We have seen in the previous subsection that cofibrant complete
$\coperad$\=/algebras are essentially images of the functor
$\cobarfunctorfull$. We study here morphisms between images of this
functor.

\begin{definition}
An ∞\=/morphism 
$f : V \inftymorphism W$ between two $\specialoperad$\=/coalgebras
is a morphism of $\coperad$-algebras from $\cobarfunctor V$
to $\cobarfunctor W$.
\end{definition}

As the underlying graded $\coperad$-algebra $V^\coperad$ of 
$\cobarfunctor V$ is free an ∞\=/morphism $f: V\to
W$ is determined by its restriction $f_V$ to the generators $V$,
that is a map
\[
	(f_\dg,f_\h) : V \to
	W^\coperad \isonat W \oplus W^{\overline \coperad},
\]
which satisfies some conditions. In particular, the map $f_\dg : V \to
W$ is a morphism of chain complexes. This one does not commutes with
the structure of $\specialoperad$-coalgebras; but it commutes up to some
homotopy,
which is actually encoded in the map $f_\h : V \to W^{\overline
\coperad}$ together with higher coherences.

Notice that, as a morphisms of chain
complexes from $V$ to $W$
\[
	f_\dg = \gr 0 f .
\]

\begin{definition}
An ∞-morphism $f: (V,a) \inftymorphism (W,b)$ between two
$\specialoperad$-coalgebras is said to be
\begin{itemize}
	\item a strict ∞-morphism if $f_\h=0$;
	\item an ∞-isomorphism if $f_\dg$ is an
	      isomorphism of chain complexes;
	\item an ∞-quasi-isomorphism if $f_\dg$ is a
	      quasi-isomorphism of chain complexes;
	\item an ∞-monomorphism if $f_\dg$ is a
	      degreewise monomorphism;
	\item an ∞-epimorphism if $f_\dg$ is a
	      degreewise epimorphism;
	\item an ∞-isotopy if $f_\dg$ is the identity of
	      the chain complex $V$. Note that in general
	      $f_\dg^\specialoperad \circ a \neq b \circ f_\dg$.
\end{itemize}
\end{definition}

\begin{remark}%
\label{rmk:composition_of_infinity_arrows}
Assume one wants to compose two ∞-morphisms $f$ and $g$, then
understanding $\under{(f \circ g)}\dg$ and $\under{(f \circ g)}\h$
is easy when $g$ is a strict ∞-morphism. Indeed in this case one has
\[
	\fullunder{f \circ g}\dg = f_\dg \circ g_\dg \qand
	\fullunder{f \circ g}\h = f_\h \circ g_\dg.
\]
\end{remark}

\begin{proposition}%
\label{thm:strict_morphisms}
Given two $\specialoperad$-coalgebras $V$ and $W$, the functor
$\cobarfunctorfull$ gives an isomorphism between
\begin{itemize}
	\item the set of morphisms of $\specialoperad$-coalgebras
	      $\hombracket{\catofcog{\specialoperad}}{V}{W}$;
	\item the subset of $\hombracket{\catofcompletealg{\coperad}}
	      {\cobarfunctor V}{\cobarfunctor  W}$
	      made up of strict ∞-morphisms, that is those $f$
	      such that $f_\h=0$.
\end{itemize}
\end{proposition}

\begin{proof}
First, notice that the function
\[
	\hombracket{\catofcog{\specialoperad}}{V}{W} \to 
	\hombracket{\catofcompletealg{\coperad}}
	{\cobarfunctor V}{\cobarfunctor  W}
\]
given by the $\cobarfunctorfull$ functor is injective%
~[\ref{thm:cobar_est_fidele}] and that it image lies
in the subset of strict ∞-morphisms. Now, let $f: V \inftymorphism W$
be a strict ∞-morphisms.
Then, as a morphism of graded complete $\coperad$-algebras $f =
f_\dg^\coperad$. The fact that $f$ commutes with the derivations
of $\cobarfunctor V$ and $\cobarfunctor W$ is encompassed
in the fact that $f_\dg$ is a chain map and in the commutation 
of the following diagram
\[
	\begin{tikzcd}
		V \arrow[r,"f_\dg"] \arrow[d,"a_V"']
		& W \arrow[d,"a_W"]
		\\
		V^\specialoperad
		\arrow[d,"V^\canonicaltwisting"']
		& W^\specialoperad \arrow[d,"W^\canonicaltwisting"]
		\\
		V^{\sinv\overline{\coperad}}
		\arrow[r,"{f_\dg^{\sinv\overline{\coperad}}}"]
		& W^{\sinv\overline{\coperad}} .
	\end{tikzcd}
\]
The commutation of this square implies that $f_\dg$ commutes with the
$\specialoperad$-coalgebras structures on $V$ and $W$ since the operad
$\specialoperad$ is generated by $s^{-1}\overline{\coperad}$. Thus
$f_\dg$ is a morphism of differential graded $\specialoperad$-coalgebras.
Finally $f = \cobarfunctor (f_\dg)$.
\end{proof}

Besides, for any $\specialoperad$-coalgebra $V$, we can notice that there
exists canonical isomorphisms of chain complexes
\[
	V \isonat \gr 0 \cobarfunctor V,
\]
and for $n \geq 1$
\[
	\gr n \cobarfunctor V \isonat V^{\grfilt n \coperad}
	\isonat \fullpower{\gr 0 \cobarfunctor V}{\grfilt n \coperad}.
\]
Then, for any morphism of complete $\coperad$-algebras $f: \cobarfunctor
V \to \cobarfunctor W$ the graded map associated to $f$ is
\[
	\gr n f  = \fullpower{\gr 0 f}{\grfilt n \coperad}
	 = \fullpower{f_\dg}{\grfilt n \coperad}.
\]
As a consequence, the functor $\gr 0$ is conservative, which is 
rephrased in the following proposition.

\begin{proposition}%
\label{thm:dg_iso_implies_infinity_iso}
An ∞-morphism $f : V \inftymorphism W$ is an isomorphism of
$\coperad$-algebras if and only if it is an ∞-isomorphism.
\end{proposition}

\begin{theorem}%
\label{thm:weq_for_cofibrant_algebras}
An ∞-morphism $f : V \inftymorphism W$
is a weak equivalence of complete $\coperad$-algebras
if and only it is an ∞-quasi-isomorphism.
\end{theorem}

\begin{proof}
First, suppose that $f_\dg $ is a quasi-isomorphism. Then for any
natural $n$, the morphism of chain complexes $\gr n f = \power{(f_\dg
f)}{\grfilt n \coperad}$ is also a quasi-isomorphism. Hence
$f$ is a dévissage equivalence and so this is a weak equivalence%
~[\ref{thm:equivalence_de_devissage=>eq}].

Conversely, suppose that $f$ is a weak equivalence. This means that
the morphism of $\specialoperad$-coalgebras $\cobaradj \cobarfunctor V \to
\cobaradj \cobarfunctor W$ is a quasi-isomorphism. Besides, we know
since%
~\sectionref{sec:the_operad_BQ_and_the_cobar_resolution} that the
composite map
\[
	\cobaradj \cobarfunctor V
	\xrightarrow{\canonicalproj^\specialoperad(V^\coperad)}
	V^\coperad 
	\to \cofiltration 0 V^\coperad \isonat V,
\]
is actually a chain map and as
a left inverse of the counit morphism $V \to \cobaradj \cobarfunctor
V$, it is a quasi-isomorphism.
Then, consider the following commutative diagram in the category
$\catss^\integers$.
\[
	\begin{tikzcd}[ampersand replacement=\&] 
		\cobaradj \cobarfunctor V \arrow[d] \arrow[r,"\cobaradj f "]
		\& \cobaradj \cobarfunctor W \arrow[d] 
		\\
		\cobarfunctor V \arrow[r,"f"'] \arrow[d]
		\& \cobarfunctor W \arrow[d]
		\\
		V \arrow[r,"\gr 0 f"']
		\& W.
	\end{tikzcd}
\]
The vertical composite maps are chain maps and quasi-isomorphisms.
This is also the case of $\cobaradj f$. So $\gr 0 f = f_\dg$ is a
quasi-isomorphism.
\end{proof}

\begin{theorem}
An ∞-morphism $f : V \inftymorphism W$, that is
a morphism of $\coperad$\=/algebras from $\cobarfunctor V$
to $\cobarfunctor W$, is a
fibration
if and only it is an ∞\=/epimorphism.
\end{theorem}

\begin{proof}
In the standard model structure, the fibrations of complete
$\coperad$-algebras coincide with the degree-wise epimorphisms%
~[\ref{thm:toutes_les_algebres_completes_sont_fibrantes},%
~\ref{thm:sane_implies_fibrations_are_epis}].
Suppose that $f$ is a degree-wise epimorphism, then using the diagram
\[
	\begin{tikzcd}[ampersand replacement=\&] 
		\cobarfunctor V \arrow[r,"f"] \arrow[d]
		\& \cobarfunctor W \arrow[d]
		\\
		V \arrow[r,"\gr 0 f"']
		\& W.
	\end{tikzcd}
\]
one sees that $\gr 0 f = f_\dg$ becomes also a degree-wise epimorphism.
So $f$ is an ∞-epimorphism.

Conversely, suppose that $f$ is an ∞-epimorphism.
For any natural $n$, the equality
\[
	\gr {n} f = \fullpower{f_\dg}{\grfilt n \coperad} 
\] 
shows that $\gr {n} f$ is also a degree-wise epimorphism.
Thus $f$ itself is also a degree-wise epimorphism%
~[\ref{thm:epi-cofiltration}].
\end{proof}

\begin{lemma}[(Perturbation)]%
\label{thm:rectification_of_cofibrations_and_fibrations}
Let $f:  (V,a) \inftymorphism  (W,b)$ be an ∞\=/morphism. Then
$f$ can be rewritten as the composite of a strict ∞-monomorphism
followed by an ∞-isotopy:
\[
	\begin{tikzcd}
		\cobarfunctor V \ar[rr, "f"]
		\ar[rd, hook, "{g=\cobarfunctor (f_\dg)}"']& &
		\cobarfunctor (W,b)
		\\
		& \cobarfunctor (W,b') \ar[ur, "t"']
		\ar[ur, phantom, shift left=1.5, "\rotatebox{30}{$\iso$}"] &
	\end{tikzcd}
\]
\end{lemma}

\begin{proof}
Since the map $f_\dg : V \to W$ is a degree-wise monomorphism, it has
a left inverse $f_\dg^{-1}$ in the category $\catss^\integers$. Let
$t$ be the endomorphism of the graded complete $\coperad$-algebra
given by
\[
t_\dg \coloneqq \id W \qand
\begin{tikzcd}
	t_\h : 
	W \arrow[r,"f_\dg^{-1}"]
	& V \arrow[r,"f_\h"]
	& W^{\overline\coperad}
\end{tikzcd}
\]
Since $\gr 0 t = \id{W}$, $t$ is an isomorphism%
~[\ref{thm:dg_iso_implies_infinity_iso}]. We can thus
transfer the derivation of $\cobarfunctor W$ along $t$ to obtain a new
derivation $d'$ on $W^\coperad$
\[
	W^\coperad \xrightarrow{t} W^\coperad
	\xrightarrow{d_{\cobarfunctor W}} W^\coperad
	\xrightarrow{t^{-1}} W^\coperad.
\]
Using the
Rosetta stone%
~[\ref{thm:Rosetta_stone}], this defines another structure of
a $\specialoperad$\=/coalgebra on $W$. We denote by $b'$ this new
structure. Moreover, the maps $t$ and $t^{-1}$ are $\infty$-isotopies
between $(W,b)$ and $(W,b')$. Let $g: (V,a)
\inftymorphism (W,b')$ be the $\infty$-morphism
which is the composition of $f$ with $t^{-1}$
\[
	g = t^{-1} \circ f .
\]
The equality of morphisms of graded complete
$\coperad$-algebras from $V^\coperad$ to $W^\coperad$
\[
	f = t \circ \fullpower{\gr 0 f}{\coperad}~%
	[\ref{rmk:composition_of_infinity_arrows}] 
\]
implies that $g$ equals $f_\dg^\coperad$ and so
is a strict $\infty$-morphism.
\end{proof}

\begin{theorem}
An $\infty$-morphism $f : V \inftymorphism W$, that is
a morphism of $\coperad$-algebras from $\cobarfunctor V$
to $\cobarfunctor W$, is a
cofibration
if and only it is an ∞\=/monomorphism.
\end{theorem}

\begin{proof}
Suppose first that $f$ is a cofibration.
Consider the following
factorisation
\[
	\begin{tikzcd}
		V
		\arrow[r, hookrightarrow,"c"]
		& X
		\arrow[r]
		\arrow[r,phantom,"\sim", shift left]
		& \cobaradj 0
	\end{tikzcd}
\]
in the category of $\specialoperad$-coalgebras, given by a cofibration
followed by a weak equivalence. Then, since the adjunction
$\cobarfunctor \dashv \cobaradj$ is a model equivalence%
~[\ref{thm:equivalence_for_cofibrant}] and since
the functor $\cobarfunctor$ preserves weak equivalences%
~[\ref{thm:weq_for_cofibrant_algebras}], the composite
morphism
\[
	\cobarfunctor X \to  \cobarfunctor \cobaradj 0  \to 0,
\]
is a weak equivalence. As it is also degree-wise surjective, it is an
acyclic fibration. Moreover, it fits in the following square diagram
\[
	\begin{tikzcd}
		\cobarfunctor V
		\arrow[d, "f"']
		\arrow[r, hook,"{\cobarfunctor c}"]
		& \cobarfunctor X
		\arrow[d]
		\\
		\cobarfunctor W \arrow[r]
		& { 0} .
	\end{tikzcd}
\]
This square has a lifting $g : \cobarfunctor W \to \cobarfunctor X$.
Since $\gr 0 c = \gr 0 g \circ \gr 0 f$ is a degreewise monomorphism,
then $\gr 0 f$ is a degreewise monomorphism, that is,
$f$ is an $\infty$-monomorphism.
Conversely,
suppose that $f$ is an ∞\=/monomorphism. By
perturbation%
~[\ref{thm:rectification_of_cofibrations_and_fibrations}], there exists
a strict ∞\=/monomorphism 
$g: (V,a) \inftymorphism (W,b')$ and an $\infty$-isotopy
$t :  (W,b') \inftymorphism (W,b)$ such that
\[
	f = t \circ \cobarfunctor g.
\]
Since $g$ is the image through the Cobar functor of a cofibration it is
a cofibration. Since $t$ is an isomorphism, $f$ is also a cofibration.
\end{proof}

\section*{Acknowledgements}
The authors would like to thank both Utrecht University and the IBS
Center for Geometry and Physics which provided financial support for
this long term research project. In particular they allowed the authors
to meet regularly either in Korea or in the Netherlands.
The first author was supported by
the {NWO}; the second author was supported by IBS-R$003$-D$1$.

They would also like to thank Gabriel Drummond-Cole for encouraging
comments, showing them nice \LaTeX{} tricks and
for sharing thoughts on model categories of coalgebras, Bruno Vallette
for his careful attention to details and
Geoffroy Horel for suggesting applications to rational homotopy theory.
The first author would like to thank Victor Roca Lucio for sharing his thoughts about
the first version of this paper. The second author would
like to thank Sori Lee for helpful conversations, Daniel
Robert-Nicoud for exchanging thoughts on complete algebras, and
Simon Henry
for introducing him to the theory of modules over non-unital rings.


\bibliography{dl}

\tocconfig%


\addresses%

\end{document}